\documentclass[intlimits]{amsart}
\usepackage[T1]{fontenc} % FONDAMENTALE
\usepackage[utf8]{inputenc}
\usepackage[english]{babel}

\usepackage{graphicx}
\usepackage{mathtools, amsmath, amssymb} % Carica gli ams in modo ordinato
\usepackage{mathrsfs, booktabs}
\usepackage{xcolor} % Rimosso dvipsnames per test

\usepackage{bm}
\usepackage{tikz}
\usepackage{pgfplots}
\pgfplotsset{compat=newest}
\usetikzlibrary{decorations.markings}
\usetikzlibrary{angles}
\usetikzlibrary{quotes}

\usepackage{cite}

\usepackage{amsthm}
\theoremstyle{plain}
\newtheorem{thm}{Theorem}[section]
\newtheorem{lem}[thm]{Lemma}
\newtheorem{prop}[thm]{Proposition}
\newtheorem{corr}[thm]{Corollary}
\newtheorem{ex}[thm]{Example}
\theoremstyle{definition}
\newtheorem{defn}{Definition}[section]
\theoremstyle{remark}
\newtheorem{rmk}{Remark}[section]

\def\faifig{1}
\definecolor{myred}{rgb}{0.9,0,0}
\definecolor{mygre}{rgb}{0,0.5,0}
\definecolor{myblu}{rgb}{0.1,0.2,0.8}
\definecolor{myvio}{rgb}{0.7,0.1,0.8}
\definecolor{mywhite}{rgb}{1,1,1}
\definecolor{myblack}{rgb}{0,0,0}
\definecolor{myoran}{rgb}{1,.4,.2}
\definecolor{mygray}{rgb}{.3,.3,.3}
\definecolor{mygrayl}{rgb}{.7,.7,.7}
\definecolor{mygreDark}{rgb}{0,0.33,0}
\definecolor{myyel}{rgb}{1,.95,.5}
\definecolor{myyell}{rgb}{1,.975,.75}
\definecolor{mylg}{rgb}{.85,.85,.85}
\definecolor{mygrea}{rgb}{0.4,0.6,0}
\definecolor{mygreb}{rgb}{0,0.6,0.4}
\definecolor{ForestGreen}{RGB}{34,139,34}

\newcommand{\gored}[1]{\textcolor{myred}{#1}}
\newcommand{\gogre}[1]{\textcolor{mygre}{#1}}
\newcommand{\gogrea}[1]{\textcolor{mygrea}{#1}}
\newcommand{\gogreb}[1]{\textcolor{mygreb}{#1}}

\newcommand{\gowhite}[1]{\textcolor{mywhite}{#1}}

\newcommand{\gogray}[1]{\textcolor{mygray}{#1}}
\newcommand{\gograyl}[1]{\textcolor{mygrayl}{#1}}
\newcommand{\goyel}[1]{\textcolor{myyel}{#1}}
\newcommand{\goyell}[1]{\textcolor{myyell}{#1}}
\newcommand{\golg}[1]{\textcolor{mylg}{#1}}

\newcommand{\goblue}[1]{\textcolor{blue}{#1}}

\newcommand{\sign}{\mathrm{sign}}
\newcommand{\dx}{\!\mathrm{d}}

\newcommand{\lam}{\lambda}
\newcommand{\arccosh}{\mathrm{arccosh}}

\newcommand{\edge}[2]{(#1\,#2)}

\newcommand{\rev}[1]{#1^{\rm (rev)}}
\newcommand{\pz}{\phantom{0}}
\newcommand{\walt}{w^{\pm}}
\newcommand{\bipg}[4]{#1=(#2,#3;#4)}

\date{\today}

\newcommand{\va}{\bm{\alpha}}
\newcommand{\vb}{\bm{\beta}}

\newcommand{\bR}{\mathbb{R}}
\newcommand{\ga}{\alpha}
\newcommand{\gb}{\beta}
\newcommand{\cT}{\mathcal{T}}
\newcommand{\cK}{\mathcal{K}}
\newcommand{\cW}{\mathcal{W}}
\newcommand{\cM}{\mathcal{M}}

\newcommand{\dotb}[2]{\put(#1.5,#2.5){{\color{blue}\circle*{0.65}}}}
\newcommand{\dotr}[2]{\put(#1.5,#2.5){{\color{red}\circle*{0.65}}}}
\newcommand{\dotbr}[3]{
\put(#1.5,#3.5){{\color{blue}\circle*{0.65}}}
\put(#2.5,#3.5){{\color{red}\circle*{0.65}}}
}
\newcommand{\dotx}[2]{\put(#1.5,#2.5){{\color{black}\circle*{0.33}}}}
\newcommand{\dotbrx}[3]{\put(#1.5,#3.5){{\color{black}\circle*{0.33}}}}

\newcommand{\dote}[2]{\put(#1.5,#2.5){{\circle{0.65}}}}

\newcommand{\eps}{\epsilon}
\newcommand{\w}{\omega}
\newcommand{\cO}{\mathcal{O}}
\newcommand{\argmin}{\mathop{\mathrm{argmin}}}

\newcommand*{\symdif}{\bigtriangleup}
\newcommand{\setminx}{\smallsetminus}

\renewcommand{\emptyset}{\varnothing}

\renewcommand{\geq}{\geqslant}
\renewcommand{\leq}{\leqslant}
\newcommand{\tr}{\mathrm{tr}}

\DeclareFontEncoding{LS1}{}{}
\DeclareFontSubstitution{LS1}{stix}{m}{n}
\DeclareSymbolFont{stixletters}{LS1}{stix}{m}{it}
\DeclareMathAccent{\cev}{\mathord}{stixletters}{"91}
\DeclareMathAccent{\vec}{\mathord}{stixletters}{"92}
\DeclareMathAccent{\vecev}{\mathord}{stixletters}{"95}
\usetikzlibrary{decorations}

\usepackage{hyperref}
\hypersetup{pdfauthor={CSS},pdftitle={Spanning trees and Assignment},%
            colorlinks, linktocpage=true, pdfstartpage=1, pdfstartview=FitV,%
    breaklinks=true, pdfpagemode=UseNone, pageanchor=true, pdfpagemode=UseOutlines,%
    plainpages=false, bookmarksnumbered, bookmarksopen=true, bookmarksopenlevel=1,%
    hypertexnames=true, pdfhighlight=/O,%
    urlcolor=orange, linkcolor=blue, citecolor=ForestGreen %pagecolor=RoyalBlue,%
}

\ifdefined\be
\renewcommand{\be}{\begin{equation}}
\else
\newcommand{\be}{\begin{equation}}
\fi
\newcommand{\ee}{\end{equation}}
\newcommand{\ef}[1]{\, #1}

\newcommand{\calR}{\mathcal{R}}
\newcommand{\calT}{\mathcal{T}}

\newcommand{\calD}{\mathcal{D}}

\newcommand{\calJ}{\mathcal{J}}
\newcommand{\calK}{\mathcal{K}}
\newcommand{\calL}{\mathcal{L}}

\newcommand{\calX}{\mathcal{X}}
\newcommand{\calC}{\mathcal{C}}
\newcommand{\calV}{\mathcal{V}}

\newcommand{\calM}{\mathcal{M}}

\newcommand{\bZ}{\mathbb{Z}}
\newcommand{\bN}{\mathbb{N}}

\newcommand{\bu}{{\bm u}}
\newcommand{\bv}{{\bm v}}

\begin{document}
\title[Spanning trees in the Assignment Problem]{Spanning trees in the Assignment Problem:\\two theorems and two conjectures}
\author{Sergio Caracciolo}
\email{\rule{0pt}{20pt}sergio.caracciolo@mi.infn.it}
\address{INFN, sezione di Milano, Milano, Italy}
\author{Gabriele Sicuro}
\email{gabriele.sicuro@unibo.it}
\address{Department of Mathematics, University of Bologna, Bologna, Italy}
\author{Andrea Sportiello}%
 \email{andrea.sportiello@gmail.com}
\address{CNRS \&  LIPN, Universit\'e Paris Nord, Villetaneuse, France}%
\date{\today}
\begin{abstract}
The \emph{Minimum Matching Problem} consists of finding an independent
edge set of minimum weight $M_{\star}(G)$ in a given edge-weighted
graph $G$. When $G$ is bipartite, this reduces to the \emph{Assignment
  Problem}. We consider a variant of this problem defined by taking
the union of optimal matchings across various slightly modified
versions of the base graph:
$H_{\calJ}(G)=\bigcup_{U \in \calJ} M_{\star}(G_{U})$.  We establish
two families of results: (1) In two distinct settings for the
Assignment Problem, we prove that the resulting graphs $H_{\calJ}$, as
well as certain associated graphs $\bar{H}_{\calJ}$, are spanning
trees on the relevant base graphs $G$ and~$\bar{G}$. (2) In these same
settings, assuming the edge weights are given by the $p$-th power of
Euclidean distances for point configurations in the plane, we show
that for $p=1$ the tree $H_{\calJ}$ is non-crossing (i.e., its planar
embedding has no crossing edges), whereas, remarkably, for $p=2$ the
associated tree $\bar{H}_{\calJ}$ is non-crossing.  Finally, we
introduce novel conjectures in Statistical Mechanics, to be explored
in future work: in the Random Euclidean Assignment Problem (where
points are i.i.d.\ on a planar domain), we conjecture that for $p=2$
the trees $\bar{H}_{\calJ}$ are asymptotically distributed as Uniform
Spanning Trees with free and wired boundary conditions in the two
respective settings. In particular, suitable paths on the tree in the
second setting, and on its planar dual in the first setting, are
asymptotically distributed as $\text{SLE}_{\kappa}$ with~$\kappa=2$.
\end{abstract}
\maketitle

\setcounter{tocdepth}{1}
\vspace*{-0.25cm}
\tableofcontents
% appendixtitleon
% appendixtitletocoff

\newpage

%% CHECK OUT THINGS....

%% \begin{verbatim}
%% https://en.wikipedia.org/wiki/Gradient-index_optics
%% https://www.physicsforums.com/threads/refraction-in-a-medium-with-a-gradient-of-refractive-index.594091/
%% https://physics.stackexchange.com/questions/122599/ray-tracing-in-a-inhomogeneous-media
%% https://en.wikipedia.org/wiki/Hamiltonian_optics
%% https://physics.stackexchange.com/questions/745161/a-conformally-flat-metric-is-also-ricci-flat
%% https://en.wikipedia.org/wiki/Conformally_flat_manifold
%% \end{verbatim}

%%%%%%%%%%%%%%%%%%%%%%%%%%%%%%%%%%%%%%%%%%%%%%%%%%%%%%%
\section{Introduction}
%%%%%%%%%%%%%%%%%%%%%%%%%%%%%%%%%%%%%%%%%%%%%%%%%%%%%%%

%-------------------------------------------------------
\subsection{A summary of results}
\label{sec.open}
%-------------------------------------------------------

\noindent
Let us start by giving a glimpse of our main results, leaving aside
variants and generalisations. Given an $n \times n$ real matrix $W=(W_{ij})$, the 
\emph{optimal assignment} $\pi^*$ is the 
permutation
$\pi \in \mathfrak{S}_n$ (unique under suitable generality hypothesis)
minimising the weight
$\cW(\pi)=\sum_{i=1}^n W_{i\,\pi(i)}$. Consider $\pi^*$ as a bijection
from $[n]$ to~$[n]$.

Now, let $W$ be an $(n+1) \times n$ matrix as above. Call $W^{(k)}$ the
matrix $W$ with the $k$-th row removed. Each of these square
matrices has an optimal assignment $\pi^{(k)}$, seen as a bijection from
$[n+1]\setminx k$ to $[n]$.
Introduce the subgraph of the complete bipartite graph, 
$H \subseteq \bipg{\calK_{n+1,n}}{A}{B}{A \times B}$, defined by
$\edge{a_i}{b_j}\in E(H)$ iff there exists a $k$ such that
$j=\pi^{(k)}(i)$.  We prove in Theorem~\ref{thm:Jtree} of
Section~\ref{sec.teoremaserio1} that, under suitable generality
hypothesis, this subgraph is a spanning tree of $\calK_{n+1,n}$, and
all vertices $b_j$ have degree 2. This property suggests to define a
``projected'' graph $\bar{H}$, subgraph of $\calK_{n+1}$, such that
$\edge{a_{i}}{a_{i'}}$ is in $E(\bar{H})$ iff there exists $j$ such
that both $\edge{a_i}{b_j}$ and $\edge{a_{i'}}{b_j}$ are in $E(H)$
(if $j$ exists, it is unique), and it is easy to conclude that also
$\bar{H}$ is a tree, now spanning on~$\calK_{n+1}$.

If the weights $W_{ij}$ are the $p$-th power of the Euclidean distance
among points $x_i$ and $y_j$ in the plane, associated to the vertices
$a_i$ and $b_j$, respectively, the graphs $H$ and $\bar{H}$ have a
natural embedding in the plane, such that, if $\edge{a_i}{b_j} \in E(H)$,
then the segment $[x_i,y_j]$
%
% $\{ t x_i + (1-t) y_j\}_{t\in[0,1]}$ 
is part of the embedded graph.  We prove (relatively easily) in
Corollary \ref{corr.1pnoncross} that, at $p=1$, the embedded graph $H$
is non-crossing, that is, no pair of segments do cross, and (with more
effort) in Theorem~\ref{th:treenoncross} of Section~\ref{sec.caso2dp2}
that, at $p=2$, the projected embedded graph $\bar{H}$ is
non-crossing.  As shown in Appendix~\ref{app.crossingPneq2}, the
hypotheses of this last theorem are quite tight: not only $\bar{H}$ is
in general crossing for all values $p\neq 2$, but also for $p=2$ it is
in general crossing on any two-dimensional manifold that has some
(positive or negative) curvature anywhere (including isolated conical
singularities). Furthermore, the statement in which strict
inequalities are replaced by weak inequalities does not hold either,
as shown again through counterexamples.  Conversely, the theorem at
$p=2$ extends from the plane to other surfaces with no curvature, like
the torus, if the segment is chosen appropriately among the several
geodesics connecting two points.

\begin{figure}[t]
\begin{center}
\if\faifig1
% [inline block 0: 1 envs, 4161 chars -> data_tex | \begin{tikzpicture}[scale=8] \draw[] (0.05,0.05) -- (1.05,0.05); ...]

\else [...TikZ\ code...] \fi
\end{center}
\caption{\label{fig.esempioapertura}% 
  Example of matching subgraphs $H$ (red) and $\bar{H}$ (blue).  Here
  $n=8$ and $W_{ij}=\|x_i-y_j\|^2$.  We show the labels of the
  vertices $b \in B$, and the sets $\calC(e)$.  Note that both $H$ and
  $\bar{H}$ are spanning trees (respectively on $G=\calK_{n+1,n}$ and
  on $\bar{G}=\calK_{n+1}$), according to Theorem \ref{thm:Jtree}, and
  that, while the tree $H$ is not guaranteed to be non-crossing
  (because $p=2$, instead of $p=1$ for Corollary \ref{corr.1pnoncross}
  to apply), and indeed it is crossing, the tree $\bar{H}$ is
  non-crossing, in agreement with Theorem~\ref{th:treenoncross}.}
\end{figure}

Theorems \ref{thm:Jtree} and \ref{th:treenoncross} are illustrated by
an example in Figure~\ref{fig.esempioapertura}, where the sets
$\calC(\edge{a_i}{b_j})=\{k\,|\,j=\pi^{(k)}(i)\}$, playing a crucial
role in our proofs, are also shown. The (two) sets $\calC(e)$
associated to the edges $e$ of $H$ incident to a vertex $y_j$ are a
partition of $[n+1]$, while the sets associated to the vertex $x_i$
are a partition of~$[n+1]\setminx i$.

We also prove, in Section~\ref{sec.AssCapa}, that the $(n+1)\times n$
matrix $T$, defined as
$T_{ij}=\frac{1}{n(n+1)}|\calC(\edge{a_i}{b_j})|$ if
$\edge{a_i}{b_j}\in H$ and $T_{ij}=0$ if $\edge{a_i}{b_j}\not\in H$,
is the optimal transportation plan
% for the square Wasserstein distance, 
in the Monge--Kantorovich Optimal Transport Problem with
costs $W(x_i,y_j)=W_{ij}$, among the measures
%
%% $\mu(z)=\frac{1}{n+1} \sum_{i=1}^{n+1} \delta(z-x_i)$ and
%% $\nu(z)=\frac{1}{n} \sum_{j=1}^n \delta(z-y_j)$, 
\begin{align}
\mu(z)&=\frac{1}{n+1} \sum_{i=1}^{n+1} \delta(z-x_i)
\ef;
&
\nu(z)&=\frac{1}{n} \sum_{j=1}^n \delta(z-y_j)
\ef;
\end{align}
and this gives a second interpretation for the graph $H$ (and, in
turn, for $\bar{H}$). We consider this fact as not obvious, as it
relies on a proposition (Corollary~\ref{corr.HnewIsHold_ass})
% Appendix~\ref{app.AssCapa}) 
%
that is rather subtle: the statement holds in our context, but it
doesn't (as some counterexamples easily show) if the hypotheses are
relaxed even mildly, for example in the natural variant of the setting
on $\cK_{n+2,n}$ rather than~$\cK_{n+1,n}$.

%% We also prove that the $(n+1)\times n$ matrix $T$, defined as
%% $T_{ij}=\frac{1}{n(n+1)}|\calC(\edge{a_i}{b_j})|$ if
%% $\edge{a_i}{b_j}\in H$ and $T_{ij}=0$ if $\edge{a_i}{b_j}\not\in H$,
%% is the optimal transportation plan for the square Wasserstein
%% distance, in the Monge--Kantorovich Optimal Transport Problem, among
%% the measures
%% %
%% %% $\mu(z)=\frac{1}{n+1} \sum_{i=1}^{n+1} \delta(z-x_i)$ and
%% %% $\nu(z)=\frac{1}{n} \sum_{j=1}^n \delta(z-y_j)$, 
%% \begin{align}
%% \mu(z)&=\frac{1}{n+1} \sum_{i=1}^{n+1} \delta(z-x_i)
%% \ef;
%% &
%% \nu(z)&=\frac{1}{n} \sum_{j=1}^n \delta(z-y_j)
%% \ef;
%% \end{align}
%% %
%% and this gives a second interpretation for the graph $H$ (and, in
%% turn, for $\bar{H}$). We consider this fact as not obvious, as it
%% relies on a proposition (Corollary~\ref{corr.HnewIsHold_ass} of
%% Appendix~\ref{app.AssCapa}) that is rather subtle: the statement
%% holds in our context, but it doesn't if the hypotheses are relaxed
%% even mildly, for example in the natural variant of the setting on
%% $\cK_{n+2,n}$ instead that $\cK_{n+1,n}$ (as some counterexamples
%% easily show).

\medskip
\noindent
In future work we will investigate the statistical properties of the
graph $\bar{H}$, in the limit of large $n$, where the $2n+1$ points
$x_i$ and $y_j$ are chosen i.i.d.\ uniformly on some domain $\Omega$
in the Euclidean space $\bR^d$. Inspired by previous (and future) work
on a field-theoretical approach to the Euclidean Random Assignment
Problem \cite{Caracciolo2014,Caracciolo2015,Caracciolo:162}, we will
be led to conjecture that $\bar{H}$ is asymptotically distributed as a
Uniform Spanning Tree on $\Omega$, with free boundary conditions, and
that, in a ``symmetric'' variant of the setting above (where adapted
versions of our theorems still hold), that we describe later on in
this paper in Sections~\ref{sec.teoremaserio1b} and
\ref{ssec.noncrossreservoir}, $\bar{H}$ is asymptotically distributed
as a Uniform Spanning Tree on $\Omega$, with wired boundary
conditions.

The latter setting is of special interest for two main reasons.
First, from the classical work of Ajtai, Komlós, and Tusnády
\cite{Ajtai1984} (see also \cite{Caracciolo2014,Ambrosio2019}), it is
well known that, in the two-dimensional Euclidean Random Assignment
Problem, because of the logarithmic divergence of soft modes in the
random density distributions, the edges appearing in $M_\star(G)$, and
in the matching subgraph $H$,
% $H_\calJ$, 
have an `anomalous' factor 
$\sqrt{\ln n}$ w.r.t.\ the typical first-neighbour distances. This
implies non-local fluctuations due to global conservations, preventing
the existence of a well-defined thermodynamic local limit in the sense
of Benjamini--Schramm. Even for $p=1$, where the graph $H$
% $H_\calJ$ 
is non-crossing, the edges form a microscopic peculiar ``zig-zag''
pattern, with combs of almost parallel edges connecting vertices of
opposite vertex classes, oriented along a certain weighted gradient of
the density distributions.  Nonetheless, passing to the forementioned
symmetric setting, and focusing on the graph $\bar{H}$,
% $\bar{H}_\calJ$, 
allows to strip out this behavior, pathological at the aims of local
limits, and may unveil a regular, short-range effective structure, and
it is reasonable to expect that, under such randomized geometric
ensembles, the projected spanning trees 
$\bar{H}$
% $\bar{H}_\calJ$ 
converge in the thermodynamic limit to a canonical tree-valued random
process, and hopefully to the Uniform Spanning Tree (UST).

The second reason is that the boundary conditions on our random tree
$\bar{H}$ in the symmetric setting are analogous to those that are
appropriate for the definition of $\kappa=2$ Schramm--Loewner
evolution
\cite{Wilson1996Generating, Schramm1999ScalingLO, LawSchWerUST}, so
that, under the assumption that our trees $\bar{H}$ converge to the
UST, we can identify suitable paths on $\bar{H}$ as being
% (conjecturally) 
asymptotically distributed as a radial SLE${}_{\kappa=2}$, that is, a
loop-erased random walk (LERW) connecting a point inside $\Omega$ to a
point on~$\partial\Omega$.  This is shown in
Figure~\ref{fig.exLerwVsNoi} on an example.  Furthermore, the planar
dual of the UST with free boundary conditions is the UST with wired
boundary conditions, thus, provided that our conjecture holds, also in
the first setting, up to taking the dual, we can identify a radial
SLE${}_{\kappa=2}$ random curve. (The planar dual of a polygonal
region of the plane can be defined canonically through the notion of
``constrained Delaunay triangulation'' \cite{chewCDT}).  This is shown
in Figure~\ref{fig.exLerwVsNoi2} on an example.

\begin{figure}[t]
\[
\includegraphics[width=.48\textwidth]{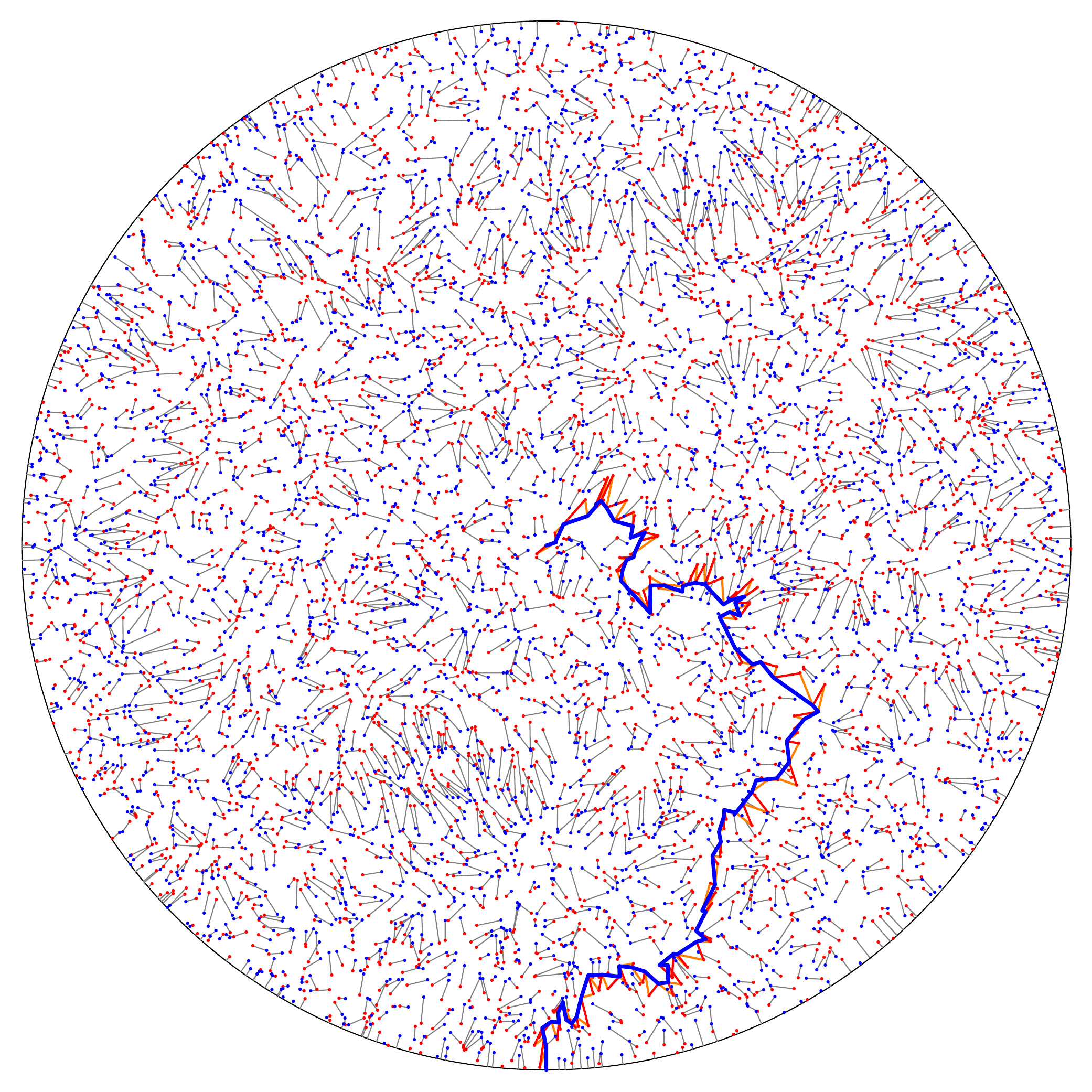}
% fig_exSLEnostro1_n1200.pdf}
\quad
\includegraphics[width=.48\textwidth]{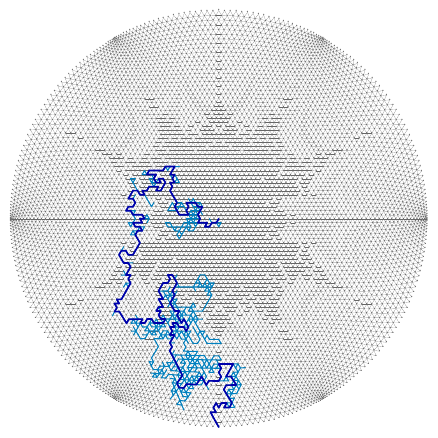}
% fig_lerwHex_n20seed1.pdf}
\]
\caption{\label{fig.exLerwVsNoi}% 
  Left: the path on $\bar{H}$ in the ``symmetric'' setting of
  Section~\ref{sec.teoremaserio1b}, connecting the center of the disk
  to a point on the boundary. Here $n=7650$. Right: a LERW on a
  hexagonal portion ($\ell=50$) of the triangular lattice, from the
  center of the grid, conditioned to reach the midpoint of a side,
  conformally mapped to the disk. The sizes have been chosen so to
  have as many vertices of a given class on the left image, as grid
  points on the right image.  We conjecture that the continuum limit
  of these two random objects do coincide.}
\end{figure}

While a comprehensive discussion of our conjectures and their physical
backing from Statistical Field Theory is deferred to a companion
paper, it is reassuring that the combinatorial theorems proven here
establish a rigorous microscopic foundation for them: they guarantee
that $\bar{H}$ is always a spanning tree (rather than an arbitrary
spanning subgraph), and that in the Euclidean setting at $p=2$ it is
non-crossing (making it entirely plausible that it fills the space
uniformly in a planar way, much like a UST).

In the Statistical Mechanics of Critical Phenomena, SLE curves, in one
of their two main incarnations (radial and chordal), have emerged as
the continuum limit of suitable paths on a regular lattice, associated
to random configurations at the critical temperature
\cite{KagerNien,cardySLErew}.  In the Statistical Mechanics of
Disordered Systems, there has been some activity in trying to identify
SLE curves, now for the ground state of the system, that is, random
configurations from the Gibbs ensemble, within random instances, in
the limit of zero temperature.  However, this line of research has
proven much more challenging than the one for ordered systems, even at
the level of conjectures and numerical results.  To our knowledge,
there is to date no clear construction of a disordered system which is
critical at zero temperature, and for which certain lattice paths
associated to the ground state may converge in the continuum limit to
a radial or chordal SLE$_{\kappa}$, satisfying Schramm's left-passage
probability formula, not even at a conjectural level.

It is precisely this long-standing gap in the study of
zero-temperature disordered systems that underscores the significance
of our findings: in constrast with the limited theoretical
understanding of the other disordered models considered so far --- the
random-bond Ising model, the Minimal Spanning Tree, or the standard
(non-bipartite) matching problem --- our construction may provide a
setting with an unexpected analytical control, which may elevate the
curves of the Assignment Problem from heuristic (and mostly numerical)
observations of random fractal objects to a definite setting for
genuine radial SLE, where exact predictions can be rigorously tested.

\begin{figure}[t]
\[
\includegraphics[width=.48\textwidth]{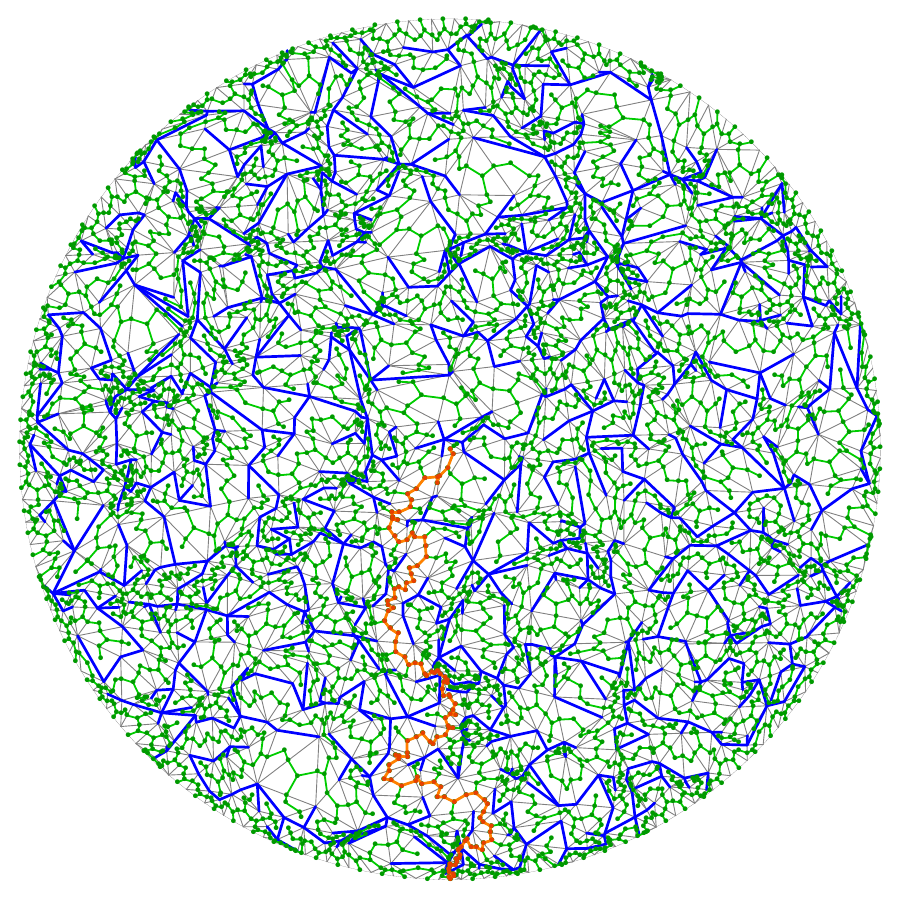}
% albdual_n1140s1L128_lato_clean2.pdf
\quad
\includegraphics[width=.48\textwidth]{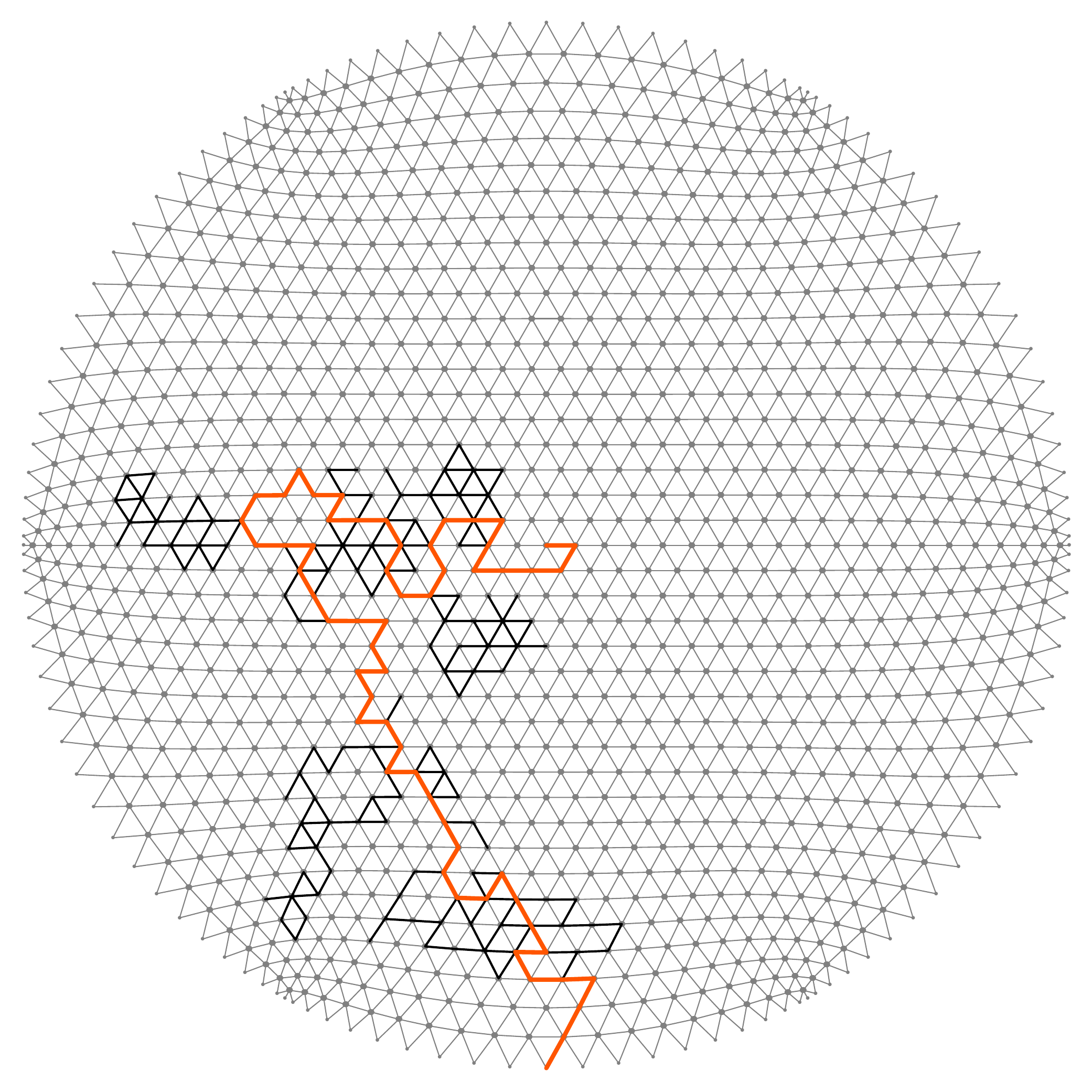}
% fig_lerwHex_n20seed1.pdf}
\]
\caption{\label{fig.exLerwVsNoi2}% 
  Left: the path on the dual of $\bar{H}$ in the setting of
  Section~\ref{sec.teoremaserio1}, connecting the center of the disk
  to a point on the boundary. The dual triangulation is the
  constrained Delaunay triangulation associated to 
  $\mathbb{D} \setminx \bar{H}$, with vertices given by the vertices
  of $\bar{H}$, plus $\sim \sqrt{n}$ extra points on the boundary of
  the disk.  Here $n=1140$. Right: a LERW on a hexagonal portion
  ($\ell=20$) of the triangular lattice, from the center of the grid,
  conditioned to reach the midpoint of a side, conformally mapped to
  the disk. The sizes have been chosen so to have as many vertices of
  each class on the left image, as grid points on the right image.  We
  conjecture that the continuum limit of these two random objects do
  coincide.}
\end{figure}

%%%%%%%%%%%%%%%%%%%%%%%%%%%%%%%%%%%%%%%%%%%%%%%%%%%%%%%
\subsection{A perspective from Statistical Mechanics}
\label{sec.statmechintro}
%%%%%%%%%%%%%%%%%%%%%%%%%%%%%%%%%%%%%%%%%%%%%%%%%%%%%%%

\noindent
In this section, we describe in more detail the bibliographic framework
for our main conjectures (which, in turn, are supported by the theorems
proved in this paper). Readers who are not particularly interested in
Statistical Mechanics, or who wish to dive straight into the mathematical
core, may safely skip this section without loss of continuity.

Identifying the ground state of a disordered system can be a
challenging task. In particular, this is the case, even in
average-case complexity, when the underlying Statistical Mechanics of
the model displays \emph{spontaneous symmetry breaking}
\cite{mezard1987spin,bookMontMez}, making the problem notoriously
intractable (despite recent astonishing breakthroughs, such as
Montanari's algorithm finding near-optimal solutions for the
Sherrington--Kirkpatrick Hamiltonian in $\sim C(\eps)\, n^2$ time under
the full RSB assumption~\cite{montanari_optSK}).

This problem may become more tractable (and by less sophisticated
means) for disordered systems that, unlike the
Sherrington--Kirkpatrick model, are non-critical at every positive
temperature, but become critical (and thus interesting) in the limit
of zero temperature. And indeed,
% Luckily enough, 
there are a few disordered
systems, even in finite dimension, that are conjectured to have such a
property. On top of this, for such systems the study of the ground
state implicitly becomes equivalent to the study of the Gibbs ensemble
at the critical temperature, adding a further motivation to this
investigation.

However, typically the ground state in itself is not very illustrative
of the thermodynamic behaviour of the system, and what really reveals
scale invariance and long-range correlations is the study of
minimal-energy excitations of the ground state, or, in other words,
the comparison between the ground state of the original instance and
that of an instance perturbed by some elementary modification
\cite{Houdayer98,seppala_PhysRevE.58.R5217,middleton95_PhysRevE.52.R3337}.

This study of \emph{excited states} has been applied also to the
Minimum Matching Problem \cite{Houdayer98,Caracciolo2021,Kahlke_2023}.
These papers perform a slight variant of what we are doing here: while
in the existing literature the elementary perturbation consists of
forbidding an edge belonging to the ground state of the original
instance, in our case a perturbation of the instance consists of
modifying the costs of the edges incident to one given vertex
(possibly all of them). While this small variation has a major impact
at the level of exact combinatorial results, it does not significantly
change the underlying Statistical Mechanics of the system. 
In particular, it remains true that the symmetric difference of the
two optimal solutions is composed of a single cycle or path
graph,\footnote{Geometrically, replacing the $i$-th row with the
  $j$-th row corresponds to removing a vertex $v_i$ and introducing
  $v_j$, yielding an open path connecting $v_i$ to $v_j$. Abstractly,
  since $v_i$ and $v_j$ are temporarily identified in $K_{n,n}$, the
  symmetric difference forms a single closed cycle.}
%% \footnote{To interpret the costs geometrically as distances in a
%%   geometric setting, one may view the perturbation as removing a
%%   vertex $v_i$ and introducing a new vertex $v_j$. Within the abstract
%%   framework of the linear Assignment Problem, this corresponds to
%%   replacing the $i$-th row of the cost matrix with the $j$-th
%%   row. Under this \emph{geometric} persepective, the symmetric
%%   difference of the two optimal solutions is composed of a single open
%%   path connecting $v_i$ to $v_j$. However, the vertices $v_i$ and
%%   $v_j$ are (temporarily) identified in the underlying abstract
%%   complete bipartite graph $K_{n,n}$, so that, in the \emph{abstract}
%%   perspective, the symmetric difference between the two optimal
%%   matchings forms a single closed cycle rather than an open path.}
whose asymptotic geometric structure reveals the critical properties
of the model.

For models on planar graphs, this structural property makes the study of
excitations in the Minimum Matching Problem remarkably similar to that of
interfaces and domain walls in standard Statistical Mechanics models.

Starting from 1999, through the seminal work of Oded Schramm
\cite{Schramm1999ScalingLO}, interfaces have started to play a
prominent role in the rigorous understanding of criticality in
Statistical Mechanics. Namely, \emph{Schramm--Loewner Evolution}
curves (SLE), in one of their two main incarnations (radial and
chordal), have emerged as the scaling limit of suitable paths on a
regular lattice associated with random configurations at the critical
temperature \cite{KagerNien,cardySLErew}. This has paved the way for
rigorous proofs of conformal invariance in the continuum limit of
two-dimensional Statistical Mechanics models undergoing second-order
phase transitions --- most notably the families corresponding to the
$q$-state Potts model in the range $0\leq q \leq 4$, and the $O(n)$
loop model in the range $-2\leq n \leq 2$, which contain the
$M_{p,p+1}$ minimal models of conformal field theory as discrete
subsets. To date, this approach has led to rigorous convergence proofs
to SLE in three landmark cases, listed in increasing order of
technical difficulty: the uniform spanning trees / loop-erased random
walks, site percolation on
the triangular lattice, and the critical Ising model.\footnote{These
  three cases correspond exactly to the first examples of minimal
  models in the $A$-series, associated with the values $q = 4
  \cos^2\left(\frac{\pi}{m}\right)$ for $m=2, 3, 4, \dots$ in the
  Fortuin--Kasteleyn representation of the Potts model. More
  precisely:
\begin{itemize}
    \item $q \to 0$ (Uniform Spanning Trees / LERW): corresponds to
      the first building block ($m=2$, central charge $c=-2$
\cite{Gaberdiel1999-sx,Pearce2006-vq}, and
      value of SLE parameter $\kappa=2$ \cite{Schramm1999ScalingLO,LawSchWerUST}).
    \item $q = 1$ (Percolation): corresponds to the second case ($m=3$, $c=0$, and $\kappa = 8/3$).
    \item $q = 2$ (Ising Model): corresponds to the third case ($m=4$, $c=1/2$, and $\kappa = 3$).
\end{itemize}
Thus, this progression is not merely a scaling of technical
difficulty, but precisely reflects the natural sequence of CFT minimal
models (the $M_{p,p+1}$ series) when exploring the ADE
classification.

We apologize for providing references only for the UST case, more
pertinent to the present paper, in order to avoid a too lengthy
digression.}

As anticipated above, there have been attempts in the recent
literature to establish a similar connection within the Statistical
Mechanics of Disordered Systems. In this case the natural idea is of
trying to identify SLE curves associated to the ground state of the
system (or, rather, to elementary excitations of it), that is, random
configurations in the limit of zero temperature, within random
instances (indeed, various disordered systems are known or conjectured
to be critical at zero temperature in dimension $d=2$)
\cite{Amoruso2006Conformal,BernLed,PhysRevB97064410,
% PhysRevB97064410,
Caracciolo2021,Kahlke_2023}.
% slides here
% https://www.physik.uni-leipzig.de/~janke/CompPhys12/Folien/khoshbakht.pdf
%

So, in short, the natural line of thought, that has emerged in the
literature, is as follows. First, one identifies a model that is
``critical at zero temperature'', i.e.\ its ground state has
long-range correlation functions (note that even this first step is
most often conjectural).  Then, from general arguments of scale
invariance in two-dimensional systems, one conjectures that the model,
in its thermodynamic limit (for diverging system size, or equivalently
for the lattice spacing going to zero) has a form of covariance under
conformal transformations.  Then, one tries to construct a simple
(i.e., non self-intersecting) curve in the discrete model, determined
unambiguously from a configuration, that, in the limit, may converge
to a SLE random curve, for some value of $\kappa$ that is either
conjectured \emph{a priori} from the known properties of the critical
model, or fitted from the data. Such a possibility is then tested
numerically against various statistical properties that SLEs must
have.

The heuristic tests to determine $\kappa$ for a random curve that is
conjectured to be a SLE in some continuum limit are not all on the
same ground: some tests (like the basic test of conformal covariance,
and the determination of the Hausdorff dimension
$d_{\rm f}=1+\frac{\kappa}{8}$) only address bulk properties of the
model, and thus may predict the ``good'' value of $\kappa$, even for
curves which are not genuinely SLE's, because the precise description
fails in the prescription of boundary conditions that satisfy the
crucial ``domain Markov property'' for Schramm's setting to work. A
very solid test is one based on the left-passage probability. This
quantity is both quite constraining (as, for radial SLE, it predicts a
two-dimensional surface, while for chordal SLE it predicts a
probability distribution on an interval, in contrast with $d_{\rm f}$,
that provides just one numerical value), and extremely sensitive to
boundary conditions. For example, chordal SLEs associated to USTs with
wired boundary conditions have left-passage probability appropriately
given by Schramm's formula at $\kappa=2$, namely
$p_{\rm left}(\theta)= \frac{\pi + 2 \theta + \sin(2 \theta)}{2 \pi}$ for 
$\theta \in [-\pi/2, \pi/2]$, in the upper half-plane $\mathbb{H}$,
while the analogous quantity for the path associated to USTs with free
boundary conditions gives the different rather simple expression
$p_{\rm left}(\theta)=\frac{\pi + 2 \theta}{2 \pi}$ (which,
accidentally, is the Schramm formula for $\kappa=4$).

To our knowledge, there is to date no clear construction of a
disordered system which is critical at zero temperature, and for which
certain lattice paths associated to the ground state may converge in
the continuum limit to a radial or chordal SLE$_{\kappa}$, satisfying
Schramm's left-passage probability formula, not even at a conjectural
level.  The best result in this direction is in \cite{BernLed}, where
it is shown that, indeed, boundary conditions with fixed endpoints
lead to heuristic fitted values of $\kappa$ that are inconsistent
across different tests, while one boundary condition that leads to
possibly consistent values involves ``floating'' endpoints, on
distinct boundaries of a doubly-connected domain (i.e., a cylinder),
that is a considerably more subtle setting, both because the domain is
not simply-connected \cite{Hagendorf2008-lc}, and because the
endpoints are not prescribed \cite{BBHdipolarSLE}. The value of
$\kappa$ determined numerically from $p_{\rm left}$ is in the range
$2.24 < \kappa < 2.40$, consistent with the value $d_{\rm f}=1.28(1)$
measured separately in the paper (and thus implying $2.16 < \kappa <
2.32$), but not consistent with the recent more precise estimates
\cite{PhysRevB97064410} $d_{\rm f}=1.27319(9)$,
implying $2.1848 < \kappa < 2.1862$ (that is, the two values
determined by two distinct statistical tests differ by roughly 2
standard deviations). Furthermore, there is little hope to identify a
minimal model that may correspond to this theory. Given the formulas
for the central charge,
% $c(\kappa)=\frac{(3\kappa-8)(6-\kappa)}{2\kappa}$ and
$c(\kappa)=(3\kappa-8)(6-\kappa)/(2\kappa)$ and
% $c_{p,q}=1-\frac{6(p-q)^2}{pq}$, 
$c_{p,q}=1-6(p-q)^2/(pq)$, the only possibility is the exotic choice
$(p,q)=(3,5)$, that gives
$\kappa(c)=\kappa(-\frac{3}{5})=\frac{12}{5}=2.4$, barely at the limit
of the fit from $p_{\rm left}$ in \cite{BernLed}, and largely ruled
out by the fit for $d_{\rm f}$ in \cite{PhysRevB97064410}.  On top of
this, as elucidated in the conclusions of \cite{PhysRevB97064410},
previous attempts to identify the scaling dimension of the energy
operator with an entry in the Kac table, while consistent with early
low-precision data, are now ruled out by the more precise
measurements.

%% However, this line of research has proven much more challenging than
%% the one for ordered systems, even at the level of conjectures and
%% numerical results, with the numerical values of $\kappa$ found so far
%% being ``exotic'' (instead that rationals associated to the ADE
%% classification), and even inconsistent for different numerical tests
%% applied to the same given system, while the geometry of the curve is
%% mostly forced to fall out of the standard setting.

Conversely, in our model we provide two distinct constructions for
curves that we expect to be genuine \emph{radial SLE} on a simply
connected domain, e.g.\ on the unit disk.  On top of this, we do not
perform any numerical fit for the parameter $\kappa$, unlike common
practices in the study of disordered systems (such as those in
Refs.~\cite{Amoruso2006Conformal, BernLed, PhysRevB97064410,
  Kahlke_2023}),
%   Caracciolo2021, 
but rather test the \emph{a priori} analytical prediction that
$\kappa=2$, which, as mentioned above, is the value that is rigorously
established for Uniform Spanning Trees.

%%   This value is rigorously established for Uniform Spanning Trees
%% (e.g., through the exact scaling limit results by Lawler, Schramm,
%% and Werner %\cite{Wilson1996Generating, LawSchWerUST}),
%% \cite{LawSchWerUST}), corresponding to the well-studied non-unitary
%% Logarithmic Conformal Field Theory at $c=-2$ associated with
%% $(p,q)=(1,2)$ (see e.g.~\cite{Gaberdiel1999-sx,Pearce2006-vq}).

Having at one's disposal a natural candidate value of $\kappa$ is a
major advantage that the former papers did not have, of course not
because of any methodological shortcomings in the analysis, but rather
in light of the complexity of the underlying physical models
considered there (like the random-bond Ising model, and analogously to
what is found in \cite{Caracciolo2021} for the Random Dimer Model in
the non-bipartite case),
% standard non-bipartite Assignment Problem), 
that lack an exact algebraic backbone.

This bipartite\,/\,non-bipartite structural gap in the Minimum
Matching Problem comes as no surprise from a Combinatorial
Optimization perspective.  Note for example how the extensive
monography of Schrijver \cite{schrijver2003combinatorial} contains a
``Part II: Bipartite Matching and Covering'', and a ``Part III:
Nonbipartite Matching and Covering''. Furthermore, from the point of
view of Statistical Physics, it is also inspired by an analogous
phenomenon in Dimer Models (although presumably due to different
microscopic reasons), where, in the frameworks of Kenyon, Okounkov and
Sheffield
\cite{Kenyon2001Dominos,KenyonOkounkovSheffield2006,KenyonOkounkov2007},
while combinatorial Pfaffian techniques formally survive in the planar
non-bipartite setting, the rich geometric structure that allows clean
conformal invariance to emerge, through the introduction of an
auxiliary height function, is entirely unlocked only when one
focalizes on periodic bipartite lattices.

%% Our ability to conjecturally hit the exact entry in the ADE/CFT
%% classification grid is thus rooted in an analogous fortunate
%% structural choice.

On top of the fact that we can conjecturally identify `some' candidate
limit of CFT minimal model, in the attempt of a theoretical
understanding of interfaces in ground states of disordered systems,
the specific conjectured value of $\kappa=2$ has crucial advantages.
Indeed, establishing the scaling limit of discrete objects in
Statistical Mechanics toward continuous objects like the
Schramm-Loewner Evolution (SLE) under the rigid constraints of pure
Gromov--Hausdorff (or Gromov--Hausdorff--Prokhorov) convergence is
typically too complex, and requires a control of $n$-point observables
that is beyond current possibilities.  To bypass these topological
obstructions, specifically for the case of Uniform Spanning Trees and
$\kappa=2$, a more viable pathway relies on \emph{effective resistance
  metrics}, which naturally mirror the Laplacian structure of Uniform
Spanning Trees and behave more flexibly under renormalization.

Nonetheless, moving from mean-field approximations or single-point
field-\!\! theoretic limits to a fully controlled metric convergence
requires bridging a formidable analytical gap. While optimal transport
tools, such as Brenier potentials, guarantee robust \emph{a priori}
regularity, analyzing infinitesimal variations between instances leads
to nonlinear PDEs whose sensitivity analysis directly mirrors the
oriented paths of our projected trees $\bar{H}$. While technically
demanding, this framework may provide the appropriate tools needed to
understand our candidate SLE curves as objects in metric spaces.

We conclude this section by observing that the model of Uniform
Spanning Trees has proven to be quite transversal in Statistical
Mechanics, with relations not only with the $O(n)$ dense loop model in
the limit $n \to 0$ and in the Potts Model in the limit $q \to 0$
% [ ALAN REVIEW ON POTTS ]
\cite{Sokal2005-yp}, and the related field theory of ``free scalar complex Grassmann
fermions'' 
\cite{Caracciolo2007-mc},
% [ NOI JPHYSA ], 
in ordinary (ordered, at equilibrium)
Statistical Mechanics, but also with an important model from
\emph{non-equilibrium} Statistical Mechanics, namely the Abelian Sandpile
Model 
\cite{Dhar1999-ys}.
%[ DHAR REVIEW ]. 
This would be the first case in which it is also connected to the
(equilibrium) Statistical Mechanics of a \emph{disordered system}.

The facts illustrated above altogether motivate the importance of the
conjectures presented in this paper.

%%%%%%%%%%%%%%%%%%%%%%%%%%%%%%%%%%%%%%%%%%%%%%%%%%%%%%%
\section{Matchings on modified graphs}
\label{sec.setting}
%%%%%%%%%%%%%%%%%%%%%%%%%%%%%%%%%%%%%%%%%%%%%%%%%%%%%%%

%\noindent
\subsection{Matchings on generic-weighted graphs}
Let $G=(V,E)$ denote a (finite, simple, loopless, undirected)
graph\footnote{In this paper, unless clear from the context, we use
  the notation $V_G$ and $E_G$, or $V(G)$ and $E(G)$, to denote the
  vertex- and edge-sets of a graph $G$, then $\edge{u}{v}$ is the edge
  with endpoints $u$ and $v$ (unique if $G$ is simple), and
  $\deg_G(v)$ denotes the degree of the vertex $v$ in $G$.}  with
vertex set $V$ and edge set $E\subseteq\{\edge{u}{v}\mid u,v\in
V,\ u\neq v\}$.
% , intended as a collection of unordered pairs of $V$, so that
% $e=\{u,v\}\in E$ is said to have $u,v\in V$ as endpoints. 
We say that $G$ is \emph{bipartite} if there exist $A,B\subset V$ that
partition $V$, i.e., $V=A\cup B$ and $A\cap B=\emptyset$, such that
$E\subseteq\{\edge{u}{v}\mid u\in A,\ v\in B\}$: in this case we will
adopt the notation $\bipg{G}{A}{B}{E}$. The graph is equipped with a
set of real \emph{edge-weights} $w$, i.e., to each edge
$e=\edge{u}{v}\in E$ is associated a \emph{weight}
$w_e\equiv w_{u,v}\equiv w_{v,u} \in \bR$. We will consider spanning
subgraphs $H \subseteq G$, i.e., $H=(V,E_H)$ for some 
$E_H\subseteq E$. For a subgraph $H$, its \emph{weight}, or
\emph{cost}, is the sum of the weights assigned to the edges in its
edge set,
$w(H)=\sum_{e\in E_H} w_e$.\footnote{In Statistical Mechanics,
  normally the weights are multiplicative, not additive. However, the
  nomenclature has been inherited from the literature on Combinatorial
  Optimization.}
This setting arises in several problems within Combinatorial
Optimization, where the goal is to find the subgraph $H$ in a given
family that has the minimum weight. Variants of this paradigm include
a number of well-known problems with a wide range of algorithmic
complexities, such as, in order of increasing complexity, the Minimum
Spanning Tree Problem, the Minimum-Weight Perfect Matching Problem or
its generalization, the Minimum-Weight $k$-factor Problem, and the
Travelling Salesman Problem (that is, the Minimum-Weight Hamiltonian
Cycle Problem)
\cite{lawlere:76, citeulike:472316, lovasz2009matching,
  schrijver2003combinatorial}.  Of these problems, only the latter is
\textsf{NP}-complete. Somewhat surprisingly, even the apparently
similar Chinese Postman Problem can be solved in polynomial time. In
fact, a crucial step in solving the latter relies on the solvability
of the Minimum-Weight Perfect Matching Problem.

In this paper, we focus on the Minimum-Weight Perfect Matching
Problem, that we now define.

A \emph{perfect matching} $M=(V,E_M)$ in a graph $G=(V,E)$ is a
spanning subgraph $M\subseteq G$ such that $\deg_M(v)=1$
% \coloneqq|\{e\in\ E_M\colon v\in e\}|=1$ 
for all $v \in V$. Such a configuration is called an \emph{assignment}
if $G$ is bipartite, a \emph{1-factor}, within the theory of Graph
Factorization, and a \emph{dimer covering} in the context of
Statistical Mechanics.\footnote{Note that, in Statistical Mechanics,
  it is natural to consider \emph{multiplicative} weights in the form
  $\omega_{e}=\exp(-\beta w_{e})$, where $\beta$ is the inverse of the
  temperature, so that the Gibbs measure of a subgraph is proportional
  to the \emph{product} of the edge weights $\omega_{e}$, while it is
  the \emph{Hamiltonian} which is defined as the sum of the parameters
  $w_e$, that would be called the \emph{energies} of the edges. The
  optimal configuration would be called the \emph{ground state} of the
  system.}  We will denote by $\mathcal{M}(G)$ the (possibly empty)
set of perfect matchings on $G$. According to the paradigm above, we
say that a perfect matching $M \in \mathcal{M}(G)$ has weight
\be
w(M)\coloneqq \sum_{e\in E_M}w_e
\ef.
\ee
The set $\calM_\star(G)$ of
\emph{minimum-weight perfect matchings} on $G$ is given by the set
of $M\in \mathcal{M}(G)$ realising the minimum of the function $w$.
% such that $w(M)=\min_{\hat M\in\calM(G)}w(\hat M)$. 
If $\calM_\star(G)$ has cardinality 1, we denote by $M_\star(G)$
its unique element.

A complete analysis of this problem is provided in
\cite[Ch.\ 24--26]{schrijver2003combinatorial}, and, in the
structurally simpler (bipartite) case of Assignment, in
\cite[Ch.\ 17--18]{schrijver2003combinatorial}.

When the weighted graph $G$ is the balanced complete bipartite graph
$\calK_{n,n}$, its structure is completely encoded by the 
$n \times n$ matrix of weights $W$, and the set $\calM(G)$ is
identified with $\mathfrak{S}_n$, that is, $w(M(\pi))=\sum_{i=1}^n
W_{i\,\pi(i)}$. The two notations (the graph one and the matrix one)
will be both used in our forthcoming study of the Assignment
Problem.\footnote{Along this paper we will use the following customary
  notation: given a matrix $W \in \mathbb{K}(n,m)$, and a set $I
  \subseteq [n]$ of rows, and $J \subseteq [m]$ of columns, we will
  denote by $W^{I|J}$ the minor of $W$ with the rows with indices in
  $I$, and the columns with indices in $J$, removed.}

A recurring notion is the following:
\begin{defn}[Alternating sum of weights on a cycle]
\label{def.genwei1pre}
Let $C=(e_1,e_2,\dots,e_{2\ell})$ be a (rooted oriented even-length)
cycle of $G$ (with edges labeled in cyclic order). The
\emph{alternating sum} $\walt(C)$ is the expression: 
\be 
\walt(C) =
\sum_{i=1}^{2\ell} (-1)^i w_{e_i}
\ef.
\label{eq:altsum}
\ee
\end{defn}
\noindent 
An elementary fact, used repeatedly along this paper, is the following
% that has already been used implicitly in Proposition
% \ref{prop.38765874} 
\begin{prop}
\label{prop.swapcyc}
Let $G$ be as above, $\cM(G)\neq\emptyset$ and $M\in \cM_{\star}(G)$.
Then there exists no cycle $C=(e_1,e_2,\dots,e_{2\ell})$
such that all the $\ell$ edges $e_{2i-1}$ are in $M$, and
$\walt(C)>0$.
\end{prop}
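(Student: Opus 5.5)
Assume by contradiction that such a cycle $C=(e_1,\dots,e_{2\ell})$ exists, with $e_{2i-1}\in M$ for all $i$ and $\walt(C)>0$. The plan is to swap $M$ along $C$ and obtain a perfect matching of strictly smaller weight, contradicting $M\in\cM_\star(G)$. Define $M'$ as the spanning subgraph with edge set $E_{M'}=(E_M\setminx\{e_1,e_3,\dots,e_{2\ell-1}\})\cup\{e_2,e_4,\dots,e_{2\ell}\}$, i.e.\ the symmetric difference $E_M\symdif E_C$. This uses that the even edges are not in $M$, which we show next.

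First I will check that $M'$ is a perfect matching. Since $C$ is a cycle, each vertex $v$ of $C$ is incident to exactly two edges of $C$, which are consecutive in the cyclic order, hence one odd-indexed and one even-indexed edge. The odd one lies in $M$, and because $\deg_M(v)=1$ it is the unique $M$-edge at $v$. Consequently no even-indexed edge $e_{2i}$ belongs to $M$, since it shares an endpoint with an odd $M$-edge and is distinct from it (the cycle being simple). After the swap, $v$ loses its odd edge and gains its even edge, so $\deg_{M'}(v)=1$. Vertices not on $C$ are untouched. Since all the $e_{2i}\in E$, we get $M'\in\cM(G)$.

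Then I compute the weight:
\[
w(M')-w(M)=\sum_{i=1}^{\ell}w_{e_{2i}}-\sum_{i=1}^{\ell}w_{e_{2i-1}}=\sum_{i=1}^{2\ell}(-1)^i w_{e_i}=\walt(C)\ef.
\]
With this sign convention, the inequality $\walt(C)>0$ would give $w(M')>w(M)$, which is not a contradiction. This sign convention is the only delicate point. With \eqref{eq:altsum} as written, a contradiction with minimality arises exactly when $\walt(C)<0$, i.e.\ when the non-$M$ edges are lighter. So the statement must be read with the complementary labelling, in which the $M$-edges are the even-indexed ones, or equivalently with the orientation/rooting of $C$ shifted by one step.

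I will therefore make the convention explicit. Re-rooting $C$ at $e_2$ flips the sign of $\walt$. Under the intended reading, where $\walt(C)$ measures the weight of the $M$-edges minus the weight of the non-$M$ edges, we get $w(M)-w(M')=\walt(C)>0$. This contradicts $w(M)=\min_{\hat M\in\cM(G)}w(\hat M)$, and the proof is complete. Apart from fixing this sign convention, there is no real obstacle: the argument is the standard augmenting-cycle exchange.
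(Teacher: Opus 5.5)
Your proof is correct and is the same cycle-swap argument as the paper's. The paper simply asserts that $M'=(M\setminus\{e_{2i-1}\})\cup\{e_{2i}\}$ is a perfect matching with $w(M')=w(M)-\walt(C)$, and your degree check supplies the step it leaves implicit. Your sign observation is also right: with Eq.~\eqref{eq:altsum} read literally one gets $w(M')=w(M)+\walt(C)$, and the paper's proof tacitly uses the convention $\sum_i(-1)^{i-1}w_{e_i}$, with $M$-edges counted positively, which is exactly the reading you adopt. The paper uses that same convention in its later definitions of $\walt$ for paths and semi-alternating cycles, and in the forbidden-pair inequalities.
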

\begin{proof}
Otherwise the subgraph 
$M'=M \setminx \{e_{2i-1}\}_{1\leq i \leq \ell} \cup \{e_{2i}\}_{1\leq i \leq \ell}$,
that is also a matching, would have a smaller cost, namely
$w(M')=w(M)-\walt(C)$, thus contradicting the optimality of $M$.
\end{proof}
\noindent 
Arguments of this sort will appear repeatedly along this paper, and
will be complemented by more subtle variants starting from
Section~\ref{sec.teoremaserio1}.  

% \noindent
This paper proves a number of theorems that hold under a suitable
generality hypothesis on the graph weighting, that in particular
ensures the uniqueness of the minimum-weight perfect matching
(provided that it exists). We state this hypothesis precisely here:
\begin{defn}[Generic weights] 
\label{def.genwei1}
The weights of a weighted graph $G$ are
  \emph{generic} if, for every even-length cycle
  $C=(e_1,e_2,\dots,e_{2\ell})$ in $G$ (with edges labeled in cyclic
  order), we have that
% the alternating sum $\walt(C)$ is not vanishing:
% \emph{alternating sum} $\walt(C)$ is not vanishing:
\be
\walt(C)
% = \sum_{i=1}^{2\ell} (-1)^i w_{e_i} 
\neq 0
\ef.
\label{eq:altsumrepeat}
\ee
\end{defn}
\noindent 
We call this condition ``generic weights'' because it holds almost
surely in several randomized versions of the problem, when the weights
are random real variables (and in particular when they are
i.i.d.\ variables sampled from a measure with no point masses), so,
again with an eye on applications in Statistical Mechanics, it is not
an unnatural condition to require.

Furthermore, weight functions which are not generic are the union of
hyperplanes within $\bR^{E}$, thus they are a submanifold with
codimension 1, and several results for the general situation can be
deduced by continuity from the generic-weight case (often with strict
inequalities that become weak inequalities). In most of our proofs,
the generic-weight assumption is just adopted in order to ``break the
tie'' among equally valid choices, in a way that remains consistent
all along some construction procedure.
In particular, as anticipated, generic weights guarantee that the
minimum is non-degenerate:
\begin{prop}
\label{prop.38765874}
If $G$ has generic weights and $\calM(G)\neq\emptyset$, then $|\calM_\star(G)|=1$.
\end{prop}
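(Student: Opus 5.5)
Since $\calM(G)$ is a nonempty finite set, the minimum of $w$ over it is attained and $\calM_\star(G)\neq\emptyset$. It remains to show that two distinct minimisers cannot coexist. We argue by contradiction: suppose $M, M' \in \calM_\star(G)$ with $M\neq M'$, so that $w(M)=w(M')$.

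The first step is to look at the symmetric difference $E_M \symdif E_{M'}$. Every vertex has degree exactly one in $M$ and in $M'$. Hence every vertex has degree $0$ or $2$ in $E_M \symdif E_{M'}$, and when the degree is $2$ the two incident edges come one from $M$ and one from $M'$. The symmetric difference is therefore a disjoint union of cycles $C_1,\dots,C_r$, with $r\geq 1$ because $M\neq M'$. Along each cycle the edges alternate between $M\setminx M'$ and $M'\setminx M$, so each cycle has even length. We root and orient each $C_s$ so that its odd-indexed edges $e_{2i-1}$ lie in $M$ and its even-indexed edges $e_{2i}$ lie in $M'$. Edges common to $M$ and $M'$ cancel in the difference of weights, and by Definition~\ref{def.genwei1pre} we obtain
\be
w(M)-w(M') = \sum_{s=1}^r \sum_{i} \big(w_{e^{(s)}_{2i-1}} - w_{e^{(s)}_{2i}}\big) = -\sum_{s=1}^r \walt(C_s)
\ef.
\ee

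The second step uses the minimality of $M$. Each $C_s$ is an even cycle whose odd edges are all in $M$, so Proposition~\ref{prop.swapcyc} gives $\walt(C_s)\leq 0$. Genericity (Definition~\ref{def.genwei1}) rules out equality, so $\walt(C_s)<0$ for every $s$. Then $w(M)-w(M')=-\sum_{s}\walt(C_s)>0$, which contradicts $w(M)=w(M')$.

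A shortcut is also available. The matching obtained by swapping $M$ with $M'$ along $C_1$ alone has weight $w(M)+\walt(C_1)$, with $\walt(C_1)\neq 0$ by genericity. Minimality of $M$ forces $\walt(C_1)>0$. But Proposition~\ref{prop.swapcyc} forbids exactly this, which already gives the contradiction.

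The only point requiring care is the decomposition of $E_M\symdif E_{M'}$ into alternating even cycles. A vertex has degree $2$ in the symmetric difference precisely when its $M$-edge and its $M'$-edge are different. Each connected component is then a finite $2$-regular graph, i.e.\ a cycle, and its length is even because consecutive edges alternate between the two matchings. Since $G$ is simple, the component is an honest cycle in the sense of Definition~\ref{def.genwei1}. No other obstacle arises.
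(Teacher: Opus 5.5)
Your reduction to the alternating even cycles $C_1,\dots,C_r$ of $E_M\symdif E_{M'}$ is correct, and the paper starts the same way. The sign step that follows is wrong, however, and the ``shortcut'' is not a valid argument.

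With Definition~\ref{def.genwei1pre} as printed, $\walt(C)=\sum_i(-1)^i w_{e_i}$ puts a minus sign on the odd-indexed edges, i.e.\ on those of $M$. So swapping $M$ along $C_s$ gives a matching of weight $w(M)+\walt(C_s)$, as you yourself write in the shortcut. Minimality of $M$ therefore yields $\walt(C_s)\geq 0$, not $\leq 0$. Proposition~\ref{prop.swapcyc} is stated and proved (via $w(M')=w(M)-\walt(C)$) with the opposite convention $\sum_i(-1)^{i-1}w_{e_i}$, which is the one the paper adopts later for semi-alternating cycles. Quoting it verbatim while computing with $(-1)^i$ flips a sign.

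Hence your main chain ends with $w(M)-w(M')>0$, which is false for every minimiser $M$. The same chain would ``refute'' any perfect matching $M'\neq M$, optimal or not, so the contradiction comes from the sign clash and not from $w(M)=w(M')$. The shortcut is worse. It never uses that $M'$ is optimal or that $w(M)=w(M')$, and its two competing inequalities are both just the minimality of $M$, read in the two conventions. If it were valid, it would show that a generically weighted graph has at most one perfect matching, which already fails for $\calK_{2,2}$.

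The repair is one line. Keep a single convention, say the printed one. Minimality of $M$ and genericity then give $\walt(C_s)>0$ for each $s$, so $w(M')-w(M)=\sum_s\walt(C_s)>0$, contradicting $w(M')=w(M)$. Delete the shortcut.

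Once corrected, your route differs slightly from the paper's. The paper treats each cycle on its own and uses the optimality of both matchings: since $\walt(C_j)\neq 0$, swapping $C_j$ strictly improves $M_1$ or $M_2$, depending on the sign. You instead use only the optimality of $M$ and sum the strict per-cycle gains against the equality of total weights. The two arguments are equally short.
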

\begin{proof}
Suppose that two distinct matchings $M_1,M_2\in\calM_\star(G)$
exist. Their symmetric difference 
%
%% $M_1\symdif M_2=\bigcup_{1 \leq j \leq k}C_j$ 
$M_1\symdif M_2=C_1 \cup \ldots \cup C_k$
is a non-empty subgraph
with $k>0$ connected components, in which each connected component
$C_j$ is a cycle of even length, with edges alternately in
$M_1$ and $M_2$.  If the weights are generic, the alternating sum on
each of these cycles $C_j$ is non-zero. However, none of the $C_j$'s
can be present, in light of the optimality of $M_1$ or $M_2$ (depending on
the sign of $\walt(C_j)$), and Proposition~\ref{prop.swapcyc}, so that
$k$ must be zero.
\end{proof}

%-------------------------------------------------------
\subsection{Admissible removals and matching subgraphs}
%-------------------------------------------------------
Let us now introduce the customary
notion of \emph{restriction} of a graph to a subset of its vertices.

\begin{defn}[Restriction]Let $G=(V,E)$ be a graph. We denote by
  $G_U=(V\setminx U; E_{G_U})$ the \emph{restriction} of $G$ to the
  complement of $U$ in $V$, 
  that is, the edges of $G_U$ are the edges of $G$ such that none of the endpoints is in $U$.
%   $E_{G_U}\coloneqq\{e\in E_G\ \vert\ e\cap U=\emptyset\}$.
\end{defn}

\begin{defn}[Admissible removal]
  Let $G=(V,E)$ be a graph. A subset $U\subseteq V$ is an
  \emph{admissible removal} if the restriction $G_U$ allows for a
  perfect matching. We denote by $\calV(G)$ the set of admissible
  removals for $G$, that is
\be
\calV(G)\coloneqq\{U\subseteq V\ \vert\ 
\calM(G_U)\neq\emptyset\}.
\ee
\end{defn}
\noindent
As the condition of having generic weights is clearly hereditary,
(i.e., if $G$ has generic weights, also all $G_U$, with weights given
by the obvious restriction, have generic weights), under the sole
assumption of generic weights for $G$, we are entitled to call
$M_U=M_{\star}(G_U)$ the (unique) optimal matching in $G_U$ for an
admissible removal
$U \in \calV(G)$. This is the central notion of this paper, as in the
following we will deal with the collections $\{M_U\}_{U \in \calJ}$
for subsets of admissible removals $\mathcal J\subseteq\calV(G)$. In
particular, we shall introduce:
\begin{defn}[Matching subgraph] 
\label{def.matchsubg}
The datum of a graph $G=(V,E)$ with
  generic weights $w$ and a set $\calJ\subseteq \calV(G)$ of
  admissible removals, determines a subgraph
\be
H_{\calJ} \coloneqq \bigcup_{U\in\calJ}M_U.
\ee
We call such a subgraph the \emph{matching subgraph} determined by the triple
$(G,w,\calJ)$.
\end{defn}

\noindent The subgraph $H_{\calJ}$ is therefore the union of a
collection of perfect matchings obtained by altering $G$ via
admissible removals. 

Whenever $\bigcap_{U\in\calJ}U=\emptyset$, $H_{\calJ}$ is a spanning
subgraph of~$G$.
%, and we say that $\calJ$ is \emph{spanning} for $G$ in this case.
This is the most relevant situation in the applications which
are of interest to us (but not necessarily in the intermediate
situations introduced in the proofs).

Later on in Theorem \ref{thm:Jtree} on page \pageref{thm:Jtree} (and
in Theorem \ref{thm:Jtreesym} on page \pageref{thm:Jtreesym}) we will
prove that, in two certain interesting situations, the matching
subgraph is a tree. When this will be established, for brevity we may
just call $H_\calJ$ the \emph{matching tree} associated to the
instance.

\label{pag.matrcosts}
Up to completing the weight function with sufficiently high values, we
can always imagine to ``extend'' a graph $G=(V,E)$ to the complete
graph $\cK_n$, or the complete bipartite graph $\cK_{n,m}$.  In the
complete bipartite case, the weights $w_{ij}$ are conveniently encoded
in the $n \times m$ \emph{matrix of costs} $W$, with
$W_{ij}=w_{\edge{a_i}{b_j}}$.  In this case, the check that each
$\calM(G_U)$, for $U\in {\calJ}$, is non-empty, reduces to some
trivial conditions
% ($|U|\equiv n\ \textrm{(mod 2)}$ 
($n-|U|$ even in the first case, and $|U\cap A|-|U\cap B|=n-m$ in the
second case).  On the other hand, as we have anticipated, the
genericity of the weights holds almost surely in various randomized
settings. Thus in most concrete applications the verification of
the hypotheses in Definition~\ref{def.matchsubg} (that 
$\calJ \subseteq \calV(G)$ and that the weights are generic)
is not problematic.

Note that, while $H_{\calJ}$ is the union of optimal configurations
for several slightly different instances of the Matching Problem, it
is not clear that it can also be characterized as the optimal
configuration of one single instance of some (other) problem in
Combinatorial Optimization.  As we will see in
Corollary~\ref{corr.HnewIsHold_ass} of 
Section~\ref{sec.AssCapa}, 
% Appendix~\ref{app.AssCapa}, 
in the main setting for a matching subgraph studied in this paper this
is indeed the case (but we are not aware of a positive answer in the
most general setting).

\begin{ex}[Square lattice]
\label{ex:square}
Let $G$ be a square portion of side $2\ell+1$ of the square
lattice. This graph is bipartite and we say that the vertices in the
same class of the corners have `even parity', whereas the remaining
vertices have `odd parity'. We have therefore $(2\ell+1)\times
(2\ell+1)$ vertices, of which $2\ell^2+2\ell$ with odd parity and
$2\ell^2+2\ell+1$ with even parity. A set $U$ is an admissible removal
only if the even vertices in $U$ are one more than the odd ones. In
particular, while $G$ itself does not allow for a perfect matching
(i.e., $\varnothing \not\in \calV(G)$), the set $\calJ=\{ \{v\}
\;\vert\; v \in V \textrm{~even} \}$ is a set of admissible
removals. 
An example of configuration is in Figure~\ref{fig:triangle}, left.
\end{ex}

\begin{figure}[t]
\includegraphics[scale=2.2]{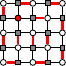}
%[height=0.2\columnwidth]
\qquad
\includegraphics[scale=2.2]{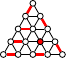} 
%[height=0.2\columnwidth]
\caption{Examples of graphs that have admissible removals consisting
  of a single vertex. (\emph{Left}) A square portion of the square
  lattice of side $5$. Even\,/\,odd, vertices are shown as circles\,/\,squares. Here, all circles are admissible
  removals. (\emph{Right}) A triangular portion of the triangular
  lattice of side $5$. This graph is in fact tripartite, 
%(with circles, squares and diamonds corresponding to different vertex classes), 
although this property does not play an important role here, and each
singleton set in each class is an admissible removal.}\label{fig:triangle}
\end{figure}

\begin{ex}[Triangular lattice]
\label{ex:triangular}
Let $G$ be a triangular portion of the triangular lattice. This graph
is not bipartite and has an even or odd number of vertices depending
on the length of its side, modulo 4. In particular, when the number of
vertices is odd, $\varnothing \not\in \calV(G)$, while the set 
$\calJ=\{ \{v\} \;\vert\; v \in V \}$ is a set of admissible removals.
An example of configuration is in Figure~\ref{fig:triangle}, right.
\end{ex}

\noindent
If we call
\begin{align}
\label{eq.2876587653}
\calC(e)&
\coloneqq\{U\in\calJ\ \vert\ e\in M_U\},
&
\calC_v&
\coloneqq
\{U\in\calJ\ \vert\ v\in U\},
\ef
\end{align}
we have that $E_{H_\mathcal{J}}=\{e\in E\ \vert\  \calC(e)\neq\emptyset\}$. 
For any vertex $v$, the sets $\calC_v$ and $\{\calC(e)\}_{e \sim v}$
constitute a partition of $\calJ$. In particular, for any two incident
edges $e$, $e'$ in $H_{\calJ}$ the sets $\calC(e)$ and $\calC(e')$ are
disjoint, as the edges of a matching are all isolated.

In the following we shall often investigate closely sets $\calJ$
having elements of small cardinality, and illustrate our arguments by
means of suitable figures. For this practical reason, we will refer to
$\calC(e)$ as the set of \emph{colors} of the edge $e$, and, if
$U\in\calC(e)$, we say that $e$ \emph{has color} $U$. Note that an
edge may have more than one color. As has been already the case in the
proof of 
Propositions \ref{prop.swapcyc} and
% Proposition 
\ref{prop.38765874}, 
a crucial tool in our arguments
will be the investigation of suitable alternating sums of weights. At
this aim we need some definitions:
\begin{defn}
    Given a path $P=(e_1,\dots,e_\ell) \in G$ of length $\ell$ (with
    the obvious edge labeling), and two sets $U$ and $U'$ in $\calJ$,
    we say that $P$ is \emph{alternating} with respect to colors $U$
    and $U'$ if the edges along the path have alternately colors $U$
    and $U'$. The associated \emph{alternating sum} along $P$ is the
    sum of the weights along $P$ taken with alternating sign, in
    order, i.e.\ $\walt(P)=\sum_{k=1}^\ell(-1)^{k-1}w_{e_k}$.  Note that,
    calling $\rev{P}$ the reverse path of $P$, we have that $\rev{P}$
    is still alternating w.r.t.\ colors $U$ and $U'$, and
    $\walt(\rev{P})=(-1)^{\ell-1} \walt(P)$.

    Similarly, given a (rooted oriented) cycle of even length
    $C=(e_1,\dots,e_{2\ell})\in G$ and two sets $U$ and $U'$ in
    $\calJ$, we say that $C$ is \emph{alternating} w.r.t.\ colors $U$
    and $U'$ if the edges along the cycle have alternately colors $U$
    and $U'$, and the associated \emph{alternating sum} along $C$ is
    $\walt(C)$ as given in Eq.~\eqref{eq:altsum} in
    Definition~\ref{def.genwei1pre}.  Similarly as above, changing
    root and orientation on $C$ does not change the fact that $C$ is
    alternating w.r.t.\ colors $U$ and $U'$, and the weight $\walt(C)$
    is unchanged up to possibly an overall sign.

    Furthermore, given $C=(e_1,\dots,e_{2\ell})\in G$ as above, and a
    set $U$ in $\calJ$, we say that $C$ is \emph{semi-alternating}
    w.r.t.\ color $U$ if the edges of $C$ that have color $U$ are
    exactly those with odd index, and the associated alternating sum
    along $C$ is still $\walt(C)=\sum_{k=1}^{2\ell}
    (-1)^{k-1}w_{e_k}$. Now, the group of transformations for the
    choice of root and orientations, for $C$ to remain
    semi-alternating, has only half the cardinality of that for an
    alternating cycle, and $\walt(C)$ remains unchanged (not up to a
    sign).
\end{defn}
An elementary fact on matching subgraphs is the following:
\begin{rmk}
\label{rmk.28976954}
    In the graph $H_\calJ$ determined by a triple $(G,w,\calJ)$ there
    are no semi-alternating cycles $C$ with $\walt(C)>0$.  Furthermore, if
    the weights $w$ are generic, there are no alternating cycles.
\end{rmk}

\noindent
Indeed, a semi-alternating cycle in color $U$ as above is nothing but
a cycle with positive alternating sum as in
Proposition~\ref{prop.swapcyc}, for $M_U \subseteq G_U$.  For the
second part, note that a cycle alternating in $U$ and $U'$ and with
non-zero alternating sum is semi-alternating with positive
alternating sum in $U$ or in~$U'$.

With this setting and definitions in mind, in the following sections
we will establish some non-trivial facts about the subgraph
$H_{\calJ}$.  Let us now analyse the structure of $H_{\calJ}$, by
isolating a special part of it. Let us denote 
$V_{\calJ} \coloneqq \bigcup_{U\in\calJ}U\subseteq V$ the set of
vertices appearing in at least one element of $\calJ$.

\begin{defn}[Nontrivial part]
%, nontrivial component]
Let $(G,w,\calJ)$ be as above.
% a graph with generic weights and let $\calJ\subseteq\calV(G)$. 
We call the \emph{nontrivial part} of
$H_{\calJ}$, and denote it by $H_{\calJ}^*$, the union of the
connected components of $H_{\calJ}$ whose vertex set has non-empty
intersection with~$V_{\calJ}$.
% We say that $H_{\calJ}^*$ is the \emph{nontrivial component} of $H_{\calJ}$ if it is connected.
\end{defn}
\noindent 
Before introducing the most relevant properties of
$H_{\calJ}^*$, let us establish (or remind) some simpler facts.
\begin{lem}
\label{lem:diff}
Let $G$ be a graph with generic weights and
$\calJ=\{U_1,U_2\}\in\calV(G)$ be a pair of non-empty vertex
sets. Given two generic matchings $M_1\in\calM(G_{U_1})$ and
$M_2\in\calM(G_{U_2})$ (not necessarily of minimal weight), the graph
$K=M_1\cup M_2$ consists of isolated dimers, cycles of alternating
colors $U_1$ and $U_2$, alternating (w.r.t.\ $U_1$ and $U_2$) paths of
even length from vertices in $U_1$ to vertices in $U_2$, and
alternating paths of odd length connecting two vertices in the same
set, $U_1$ or $U_2$.
\end{lem}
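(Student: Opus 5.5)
The plan is to study the degrees in the multigraph $K=M_1\cup M_2$, where an edge lying in both matchings counts twice, i.e.\ carries both colors. Every vertex $v\notin U_1\cup U_2$ is covered exactly once by $M_1$ and exactly once by $M_2$, so it has degree $2$ in $K$ counted with multiplicity. A vertex in $U_1\setminx U_2$ is missed by $M_1$ and covered by $M_2$, so it has degree $1$, and symmetrically for $U_2\setminx U_1$. A vertex in $U_1\cap U_2$ has degree $0$ and is isolated; this case is trivial, and I will treat it as a degenerate component or note that it contributes nothing. Since each $M_i$ is a matching, at each vertex the incident edges of $K$ carry distinct colors. Hence, along any walk in $K$, the colors $U_1$ and $U_2$ alternate, and every connected component is alternating.

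Next I classify the components using only these degree constraints. A connected multigraph with all degrees at most $2$ is a path, a cycle, or a doubled edge. The doubled edge is an edge lying in both $M_1$ and $M_2$, and it gives exactly an isolated dimer. A component with all degrees equal to $2$ and at least two distinct edges is a cycle. Because its colors alternate, it has even length and is an alternating cycle. Every remaining component is a path whose two endpoints have degree $1$, so each endpoint lies in $U_1\symdif U_2$.

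For the paths, the parity is determined by the colors of the end edges. An endpoint in $U_1$ is not covered by $M_1$, so its unique incident edge has color $U_2$. An endpoint in $U_2$ likewise has an incident edge of color $U_1$. Colors alternate along the path, so the first and last edges have the same color exactly when the length is odd. It follows that a path joining a $U_1$-vertex to a $U_2$-vertex has end edges of different colors and hence even length. A path with both endpoints in the same set $U_i$ has both end edges in the other color and hence odd length.

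There is no real obstacle here. The only point needing care is the bookkeeping of double edges, so that they are counted correctly as isolated dimers rather than as $2$-cycles, together with the degenerate vertices in $U_1\cap U_2$. Genericity of the weights plays no role in this argument, since the statement is purely combinatorial.
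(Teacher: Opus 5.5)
Your proof is correct and is essentially the paper's argument: a local degree count in $K$ (degree $2$ off $U_1\cup U_2$, except at an edge lying in both matchings, which forms an isolated dimer; degree $1$ on $U_1\symdif U_2$; vertices of $U_1\cap U_2$ excluded), from which the component structure follows. Your argument via the colors of the end edges makes explicit the parity of the path lengths, which the paper dismisses as following ``trivially'' from this local structure.
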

\begin{proof}
Let us denote by $K^*$ the graph obtained from $K$ by dropping the
isolated dimers.  Observe that the vertex set of $K$ is
$V_K=V\setminx(U_1\cap U_2)$. Each vertex 
$v\in V_K\setminx (U_1\cup U_2)$ is either connected to the same
neighbour $u$ by both $M_1$ and $M_2$ (so that the edge $\edge{u}{v}$
in an isolated dimer and thus is in $K \setminx K^*$), or connected to
two distinct vertices, thus
$\mathrm{deg}_K(v)=\mathrm{deg}_{K^*}(v)=2$. Similarly, a vertex $v\in
U_1\symdif U_2$ is connected to a single vertex $u$ (say, if $v \in
U_1$, it is connected to $u$ iff $\edge{u}{v} \in M_2$), thus
$\deg_K(v)=\deg_{K^*}(v)=1$. The global properties of $K$ and $K^*$
follow trivially from this list of possible local structures.
\end{proof}
\noindent Note that in the Lemma above the overall number of paths
%joining vertices in $U_1$ to vertices in $U_2$ is bounded by 
is $\frac{1}{2}|U_1\symdif U_2|$. Combining this observation with
Remark \ref{rmk.28976954}, we also have the following
% If $U_1=\{u\}$ and $U_2=\{u'\}$, with $u\neq u'$, then the symmetric
% difference consists of exactly one (alternating) path, and possibly
% some cycles.
\begin{corr}
\label{cor.coppia}
Let $G$ be a graph with generic weights $w$, and
$\calJ=\{U_1,U_2\}\subseteq\calV(G)$.  The graph $H_{\calJ}\setminx
H_{\calJ}^*$ consists of a collection of isolated dimers.  The graph
$H_{\calJ}^*$ contains no cycles and consists of alternating paths
only. In particular, if $U_1 \symdif U_2 = \{u,u'\}$ (that is,
$U_1=\{u\}$ and $U_2=\{u'\}$ with $u\neq u'$, or $U_1=\emptyset$ and
$U_2=\{u,u'\}$), then it consists of a single alternating path
connecting $u$ to~$u'$.
\end{corr}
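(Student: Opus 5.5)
We apply Lemma~\ref{lem:diff} to the optimal matchings $M_1=M_{U_1}$ and $M_2=M_{U_2}$. Since $H_\calJ=M_1\cup M_2$, the lemma tells us that $H_\calJ$ is a disjoint union of isolated dimers, alternating cycles, and alternating paths. Each path has endpoints of degree $1$, lying in $U_1\symdif U_2$. All other vertices of a path or cycle have degree $2$ in $H_\calJ$, with one incident edge from each matching. There are no other possible local structures.

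The first step is to rule out cycles. By genericity of $w$ and Remark~\ref{rmk.28976954}, $H_\calJ$ has no alternating cycles with respect to colors $U_1,U_2$. Indeed, such a cycle has $\walt(C)\neq0$, so it is semi-alternating with positive sum in one of the two colors, which contradicts Proposition~\ref{prop.swapcyc} for $M_{U_1}$ or $M_{U_2}$. Hence the connected components of $H_\calJ$ are isolated dimers and alternating paths only.

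The second step identifies $H^*_\calJ$. A path component has its endpoints in $U_1\symdif U_2\subseteq V_\calJ$, so it belongs to $H^*_\calJ$. An isolated dimer $\edge{u}{v}$ lies in both $M_1$ and $M_2$, so neither endpoint is in $U_1\cup U_2=V_\calJ$, and it is not in $H^*_\calJ$. Vertices of $U_1\cap U_2$ are absent from $H_\calJ$. If one wants them viewed as isolated vertices of the spanning subgraph, they can be disregarded, as the statement concerns edges. Therefore $H_\calJ\setminx H^*_\calJ$ is a collection of isolated dimers, and $H^*_\calJ$ is an acyclic union of alternating paths.

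The final step treats the case $U_1\symdif U_2=\{u,u'\}$. The endpoints of the paths are exactly the degree-one vertices, namely the elements of $U_1\symdif U_2$. Each path uses two of them, so there are $\tfrac12|U_1\symdif U_2|=1$ paths. This single path must connect $u$ to $u'$. In both sub-cases ($U_1=\{u\},U_2=\{u'\}$, or $U_1=\emptyset,U_2=\{u,u'\}$), $V_\calJ=\{u,u'\}$, so $H^*_\calJ$ is exactly this path.

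The only delicate point is the bookkeeping of degree-one vertices. A vertex $v\in U_1\setminx U_2$ is covered only by $M_2$, so it has degree exactly $1$ and must be a path endpoint. This is already handled in Lemma~\ref{lem:diff}, so no real obstacle remains.
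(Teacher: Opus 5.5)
Your proof is correct and takes the same approach as the paper. You use Lemma~\ref{lem:diff} for the local structure of $M_{U_1}\cup M_{U_2}$, Remark~\ref{rmk.28976954} (via genericity) to exclude alternating cycles, and the count of $\tfrac12|U_1\symdif U_2|$ paths for the single-path case. Your extra bookkeeping is accurate: isolated dimers are exactly the edges of $M_{U_1}\cap M_{U_2}$, and a one-edge component lying in only one matching is a length-one path. It is also harmless that Lemma~\ref{lem:diff} is stated for non-empty sets, since its proof never uses this.
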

\noindent
Some aspects of Corollary \ref{cor.coppia}
% Lemma \ref{lem:diff}
generalise easily to arbitrary sets $\calJ$:
\begin{lem}Given a graph $G$ with generic weights, and a non-empty
  subset $\calJ \subseteq\calV(G)$, the subgraph $H_{\calJ}\setminx
  H_{\calJ}^*$ consists of a collection of isolated dimers.
\label{lem:T}
\end{lem}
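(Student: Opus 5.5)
The plan is to show that every connected component $K$ of $H_{\calJ}$ that avoids $V_{\calJ}$ is a single edge. Fix such a $K$, i.e.\ $V(K)\cap V_{\calJ}=\emptyset$. Then every vertex $v\in V(K)$ lies in no $U\in\calJ$. Hence $v$ is covered by each matching $M_U$, and the sets $\{\calC(e)\}_{e\sim v}$ partition all of $\calJ$ (recall $\calC_v=\emptyset$). Moreover, since $K$ is a connected component of $H_{\calJ}$, every edge of any $M_U$ incident to a vertex of $K$ lies in $K$. So for each $U\in\calJ$ the restriction $M_U\cap K$ is a perfect matching of $K$.

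The key step is to prove that these perfect matchings of $K$ all coincide. Take $U,U'\in\calJ$ and suppose $M_U\cap K\neq M_{U'}\cap K$. The symmetric difference $(M_U\cap K)\symdif(M_{U'}\cap K)$ is then a non-empty union of even cycles. Their edges alternate between $M_U$ and $M_{U'}$, so they are alternating cycles with respect to colors $U$ and $U'$ inside $H_{\calJ}$. Remark~\ref{rmk.28976954}, using the generic weights, forbids such cycles. To spell this out: if $\walt(C)\neq0$, then one of $M_U$ or $M_{U'}$ could be improved by swapping along $C$. This swap is legitimate because every vertex of $C$ lies in $K$, hence outside $U\cup U'$. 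So $C$ lies in both $G_U$ and $G_{U'}$, and Proposition~\ref{prop.swapcyc} applies. This is a contradiction. Therefore a single perfect matching $M_K$ of $K$ satisfies $M_U\cap K=M_K$ for every $U\in\calJ$.

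The conclusion follows from $E(K)\subseteq\bigcup_U M_U$. This gives $E(K)=M_K$, so $K$ is a perfect matching that is also connected, i.e.\ a single edge, an isolated dimer. The hypothesis that $\calJ$ is non-empty is needed so that some $M_U$ exists. The only step needing care is the one checking that the alternating cycle genuinely lives in both $G_U$ and $G_{U'}$, which is exactly where the condition $V(K)\cap V_{\calJ}=\emptyset$ enters. This is the analogue, for general $\calJ$, of the isolated-dimer part of Lemma~\ref{lem:diff}.
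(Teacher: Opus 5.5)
Your proof is correct. It rests on the same underlying fact as the paper's: genericity forbids cycles alternating in two colors (Remark~\ref{rmk.28976954}). The argument is organized differently, though.

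The paper works edge by edge. For $e\in H_{\calJ}\setminx H_{\calJ}^*$, either $\calC(e)=\calJ$, in which case $e$ is directly an isolated dimer, or one picks $U_1\in\calC(e)$ and $U_2\notin\calC(e)$. In the latter case the two-color result, Corollary~\ref{cor.coppia} (built on Lemma~\ref{lem:diff}), places $e$ on an alternating path with endpoints in $U_1\symdif U_2\subseteq V_{\calJ}$. Since $H_{\{U_1,U_2\}}\subseteq H_{\calJ}$, this puts $e$ in $H_{\calJ}^*$, a contradiction.

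You instead work component by component. On a component $K$ avoiding $V_{\calJ}$, each $M_U$ restricts to a perfect matching of $K$. Two distinct restrictions would differ by alternating cycles, so all restrictions coincide, and $K$ is a single edge.

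Your route is more self-contained: it needs neither the path/cycle classification of $M_{U_1}\cup M_{U_2}$, nor the case split on $|\calJ|$, nor the monotonicity of components under adding colors. The paper's route reuses the already established pairwise statement. As a by-product it exhibits, for every edge with $\calC(e)\neq\calJ$, an explicit two-color alternating path linking it to $V_{\calJ}$.
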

\begin{proof}
When $\calJ=\{U\}$, $H_{\calJ}$ consists of the sole matching $M_{U}$,
% while $V_{H_{\calJ}}=V\setminx U$ 
and the proposition is trivial. The case $|\calJ|=2$
% restriction of the proposition to the case $\calJ=\{U_1,U_2\}$ 
is already established in Corollary~\ref{cor.coppia}. When $\calJ$ has
larger cardinality, let $e$ be an edge in $H_{\calJ}$, but not in
$H_{\calJ}^*$. If $\calC(e)=\calJ$, it is an isolated dimer. Assume
that this is not the case. So there is a ``color'' of $\calJ$, say
$U_1$, which is in $\calC(e)$, and a color, say $U_2$, which is not in
$\calC(e)$. By applying Corollary~\ref{cor.coppia} to the pair
$\calJ'=\{U_1,U_2\}$, we conclude that $e$ is part of an alternating
path, whose endpoints are in $U_1 \cup U_2$ (in fact, they are in the
smaller set $U_1 \symdif U_2$). As adding more colors to $\calJ$ makes
the partition of the vertices into connected components of $H_\calJ$ even
coarser, we deduce that $e$ must be in $H_{\calJ}^*$.
\end{proof}
\noindent
Corollary~\ref{cor.coppia} also gives a sufficient conditions for the
symmetric difference of two optimal matchings to consist of a unique
path. As we will see, this property is crucial for the matching
subgraph to have remarkable combinatorial properties. For this
reasons, we shall introduce a definition
\begin{defn}
\label{def.singlepath}
We say that a set $\calJ$ is \emph{single-path} if, for all
$\{U_1,U_2\}\subseteq \calJ$, we have $\frac{1}{2}|U_1\symdif U_2|=1$,
and thus, for what we said above, 
%
% $M_{\star}(G_{U_1}) \symdif M_{\star}(G_{U_2})=
$M_{U_1}\symdif M_{U_2}$
consists of a unique alternating path.
\end{defn}

\noindent
A typical situation in which $\calJ$ is single-path is when it
consists of singleton sets, 
i.e.\ $\calJ=\{\{v\}\}_{v \in V'}$ for some $V' \subseteq V_G$. In
this case the matching subgraph $H_\calJ$ has a further property.
Recall that an edge $e=\edge{u}{v} \in G$ is a \emph{bridge} of $G$ if
$u$ and $v$ are in distinct connected components of $G \setminx e$.
A graph $G$ is said to be \emph{bridgeless} if it contains no
bridges. Then we have
\begin{lem}
\label{lem:nobridge}
If $\calJ=\{\{v\}\}_{v \in V'}$, and $u, v \in V'$, then the edge
$\edge{u}{v}$ is not a bridge of $H_{\calJ}$.
\end{lem}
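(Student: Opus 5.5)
The plan is to show directly that $u$ and $v$ are joined in $H_{\calJ}$ by a path that avoids the edge $e=\edge{u}{v}$. If $e\notin E(H_{\calJ})$ there is nothing to prove, so assume $e\in E(H_{\calJ})$ (in particular $u\neq v$).

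\emph{Step 1: $e$ lies in neither $M_{\{u\}}$ nor $M_{\{v\}}$.} The matching $M_{\{u\}}=M_\star(G_{\{u\}})$ lives on $G_{\{u\}}$, which has no edge incident to $u$. Hence $e\notin M_{\{u\}}$, and symmetrically $e\notin M_{\{v\}}$. The colors of $e$ therefore come only from other vertices of $V'$; this is irrelevant below.

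\emph{Step 2: an alternating path from $u$ to $v$.} Apply Corollary~\ref{cor.coppia} to the pair $\calJ'=\{\{u\},\{v\}\}\subseteq\calJ$. Since $\{u\}\symdif\{v\}=\{u,v\}$, the nontrivial part $H^*_{\calJ'}$ is a single alternating path $P$ from $u$ to $v$. Its edges alternately belong to $M_{\{u\}}$ and $M_{\{v\}}$. By Step 1, no edge of $P$ equals $e$.

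\emph{Step 3: conclusion.} Because $\calJ'\subseteq\calJ$, we have $H_{\calJ'}\subseteq H_{\calJ}$, so $P\subseteq H_{\calJ}\setminx e$. Thus $u$ and $v$ remain in the same connected component of $H_{\calJ}\setminx e$, and $e$ is not a bridge.

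As a consistency check, $P$ cannot be the single edge $e$: a simple $u$–$v$ path containing $\edge{u}{v}$ must consist of that edge alone, which Step 1 forbids. Accordingly, $P$ has even length at least $2$, as Lemma~\ref{lem:diff} predicts for a path from a vertex of $U_1$ to a vertex of $U_2$. I expect no real obstacle. The only delicate point is the observation in Step 1, that a removed vertex has no incident edges in its own optimal matching, which ensures the path supplied by Corollary~\ref{cor.coppia} avoids $e$.
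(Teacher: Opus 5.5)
Your proof is correct, but it takes a different route from the paper's. You argue directly. Corollary~\ref{cor.coppia}, applied to $\{\{u\},\{v\}\}$, gives an alternating $u$--$v$ path $P$ in $M_{\{u\}}\symdif M_{\{v\}}\subseteq H_\calJ$. Neither matching can use an edge at its own removed vertex, so $P$ avoids $e=\edge{u}{v}$, and $P\cup e$ is a cycle of $H_\calJ$ through $e$.

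The paper instead argues by contradiction with a parity count. If $e$ were a bridge, let $H'$ be the component of $H_\calJ\setminx e$ containing $u$. Since $e\notin M_{\{u\}}\cup M_{\{v\}}$, both matchings restrict to $H'$. Then $M_{\{u\}}$ perfectly matches $V_{H'}\setminx u$, forcing $|V_{H'}|$ odd, while $M_{\{v\}}$ perfectly matches $V_{H'}$, forcing it even.

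Both arguments rest on the same parity phenomenon. The existence of the $u$--$v$ path in Lemma~\ref{lem:diff} is itself a degree-parity fact, and the paper's argument is its cut-form counterpart. Yours reuses the already-established structure of $M_1\cup M_2$ and exhibits an explicit cycle through $e$. The paper's is self-contained: it only uses that $M_{\{u\}}$ and $M_{\{v\}}$ lie in $H_\calJ$, avoid $e$, and are perfect matchings of $G$ minus one vertex. Neither actually needs genericity beyond making the $M_U$'s well defined. In particular, the $u$--$v$ path already exists by Lemma~\ref{lem:diff}, which holds for arbitrary matchings, so the cycle exclusion in Corollary~\ref{cor.coppia} is not needed.
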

\begin{proof}
The proof is by contradiction. Suppose that $e=\edge{u}{v}$ is a bridge of $H_\calJ$,
and call $H'$ the component of $H_\calJ\setminx \edge{u}{v}$ containing
$u$ (and thus it does not contain $v$). Clearly, $e$ is neither in 
$M_{\{u\}}$, nor in $M_{\{v\}}$.
% neither $\{u\}$ nor $\{v\}$ are in $\calC(e)$, 
Also, all the edges of
% the matching 
$M_{\{u\}}$ have endpoints either both in $H'$ or both not in $H'$,
and the same holds for $M_{\{v\}}$. However, while the restriction of
$M_{\{u\}}$ to $H'$ covers all vertices in
$V_{H'} \setminx u$, so that $|V_{H'}|$ must be odd, the restriction
of $M_{\{v\}}$ to $H'$ covers all vertices in $V_{H'}$, so that
$|V_{H'}|$ must be even. This contradiction allows to conclude.
\end{proof}
\noindent
As a result, we have the following fact:
\begin{corr}
\label{cor.settingmono}
In a triple $(G,w,\calJ)$, where 
%
% $|V_G|$ is odd,
$G=(V,E)=\calK_{2n+1}$, 
$w$ is a generic weight function, and
$\calJ=\{ \{v\} \}_{v \in V}$, we have that $H_\calJ=H_\calJ^*$ is a
connected spanning subgraph of 
% $G$,
$\calK_{2n+1}$, 
and is bridgeless.
\end{corr}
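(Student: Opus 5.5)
The plan is to check the three claims separately: connectedness together with spanning, the identity $H_\calJ=H_\calJ^*$, and bridgelessness. Each should follow quickly from earlier results.

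\emph{Spanning and $H_\calJ = H_\calJ^*$.} First note that $\calJ\subseteq\calV(G)$. For each $v$, the restriction $G_{\{v\}}$ is $\calK_{2n}$, which admits a perfect matching. Since the weights are generic, each $M_{\{v\}}$ is therefore well defined by Proposition~\ref{prop.38765874}. Moreover $\bigcap_{U\in\calJ}U=\emptyset$ (for $n\geq 1$), so $H_\calJ$ is spanning. Next, $V_\calJ=\bigcup_{v}\{v\}=V$. Every connected component of $H_\calJ$ then meets $V_\calJ$, so by definition every component belongs to the nontrivial part, and $H_\calJ=H_\calJ^*$. In particular there are no isolated dimers, which is consistent with Lemma~\ref{lem:T}.

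\emph{Connectedness.} Take any two vertices $u\neq u'$. Apply Corollary~\ref{cor.coppia} to the pair $\{\{u\},\{u'\}\}\subseteq\calJ$: the graph $M_{\{u\}}\cup M_{\{u'\}}\subseteq H_\calJ$ then contains a single alternating path from $u$ to $u'$. Since $u,u'$ were arbitrary, $H_\calJ$ is connected. (Equivalently, $\calJ$ is single-path in the sense of Definition~\ref{def.singlepath}.)

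\emph{Bridgelessness.} Let $e=\edge{u}{v}$ be any edge of $H_\calJ$. Because $V'=V$, both endpoints lie in $V'$, so Lemma~\ref{lem:nobridge} applies directly and $e$ is not a bridge. A parity argument is hidden in that lemma and makes this work: the component $H'$ of $u$ in $H_\calJ\setminx e$ would have to have both odd and even cardinality. That argument only needs $M_{\{u\}}$ and $M_{\{v\}}$ to be perfect matchings of $G_{\{u\}}$ and $G_{\{v\}}$, which holds since $|V|=2n+1$ is odd.

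No step is a real obstacle; the proof is essentially bookkeeping. The only points needing care are that every singleton is admissible, which uses that $|V|$ is odd, and that the nontrivial part covers all of $H_\calJ$ because $V_\calJ=V$.
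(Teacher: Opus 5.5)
Your proposal is correct and follows essentially the same route as the paper, which states this corollary as an immediate consequence of the preceding results. Connectivity comes from the single alternating path joining $u$ to $u'$ for each pair of singleton removals (Corollary~\ref{cor.coppia}); $H_\calJ=H_\calJ^*$ holds because $V_\calJ=V$; and bridgelessness is Lemma~\ref{lem:nobridge} applied with $V'=V$.
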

\noindent
We shall call the \emph{monopartite standard setting} the type of
triple $(G,w,\calJ)$ as in the corollary above, when $G=\calK_{2n+1}$
(either from the beginning, or, as explained above, in the situation
where we ``extend'' a graph $G=(V,E)$, with $|V|$ odd, to the complete
graph by adding edges with sufficiently large weights). In fact the
corollary holds also if $G$ is not the complete graph,
provided that $|V|$ is odd, and that $\calJ \subseteq \calV$.

Let us now prove two last, slightly more subtle facts. The first one
extends to our setting the well-known fact that chains of length $>2$
have a trivial role in Matching Theory.  That is, if $G$ has a chain
of length 3 (i.e., two adjacent vertices of degree 2), the triple
$(G,w,\calJ)$ is essentially equivalent to a second triple
$(G',w',\calJ')$, where $G'$ is the graph in which the chain has been
replaced by a single edge, and $w'$, $\calJ'$ are suitably
constructed.  More precisely, let $(v_0,v_1,v_2,v_3)$ be a chain of
$G$, that is $\deg_G(v_1)=\deg_G(v_2)=2$.
% and in particular $\deg_G(v_1)=\deg_G(v_2)=2$. 
We shall set
\begin{itemize}
\item
$V_{G'}=V_G \setminx \{v_1,v_2\}$ and
$E_{G'}=E_G \setminx \{
\edge{v_0}{v_1},
\edge{v_1}{v_2},
\edge{v_2}{v_3}\} \cup \edge{v_0}{v_3}$
\item
$w'_{v_0,v_3}=w_{v_0,v_1}-w_{v_1,v_2}+w_{v_2,v_3}$
\item
$\calJ'=\{ \phi(U) \}_{U \in \calJ}$, where, calling 
$\tilde{U}=U\cap \{v_0,v_1,v_2,v_3\}$ and $\hat{U}=U\setminx \tilde{U}$,
% the restriction of $U$ to the complement of $\{v_0,v_1,v_2,v_3\}$,
we have:
\be
\phi(U)=\left\{
\begin{array}{ll}
U & v_1,v_2 \not\in \tilde{U} \\
\hat{U}  & \tilde{U} = \{v_1,v_2\} \\
\hat{U} \cup \{v_3\} & \tilde{U} = \{v_1\} \textrm{~or~} 
%\tilde{U} = 
\{v_1,v_2,v_3\}\\
\hat{U} \cup \{v_0\} & \tilde{U} = \{v_2\} \textrm{~or~} 
%\tilde{U} = 
\{v_0,v_1,v_2\}\\
\hat{U} \cup \{v_0,v_3\} & \tilde{U} = \{v_0,v_1\},
% \textrm{~or~} 
% \tilde{U} = 
\{v_2,v_3\}
% \textrm{~or~} 
% \{v_0,v_3\} 
\textrm{~or~} 
\{v_0,v_1,v_2,v_3\}
\end{array}
\right.
\ee
All the other cases are
%  deduced by symmetry, or 
excluded because $U$
would not be an admissible removal.
\end{itemize}
Note that $w'$ is generic if and only if $w$ is generic.  
This leads to the following remark
\begin{rmk}
\label{rmk.reservoiriseasy}
In the construction above, if $U \cap \{v_1,v_2\}=\varnothing$ for all
$U \in \calJ$, the matching subgraph $H_{\calJ}$ for the triple
$(G',w',\calJ)$, modified by contracting the edge $\edge{v_1}{v_2}$,
coincides with the matching subgraph $H_\calJ$ for the triple
$(G,w,\calJ)$, modified by subdividing the edge $\edge{v_0}{v_3}$ by
adding one new vertex.
\end{rmk}
Indeed, in our setting,
the restriction of $M_{\star}(G_U)$ (for weights $w'$) to the
complement of the chain $(v_0,v_1,v_2,v_3)$ coincides with the
restriction of $M_{\star}(G'_{\phi(U)})$ to the complement of the edge
$\edge{v_0}{v_3}$. This already implies that the matching subgraph
$H_\calJ$ for the triple $(G,w,\calJ)$ coincides with the matching
subgraph $H_{\calJ'}$ for the triple $(G',w',\calJ')$, when the former
is restricted to the complement of the chain, and the latter to the
complement of the edge $\edge{v_0}{v_3}$.

If, furthermore, we have that $U \cap \{v_1,v_2\}=\varnothing$ for all
$U \in \calJ$, then we just have $\phi(U)=U$ for all $U \in \calJ$,
and $\calJ'=\calJ$. If
$\edge{v_0}{v_3} \in M_{\star}(G'_{U})$, then 
$\edge{v_0}{v_1}, \edge{v_2}{v_3} \in M_{\star}(G_{U})$, while if
$\edge{v_0}{v_3} \not\in M_{\star}(G'_{U})$, then 
$\edge{v_1}{v_2} \in M_{\star}(G_{U})$. From this, the claim of our
remark follows easily.

Now we go to the second subtle fact of this section. Its relevance
will be clear only in Section \ref{sec.caso2d}, but it has a level of
generality that makes it appropriate at this point.
\begin{defn}
\label{def.forbpair}
Given a graph $G=(V,E)$, with generic weights $w$, a 
\emph{forbidden pair} $\{e_1,e_2\}$ is a pair of edges of $E$ with
distinct endpoints,
$e_1=\edge{u_1}{v_1}$ and $e_2=\edge{u_2}{v_2}$, such that also
$e'_1=\edge{u_1}{v_2}$, $e'_2=\edge{u_2}{v_1}$, $e''_1=\edge{u_1}{u_2}$ and
$e''_2=\edge{v_1}{v_2}$ are in $E$, and
the two cycles $C'=(e_1,e'_1,e_2,e'_2)$ and
$C''=(e_1,e''_1,e_2,e''_2)$ have
$\walt(C'),\walt(C'')>0$, that is
\begin{align}
w_{e_1}+w_{e_2}-w_{e'_1}-w_{e'_2} &>0
\ef;
&
w_{e_1}+w_{e_2}-w_{e''_1}-w_{e''_2} &>0
\ef.
\end{align}
If $G$ is bipartite, then a forbidden pair 
$\{e_1,e_2\}$
is a pair such that
also
$e'_1=\edge{u_1}{v_2}$ and $e'_2=\edge{u_2}{v_1}$ 
are in $E$, and
$\walt(C')>0$, that is
\be
w_{e_1}+w_{e_2}-w_{e'_1}-w_{e'_2}>0
\ef.
\ee
Let us denote by $F=F(G;w)$ the set of forbidden pairs of~$G$.
\end{defn}
%% (in other words, the matching
%% $\{\{u_1,v_2\},\{u_2,v_1\}\}$ has smaller weight than
%% $\{\{u_1,v_1\},\{u_2,v_2\}\}$).
\begin{lem}
\label{lem.nopairF}
Let us consider our problem, with graph $G$ and generic weights $w$.
% $G$ being a complete bipartite graph $\calK_{n,m}$, or a complete graph $\calK_n$. 
Then, for any single-path
% \footnote{I.e., such that $\frac{1}{2}|U_1\symdif U_2|=1$
% for all $\{U_1,U_2\}\subseteq \calJ$, as in Definition~\ref{def.singlepath}.}
%
$\calJ \subseteq \calV(G)$,
% such that for all $U_1,U_2 \in \calJ$ the subgraph $H^*_{\{U_1,U_2\}}$ is a single path,
the matching subgraph $H_{\calJ}$ does not contain any pair of edges
in the list $F$ of forbidden pairs.
\end{lem}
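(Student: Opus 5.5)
Assume by contradiction that $\{e_1,e_2\}\in F$ with both edges in $H_\calJ$, say $U_1\in\calC(e_1)$ and $U_2\in\calC(e_2)$. The plan is to reduce to a two-colour configuration and then exhibit a pair of exchanges whose total weight variation is negative.

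\emph{Same colour.} If some $U$ lies in $\calC(e_1)\cap\calC(e_2)$, then both $e_1,e_2\in M_U$. The cycle $C'$ (and, in the non-bipartite case, also $C''$) is semi-alternating in colour $U$ with $\walt>0$. This contradicts Proposition~\ref{prop.swapcyc}, in the form of Remark~\ref{rmk.28976954}.

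\emph{Distinct colours.} Otherwise $e_1\in M_{U_1}\setminx M_{U_2}$ and $e_2\in M_{U_2}\setminx M_{U_1}$. Hence both edges lie in $M_{U_1}\symdif M_{U_2}$. Since $\calJ$ is single-path, Corollary~\ref{cor.coppia} and Definition~\ref{def.singlepath} say this symmetric difference is a single alternating path $P$.
\begin{itemize}
\item Orient $P$ so that $e_1$ is traversed before $e_2$. Write $e_1=\edge{x_1}{y_1}$ and $e_2=\edge{x_2}{y_2}$ in the order of traversal, so $\{x_1,y_1\}=\{u_1,v_1\}$ and $\{x_2,y_2\}=\{u_2,v_2\}$.
\item Let $Q$ be the sub-path of $P$ from $y_1$ to $x_2$. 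It is non-empty, since the four endpoints are distinct.
\item By alternation, $Q$ starts with a $U_2$-edge and ends with a $U_1$-edge, so it has even length.
\end{itemize}

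\emph{The two exchanges.} Define two new matchings.
\begin{itemize}
\item $M'_1$: from $M_{U_1}$ remove $e_1$ and the $U_1$-edges of $Q$, then add the $U_2$-edges of $Q$ and the edge $\edge{x_1}{x_2}$.
\item $M'_2$: from $M_{U_2}$ remove $e_2$ and the $U_2$-edges of $Q$, then add the $U_1$-edges of $Q$ and the edge $\edge{y_1}{y_2}$.
\end{itemize}
In $M'_1$, the swap on $Q$ covers $y_1$ and frees $x_2$, and removing $e_1$ frees $x_1$; so the new edge $\edge{x_1}{x_2}$ restores a perfect matching of $G_{U_1}$. Symmetrically, $M'_2$ is a perfect matching of $G_{U_2}$.

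Two routine checks are needed. First, no vertex of $U_1$ (resp.\ $U_2$) becomes covered: all touched vertices are internal to $P$ or endpoints of $e_1,e_2$, hence already covered by the relevant matching. Second, the added edges exist in $G$. Indeed, $\{\edge{x_1}{x_2},\edge{y_1}{y_2}\}$ is either $\{e'_1,e'_2\}$ or $\{e''_1,e''_2\}$, and both pairs are edges of $G$ by Definition~\ref{def.forbpair}. In the bipartite case, parity along the alternating path forces $x_1,x_2$ into opposite classes, so the added pair is the primed one, which is the only one required to exist there.

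\emph{Conclusion.} The $Q$-contributions cancel in the sum of the two variations, so
\[
w(M'_1)-w(M_{U_1})+w(M'_2)-w(M_{U_2})=w_{x_1x_2}+w_{y_1y_2}-w_{e_1}-w_{e_2}<0,
\]
by the forbidden-pair inequality for whichever pair appears. Hence at least one of $M'_1,M'_2$ is strictly cheaper than the corresponding optimum, which is a contradiction.

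The main obstacle is the bookkeeping of orientation and endpoints. One must verify that, whichever way $e_1$ and $e_2$ sit on $P$, the two added edges always form one of the two admissible pairs. One must also verify that the swapped objects are genuine perfect matchings of $G_{U_1}$ and $G_{U_2}$; the single-path hypothesis is what guarantees that $e_1$ and $e_2$ lie on one common path $P$.
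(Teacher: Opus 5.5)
Your proof is correct and follows essentially the same argument as the paper. The same-colour case is a direct $4$-cycle swap inside $M_U$. In the distinct-colour case, the two exchanges you perform in $M_{U_1}$ and $M_{U_2}$, closing the sub-path $Q$ with $\edge{x_1}{x_2}$ and $\edge{y_1}{y_2}$, are exactly the paper's cycles $C_1$ and $C_2$, and their contributions on $Q$ cancel. Your explicit check that the added pair is always $\{e'_1,e'_2\}$ or $\{e''_1,e''_2\}$ (the primed one in the bipartite case, by parity) simply spells out the paper's ``up to renaming who's who'' step.
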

\begin{proof}
Suppose, for a contradiction, that the two edges $e_1=\edge{u_1}{v_1}$
and $e_2=\edge{u_2}{v_2}$ are in $H_{\calJ}$, and the pair
$\{e_1,e_2\}$ is in~$F$.

If $\calC(e_1)\cap\calC(e_2)\neq \varnothing$, let $U\in\calJ$ be an
element of $\calC(e_1)\cap\calC(e_2)$. Then, the matching
$(M_U \setminx \{e_1,e_2\}) \cup \{e'_1,e'_2\}$ would have a smaller
weight than $M_U$, reaching a contradiction.
So there are
$U_1 \in \calC(e_1)$ and $U_2 \in \calC(e_2)$ distinct, with 
$U_2 \not\in \calC(e_1)$ and $U_1 \not\in \calC(e_2)$.  Now, consider
$M_{U_1} \symdif M_{U_2}$.  The edges $e_1$ and $e_2$ are contained in
the unique path of the configuration, which is alternating in colors
$U_1$ and $U_2$, and are at even distance. Up to renaming who's who
among $u_i$ and $v_i$, we can set that this path, in some orientation,
has the form
\be
P_1 \circ \edge{u_1}{v_1} \circ P \circ \edge{v_2}{u_2} \circ P_2
\ef.
\ee
Note that, if $G$ is bipartite, the fact that $P$ has even length
indeed implies that its endpoints are in the same vertex class of $G$,
hence the choice of names for the vertices.

The stability of $M_{U_1}$ w.r.t.\ swapping the cycle 
$C_1=(u_1,v_1) \circ P \circ (v_2,u_1)$ takes the form 
\be
w_{e_1} + W -w_{e'_1}<0
\ef,
\ee
with $W=\walt(P)$, the alternating sum of $P$, with its
appropriate sign.  Similarly, the stability of $M_{U_2}$
w.r.t.\ swapping the cycle 
%
% $Q_2=(u_2,v_1) \circ P \circ (v_2,u_2)$ 
$C_2=(u_2,v_2) \circ \rev{P} \circ (v_1,u_2)$ 
takes the form
\be
w_{e_2} - W -w_{e'_2}<0
\ef.
\ee
Note that, indeed, here $W$ comes with opposite sign, because $P$ is
of even length.
Summing the two inequalities gives
\be
w_{e_1}+w_{e_2}-w_{e'_1}-w_{e'_2}<0
,
\ee
in contradiction with the condition for $\{e_1,e_2\}$ to be in~$F$.
\end{proof}
\noindent
The proof above is illustrated in Figure~\ref{fig.forlem.nopairF}.

\begin{figure}[t]
\begin{center}
\if\faifig1  
\begin{tikzpicture}[scale=2.5]
\node[label={180:{\small $u_1$}},inner sep=2.5pt,rectangle,fill=gray!50,draw] (u1) at (-1,.2) {}; 
\node[label={180:{\small $v_2$}},inner sep=2.5pt,rectangle,fill=gray!50,draw] (v2) at (-.7,.9) {}; 
\node[label={0:{\small $u_2$}},inner sep=2.5pt,rectangle,fill=gray!50,draw] (u2) at (1,0) {}; 
\node[label={0:{\small $v_1$}},inner sep=2.5pt,rectangle,fill=gray!50,draw] (v1) at (.8,.8) {}; 
\node[inner sep=2.5pt,rectangle,fill=gray!50,draw] (x1) at (.5,1.2) {}; 
\node[label={100:{\rule{20pt}{0pt}$P$}},inner sep=2.5pt,rectangle,fill=gray!50,draw] (x2) at (-.1,1.1) {}; 
\node[inner sep=2.5pt,rectangle,fill=gray!50,draw] (x3) at (-.8,1.3) {}; 
\node[inner sep=2.5pt,rectangle,fill=gray!50,draw] (a1) at (1.2,-.4) {}; 
\node[inner sep=2.5pt,rectangle,fill=gray!50,draw] (a2) at (1.7,-.2) {}; 
\node[label={0:{\small $\ldots$}},inner sep=2.5pt,rectangle,fill=gray!50,draw] (a3) at (2.0,.3) {}; 
\node[inner sep=2.5pt,rectangle,fill=gray!50,draw] (b1) at (-1.3,-.1) {}; 
\node[inner sep=2.5pt,rectangle,fill=gray!50,draw] (b2) at (-1.8,-.4) {}; 
\node[label={180:{\small $\ldots$}},inner sep=2.5pt,rectangle,fill=gray!50,draw] (b3) at (-2.0,.5) {}; 
\draw[very thick,blue] (u1) -- node[sloped,above]{$e_1$\rule{30pt}{0pt}} (v1); 
\draw[very thick,red] (u2) -- node[sloped,above]{\rule{30pt}{0pt}$e_2$} (v2); 
\draw[thick,dashed] (v1) -- node[sloped,above]{$e'_2$} (u2); 
\draw[thick,dashed] (u1) -- node[sloped,above]{$e'_1$} (v2); 
%% \draw[thick,dashed] (u2) -- node[sloped,above]{$Q_2$} (v1); 
%% \draw[thick,dashed] (u1) -- node[sloped,above]{$Q_1$} (v2); 
\draw[thick,red] (v1) -- (x1); 
\draw[thick,blue] (x2) -- (x1);
\draw[thick,red] (x2) -- (x3); 
\draw[thick,blue] (v2) -- (x3); 
\draw[thick,blue] (u2) -- (a1);
\draw[thick,red] (a2) -- (a1); 
\draw[thick,blue] (a2) -- (a3); 
\draw[thick,red] (u1) -- (b1);
\draw[thick,blue] (b2) -- (b1); 
\draw[thick,red] (b2) -- (b3); 
\end{tikzpicture}
\else [...TikZ\ code...] \fi
\end{center}
\caption{\label{fig.forlem.nopairF}%
Illustration of the alternating path and cycles appearing in
  the proof of Lemma~\ref{lem.nopairF}.}
\end{figure}

%-------------------------------------------------------
\subsection{A variant: matchings with a reservoir}
\label{sec.reservintro}
%-------------------------------------------------------

A variant of the setting above is a situation in which the graph
$G=(V\cup \{s\},E)$ has a special vertex, here denoted by $s$, that
can participate in a covering with an unconstrained degree, i.e.,
valid configurations $M\subseteq G$ are not anymore just perfect
matchings of $G$, but rather subgraphs such that $\deg_M(v)=1$ for all
$v \in V$, whereas no constraint is imposed on $\deg_M(s)$. We call
$s$ the \emph{reservoir vertex}, or simply the reservoir, of the
graph.

If $G$ has the form $G=(A \cup B \cup \{s\},E)$ with 
$A\cap B=\emptyset$ and 
$E\subseteq (A \times B) \cup ((A \cup B) \times \{s\})$, we say that
we are in the
\emph{bipartite setting with reservoir}, even though $G$ is not
bipartite. (Of course, it can be made a genuine bipartite graph by
adding \emph{two} reservoir sites, $s_A$ and $s_B$, and stating that
all edges are either of the form $\edge{a_i}{b_j}$, or
$\edge{a_i}{s_B}$, or $\edge{s_A}{b_j}$).

We will consider removals $U$ that only concern ``ordinary''
vertices, and always keep the reservoir in the graph $G_U$.

\begin{ex}[Square lattice with reservoir on the boundary]
\label{ex:squareres}
Let $G$ be a square portion of side $\ell$ of the square lattice, plus
the extra vertex $s$, that is not drawn. The vertices connected to the
reservoir are those on the boundary.  This graph is bipartite in the
sense above.  Now there are no parity constraints on $\ell$, as well
as on the sets of admissible removals.  Examples of configurations in
which $U$ is the empty set, or consists of a single vertex, are
presented in Figure~\ref{fig:sqres}.
\end{ex}

\begin{figure}[t]
\includegraphics[scale=2.2]{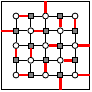}
%[height=0.2\columnwidth]
\qquad
\includegraphics[scale=2.2]{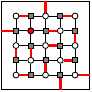}
\caption{\label{fig:sqres}% 
  Examples of matchings on graphs with a reservoir, where the vertices
  $v_i$ are those of a portion of the square lattice, and the
  reservoir $s$ can be identified with the boundary of the
  domain. Notations are as in Figure \ref{fig:triangle}.
  (\emph{Left}) A matching with $U=\emptyset$.  (\emph{Right}) A
  matching with $|U|=1$.}
\end{figure}

\noindent
Under our generality hypotheses, this generalised setting can be
implemented within the ordinary setting for matchings, up to
performing a limit. In the monopartite case, with triple
$(G,w,\calJ)$, with $G=(V\cup\{s\},E)$ and $|V|=n$, let us
construct a new (weighted) graph $G^{\rm res}$ as follows:
\begin{itemize}
\item $G^{\rm res}=(V\cup S,E^{\rm res})$, with $|S|=|V|=n$;
\item 
$E^{\rm res}$ contains the edges of $E$ not incident to the reservoir,
  the edges $\edge{v_i}{s_i}$ for all $v_i\in V$ such that
  $\edge{v_i}{s}\in E$, and all the edges $\edge{s_i}{s_j}$ for $1
  \leq i < j \leq n$
%% $E^{\rm res}=
%% \{\edge{v_i}{v_j}\}_{\substack{v_i,v_j\in V,\\ \edge{v_i}{v_j}\in E}}
%% \cup 
%% \{\edge{v_i}{s_i}\}_{\substack{v_i\in V,\\ \edge{v_i}{s}\in E}}
%% \cup 
%% \{\edge{s_i}{s_j}\}_{1 \leq i < j \leq n}$
\item $w^{\rm res}_{v_i,v_j}=w_{v_i,v_j}$
\item $w^{\rm res}_{v_i,s_i}=w_{v_i,s}$
\item $w^{\rm res}_{s_i,s_j}$ are i.i.d.\ random numbers in the
  range $[0,\eps]$, for some $\eps>0$. 
\end{itemize}
Now, call $\calR$ the operator that shrinks the reservoir auxiliary
part of the graph $S$ to the single vertex $s$, that is, sends the
edges $\edge{v_i}{s_i}$ of $E^{\rm res}$ to the edges $\edge{v_i}{s}$ of $E$, and
drops the edges $\edge{s_i}{s_j}$. We have that
\begin{align}
\label{eq.9837628}
\lim_{\eps \to 0} w(M_\star(G^{\rm res}_U))
&=w(M_\star(G_U))
;
&
\lim_{\eps \to 0} \calR M_\star(G^{\rm res}_U)
&= M_\star(G_U)
.
\end{align}
In fact,
$w(M_\star(G^{\rm res}_U))-w(M_\star(G_U)) \in [0,\eps\, n/2]$.

In the bipartite setting, the limit can be performed in a way that
preserves the bipartition. For a triple
$(G,w,\calJ)$, with $G=(A\cup B\cup\{s\},E)$,  $A\cap B=\emptyset$  and $|A|=n_1$, 
$|B|=n_2$, 
we construct a new weighted bipartite graph $G^{\rm res}$ as follows:
\begin{itemize}
\item $\bipg{G^{\rm res}}{A\cup C}{B \cup D}{E^{\rm res}}$, with 
$|C|=|B|=n_2$ and $|D|=|A|=n_1$;
\item 
$E^{\rm res}$ contains all the edges of $E$ of the form
  $\edge{a_i}{b_j}$, and edges 
$\edge{a_i}{d_i}$ and $\edge{b_i}{c_i}$ for all 
$a_i\in A$ such that $\edge{a_i}{s}\in E$, and all
$b_i\in B$ such that $\edge{b_i}{s}\in E$, respectively,
plus all the edges
$\edge{c_i}{d_j}$ for $1 \leq i \leq n_2$ and $1\leq j \leq n_1$
%% $E^{\rm res}=
%% \{\edge{a_i}{b_j}\}_{\substack{a_i \in A,\\ b_j\in B,\\ \edge{a_i}{b_j}\in E}}
%% \cup \{\edge{a_i}{d_i}\}_{\substack{a_i\in A,\\ \edge{a_i}{s}\in E}}
%% \cup \{\edge{b_i}{c_i}\}_{\substack{b_i\in B,\\ \edge{b_i}{s}\in E}}
%% \cup \{\edge{c_i}{d_j}\}_{\substack{1 \leq i \leq n_2,\\ 1\leq j \leq n_1}}$
\item $w^{\rm res}_{a_i,b_j}=w_{a_i,b_j}$
\item $w^{\rm res}_{a_i,d_i}=w_{a_i,s}$, 
$w^{\rm res}_{b_i,c_i}=w_{b_i,s}$, 
\item $w^{\rm res}_{c_i,d_j}$ are i.i.d.\ random numbers in the
  range $[0,\eps]$, for some $\eps>0$. 
\end{itemize}
Then equations (\ref{eq.9837628}) still hold, and
$w(M_\star(G^{\rm res}_U))-w(M_\star(G_U)) \in [0,\eps \min(n_1,n_2)]$.

A slightly more subtle fact is that the reservoir setting can be
implemented within the ordinary setting, without performing a limit,
at the price of introducing a larger auxiliary graph (essentially, the
size gets multipled by four, instead of doubling). When the graph is generic,
we introduce a new weighted graph $G^{\rm res}$ as
follows:
\begin{itemize}
\item $G^{\rm res}=(V\cup S \cup S' \cup V',E^{\rm res})$, with $|S|=|V|=|S'|=|V'|=n$;
\item $E^{\rm res}$ contains an edge $\edge{v_i}{v_j}$
and an edge $\edge{v'_i}{v'_j}$ for each $\edge{v_i}{v_j}\in E$, an
edge $\edge{v_i}{s_i}$ and an edge $\edge{v'_i}{s'_i}$ for each
$\edge{v_i}{s}\in E$, and an edge
$\edge{s_i}{s'_i}$ for each $1 \leq i \leq n$
%% \item $E^{\rm res}=
%% \{\edge{v_i}{v_j}\}_{\substack{v_i,v_j\in V,\\ \edge{v_i}{v_j}\in E}}
%% \cup
%% \{\edge{v'_i}{v'_j}\}_{\substack{v_i,v_j\in V,\\ \edge{v_i}{v_j}\in E}}
%% \cup 
%% \{\edge{v_i}{s_i}\}_{\substack{v_i\in V,\\ \edge{v_i}{s}\in E}}
%% \cup 
%% \{\edge{v'_i}{s'_i}\}_{\substack{v_i\in V,\\ \edge{v_i}{s}\in E}}
%% \cup 
%% \{\edge{s_i}{s'_i}\}_{1 \leq i \leq n}$
\item $w^{\rm res}_{v_i,v_j}=w^{\rm res}_{v'_i,v'_j}=w_{v_i,v_j}$
\item $w^{\rm res}_{v_i,s_i}=w^{\rm res}_{v'_i,s'_i}=w_{v_i,s}$
\item $w^{\rm res}_{s_i,s'_i}=0$
\item $\calJ^{\rm res}=\{U \cup U'\}_{U \in \calJ}$, where $U'$ is the
  counterpart of $U$ inside $V'$ (that is, $v'_j \in U'$ iff $v_j
  \in U$).
\end{itemize}
Under this construction, the bipartite case is just a special case of
the non-bipartite case.
\label{pg.reservoir}
Then, calling $\calL$ the operator that sends 
$H^{\rm res} \subseteq G^{\rm res}$ to $H \subseteq G$, with the rule
that $\edge{v_i}{v_j}\in H$ iff $\edge{v_i}{v_j}\in H^{\rm res}$, and
$\edge{v_i}{s}\in H$ iff $\edge{v_i}{s_i}\in H^{\rm res}$, we have
\begin{align}
\label{eq.9837628x}
w(M_\star(G^{\rm res}_{U\cup U'}))
&=2 w(M_\star(G_U))
\ef;
&
\calL M_\star(G^{\rm res}_{U\cup U'})
&= M_\star(G_U)
\ef.
\end{align}
We will explain these facts later on, after that a more general
setting of ``symmetric graphs'' is introduced in
Section~\ref{sec.teoremaserio1b}.

Remark that in this case, if $G\setminx s$ is bipartite, then 
$G^{\rm res}$ is bipartite, with $v_j$ and $v'_j$ in different vertex
classes, as well as $s_j$ and~$s'_j$.

Recall
% from Section \ref{sec.setting} 
(e.g.\ from page \pageref{pag.matrcosts}) that, up to introducing
further edges with large weights, an instance on a bipartite graph
$G=(A,B;E)$ is conveniently encoded in a
$n \times m$ matrix of costs $W$, where the rows and columns are
associated to the $n=|A|$ and $m=|B|$ vertices of the two
classes. Then, the weights for a bipartite instance with reservoir is
similarly encoded in a
$(n+1) \times (m+1)$ matrix of costs,
with 
$W_{ij}=w_{\edge{a_i}{b_j}}$ if $i\leq n$ and $j\leq m$,
$W_{n+1\,m+1}$ is a special symbol (a yellow cell in the following
notation) that denotes that the entry $(n+1,m+1)$ cannot be taken, 
and
$W_{n+1\,j}=w_{\edge{s}{b_j}}$ and
$W_{i\,m+1}=w_{\edge{a_i}{s}}$ otherwise. Then, the construction of
the auxiliary problem involves a larger (square) matrix of costs,
having as one of the four blocks the transpose of the original matrix
of costs, and two of the blocks allowing only diagonal entries, as in
the following example:
\[
\setlength{\unitlength}{10pt}
\thicklines
\raisebox{48pt}{% [inline block 1: 2 envs, 2877 chars -> data_tex | \begin{picture}(10,7.8)(0,-0.4) %% \put(0,1){\goyell{\rule{60pt}{40pt}}}...]
}
\]
A dot on the diagonal blocks denotes that, in the chain
$(v_i,s_i,s'_i,v'_i)$ of $G^{\rm res}$, the edges $\edge{v_i}{s_i}$ and
$\edge{s'_i}{v'_i}$ have been taken in $M$, and the absence of the dot
means that the edge $\edge{s_i}{s'_i}$ has been taken in $M$.

%%%%%%%%%%%%%%%%%%%%%%%%%%%%%%%%%%%%%%%%%%%%%%%%%%%%%%%
\section{A first setting where the matching subgraph is a spanning tree}
\label{sec.teoremaserio1}
%%%%%%%%%%%%%%%%%%%%%%%%%%%%%%%%%%%%%%%%%%%%%%%%%%%%%%%

\noindent
We have seen above with relatively small effort that, if 
$\calJ=\{U_1, U_2\}$ and $|U_1 \symdif U_2|=2$ (so that $\calJ$ is
``single-path''), the matching subgraph $H_{\calJ}$ indeed consists of
isolated dimers and one path, so in particular it is acyclic. This
section and Section \ref{sec.teoremaserio1b} are devoted to the proof of the first
surprising (and considerably harder) fact, that, in two general
settings of the Assignment Problem, with $|\calJ|$ scaling as the size
of the graph, the matching subgraph $H_{\calJ}$ is indeed a spanning
tree. This fact is also at striking difference w.r.t.\ the case
considered in Lemma \ref{lem:nobridge}, where we provide a setting in
which no edge of $H_\calJ$ is a bridge, while in a tree all the edges
are bridges.

Let us start by investigating under which circumstances it is
conceivable that, for $|\calJ|=3$, the graph $H_{\calJ}$ is guaranteed
to be connected and acyclic. There does not seem to be much room for
generality here, in light of Lemma \ref{lem:nobridge}, and as already
the case in which $G$ is a triangle, and $\calJ$ is the set of the
three singleton subsets, independently from the weight function, we
have $H_\calJ=G$, and thus it is not acyclic. 

These facts suggest to restrict our attention to the case in which
$\bipg{G}{A}{B}{E}$ is bipartite.  From now onward, it will be
understood that vertices named $a_1$, $a_2$,\dots\,are in $A$, and
vertices $b_1$, $b_2$,\dots\,are in $B$, where $A$ and $B$ are the two
classes of vertices of the bipartite graph $G$, and that vertices (and
edges) with distinct names are distinct.

For reasons which are not clear at this point, in performing our
manipulations we should keep track of a certain property, namely that
all $B$-vertices of $H^*_{\calJ}$ not in a set $U$ have degree~2.

Let us provide one last preparatory observation:
\begin{prop}
\label{prop.tradeaforb}
Let $(G,w,\calJ)$ be a triple, with $\bipg{G}{A}{B}{E}$ bipartite.
Let $a \in A$ be an element of $V_{\calJ}$ (that is, there exists
$U\ni a$). 
Construct a new triple $(\tilde{G},\tilde{w},\tilde{\calJ})$ as
follows:
\begin{itemize}
\item $V(\tilde{G})=V(G) \cup \{b\}$, where $b$ is a new $B$-vertex;
\item $E(\tilde{G})=E(G) \cup \{\edge{a}{b}\}$;
\item $\tilde{w}_e=w_e$ if $e\in E(G)$, and $\tilde{w}_{a,b}=0$;
\item for all $U \in \calJ$, $\tilde{U}=U \setminx a$ if $a \in U$
  and $\tilde{U}=U\cup\{b\}$ otherwise.
\end{itemize}
Then the graph $H_{\calJ}^* \subseteq G$ is a tree, and all vertices in 
$(B \setminx V_\calJ)\cap V_{H_{\calJ}^*}$ have
degree~$2$, if and only if the same facts hold for 
$H_{\tilde{\calJ}}^* \subseteq \tilde{G}$.
\end{prop}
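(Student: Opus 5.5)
The plan is to compute $H_{\tilde{\calJ}}$ explicitly in terms of $H_{\calJ}$. I expect to find that
\[
H_{\tilde{\calJ}} = H_{\calJ} \cup \{\edge{a}{b}\},
\]
with $b$ a pendant vertex attached to $a$. I would then transfer the three properties (connectedness, acyclicity, degree condition) across this pendant edge.

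First, I check that $\tilde{\calJ}\subseteq \calV(\tilde G)$ and that $\tilde w$ is generic. Genericity holds because $b$ has degree one in $\tilde G$, so it lies on no cycle. The identification of the optimal matchings splits into two cases.
\begin{itemize}
\item If $a\in U$, then $\tilde U=U\setminx a$ keeps both $a$ and $b$. Since $\edge{a}{b}$ is the only edge at $b$, every perfect matching of $\tilde G_{\tilde U}$ contains $\edge{a}{b}$, and its remainder is a perfect matching of $\tilde G_{\tilde U}\setminx\{a,b\}=G_U$. Since $\tilde w_{a,b}=0$, the weights agree, so $M_{\tilde U}=M_U\cup\{\edge{a}{b}\}$.
\item If $a\notin U$, then $\tilde U=U\cup\{b\}$, so $\tilde G_{\tilde U}=G_U$ with the same weights, and $M_{\tilde U}=M_U$.
\end{itemize}
Since some $U\ni a$ exists, the edge $\edge{a}{b}$ really occurs, and $H_{\tilde{\calJ}}=H_{\calJ}\cup\{\edge{a}{b}\}$. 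In particular $\calC(\edge{a}{b})=\{\tilde U: a\in U\}$ is nonempty.

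Next I compare the nontrivial parts. We have $V_{\tilde{\calJ}}\setminx\{a,b\}=V_{\calJ}\setminx\{a\}$. Moreover $b\in V_{\tilde{\calJ}}$ iff some $U\in\calJ$ does not contain $a$, and $a$ may drop out of $V_{\tilde{\calJ}}$. The component of $a$ in $H_{\calJ}$ belongs to $H^*_{\calJ}$ because $a\in V_{\calJ}$. In the generic case, where not every $U$ contains $a$, the component of $a$ in $H_{\tilde{\calJ}}$ meets $V_{\tilde{\calJ}}$ at $b$. All other components meet $V_{\calJ}$ exactly when they meet $V_{\tilde{\calJ}}$, since they avoid $a$ and $b$. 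Hence $H^*_{\tilde{\calJ}}=H^*_{\calJ}\cup\{\edge{a}{b}\}$.

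Adding a pendant edge with a new leaf preserves both connectedness and acyclicity, in both directions. So $H^*_{\calJ}$ is a tree iff $H^*_{\tilde{\calJ}}$ is.

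For the degree condition, every $B$-vertex other than $b$ has the same degree and the same membership in $V_{\calJ}$ versus $V_{\tilde{\calJ}}$. The new vertex $b$ lies in $V_{\tilde{\calJ}}$, so it is exempt from the requirement. Thus the degree conditions are equivalent.

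The only delicate point, and the main obstacle, is the degenerate case in which $a$ belongs to every $U\in\calJ$. Then $a$ is uncovered by all $M_U$, so it is an isolated vertex of $H_{\calJ}$ forming a trivial component of $H^*_{\calJ}$. In $\tilde{\calJ}$, the edge $\edge{a}{b}$ becomes an isolated dimer with $\calC(\edge{a}{b})=\tilde{\calJ}$, and it is excluded from $H^*_{\tilde{\calJ}}$, consistent with Lemma~\ref{lem:T}.

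I would handle this case separately. If $H^*_{\calJ}$ is connected, then it equals $\{a\}$, so $V_{\calJ}=\{a\}$ and every $U=\{a\}$; in this case $H^*_{\tilde{\calJ}}$ is empty. Otherwise $H^*_{\calJ}$ contains the extra component $\{a\}$, which the construction removes, and both sides need to be compared after discarding it. I would check that the statement is meant under the convention that excludes this situation, namely that $\calJ$ is spanning or that some $U\not\ni a$ exists, which is the case in the applications. If the convention does not exclude it, I would state it as a trivial exceptional case.
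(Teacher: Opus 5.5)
Your proof is correct and follows the paper's argument. You use the weight-preserving bijection $M\mapsto M\cup\edge{a}{b}$ (if $a\in U$) or $M\mapsto M$ (otherwise) to get $H_{\tilde\calJ}=H_{\calJ}\cup\edge{a}{b}$ with $b$ a pendant leaf, and your bookkeeping of $V_{\tilde\calJ}\setminx\{b\}=V_{\calJ}\setminx\{a\}$ and of the nontrivial parts makes precise what the paper compresses into ``left unchanged''. The degenerate case needs no exclusion: the paper takes $V\setminx\bigcap_{U\in\calJ}U$ as the vertex set of $H_{\calJ}$ (as in Lemma~\ref{lem:diff}), so when $a$ lies in every $U$ it is not a vertex of $H_{\calJ}$, $\edge{a}{b}$ is an isolated dimer outside $H^*_{\tilde\calJ}$, and simply $H^*_{\tilde\calJ}=H^*_{\calJ}$.
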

\begin{proof}
Indeed, there is an obvious bijection $\phi$ between matchings 
$M \in \calM(G_U)$ and matchings 
$\tilde{M} \in \calM(\tilde{G}_{\tilde{U}})$.  If $a\in U$, then
$\tilde{M}=\phi(M)=M \cup \edge{a}{b}$, while if $a\not\in U$, then
$\tilde{M}=\phi(M)=M$.  In both cases, $w(M)=w(\tilde{M})$, so that
the image of the optimal matching in $G_U$ is the optimal matching in
$G_{\tilde{U}}$.  This modification leaves $H_{\calJ}$ essentially
unchanged, up to the addition of an edge $\edge{a}{b}$, which is
attached to $H_{\calJ}$ on $a$, and has $b$ as a leaf (the presence of
this edge is implied by the fact that we asked that $a$ is in at least
one of the sets $U$). In this way, the properties of interest of
$H_{\calJ}$ (to be connected, acyclic, and having $B$-vertices of
degree $2$ if they are not in $V_\calJ$) are left unchanged.
\end{proof}

\noindent
Let us introduce a definition that will appear in the most subtle part
of the following analysis:
\begin{defn}
\label{def.3ham}
Let $\bipg{G}{A}{B}{E}$ be a connected bipartite graph, with three leaves
(all $B$-vertices) and all other vertices of degree 3, equipped with a
proper 3-coloration of the edges\footnote{A proper edge-coloration of
  a graph $G=(V,E)$ is a map $\phi:E \to C$ such that
  $\phi(e_1)\neq\phi(e_2)$ if $e_1$ and $e_2$ are incident.}, such
that each leaf is incident to an edge of a different color. We say
that $G$ is \emph{3-Hamiltonian} if there are no alternating
cycles. That is, the subgraph alternating in each pair of colors is a
single path connecting the two leaves of those colors, and visiting
all the vertices of $G$ except for the third leaf (i.e., a Hamiltonian
path on $G$ minus the third leaf).
\end{defn}
\noindent
If $G$ is 3-Hamiltonian, the graph $G'$ obtained by joining the three
leaf vertices into a single vertex $b$ is a ($B$-vertex-rooted)
properly 3-colored graph such that the subgraph alternating in each
pair of colors is a Hamiltonian cycle of $G'$.  Some examples of
3-Hamiltonian graphs are provided in Figure~\ref{fig:3HamN5}.

The fact that the family of 3-Hamiltonian graphs is infinite and
contains arbitrarily large graphs is not completely trivial, but
indeed true.  As a curiosity, these graphs can also be enumerated,
thanks to a connection with a well-known quantity in Representation
Theory, namely the number of factorizations of a long cycle into two
long cycles. There are several ways of seeing this, and one goes as
follows.  Call $Z_n$ the number of 3-Hamiltonian graphs $G$ with $n$
$A$-vertices (i.e., graphs $G'$ with $2n$ vertices).  Say that the
colors are blue, red and green.  Label the $B$-vertices of $G'$ from
$1$ to $n$,
%, and the $A$-vertices from $a_1$ to $a_n$, 
in the order given by the cycle in blue and red, rooted at the vertex
obtained by connecting the three leaves, and oriented as to start with
the blue edge. Label the blue, red and green edges with integers from
$1$ to $n$, according to the incident $B$-vertex. Thus each label $i$
is associated to one $B$-vertex, and to one edge per color (while we
can leave the $A$-vertices unlabeled). 

Call $\sigma$, $\tau$ and $\rho$ the permutations such that, if the
blue, red and green edges incident to the same $A$-vertex have indices
$i$, $j$ and $k$, respectively, then $j=\sigma(i)$, $k=\tau(j)$ and
$i=\rho(k)$.  An example of labeling is
\[
\if\faifig1  
% [inline block 2: 1 envs, 8060 chars -> data_tex | \begin{tikzpicture}[baseline={([yshift=-.5ex]current bounding box.center)}] \draw[double=green!75!blue,white,double dist...]

\else [...TikZ\ code...] \fi
\]
It is easily seen that:
\begin{itemize}
\item $\rho\tau\sigma$ is the identity permutation, that is
  $\rho(\tau(\sigma(i)))=i$ for all $1\leq i \leq n$, because all
  $A$-vertices have degree 3;
\item $\sigma$, $\tau$ and $\rho$ are all of cycle type $\lambda=(n)$,
  as we asked that $G$ is 3-Hamiltonian;
\item $\sigma$ is always the long-cycle permutation
  $(n\,n-1\,\ldots\,2\,1)$, as we have labeled the vertices according
  to the blue-red path;
\item the choices of $(\tau,\rho)$ such that the properties above do
  hold are in bijection with 3-Hamiltonian graphs $G$ (in particular,
  graphs as $G$, once that they are rooted at a $B$-vertex and
  3-edge-colored, have no non-trivial automorphisms).
\end{itemize}
Thus $Z_n$ is given by the number of ways in which the canonical long
cycle $\sigma^{-1}=(1\,2\,3\,\ldots\,n)$ in $\mathfrak{S}_n$ can be
decomposed as the product of two long cycles, $\tau$ and $\rho$.  As
the long cycle in $\mathfrak{S}_n$ has signature $(-1)^{n-1}$, $Z_n$
can be non-zero only for odd values of $n$, and in this case it can be
evaluated through the theory of characters of the symmetric group (see
for example~\cite[Ex.~4.9]{BOCCARA1980105} or \cite{STANLEY1981255}, or 
{\tt https://oeis.org/A060593} and references therein):
%% Counting Cycles in Permutations by Group Characters, With an Application to a Topological Problem
%% D.M. Jackson
%% Transactions of the American Mathematical Society
%% Vol. 299, No. 2 (Feb., 1987), pp. 785-801 (17 pages)
%
\be
Z_{2n+1}=\frac{(2n)!}{n+1}=(1,1,8,180, 8064, 604800,\ldots) 
\ee

\begin{figure}[t]
\if\faifig1  
\begin{align*}
% {1}
&
n=1:\ 
% [inline block 3: 10 envs, 17537 chars -> data_tex | \begin{tikzpicture}[baseline={([yshift=-.5ex]current bounding box.center)}] %     \node[label={180:{\small $b_1$}},inner...]

\end{align*}
\else [...TikZ\ code...] \fi
\caption{\label{fig:3HamN5}%
All 3-Hamiltonian graphs with up to $n=5$ $A$-vertices (circles), and
$7$ $B$-vertices (squares).}
\end{figure}

\noindent
Now we are ready to state and prove one of the main tools of this paper:
\begin{lem}
\label{lem:tripla}
Let $\bipg{G}{A}{B}{E}$ be a generic-weighted bipartite graph and let
$\calJ=\{U_1,U_2,U_3\}\subseteq\calV(G)$. Call $U\coloneqq U_1 \cap
U_2 \cap U_3$, and $U'_i\coloneqq U_i\setminx U$. Suppose that we are
in one of the four cases below:\footnote{Which are all the cases
  obtained from the first row of the table by applications of
  Proposition~\ref{prop.tradeaforb}, and also, for $U=\varnothing$,
  all the single-path cases with three sets up to swapping $A$
  and~$B$.}
\begin{center}
\begin{tabular}{ccc}
\toprule
$U'_1$ & $U'_2$ & $U'_3$ \\
\midrule
$\{a_1\}$ & $\{a_2\}$ & $\{a_3\}$ \\
$\{a_1, b_3\}$ & $\{a_2, b_3\}$ & $\varnothing$ \\
$\{a_1, b_2, b_3\}$ & $\{b_3\}$ & $\{b_2\}$ \\
$\{b_2, b_3\}$ & $\{b_1, b_3\}$ & $\{b_1, b_2\}$ \\
\bottomrule
\end{tabular}\end{center}
\iffalse
\[
\begin{array}{|ccc|}
\hline
\rule{0pt}{11pt}%
U'_1 & U'_2 & U'_3 
\raisebox{-6pt}{\rule{0pt}{11pt}}%
\\
\hline
\rule{0pt}{11pt}%
\{a_1\} & \{a_2\} & \{a_3\} \\
\{a_1,b\} & \{a_2,b\} & \varnothing \\
\{a,b_2,b_3\} & \{b_3\} & \{b_2\} \\
\{b_2,b_3\} & \{b_1,b_3\} & \{b_1,b_2\}
\raisebox{-6pt}{\rule{0pt}{11pt}}%
\\
\hline
\end{array}
\]
\fi
Then the graph $H_{\calJ}^*$ is a tree, and all vertices in 
$(B \setminx(U_1 \cup U_2 \cup U_3))\cap V_{H_{\calJ}^*}$ have
degree~$2$.
\end{lem}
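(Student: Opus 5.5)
The plan is to reduce everything to the first row of the table, and then analyse the union of three optimal matchings through the pairwise structure given by Corollary~\ref{cor.coppia}. The footnote already indicates the reduction: the last three rows are obtained from the first by (possibly repeated) applications of Proposition~\ref{prop.tradeaforb}. That proposition says the two properties we want (being a tree, and $B$-vertices outside $V_\calJ$ having degree $2$) are invariant under trading an $A$-vertex $a$ for a pendant $B$-leaf. Moreover, the common part $U$ can be deleted from the start: $G_U$ is again generic-weighted, and $M_{U_i}=M_\star((G_U)_{U'_i})$. So I would assume $U=\varnothing$ and $U_i=\{a_i\}$, and prove the statement in this case only.

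In this case $\calJ$ is single-path. By Corollary~\ref{cor.coppia}, $M_{U_i}\symdif M_{U_j}$ is a single alternating path $P_{ij}$ from $a_i$ to $a_j$, plus dimers. A vertex $b\in B$ is covered by all three matchings, so it has degree $1$, $2$ or $3$ in $H_\calJ$. Degree $1$ means isolated dimers, which lie outside $H^*_\calJ$ by Lemma~\ref{lem:T}. Degree $2$ means two of its three colors coincide. So the degree claim amounts to excluding $B$-vertices whose three incident edges carry three distinct colors. The vertex $a_i$ is covered by the other two matchings only, so it has degree at most $2$ in $H_\calJ$, and other $A$-vertices have degree at most $3$.

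Next I would show that $H^*_\calJ$ is connected and acyclic. Connectivity is immediate, since $H^*_\calJ\subseteq P_{12}\cup P_{13}\cup P_{23}$ and these paths share endpoints. For acyclicity and the degree claim, consider the subgraph $K$ formed by the vertices of degree $3$ (trichromatic vertices) together with the edges, each of which has one or two colors. Contracting bicolored edges and degree-$2$ vertices, $K$ becomes a properly $3$-edge-colored cubic bipartite graph. Its leaves are attached at $a_1,a_2,a_3$, and in each pair of colors the alternating structure is a single path (the $P_{ij}$) with no alternating cycles (Remark~\ref{rmk.28976954}). That is exactly a structure of the kind in Definition~\ref{def.3ham}. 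The target is to show that the only admissible configuration is the trivial one: a single trichromatic $A$-vertex joining three monochromatic arms, or equivalently a ``Y''-shaped tree with all $B$-vertices of degree $2$.

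The main obstacle is ruling out nontrivial 3-Hamiltonian configurations: Definition~\ref{def.3ham} and the count $Z_n$ show that these exist combinatorially, so an argument based on weights is needed. My approach would be to sum stability inequalities. For each cycle $C$ of $H^*_\calJ$, or each trichromatic $B$-vertex, I would take a cycle in $H_\calJ$ that is semi-alternating in some color $U_i$. Proposition~\ref{prop.swapcyc} gives $\walt\le 0$ for it, and genericity makes the inequality strict. I would then choose such cycles, one for each color, so that every edge weight appears with total coefficient zero, the same trick as in Lemma~\ref{lem.nopairF}. Summing the three strict inequalities would then give $0<0$. Concretely, at a trichromatic $B$-vertex $b$, I would follow the three monochromatic-then-alternating paths from $b$ to the $a_i$ and form, for each $i$, the cycle built from $P_{jk}$ and the two branches from $b$. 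I expect the bookkeeping of signs (even versus odd path lengths in the bipartite setting) to be the delicate part, and I would induct on the number of trichromatic vertices if the direct cancellation does not close.
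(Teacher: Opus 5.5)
\textbf{Verdict: there is a genuine gap in the decisive step.} Your set-up matches the paper's. After removing $U$, you contract each bicoloured edge together with its two monocoloured neighbours. What remains is a properly $3$-edge-coloured cubic bipartite graph with three leaves and no alternating cycles, i.e.\ a 3-Hamiltonian graph in the sense of Definition~\ref{def.3ham}. Both acyclicity and the degree claim follow once this graph is shown to be the single Y.

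A small point on the reduction. Proposition~\ref{prop.tradeaforb} trades an $A$-vertex for a pendant $B$-leaf, so it turns row~1 into rows~2--4. It therefore lets you deduce rows~1--3 from row~4, which is why the paper reduces to the last row. To reduce to row~1 instead, you need the dual trade of a $B$-vertex for a pendant $A$-leaf. That trade preserves the degree condition only because the traded $B$-vertices are leaves of $H_\calJ$ in these cases. This is easily repaired.

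The gap is the step you yourself flag as the main obstacle, and it is the entire content of the lemma. You never produce, for an arbitrary non-trivial 3-Hamiltonian configuration, a family of semi-alternating cycles whose strict inequalities add up to $0<0$. Saying you will choose three cycles, one per colour, so that all weights cancel restates what has to be proved. The fallback induction on the number of trichromatic vertices is not specified.

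The concrete recipe also fails as it stands. Read it as: close up the two branches from a trichromatic $b$ towards $a_i$, along $P_{ij}$ and $P_{ik}$.
\begin{itemize}
\item If both branches have interior vertices, the cycle can only be semi-alternating in $U_i$. It therefore gives an inequality only if exactly one of its two edges at $b$, and at the meeting vertex, has colour $U_i$. When the branches leave $b$ through its $U_j$- and $U_k$-edges there is no inequality at all. This already happens for 3-Hamiltonian graphs with five $A$-vertices.
\item When the three cycles at some $b$ are all semi-alternating, nothing makes their sum vanish. In such an example the sum is minus the functional of a fourth semi-alternating cycle, not zero.
\end{itemize}

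The missing idea is a global choice of cycles adapted to a spine, as in the paper.
\begin{itemize}
\item Take the $U_1/U_2$-alternating path $P=(b_1,\alpha_1,\beta_1,\dots,\alpha_n,b_2)$. Let $\alpha_\kappa$ be the vertex joined to $b_3$, and let $(\alpha_{\pi(k)},\beta_k)$ be the other $U_3$-edges.
\item Use the $n-1$ cycles $L_k$, each closing one such edge along $P$; each is semi-alternating in $U_1$ or $U_2$.
\item Also use the cycles $C_j$ into which the following union decomposes: these $U_3$-edges, the $U_2$-edges of $P$ left of $\alpha_\kappa$, and the $U_1$-edges of $P$ right of $\alpha_\kappa$. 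Every vertex of this union has degree $2$ with exactly one $U_3$-edge, so each $C_j$ is semi-alternating in $U_3$.
\item Each $U_3$-edge occurs once in an $L_k$ and once in a $C_j$, with opposite signs, so these coefficients cancel.
\item On the spine, let $n^\pm_e$ count the $L_k$ in which $w_e$ appears with sign $\pm$, and let $m_e$ count the $C_j$ containing $e$. The difference $d_e=n^+_e-n^-_e$ satisfies $d_e+d_{e'}=1$ for consecutive edges meeting at a vertex other than $\alpha_\kappa$, $d_e+d_{e'}=0$ at $\alpha_\kappa$, and $d_e=0$ at both ends. This forces $d_e=m_e$.
\end{itemize}
Hence the sum of all these strict inequalities is identically zero, which excludes every configuration with $n>1$.
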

\begin{proof}
  Observe that, up to (possibly multiple) applications of Proposition
  \ref{prop.tradeaforb} above, it is sufficient to consider the last
  case of the table above.

For an edge $e\in H_{\calJ}^*$, we say that $e$ is
\emph{bicolored} if $|\calC(e)|=2$ and \emph{monocolored} if
$|\calC(e)|=1$.  With our labeling, $\calC_{b_1}=\{U_2,U_3\}$ (and so
on), thus $b_1$ must be a leaf of $H_{\calJ}^*$, and the only edge
$e=\edge{a}{b_1}$ in $H_{\calJ}^*$ incident to $b_1$ must have
$\calC(e)=U_1$.  In other words, the three $b_i$'s are leaf vertices,
incident to monocolored edges.  All other vertices of $H_{\calJ}^*$
are either of degree $2$, and incident to one monocolored and one
bicolored edge, or of degree $3$, and incident to monocolored edges
only.

We will perform a series of manipulations on $H_{\calJ}^*$ in order to
simplify its analysis. Let us consider a bicolored edge 
$e \in H_{\calJ}^*$.  
% $e=\{a,b\} \in H_{\calJ}^*$.  
For what we said above,
% its endpoints are not in a $U_j$, and 
such an edge will be necessarily adjacent to two monocolored edges,
say $e_1$ and $e_2$, one per
endpoint,
and 
having the remaining color,
$\calC(e_1)=\calC(e_2)= \calJ\setminx \calC(e)$. We construct a new,
smaller graph
% $\tilde{G}$ and subgraph 
$\tilde{H}_{\calJ}^*$, that coincides with $H_{\calJ}$ everywhere
outside the path $(e_1,e,e_2)$, and has this path replaced by a single
new edge $\tilde{e}$, and state that its weight is given by the
alternating sum of the path, $w_{\tilde{e}}\coloneqq
w_{e_1}-w_e+w_{e_2}$.  Also, we set
$\calC(\tilde{e})=\calC(e_1)=\calC(e_2)$. Pictorially\footnote{Here
  and in the following we will pictorially represent as shaded the
  vertices that might have other incident edges within $H_{\calJ}$,
  and use squares for vertices in $B$, and circles for vertices in
  $A$.}
\begin{equation*}
\if\faifig1  
\begin{tikzpicture}%[baseline={([yshift=-.5ex]current bounding box.center)}]
    \node[circle,inner sep=2pt,fill=white,draw=black!20] (1) at (0,0)   {};
    \node[inner sep=2.5pt,rectangle,fill=gray!50,draw] (2) at (1,0)   {};
    \node[circle,inner sep=2pt,fill=white,draw] (3) at (2,0)   {};
    \node[inner sep=2.5pt,rectangle,fill=gray!30,draw=none] (4) at (3,0)   {};
    \draw[very thick,red] (2) to[out=15, in=165,looseness=0.2,edge node={node [above] {\small\color{black} $w_e$}}] (3);
    \draw[very thick,blue] (2) to[out=-15, in=-165,looseness=0.2] (3);
    \draw[very thick,green!75!blue] (1) to[edge node={node [above] {\small\color{black} $w_{e_1}$}}] (2);
    \draw[very thick,green!75!blue] (3) to[edge node={node [above] {\small\color{black} $w_{e_2}$}}] (4);
  \end{tikzpicture}
\else [...TikZ\ code...] \fi
\mapsto
\if\faifig1  
\begin{tikzpicture}%[baseline={([yshift=-.5ex]current bounding box.center)}]
    \node[circle,inner sep=2pt,,draw=black!20] (1) at (0,0)   {};
    \node[inner sep=2.5pt,rectangle,fill=gray!30,draw=none] (4) at (1,0)   {};
    \draw[very thick,green!75!blue] (1) to[edge node={node [above] {\small\color{black} $w_{e_1}-w_e+w_{e_2}$}}] (4);
  \end{tikzpicture}
\else [...TikZ\ code...] \fi
\end{equation*}
These manipulations are performed in such a way that various
properties of our graph are preserved: alternating sums along paths
and cycles are left unchanged, as well as the property of being
alternating (or semi-alternating) w.r.t.\ certain colors, and the
vertices that have been removed have degree 2, so that 
the pertinent required property (that all vertices in 
$(B \setminx(U_1 \cup U_2 \cup U_3))\cap V_{H_{\calJ}^*}$ have
degree~$2$) holds for $H_{\calJ}^*$ if and only if it holds for
$\tilde{H}_{\calJ}^*$.  Of course, connectivity and bipartition are
also unaffected.

By performing these operations as long as possible, in any order, we
are left with a graph in which no edge is bicolored. And thus, no
vertex is of degree~$2$.  With abuse of language, we will denote this
graph by $H_\calJ^*$ as well.  Thus $H_\calJ^*$ has three leaves,
namely the vertices $\{b_1,b_2,b_3\}$, and all other vertices of
degree~3. Furthermore, the graph $H_\calJ^*$ is bipartite, the edges
are properly 3-colored, and (for this reason, and also because it is
the union of three paths sharing their endpoints) it is connected.

By equating the number of half-edges attached to vertices in $A$ and
in $B$, we get a linear relation between $n=|A|$ and $|B|$, namely
$|B|=n+2$. The content of our statement is that, in fact, $n=1$ (and
$|B|=3$, that is, in the original graph $H^*_\calJ$, there are no
vertices $b \in B$ with degree larger than 2).  In other words, the
graph obtained through our sequence of manipulations is exactly the
following one:
\begin{equation}
\label{gr:triplo}
 H_\calJ^*=
\if\faifig1  
\begin{tikzpicture}[baseline={([yshift=-.5ex]current bounding box.center)}]
    \node[circle,inner sep=2pt,fill=white,draw] (1) at (0,0)   {};
    \node[inner sep=2.5pt,rectangle,fill=gray!50,draw] (2) at (0.25, 0.433013)   {};
    \node[inner sep=2.5pt,rectangle,fill=gray!50,draw] (3) at (0.25, -0.433013)   {};
    \node[inner sep=2.5pt,rectangle,fill=gray!50,draw] (4) at (-0.5,0)   {};
    \draw[very thick,blue] (1) to (2);
    \draw[very thick,red] (1) to (3);
    \draw[very thick,green!75!blue] (1) to (4);
  \end{tikzpicture}
\else [...TikZ\ code...] \fi
\end{equation}
We know that $H_\calJ^*$ cannot have any alternating cycle. Thus, it
must be a ``3-Hamiltonian graph'', in the sense introduced above in
Definition~\ref{def.3ham}.

Let us denote by
\be
P
=
\left(
b_1,\alpha_1,\beta_1,\alpha_2,\dots,\beta_{n-1},\alpha_{n},b_2\right)
% b_1,\alpha_1,\beta_1,\alpha_2,\dots,\beta_{\ell-1},\alpha_{\ell},b_2\right)
%% \left(\{
%% b_1,\alpha_1\},\{\alpha_1,\beta_1\},\{\beta_1,\alpha_2\},\dots,\{\beta_{\ell-1},\alpha_{\ell}\},\{\alpha_\ell,b_2\}\right)
\ee
the unique path in $H_\calJ^*$, alternating in colors $U_1$ and $U_2$
and connecting $b_1$ to $b_2$, that is, we have named
$\{\alpha_i\}_{i=1}^{n}$ and $\{\beta_i\}_{i=1}^{n-1}$ the
$A$-vertices and $B$-vertices of degree 3 of our graph, in the order
induced by the path.

The rest of the graph consists of edges of color $U_3$. One of this
edges connects an $A$-vertex (say, $\alpha_\kappa$, for 
$1\leq \kappa \leq n$) to the leaf $b_3$, while the $n-1$
remaining ones pair the remaining $A$-vertices along the path to the
non-leaf $B$-vertices along the path.  Let us denote by
$\pi\colon[n-1]\to[n]\setminx\{\kappa\}$ the map such that
$\edge{\alpha_{\pi(i)}}{\beta_i}$ is an edge of color $U_3$ in
$H_\calJ^*$.

Representing colors $U_1$, $U_2$ and $U_3$ as blue, red and green,
respectively, an example is the following:
\be
\label{eq:spine}
\if\faifig1  
\begin{split}
&
% [inline block 4: 1 envs, 6764 chars -> data_tex | \begin{tikzpicture}[baseline={([yshift=-.5ex]current bounding         box.center)}]...]

\\
&
\kappa=3\,;\qquad \pi\,:\,(1,2,3,4,5,6) \to (5,7,6,1,2,4)
\end{split}
\else [...TikZ\ code...] \fi
\ee
where we have adopted a useful graphical notation, to draw an arc of
color $U_3$ (in green) above or below the spine, if the $A$-vertex
endpoint is on the right or on the left of the $B$-vertex endpoint,
respectively.

At this point we have exploited at the maximal possible extent the
consequences of the fact that $H_\calJ^*$ has no alternating cycles,
reducing the problem from all viable (connected 3-colored bipartite)
graphs with three leaves to the relatively smaller family of
3-Hamiltonian graphs.  And this has not been sufficient to rule out
all other possibilities besides the small Y-shaped graph with $|A|=1$
in equation (\ref{gr:triplo}), that is the claim of our lemma (for
example, the relatively large graph in the picture above is indeed a
large example of 3-Hamiltonian graph, as it has no alternating cycle
whatsoever, because the alternating path from $b_1$ to $b_3$ visits
all vertices except for $b_2$, and the alternating path from $b_2$ to
$b_3$ visits all vertices except for $b_1$). We shall then use the
information that comes from semi-alternating cycles. Recall (from
Remark \ref{rmk.28976954} on page \pageref{rmk.28976954}) that each
semi-alternating cycle provides a (strict) linear inequality among the
edge weights. A remarkable fact is that, although the family of graphs
of the form above is infinite, we have a general recipe, valid for
each graph except the one in equation (\ref{gr:triplo}), to combine
these linear inequalities (all with coefficient $1$) in order to
provide a contradiction.

We shall now describe this recipe, and start by
listing our choice of semi-alternating cycles.  First, we have
$n-1$ cycles $L_k$, which are exactly the cycles that contain a
single green edge $e^{(k)}=\edge{\alpha_{\pi(k)}}{\beta_k}$ of color $U_3$.
% besides the edge $e_3$ (green in our drawing). 
These cycles contain $e^{(k)}$, and the portion of the path $P$
between $\alpha_{\pi(k)}$ and $\beta_k$.  The cycle $L_k$ is
semi-alternating of color $U_1$ (blue in the drawing) or $U_2$ (red),
if $k>\pi(k)$ or $k\leq \pi(k)$, respectively (that is, if the arc is
drawn above or below the spine, in our graphical notation).

Now let us consider the subgraph consisting of all the $n-1$ edges
$e^{(k)}$, the $\kappa-1$ edges of color $U_2$ (red) on the portion of
path $P$ between $b_1$ and $\alpha_\kappa$, and the $n-\kappa$ edges
of color $U_1$ (blue) on the portion of $P$ between $\alpha_\kappa$
and $b_2$.  All the vertices of this graph have degree 2, and are
incident to exactly one edge of color $U_3$ (green), so this graph is
composed of a collection $C_j$ of cycles, all semi-alternating
on color $U_3$. These cycles are illustrated here for our running
example:
\begin{equation}
\label{eq:spinecycles}
\if\faifig1  
% [inline block 5: 1 envs, 5477 chars -> data_tex | \begin{tikzpicture}[baseline={([yshift=-.5ex]current bounding box.center)}]     \node[label={180:{\small \textcolor{blac...]

\else [...TikZ\ code...] \fi
\end{equation}
(there are two cycles in our example).

Now, let us consider the inequality $I[w]>0$ derived from the sum of
the inequalities associated to all the semi-alternating cycles $L_k$
and $C_j$.
% those associated to the semi-alternating cycles 
Our goal is to prove that this inequality cannot be satisfied, because
the expression $I[w]=\sum_e c_e w_e$, with $c_e \in \mathbb{Z}$, is in
fact identically zero, i.e.\ all the $c_e$'s are zero.

An easy observation is that the weights of the edges incident to the
three leaves $b_i$ 
%$w_{e_3}$
do not appear in any of our inequalities, as these edges are just in
no cycle whatsoever.
%  $e_3$ is adjacent to a leaf so it is not in any cycle.

Each edge $e^{(k)}$ of color $U_3$ (other than $e_3$) appears in the
inequality for $L_k$, and in exactly one inequality for the $C_j$'s,
and with opposite sign (because all the $C_j$'s are semi-alternating
in color $U_3$, and the $L_k$'s are semi-alternating in colors $U_1$
or $U_2$), so that the corresponding coefficient cancels out in the
expression~$I[w]$.

Then, for an edge $e \in P$, let us call $n_e^{\pm}$ the number of
times that $w_e$ occurs in an inequality of type $L_k$, with sign
$\pm$, and $m_e$ the number of times that $w_e$ occurs in an
inequality of type $C_j$ (when it appears, it always does with
coefficient $-1$). We have 
$m_{\edge{\alpha_i}{\beta_i}}=1$ if $i<\kappa$,
$m_{\edge{\beta_i}{\alpha_{i+1}}}=1$ if $i\geq \kappa$, and $m_e=0$ for
all other edges $e$ in $P$ (that is, edges $e$ of color $U_2$ left of
$\alpha_\kappa$ and of color $U_1$ right of $\alpha_\kappa$ have
$m_e=1$, while edges of color $U_1$ left of $\alpha_\kappa$ and of
color $U_2$ right of $\alpha_\kappa$ have $m_e=0$). Then, if $e$ and
$e'$ are consecutive along $P$, with $e$ on the left, and not the two
edges incident to $\alpha_\kappa$, we have that they appear in the
same list of cycles $L_k$ (and with opposite sign), except for the
unique cycle associated to the edge $e^{(k)}$ incident to both $e$ and
$e'$.  This implies the relations
\begin{enumerate}
\item $n_{e'}^+=n_e^-$ and $n_{e'}^-=n_e^+$ if $e$ and $e'$ are the edges incident to $\alpha_\kappa$;
\item $n_{e'}^+=n_e^-+1$ and $n_{e'}^-=n_e^+$ if $e$, $e'$ are incident to $\alpha_i$ for $i\neq \kappa$, and $\pi(i)\geq i$;
\item $n_{e'}^+=n_e^-$ and $n_{e'}^-=n_e^+-1$ if $e$, $e'$ are incident to $\alpha_i$ for $i\neq \kappa$, and $\pi(i)<i$;
\item $n_{e'}^+=n_e^-$ and $n_{e'}^-=n_e^+-1$ if $e$, $e'$ are incident to $\beta_i$ and $\pi^{-1}(i)>i$;
\item $n_{e'}^+=n_e^-+1$ and $n_{e'}^-=n_e^+$ if $e$, $e'$ are incident to $\beta_i$ and $\pi^{-1}(i)\leq i$.
\end{enumerate}
This pattern is illustrated in the picture below, where the number of
cyan/orange dots on the same column of the edge $e$ correspond to the
value of $n_e^+$ and $n_e^-$, respectively, and the cyan dots above
the line correspond to $m_e=1$:
\begin{equation}\label{eq:spinesign}
\if\faifig1
% [inline block 6: 1 envs, 8846 chars -> data_tex | \begin{tikzpicture}[baseline={([yshift=-.5ex]current bounding box.center)}]     \node[label={180:{\small $b_1$}},inner s...]

\else [...TikZ\ code...] \fi
\end{equation}
Combining (2) and (3) gives that
$n_{e'}^+-n_{e'}^-=-(n_{e}^+-n_{e}^-)+1$, if $e$, $e'$ are incident to
$\alpha_i$ for $i\neq \kappa$, regardless from $\pi$, while combining
(4) and (5) gives that $n_{e'}^+-n_{e'}^-=-(n_{e}^+-n_{e}^-)+1$, if
$e$, $e'$ are incident to $\beta_i$, regardless from $\pi$. But the
sequence $f_n + f_{n+1} = 1$ is the string
$(\ldots,0,1,0,1,0,1,\ldots)$ provided that a single value $f_0$ is
zero. Using the initial condition that $n_{e}^+-n_{e}^-=0$ on the
leftmost and rightmost edges of the path, gives that
$n_{e}^+-n_{e}^-=m_e$ for all edges of $P$, that is restated in the
fact that the coefficient of the corresponding weight $w_e$ cancels
out in the expression $I[w]$. As a result, as a function of $w$, we
have $I[w]=0$, which is in contradiction with the fact that $I[w]$ is
a non-trivial positive sum of strict inequalities.

Having discarded all possibilities with $n>1$, we are left with $n=1$,
i.e.\ there are no $B$-vertices of degree 3, as was to be proven.
\end{proof}

\begin{figure}[t]
\[
% \label{eq:spine}
\if\faifig1  
% [inline block 7: 1 envs, 7075 chars -> data_tex | \begin{tikzpicture}[baseline={([yshift=-.5ex]current bounding box.center)}] \node[label={90:{\small $\alpha$}},circle,in...]

\else [...TikZ\ code...] \fi
\]
% OSSERVAZIONE DI LATEX
% il tex distanzia i + in maniera diversa in espressioni tipo "+y" e
% "x+y", quindi nei phantom BISOGNA aggingere dei simboli di larghezza
% nulla {} per forzare l'altro tipo di spacing.
\begin{align*}
L_1:&& 
\goblue{b+d+f+h\phantom{{}+j+l}}&<\mathmakebox[12pt][c]{\alpha}\gored{\phantom{{}+a}+c+e+g}
\\
L_2:&& 
\goblue{\phantom{b+{}}d+f+h+j+l}&<\mathmakebox[12pt][c]{\beta}\gored{\phantom{{}+a+c}+e+g+i+k}
\\
L_3:&&
\goblue{\phantom{b+d+{}}f+h+j\phantom{{}+l}}&<\mathmakebox[12pt][c]{\gamma}\gored{\phantom{{}+a+c+e}+g+i}
\\
L_4:&& 
\gored{a+c+e+g\phantom{{}+i+k}}&<\mathmakebox[12pt][c]{\delta} \goblue{{}+b+d+f}
\\
L_5:&&
\gored{\phantom{a+{}}c+e+g+i\phantom{{}+k}}&<\mathmakebox[12pt][c]{\epsilon}
\goblue{\phantom{{}+b}+d+f+h}
\\
L_6:&&
\gored{\phantom{a+c+e+{}}g+i+k}&<\mathmakebox[12pt][c]{\zeta} \goblue{\phantom{{}+b+d+f}+h+j}
\\
C_1:&&
\alpha+\delta&<\goblue{h}+\gored{a}
\\
C_2:&&
\beta+\zeta+\gamma+\epsilon&<
\goblue{l}+\goblue{f}+\goblue{j}+\gored{c}
\\
\text{\raisebox{-.5pt}{\scalebox{1.3}{$\Sigma$}}}_j L_j
+
\text{\raisebox{-.5pt}{\scalebox{1.3}{$\Sigma$}}}_k C_k
% \textrm{\textstyle{$\sum_j C_j + \sum_k L_k$}} 
:&&
0&<0
\end{align*}
\caption{Illustration of the expression $I[w]$ for our running
  example.}
% The sum of the inequalities above produces the contra\-diction~$0<0$.}
\end{figure}

\noindent
Once the fact above for sets $\calJ$ of cardinality 3 has been
established, we can deduce similar consequences on settings of more
practical interest, where $|\calJ|$ is arbitrarily large. We start by
proving a structure theorem on sets $\calJ$:
\begin{lem}
\label{lem.formaJ}
Let $\bipg{G}{A}{B}{E}$ be a bipartite graph, and $\calJ$ a
single-path\footnote{I.e., for all $U_i$, $U_j \in \calJ$ we have
  $|U_i \symdif U_j|=2$, as in Definition~\ref{def.singlepath}.}
collection of cardinality at least 2 such that $\bigcap_{U \in \calJ}
U=\varnothing$, and for all $U_i$, $U_j$, $U_k \in \calJ$
% and, for all subsets $\calJ' \subseteq \calJ$ of cardinality 3, 
the hypotheses of Lemma \ref{lem:tripla} hold. Then there exist a set
$A_0 \subseteq A$ and a set $B_0 \subseteq B$, such that
$|\calJ|=|A_0|+|B_0|$, and the collection $\calJ$ is of the form
\begin{equation}
\label{eq.3478687645}
\calJ = \{ \{a\} \cup B_0 \}_{a \in A_0} \cup
\{ B_0 \setminx b \}_{b \in B_0}
.
\end{equation}
\end{lem}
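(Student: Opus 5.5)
The proof is purely combinatorial: the weights play no role, and the claim follows from the single-path condition together with the triple condition of Lemma~\ref{lem:tripla} and $\bigcap_{U\in\calJ}U=\varnothing$.

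\emph{Step 1: a size constraint.} Since every $U\in\calJ$ is an admissible removal of the bipartite graph $G$, the quantity $d(U)\coloneqq|U\cap A|-|U\cap B|$ must equal $|A|-|B|$, so it is the same for all $U\in\calJ$. Combining this with $|U_i\symdif U_j|=2$, for any two distinct sets in $\calJ$ there are only two possibilities:
\begin{itemize}
\item they have the same size and differ by swapping one vertex of the same class, $U_j=U_i\setminx x\cup y$ with $x,y$ both in $A$ or both in $B$; or
\item they differ in size by two and $U_j=U_i\cup\{a,b\}$ with $a\in A$, $b\in B$.
\end{itemize}

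\emph{Step 2: restrict to triples.} Take any three sets $U_i,U_j,U_k\in\calJ$, put $U=U_i\cap U_j\cap U_k$, and write the reduced sets $U'$ as in Lemma~\ref{lem:tripla}. By hypothesis the reduced triple matches one of the four rows of that table, up to renaming the labels. The aim is to show that every triple has the form predicted by \eqref{eq.3478687645} relative to a global $A_0$ and $B_0$. Each row of the table fits this description with a local $A_0$ and $B_0$:
\begin{itemize}
\item row 1: $A_0=\{a_1,a_2,a_3\}$, $B_0=\varnothing$;
\item row 2: $A_0=\{a_1,a_2\}$, $B_0=\{b_3\}$;
\item row 3: $A_0=\{a_1\}$, $B_0=\{b_2,b_3\}$;
\item row 4: $A_0=\varnothing$, $B_0=\{b_1,b_2,b_3\}$.
\end{itemize}
In each row the common intersection $U$ is an extra part, shared by the whole triple, that must be accounted for.

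\emph{Step 3: globalise.} Define $A_0$ as the set of $A$-vertices lying in some $U\in\calJ$. Define $B_0$ as the set of $B$-vertices lying in all but at most one member of $\calJ$. Then show:
\begin{itemize}
\item every $U$ contains at most one $A$-vertex; otherwise some pair or triple would produce a table row with two $A$-vertices in a single $U'_i$, which does not occur;
\item a $B$-vertex belongs either to every set, or to exactly $|\calJ|-1$ of them, or to none.
\end{itemize}
For the second point, suppose $b$ lies in $U_1$ but is missing from both $U_2$ and $U_3$. Then $b\in U'_1$ and $b\notin U'_2\cup U'_3$, and comparison with the table shows this happens only in rows where $U'_1$ is a single $A$-vertex, so no $B$-vertex sits alone; this gives a contradiction. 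The hypothesis $\bigcap_{U\in\calJ}U=\varnothing$ excludes the case of a vertex in every set. So each $B$-vertex in $V_\calJ$ is missing from exactly one set; these vertices form $B_0$, and the set missing $b$ is $B_0\setminx b$.

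The sets that contain all of $B_0$ must then each contain exactly one extra $A$-vertex. They cannot contain none, since $d(U)$ is constant across $\calJ$ while $d(B_0)\neq d(B_0\setminx b)$. This yields the form $\{a\}\cup B_0$. Distinctness of the sets in $\calJ$ gives the bijection with $A_0\sqcup B_0$, and hence $|\calJ|=|A_0|+|B_0|$.

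The main obstacle I expect is the bookkeeping in Step 3: using only triple information, one has to exclude a $B$-vertex that is missing from two or more sets, and an $A$-vertex that appears in two sets. When $|\calJ|=2$ there is no triple to draw on, so that case must be handled directly: there $B_0$ has size $0$, $1$ or $2$, and the single-path condition forces the pair to be $\{\{a_1\},\{a_2\}\}$, $\{\{a\}\cup\{b\},\varnothing\}$ or $\{\{b_1\},\{b_2\}\}$ up to the global intersection, which must be empty.
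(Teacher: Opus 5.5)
Your use of the triple table works as far as it goes. It shows that no $U$ contains two $A$-vertices, provided you choose the triple so that neither vertex lies in its common intersection; the empty-intersection hypothesis allows this, as in the paper. It also shows, via the fact that no row has a $B$-vertex in exactly one of the three sets, that every $B$-vertex of $V_\calJ$ is missing from exactly one member of $\calJ$.

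The gap is the next sentence, \emph{the set missing $b$ is $B_0\setminx b$}. What you proved is a condition on each vertex $b$. The conclusion also needs a condition on each set $U$:
\begin{itemize}
\item $U$ misses at most one vertex of $B_0$;
\item a set missing some $b$ contains no $A$-vertex.
\end{itemize}
You give no argument excluding a member of $\calJ$ that is the unique set missing both $b$ and $b'$, or a member of the form $\{a\}\cup(B_0\setminx b)$. Your final paragraph compares $d(B_0)$ with $d(B_0\setminx b)$, so it already presupposes that some member equals $B_0\setminx b$. These two points are exactly where the paper's proof does its work: the bound $|\tilde B_0\setminx(U\cap B)|\le 1$ for every $U$, and the exclusion of the $(1,1)$ pattern.

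The gap can be closed with your own Step~1 invariant. Let $m_U$ be the number of vertices of $B_0$ missing from $U$, and let $\alpha_U=|U\cap A|\le 1$.
\begin{itemize}
\item Constancy of $d$ gives $\alpha_U+m_U=K$ for a constant $K$.
\item Distinct members miss disjoint subsets of $B_0$, and $U\cap B\subseteq B_0$. Hence $|U\symdif U'|=m_U+m_{U'}+|(U\symdif U')\cap A|$.
\item If $K\ge 2$, every $m_U\ge 1$. The single-path condition then forces $m_U=1$ and equal $A$-parts for all pairs. So $\alpha_U=1$ for every $U$, and a common $A$-vertex lies in every member, contradicting $\bigcap_{U\in\calJ}U=\varnothing$.
\item If $K=0$, every member equals $B_0$, contradicting $|\calJ|\ge 2$.
\end{itemize}
So $K=1$, which is exactly the dichotomy $U=\{a\}\cup B_0$ or $U=B_0\setminx b$. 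This also covers $B_0=\varnothing$, where your comparison of $d(B_0)$ with $d(B_0\setminx b)$ has nothing to compare with.

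One smaller point: for $|\calJ|=2$, the single-path condition alone does not rule out pairs such as $\{\{a\},\{b\}\}$ or $\{\{a_1,a_2\},\varnothing\}$. It is the constancy of $d$ that does.
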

\begin{proof}
First of all, observe that the construction given above in
(\ref{eq.3478687645}) is easily verified to be compatible with the
requirements.
% table. 
We shall prove that no other structures are possible.

The case $|\calJ|=2$ is steadly verified.
%% Remark again that, for the hypotheses of Lemma \ref{lem:tripla} to
%% hold (that is, such that all triples of removals have the structure of
%% one of the rows of the table), we shall have that the ``Hamming
%% distance'' among any two distinct removals is exactly 2, that is,
%% $|U_i\symdif U_j|=2$ for all
%% %
%% $U_i$, $U_j \in \calJ$ with $i\neq j$. Then, 

Then, for $|\calJ|\geq 3$, observe that, in all four cases of the
table in Lemma \ref{lem:tripla}, an $A$-vertex appears in a single set
(or in all three sets, or none), and a $B$-vertex appears in two sets
(or in all three sets, or none).

Let us show that $|U_i \cap A| \leq 1$ for all $i$.  Assume, for the
sake of contradiction, that there exists a set $U \in \calJ$, and two
distinct elements $a_1$, $a_2$ in $U$. As $\bigcap_{U \in J}
U=\varnothing$, there must be a set $U'$ that contains only $a_1$ and
a set $U''$ that contains only $a_2$, or a set $U'$ that does not
contain neither $a_1$ nor $a_2$, and some other set $U''$ (because
$|\calJ|\geq 3$). In both cases, the triple $\{U,U',U''\}$ has a
structure incompatible with the table.

% We can tentatively set $A_0=(\bigcup_{U \in \calJ} U) \cap A$, and
% $B_0=(\bigcup_{U \in \calJ} U) \cap B$.

Analogously, let us define $\tilde{B}_0=(\bigcup_{U \in \calJ} U) \cap
B$, and show that $|\tilde{B}_0 \setminx (U_i \cap B)| \leq 1$ for all
$i$.
%  (for $B_0$ defined as above). 
Suppose, for a contradiction, that
there exists a set $U \in \calJ$, and two distinct
elements $b_1$, $b_2$ in $\tilde{B}_0$ but not in $U$. The definition
of $\tilde{B}_0$ implies that there must be a set $U'$ that contains
only $b_1$ and a set $U''$ that contains only $b_2$, or a set $U'$
that contains both $b_1$ and $b_2$, and some other set $U''$ (because
$|\calJ|\geq 3$). In both cases, the triple $\{U,U',U''\}$ has a
structure incompatible with the table.

Finally, let us show that, for all $U$, the pair of integers 
$(|U_i \cap A|,|\tilde{B}_0 \setminx (U_i \cap B)|)$ can only be
$(1,0)$ or $(0,1)$. As we know that $|U_i \symdif U_j|=2$ for all
$i\neq j$, either all sets have pair $(1,0)$ or $(0,1)$, or all sets
have pair $(0,0)$ or $(1,1)$.

According to our definitions, there is at most a single distinct set
$U$ with pair $(0,0)$. So there must exist two sets $U'$ and $U''$
with pair $(1,1)$, and such that $U' \symdif U''$ is either of the
form $\{a_1,a_2\}$, or of the form $\{b_1,b_2\}$, thus the triple
would be either of the form $(\{b\},\{a_1\},\{a_2\})$, or of the form
$(\{b_1,b_2\},\{a,b_1\},\{a,b_2\})$ none of which is compatible with
the table.

So we are only left with the case in which all the pairs are $(1,0)$
or $(0,1)$. In this case we see that $\calJ$ must be a subset of the
list given in equation (\ref{eq.3478687645}) with
$B_0=\tilde{B_0}$. Also, if any element of the list is missing, we
would have a contradiction either with the definition of $B_0$, or
with the fact that $\bigcap_{U \in J} U=\varnothing$. This allows us
to conclude.
\end{proof}
\noindent
% Now we are ready to establish the first theorem of this paper:
%
% Having established the fact above for sets $\calJ$ of cardinality 3
% has an immediate consequence on settings of more practical interest:
We can then establish
\begin{lem}
\label{lem:co2}
Let $\bipg{G}{A}{B}{E}$ be a generic-weighted bipartite graph, and $\calJ$
as in Lemma \ref{lem.formaJ} (in particular, as in equation
(\ref{eq.3478687645})).  Then the graph $H_{\calJ}^*$ is 
connected,
$A_0 \cup B_0 \subseteq V_{H_{\calJ}^*}$,
% a tree, 
and all vertices in $(B \setminx B_0) \cap V_{H_{\calJ}^*}$
% $(B \setminx \bigcup_{U \in \calJ} U)\cap V_{H_{\calJ}^*}$
have degree $2$, while all vertices in $B_0$ are leaves.
% $J\subseteq C(G)\subseteq A$ with $|J|\geq 2$. The $B$-vertices in
% $H_{\calJ}^*$ have coordination $2$.
\end{lem}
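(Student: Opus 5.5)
The plan is to derive every claim from two facts: the local partition of colors at each vertex, and Corollary~\ref{cor.coppia} and Lemma~\ref{lem:tripla} applied to pairs and triples of elements of $\calJ$. Throughout, $\calJ$ has the form (\ref{eq.3478687645}), so $V_\calJ=A_0\cup B_0$. Recall that for every vertex $v$ the sets $\calC_v$ and $\{\calC(e)\}_{e\sim v}$ partition $\calJ$.

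\emph{Step 1: vertices of $A_0\cup B_0$.} For $b\in B_0$, the form (\ref{eq.3478687645}) shows that $b$ lies in every $U\in\calJ$ except $B_0\setminx b$. Hence $\calC_b=\calJ\setminx\{B_0\setminx b\}$, and $b$ is covered by exactly one matching, namely $M_{B_0\setminx b}$. So $b$ has degree exactly $1$ in $H_\calJ$. Since $b\in V_\calJ$, its component belongs to $H^*_\calJ$, and $b$ is a leaf of it. Similarly, $a\in A_0$ lies only in $\{a\}\cup B_0$. Since $|\calJ|\geq 2$, $a$ is covered by some matching, so it has positive degree and lies in $V_{H^*_\calJ}$. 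This proves $A_0\cup B_0\subseteq V_{H^*_\calJ}$ and that the vertices of $B_0$ are leaves.

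\emph{Step 2: connectivity.} For any $U\neq U'$ in $\calJ$, the set $U\symdif U'$ consists of two elements of $A_0\cup B_0$. The possible pairs are $\{a,a'\}$, $\{a,b\}$ and $\{b,b'\}$, so every pair of elements of $A_0\cup B_0$ arises. By Corollary~\ref{cor.coppia}, $M_U\symdif M_{U'}\subseteq H_\calJ$ contains a path joining the two elements of $U\symdif U'$. Therefore all of $A_0\cup B_0$ lies in a single component of $H_\calJ$. By definition every component of $H^*_\calJ$ meets $V_\calJ=A_0\cup B_0$, so $H^*_\calJ$ is connected.

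\emph{Step 3: degree $2$ for $b\in(B\setminx B_0)\cap V_{H^*_\calJ}$.} Such a $b$ is in no $U$, so the colors of its incident edges partition all of $\calJ$.

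For the upper bound, suppose $b$ had three incident edges, carrying colors $U_1,U_2,U_3$ respectively. The triple $\{U_1,U_2,U_3\}$ satisfies the hypotheses of Lemma~\ref{lem:tripla}, and $b$ has degree $3$ in $H_{\{U_1,U_2,U_3\}}$. By Lemma~\ref{lem:T}, a vertex of degree $3$ cannot belong to an isolated dimer, so $b$ lies in $H^*_{\{U_1,U_2,U_3\}}$. Since $b\notin U_1\cup U_2\cup U_3$, Lemma~\ref{lem:tripla} forces its degree there to be $2$, a contradiction.

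For the lower bound, suppose $b$ had degree $1$, via an edge $e=\edge{a}{b}$ with $\calC(e)=\calJ$. Then $\calC_a=\varnothing$, so $a\notin A_0$, and the partition at $a$ forces $e$ to be the only edge at $a$. Thus $e$ is an isolated dimer, contradicting $b\in V_{H^*_\calJ}$.

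The only delicate point I expect is the upper bound in Step~3. There one must check that the degree-$3$ vertex really lies in the nontrivial part of the triple's matching subgraph, which I resolve via Lemma~\ref{lem:T}. One must also check that the relevant sub-triple satisfies the table of Lemma~\ref{lem:tripla}. This holds by the hypothesis inherited from Lemma~\ref{lem.formaJ}, since it requires the table condition for every triple, regardless of the intersection $U$.
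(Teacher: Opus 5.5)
Your proof is correct and follows essentially the same route as the paper. It bounds the degree above by restricting to a triple of colors and invoking Lemma~\ref{lem:tripla}, gets the leaf versus degree-$2$ dichotomy from the color partition at each vertex, and obtains connectivity from the single alternating paths between pairs of removals. You also make explicit two points the paper leaves implicit: you use Lemma~\ref{lem:T} to place $b$ in $H^*_{\{U_1,U_2,U_3\}}$, and you check that $A_0\cup B_0\subseteq V_{H^*_{\calJ}}$.
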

\begin{proof} 
The case $|\calJ|=2$ is obvious, and the case $|\calJ|=3$ is the
statement of Lemma~\ref{lem:tripla}. Now, if $|\calJ|\geq 4$, we shall
prove connectivity and the fact that all $B$-vertices in $H_\calJ^*$
not in a $U_j$ are of degree $2$. 
% We shall again work, with no loss of generality, in the setting in
% which the $U_j$'s contain only $B$-vertices.  
By contradiction, if $H_\calJ^*$ has a $B$-vertex $b$ of
degree 3 or larger, let $e_1$, $e_2$ and $e_3$ be distinct edges
incident to $b$, and (up to renaming the $U_j$'s) $U_i \in \calC(e_i)$
for $i=1,2,3$. Then, calling $\calJ'=\{U_1,U_2,U_3\}$, we have that
$H_{\calJ'}^* \subseteq H_\calJ^*$, and that $b$ has degree $3$
already in $H_{\calJ'}^*$, which is not possible in light of
Lemma~\ref{lem:tripla}, and the fact that adding more colors increases
the quantities $\deg_{H_\calJ}(v)$.

Let us determine which vertices have degree 1, and which have degree
2, in light of the structure theorem above, Lemma~\ref{lem.formaJ}. 
A vertex $b \in B_0$ is present in $G_U$ only for the corresponding
set $U=B_0\setminx b$, so $\calC_b=\calJ\setminx U$ and there exists
a single edge $e$, incident to $b$, with $\calC(e)=\{U\}$, so $b$ is a
leaf. A vertex $b \not\in B_0$ is present in all $G_U$'s, so
$\calC_b=\varnothing$. If it has degree 1, then there is an incident
edge $e$ such that $\calC(e)=\calJ$, so $e$ is an isolated edge of
$H_\calJ$, and thus it is not in $H_\calJ^*$. So it must have degree 2.

Now, for connectivity, let us apply again the structure theorem, Lemma
\ref{lem.formaJ}. As we know by definition that any vertex of
$H^*_\calJ$ is connected to some vertex in $V_{\calJ}$,
it suffices to prove that all vertices in 
$A_0 \cup B_0$ are in the same component. Let $a_1$ and $a_2$ be
distinct elements of $A_0$. Then, calling $U_1=\{a_1\} \cup B_0$ and
$U_2=\{a_2\} \cup B_0$, we have that $M_{U_1} \symdif M_{U_2}$
consists of a path (of even length) connecting $a_1$ and $a_2$, and of
course $M_{U_1} \symdif M_{U_2} \subseteq M_{U_1} \cup M_{U_2}
\subseteq H_{\calJ}$, while the path must be inside $H_{\calJ}^*$ by
definition of $H_{\calJ}^*$. Similarly, if $b_1$ and $b_2$ are
distinct elements of $B_0$, calling $U'_1=B_0 \setminx b_1$ and
$U'_2=B_0 \setminx b_2$, we have that $M_{U'_1} \symdif M_{U'_2}$
consists of a path (of even length) connecting $b_2$ and $b_1$, while
if $a_1$ is in $A_0$ and $b_1$ is in $B_0$, we have that $M_{U_1}
\symdif M_{U'_1}$ consists of a path (of odd length) connecting $a_1$
and $b_1$.
\end{proof}
\noindent
Now we are ready to establish the first of the `theorems' mentioned in
the title of this paper:
\begin{thm} 
In the conditions of Lemma \ref{lem:co2}, namely, for a triple
$(G,w,\calJ)$, with $\bipg{G}{A}{B}{E}$ bipartite, generic weights, and
$\calJ$ of the form $\calJ = \{ \{a\} \cup B_0 \}_{a \in A_0} \cup \{
B_0 \setminx b \}_{b \in B_0}$ for some $A_0 \subseteq A$ and $B_0
\subseteq B$,
% given by equation (\ref{eq.3478687645}))
% Assume $ J\subseteq C$ with $| J|\geq 2$. 
the subgraph $H_{\calJ}^*$ is a tree. If $A_0=A$, then
$H_{\calJ}=H_{\calJ}^*$ is a spanning tree of $G$.
\label{thm:Jtree}
\label{th:spantree}
\end{thm}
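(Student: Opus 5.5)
Given Lemma~\ref{lem:co2}, connectivity of $H_\calJ^*$ is already known, so the plan is to prove acyclicity by counting: a connected graph is a tree iff $|E|=|V|-1$. The degree information of Lemma~\ref{lem:co2} gives the $B$-side of the count directly. Let $H=H_\calJ^*$, $V_A=V_H\cap A$, $V_B=V_H\cap B$. Since $H$ is bipartite, $|E_H|=\sum_{b\in V_B}\deg(b)$. By Lemma~\ref{lem:co2}, vertices of $B_0$ are leaves and vertices of $V_B\setminx B_0$ have degree $2$, so
\[
|E_H| = |B_0| + 2\,(|V_B|-|B_0|) = 2|V_B|-|B_0|.
\]
It therefore suffices to show $|V_A| = |V_B| - |B_0| + |A_0| - 1$, i.e.\ the number of $A$-vertices equals the number of degree-$2$ $B$-vertices plus $|A_0|-1$. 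Then $|E_H|=|V_A|+|V_B|-1$.

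To obtain this relation I would use a single matching. Fix any $U\in\calJ$ and look at $M_U$ restricted to $V_H$. Each edge of $M_U$ either lies in $H$ or is an isolated dimer of $H_\calJ$ (Lemma~\ref{lem:T}). An isolated dimer has both endpoints outside $V_H$, since a component of $H_\calJ$ is either in $H$ or not. Hence $M_U$ restricted to $H$ is a perfect matching of $V_H\setminx U$, which gives $|V_A\setminx U|=|V_B\setminx U|$. For $U=\{a\}\cup B_0$ with $a\in A_0$ (using $A_0\cup B_0\subseteq V_H$ from Lemma~\ref{lem:co2}), this yields $|V_A|-1=|V_B|-|B_0|$. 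For $U=B_0\setminx b$ it yields $|V_A|=|V_B|-|B_0|+1$. Both agree with the required identity in the respective cases $A_0\neq\varnothing$ and $A_0=\varnothing$, namely $|V_A|=|V_B|-|B_0|+|A_0|-1$, \emph{provided} $|A_0|\in\{0,1\}$.

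Here is the obstacle. The $U$-matching count alone does not capture $|A_0|>1$, so the degree sum must be redone more carefully: $A_0$-vertices are not assumed to be of degree $1$, and the count above must hold for general $A_0$. Redoing the count: $|E_H|=2|V_B|-|B_0|$ and $|V_A|=|V_B|-|B_0|+[A_0\neq\varnothing]$, so $|E_H|-|V_H|=2|V_B|-|B_0|-|V_B|-|V_B|+|B_0|-[A_0\ne\varnothing]=-[A_0\neq\varnothing]$. That equals $-1$ when $A_0\neq\varnothing$. When $A_0=\varnothing$ the matching count gives $|V_A|=|V_B|-|B_0|+1$, and then $|E_H|-|V_H|=-1$ as well. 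So in all cases $|E_H|=|V_H|-1$, and together with connectivity $H$ is a tree. The delicate point is only the justification that $M_U$ restricts to a perfect matching of $V_H\setminx U$, which follows from the component structure described above. The degenerate case $|\calJ|\le 1$ needs a separate trivial check.

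For the second claim, suppose $A_0=A$. Every $a\in A$ lies in some $U$, so $V_\calJ\supseteq A$. Any isolated dimer of $H_\calJ$ would contain an $A$-vertex of $V_\calJ$, which is impossible by the definition of $H^*_\calJ$. Hence $H_\calJ=H^*_\calJ$. This graph contains all $A$-vertices. It also contains all $B$-vertices: each $b\notin B_0$ is matched in every $M_U$, hence lies in $H_\calJ$, and $B_0\subseteq V_H$ by Lemma~\ref{lem:co2}. So $H_\calJ$ is a spanning tree of $G$.
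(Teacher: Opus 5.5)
Your argument is correct and is essentially the paper's first proof. You count the edges of $H^*_\calJ$ through the $B$-degrees given by Lemma~\ref{lem:co2}, obtain a vertex-balance identity from a perfect matching, and conclude from connectivity plus $|E|=|V|-1$. The only difference is that you get the balance by restricting a single $M_U$ to $H^*_\calJ$, whereas the paper counts globally in $G$ and subtracts the isolated dimers.

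The ``obstacle'' in your write-up, however, comes from an arithmetic slip and is not real. From $|E_H|=2|V_B|-|B_0|$, the identity you need is $|V_A|=|V_B|-|B_0|+1$, which does not involve $|A_0|$. Every $U\in\calJ$, of either type, yields exactly this identity. So you should delete the formula with $|A_0|-1$ and the formula with $[A_0\neq\varnothing]$; the latter is in fact false when $A_0=\varnothing$.
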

\noindent
We will give two different proofs.\footnote{One proof is quite short
  and is based on the classical Euler formula. The second one,
  instead, is based on a structural argument that is purely local. In
  this sense it may be considered more satisfactory, as local
  arguments are intrinsecally more robust than proofs based on a
  global numerical balance.}
\begin{proof}[First proof]
Note that we must have that $\calM(G_U) \neq \varnothing$ for all
$U \in \calJ$, and in particular, as the $G_U$'s are all bipartite,
these graphs must have as many $A$-vertices as $B$-vertices, that implies
$|A|-1=|B|-|B_0|=n$ for some $n$. Let
% Let assume as above $|B|=|A|-1=n$ and let 
$n_d$ be the number of isolated dimers in $H_{\calJ}$. It follows that
the number $n_B$ of $B$-vertices of degree $2$ in $H_{\calJ}^*$ is
$n_B=n-n_d$, while the vertices in $B_0$ are all in $H_{\calJ}^*$, and
have degree 1. Then, the number of $A$-vertices in $H_{\calJ}^*$ is
$n_A=n+1-n_d=n_B+1$.  Counting the number of edges as 
\be
|E_{H_{\calJ}^*}|=\sum_{b \in V_{H_{\calJ}^*} \cap B}
\deg_{H_{\calJ}^*}(b)
\ee
we get $|E_{H_{\calJ}^*}|=|B_0|+2 n_B$.
% (n-n_d)$.
On the other side $|V_{H_{\calJ}^*} \cap B| = |B_0|+n_B$ and
$|V_{H_{\calJ}^*} \cap A| = n_B+1$, that is 
$|V_{H_{\calJ}^*}| = |B_0|+2n_B+1$. As $H_{\calJ}^*$ is connected, by
Euler formula $H_{\calJ}^*$ is a tree. If $A_0=A$, we have 
$A \subseteq V_{H_{\calJ}^*}$. As $G$ is bipartite, there cannot be
any isolated dimer in $H_{\calJ}\setminx H_{\calJ}^*$ (because one of
its endpoints would be an $A$-vertex), thus $H_{\calJ}=H_{\calJ}^*$
and $V_{H_{\calJ}}=V_G$.
\end{proof}

\begin{proof}[Second proof]
Let us now provide a second proof, not relying on the Euler
formula. We will only use that $H_\calJ^*$ is connected, and all the
$B$-vertices have degree 1 or 2.
%  In light of Proposition \ref{prop.tradeaforb}, we can assume that,
%  for some set $B_0$, we have $\calJ=\{ B_0 \seminx b \}_{b \in B_0}$.
What is left to prove is that $H_\calJ^*$ has no cycles. If
$H_\calJ^*$ consists of a Hamiltonian cycle, there is some partition
$\calJ=J_1 \cup J_2$ such that the sets $\calC(e)$ along the cycle are
alternately $J_1$ and $J_2$, so $H_\calJ^*$ would be an alternating
cycle for some pair of colors $U_1 \in J_1$ and $U_2 \in J_2$, which
is not allowed. Thus, $H_\calJ^*$ has a cycle of length $2\ell$, with
edges $(e_1,e_2,\ldots,e_{2\ell})$, containing some $A$-vertex $a$ of
degree larger than 2, say between edges $e_{2\ell}$ and $e_1$.  The
fact that $a$ has degree larger than 2 implies that there is some
color $U \in \calJ$ which is contained in a set $\calC(e)$ for some
$e$ incident to $a$, and not in the cycle. Now, this color is neither
in $\calC(e_{2\ell})$ nor in $\calC(e_1)$, but it is in
$\calC(e_{2\ell-1})$ and in $\calC(e_2)$, because the vertices between
$e_{2j-1}$ and $e_{2j}$ are $B$-vertices, and have degree
2. Similarly, it is not in $\calC(e_{2\ell-2})$ nor in $\calC(e_3)$,
but it is in $\calC(e_{2\ell-3})$ and in $\calC(e_4)$, and so
on. Given the parity of the cycle length, this gives a contradiction.
As we have found a contradiction, we have determined that $H_\calJ^*$ has no
cycles, thus it is a tree.
\end{proof}

\begin{rmk}
\label{rmk.reconstr}
In the conditions of the theorem above, the datum of $G$, $\calJ$ and
$H_\calJ$ allows to determine the sets $\{\calC(e)\}_{e\in E}$ and
$\{\calC_v\}_{v\in V}$ (even without knowing~$w$).

This is similar to the fact that we can prune completely a tree by
removing the leaves in any order. Indeed, let us initialize a set of
auxiliary quantities $\calD_v = \calJ$. The sets $\calC_v$ are all
known from $\calJ$ alone. Then, at any step of the algorithm, if
$e=\edge{u}{v}$ is an edge attached to the leaf $u$, we can set
$\calC(e)=\calD_u \setminx \calC_u$, update 
$\calD_v \to \calD_v \setminx \calC(e)$, and remove the edge $e$ from
the tree.
\end{rmk}
\noindent
Let us introduce a notion of \emph{projection} for bipartite graphs:
  \footnote{In network analysis and complex systems, the operation of
    collapsing one partition of a bipartite graph to induce a
    structure on the remaining nodes via shared neighbors is commonly
    known as \emph{one-mode projection} (or \emph{bipartite
      projection}). When the shared nodes generate groups of size
    greater than two, as will be the case in the generalization of
    Section \ref{sec.ktuples}, the resulting structure naturally
    generalizes from a projected graph to a hypergraph projection
    (sometimes related to the incidence hypergraph or hypergraph
    representation of two-mode networks). In this paper, for short, we
    just use the name \emph{projection}, as there is no risk of
    confusion.}
\begin{defn}[Projection]
\label{def.proj}
Let $\bipg{G}{A}{B}{E}$ be a bipartite graph.  The \emph{projection} of
$G$ is the graph $\bar{G}=(A,E')$ (possibly not simple), where in $E'$
we have as many copies of the edge $\edge{a_1}{a_2}$ as many vertices
$b \in B$ exist such that $\edge{a_1}{b}$ and $\edge{a_2}{b}$ are
edges of $E$. In other words, the neighborhood of a vertex $b$ in $G$
is some star-graph with leaves $a_1$, \ldots, $a_\ell$, and is
replaced in $\bar{G}$ by the $\ell$-clique on $a_1$, \ldots,
$a_\ell$. Then, the union of these cliques has to be taken by allowing
multiple edges.
% on $U\subseteq A$ is the graph $\bar G_U=(U,\bar E_U)$ such that
% $\bar E_U=\{\{u_1,u_2\}\ \vert\ \exists b\in B\text{ with }
% \{u_1,b\},\{u_2,b\}\in E\}$.
\end{defn}
\noindent
Note that $\bar{G}$ is simple iff $G$ has girth larger than~4 (and
more generally the girth of $\bar{G}$ is half of that of $G$), and
that the number of connected components of $\bar{G}$ is equal to the
number of components of $G$, minus the number of isolated
$B$-vertices.

\begin{rmk}
As we will be mainly interested in the projection of matching
subgraphs, it is worth noting that the projection of an isolated dimer
is an isolated vertex, thus $\overline{H^*_{\calJ}}$ coincides with
$\overline{H_{\calJ}}$ with isolated vertices dropped out.
\end{rmk}

\begin{rmk}
\label{rmk.projForIsFor}
The projection of a forest $F=(A\cup B,E)$ where all $B$-vertices have
degree 1 or 2 is a forest $\bar{F}$ with the same number of components
as~$F$.  In particular, in the conditions of Theorem \ref{thm:Jtree},
with $A_0=A$, the tree $\bar{H}_\calJ=\overline{(H_\calJ)}$ is
spanning on $\bar{G}$.
\end{rmk}

\noindent
% Similarly to what we have discussed for non-bipartite graphs, 
Recall that, up to completing the weight function with sufficiently
high values, we can always imagine to ``complete'' a bipartite graph
$\bipg{G}{A}{B}{E}$, with $|A|=|B|+1$, to the complete bipartite
graph $\calK_{|A|,|B|}=\calK_{n+1,n}$. In this case, we do not need to
check for the fact that $\calM(G_U)$ is non-empty, and we only need to
ensure the genericity of the weights (which holds almost-surely in
various randomized settings).
% and that $|A|=|B|+1$. 
Thus, as a result of the analysis above, we have determined another
interesting setting:
\begin{corr}
\label{cor.setting1}
In a triple $(G,w,\calJ)$, where 
$\bipg{G}{A}{B}{E}=\calK_{n+1,n}$, $w$ is a generic weight
function, and $\calJ=\{ \{a\} \}_{a \in A}$, we have that
$H_\calJ=H_\calJ^*$ is a spanning tree on $\calK_{n+1,n}$, with all
$B$-vertices of degree 2, and $\bar{H}_\calJ$ is a spanning tree on
$\calK_{n+1}$.
\end{corr}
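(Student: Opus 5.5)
The plan is to obtain Corollary~\ref{cor.setting1} as a specialisation of Theorem~\ref{thm:Jtree}, with $A_0=A$ and $B_0=\varnothing$, and then to apply Remark~\ref{rmk.projForIsFor}.

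\textbf{Step 1: the collection $\calJ$ has the required form.} With $B_0=\varnothing$, the collection in equation~(\ref{eq.3478687645}) becomes $\{\{a\}\}_{a\in A}$, which is exactly our $\calJ$. We also check that each singleton $\{a\}$ is an admissible removal. Since $G=\calK_{n+1,n}$, the graph $G_{\{a\}}=\calK_{n,n}$ has a perfect matching. The weights are generic by hypothesis. Hence every $M_{\{a\}}$ is well defined, and the triple satisfies the hypotheses of Lemma~\ref{lem:co2} and Theorem~\ref{thm:Jtree}.

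\textbf{Step 2: the collection is single-path and satisfies Lemma~\ref{lem:tripla}.} For the record, any two distinct sets satisfy $|\{a_1\}\symdif\{a_2\}|=2$, so $\calJ$ is single-path. The intersection $\bigcap_{U\in\calJ} U$ is empty as soon as $n\geq 1$. Every triple has the form of the first row of the table in Lemma~\ref{lem:tripla}, with $U=\varnothing$.

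\textbf{Step 3: apply Theorem~\ref{thm:Jtree} and Lemma~\ref{lem:co2}.} Since $A_0=A$, Theorem~\ref{thm:Jtree} gives that $H_\calJ=H_\calJ^*$ is a spanning tree of $G$. Lemma~\ref{lem:co2} says that every vertex of $(B\setminx B_0)\cap V_{H^*_\calJ}=B$ has degree~2, because $H_\calJ^*$ is spanning and $B_0=\varnothing$. There is no degree-1 exception here, since no $B$-vertex lies in any $U$.

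\textbf{Step 4: the projected graph.} Remark~\ref{rmk.projForIsFor} applies to a tree whose $B$-vertices all have degree 1 or 2. It gives that $\bar H_\calJ$ is a tree on vertex set $A$, and so it is spanning on $\bar G$. Here $\bar G$ is the projection of $\calK_{n+1,n}$, which is $\calK_{n+1}$ with multiple edges, one copy per $b\in B$. The remaining concern is that $\bar H_\calJ$ might contain a multi-edge or fail to be simple. This cannot happen: each $b$ contributes exactly one edge $\edge{a}{a'}$, and two distinct $b,b'$ producing the same pair would give a 4-cycle in the tree $H_\calJ$. So $\bar H_\calJ$ is a simple spanning tree of $\calK_{n+1}$.

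The only step that needs care is the degenerate case $n=0$ or the case $|\calJ|=2$. Both are immediate: a single edge, and a single path of Corollary~\ref{cor.coppia}.
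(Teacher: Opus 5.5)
Your argument is correct and matches how the paper obtains the corollary: it is the specialisation $A_0=A$, $B_0=\varnothing$ of Theorem~\ref{thm:Jtree} and Lemma~\ref{lem:co2}, followed by Remark~\ref{rmk.projForIsFor}, and your 4-cycle argument for the simplicity of $\bar H_\calJ$ is a harmless extra check. One small slip: for $n=0$ the graph $\calK_{1,0}$ is a single vertex with no edges, and $|\calJ|=1$ so Lemma~\ref{lem.formaJ} does not literally apply; that trivial case therefore gives a single vertex, not a single edge.
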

\noindent
We shall call the \emph{first bipartite standard setting} the type of
triple in the corollary above.

The precise information on the sets $\calC(e)$ that we have
accumulated along the proofs of this section allows us to establish
also the following interesting fact:
\begin{lem}
\label{lem.subtreeIsHJ}
Let $(G,w)$ be a graph with generic weights, in the first bipartite
standard setting, and let $H$ be the resulting matching tree. Thus
$|V(G)|=|V(H)|=2n+1$. Let $H'$ be a subtree of $H$, such that all
$B$-vertices have degree 2 and the restriction of $(G,w)$ to $V(H')$
is a valid instance (in particular, $H'$ has $m+1$ $A$-vertices and
$m$ $B$-vertices). Then the matching tree associated to the
restriction of $(G,w)$ to $V(H')$ coincides with~$H'$.
\end{lem}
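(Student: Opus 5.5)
The plan is to show directly that, for each $A$-vertex $a'$ of $H'$, the optimal matching of the restricted instance with $a'$ removed is a sub-matching of $H'$. We obtain it by restricting the global optimal matching $M_{\{a'\}}$ of $G$ to $V(H')$. Taking the union over $a'$ then recovers all of $H'$.

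\emph{Step 1: the matchings $M_{\{k\}}$ as subgraphs of $H$.} For $k\in A$ we have $M_{\{k\}}\subseteq H$ by definition. By Corollary~\ref{cor.setting1}, $H$ is a spanning tree with all $B$-vertices of degree 2. Such a tree has a \emph{unique} perfect matching of $H\setminx k$: root $H$ at $k$; then every $B$-vertex has exactly one child, which is an $A$-vertex, and it must be matched to that child. A leaf-pruning induction shows this is forced, in the spirit of Remark~\ref{rmk.reconstr}. Hence $M_{\{k\}}$ is exactly this ``match every $b$ to its child'' matching of $H$ rooted at~$k$.

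\emph{Step 2: restriction to $V(H')$.} Fix $a'\in A':=A\cap V(H')$ and root $H$ at $a'$. Every $B$-vertex $b\in V(H')$ has degree 2 in $H'$, hence both of its $H$-neighbours lie in $H'$. In particular its child lies in $H'$. Thus the edges of $M_{\{a'\}}$ incident to $B$-vertices of $H'$ form a perfect matching $N_{a'}$ of $V(H')\setminx a'$ using only edges of $H'$. This is also the unique such matching inside $H'$, by Step 1 applied to $H'$. All remaining edges of $M_{\{a'\}}$ have both endpoints outside $V(H')$: their $A$-endpoints cannot lie in $V(H')\setminx a'$, since those vertices are already covered by $N_{a'}$.

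Hence $M_{\{a'\}}=N_{a'}\sqcup R$, with $R$ a perfect matching on $V(G)\setminx V(H')$. If some perfect matching $N$ of the restricted instance minus $a'$ had $w(N)<w(N_{a'})$, then $N\sqcup R$ would be a perfect matching of $G_{\{a'\}}$ lighter than $M_{\{a'\}}$, a contradiction. Since genericity is hereditary, Proposition~\ref{prop.38765874} then gives that the restricted optimal matching for removal $\{a'\}$ is exactly $N_{a'}\subseteq H'$.

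\emph{Step 3: the union is all of $H'$.} The previous step gives $\bigcup_{a'\in A'}N_{a'}\subseteq H'$. Conversely, let $(a,b)\in E(H')$, and let $a''$ be the other $H'$-neighbour of $b$ (it exists because $\deg_{H'}(b)=2$). Rooting at $a''$ makes $a$ the child of $b$, so $(a,b)\in N_{a''}$. Therefore the matching tree of the restricted instance equals $H'$. Note also that the restricted instance is of the first bipartite standard setting type with $m+1$ $A$-vertices and $m$ $B$-vertices, so its matching tree is well defined.

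The main point needing care is Step~2: checking that $M_{\{a'\}}$ genuinely splits across $V(H')$. This relies crucially on the hypothesis that $B$-vertices have degree 2 \emph{in $H'$}, not merely in $H$; without it, a $B$-vertex of $H'$ could be matched outside $H'$. Everything else is a direct cut-and-paste optimality argument.
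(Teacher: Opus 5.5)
Your proof is correct, and it follows a genuinely different route from the paper's.

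\textbf{The paper's route.} It first proves a color-set characterization of the subtree: $e\in E(H')$ iff $\varnothing\neq\calC(e)\cap C\neq C$, where $C$ is the set of colors of the $A$-vertices of $H'$. This is a three-case contradiction argument that uses two facts: the color sets at a vertex partition $\calJ$ (or $\calJ$ minus the vertex's own color), and $M_{\{a\}}\symdif M_{\{a'\}}$ is a single path, which must lie inside the subtree $H'$. From this it deduces that shrinking $\calJ$ to $C$, still on the full graph $G$, gives $H'$ plus a set of edges common to all $M_U$ with $U\in C$. It then deletes these common dimers one at a time by a cut-and-paste argument.

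\textbf{Your route.} You use only that $H$ is a tree whose $B$-vertices all have degree $2$. A forest has at most one perfect matching, so $M_{\{k\}}$ is the ``match each $b$ to its child'' matching of $H$ rooted at $k$. This makes the splitting $M_{\{a'\}}=N_{a'}\sqcup R$ immediate. You then do the cut-and-paste in one shot for each $a'$ separately, so you never need $R$ to be independent of $a'$ (it is, but you don't have to prove it).

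\textbf{Comparison.} Your route is shorter and more elementary, and it gives an explicit description of each restricted optimum. The paper's route yields the color-set description of $H'$, which it reuses in Remark~\ref{rmk.crossifpath}.

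\textbf{One step to make explicit.} In Step~2, state why $N_{a'}$ covers every $A$-vertex of $V(H')\setminx a'$. Either count: the $m$ $B$-vertices of $H'$ are matched to $m$ distinct $A$-vertices of $V(H')\setminx a'$, a set of size exactly $m$. Or note that the parent of such a vertex lies on the $H$-path from $a'$ to it, and this path is contained in the subtree $H'$.
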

\begin{proof}
Call $C\subseteq \{\{a_i\}\}$ the set of ``colors'' associated to the
$A$-vertices of $H'$. We claim that $H'$ is characterized as the
subgraph of $H$ such that $e \in E(H')$ iff 
$\calC(e) \cap C \neq \varnothing, C$.
Suppose by contradiction that this is not the case. The first
possibility is that there exists $e=\edge{a}{b} \in E(H')$ with 
$\calC(e) \cap C = C$. But this is excluded immediately, because 
$a \in C$, and $a\not\in \calC(e)$.  Another possibility is that
$\calC(e) \cap C = \varnothing$. As $b$ has degree 2 in $H'$, there
must be a second edge $e'=\edge{a'}{b}$ in $H$, and also $a'$ is in
$C$. As $\calC(e)\cup\calC(e')=\calJ$, and $a' \not\in \calC(e')$, it
must be that $a' \in \calC(e)$, and this produces a contradiction.
Then there is a third possibility, namely that there exists
$e\in E(H) \setminx E(H')$, and $a,a' \in C$ such that 
$a \in \calC(e)$ but $a' \not\in \calC(e)$. However 
$M(a) \symdif M(a')$ consists of a single open path on $H$, with
endpoints in $a$ and $a'$, and as both $a$ and $a'$ are in $H'$, that
is a subtree of $H$, this path must be fully contained in $H'$. All
other edges $e'$ of $G$ not in this path are such that
$\calC(e')\cap\{a,a'\}$ is either $\varnothing$ or $\{a,a'\}$, so we
reached a contradiction. What is left is our claim.

This implies that the restriction of $(G,w,\calJ)$ to
$(G,w,\{\{a\}\}_{a \in V(H')\cap A})$ has matching subgraph consisting
of $H'$ plus a collection of isolated dimers, all contained in $H$.

Now, portions of the graph $G$ that are the same in all $M_U$'s, for 
$U \in \calJ$, have a trivial role in the problem, and can be removed
consistently. In particular, if the instance $(G,w,\calJ)$ has a
matching subgraph $H$ containing an isolated dimer $e=\edge{a}{b}$,
and $a$ and $b$ do not appear in any of the $U$'s, calling $(G',w')$
the graph obtained by removing $e$ and all of its incident edges, and
restricting $w$ accordingly, the matching subgraph of $(G',w',\calJ)$
must coincide with $H\setminx e$. The iterated application of this
reasoning allows us to conclude our claim on the matching
subgraph~$H'$.
\end{proof}

\subsection{A digression: a simultaneous Hungarian gauge}
\label{sec.hunggauge}
%-------------------------------------------------------

\noindent
The Assignment Problem, in its full generality, has a ``gauge
invariance'':
\begin{rmk}[Gauge Invariance]
\label{rmk.gaugeinv}
Let $W$ be a $n \times n$ real matrix, and
$\bm{\lambda}=(\lambda_1,\ldots,\lambda_n)$,
$\bm{\mu}=(\mu_1,\ldots,\mu_n)$ two vectors. Define
$W'_{ij}=W_{ij}-\lambda_i-\mu_j$. Then, calling $\cW(\pi)$ the cost of
the assignment $\pi$ for the matrix of costs $W$, 
$\cW'(\pi)$ the cost for the matrix of costs $W'$, 
and
$c=\sum_{i=1}^n(\lambda_i+\mu_i)$, we have
\be
\cW'(\pi) = \cW(\pi)-c
\ee
and in particular $\pi_\star(W)=\pi_\star(W')$.
\end{rmk}
\noindent
This notion plays a prominent role in the design of the
\emph{Hungarian Algorithm}, which identifies $\pi_{\star}(\{W_{ij}\})$
in polynomial time. In particular, the algorithm works also because it
can certificate that the matching $\pi_\star$ is indeed optimal. The
certification is in the form of a \emph{Hungarian gauge}, that is, a
choice of $\bm{\lambda}$ and $\bm{\mu}$ such that the new weights
$W'_{ij}=W_{ij}-\lam_i-\mu_j$ satisfy $W'_{i\,\pi(i)}=0$ for all
$1\leq i\leq n$ and $W'_{ij}\geq 0$ for all $1\leq i,j\leq n$.

For a general matrix of costs, the Hungarian Algorithm performs a set
of ``moves'' on a certain auxiliary graph. As a result, the set of
positions $(ij)$ such that $W'_{ij}=0$ at the end of the algorithm is
in general larger than the set of the $(i\,\pi(i))$'s, and thus
constitutes some subgraph 
$\calK_{n,n} \supseteq H_{\rm gauge} \supseteq M_{\star}$. If $W$ is
generic, $H_{\rm gauge}$ is a spanning forest. The properties of this
graph have been investigated in detail in~\cite[Chapter 7]{Fichera}.
In particular, there are several ways to perform some further gauge
transformations, staying within the class of gauges which are
Hungarian, and transforming $H_{\rm gauge}$ into a spanning tree of
$\calK_{n,n}$. However, none of these choices seem to be canonical in
a natural sense. Furthermore, even the forest obtained at the end of
the algorithm is not canonical, and depends on the details of the
implementation. For example, it is not even invariant under
permutation of rows and columns, or transposition, of the cost matrix
$W$. One natural way of choosing the gauge parameters is in terms of
the asymptotic values of the ``cavity fields'' for the parallel
implementation of the Cavity Equations \cite{bookMontMez} (see again
\cite{Fichera}).  This choice is invariant under permutation of rows
or columns, while still not being invariant under transposition. As a
further drawback, it is defined in terms of quantities whose
definition is rather subtle (one should first prove that, for generic
instances, the cavity fields in parallel implementation, which are a
function of discrete time values, become ultimately periodic up to a
linear shift, and then define the gauge parameters in terms of the
fields within this periodic regime, averaged over one period).

Thus, the following theorem, which is relatively compact and simple to
state, should come as a surprise:
\begin{thm}
In the first bipartite standard setting at size $n$, with matrix of
costs $W \in \mathbb{R}(n+1,n)$, 
there exists a choice of the $2n+1$ gauge
parameters $\lam_i$ and $\mu_j$ such that its restriction
to each of the $n+1$ instances 
$G\setminx \{a_i\}$
% (that is, for the minor $W^{\{k\}|\varnothing}$)
%  that is, the matrix $W'$ with $W'_{ij}=W_{ij}-\lam_i-\mu_j$ is such
%  that
is a Hungarian gauge.\footnote{That is, the gauge transformation
with parameters
$(\lam_1,\ldots,\lam_{k-1},\lam_{k+1},\ldots,\lam_{n+1})$ and
$(\mu_1,\ldots,\mu_n)$ is Hungarian for the minor
$W^{\{k\}|\varnothing}$ of the matrix of costs $W$.}

On top of this, if $W$ encodes generic weights, among the possible
choices of gauges with the property above, there is a canonical one
\begin{align}
\lam_i &= -\cW(M_\star(G\setminx \{a_i\}))
\ef;
&
\mu_j &= \cW(M_\star(G\cup \{b'_j\}))
\ef;
&
\end{align}
and in this case
$H_{\rm gauge}=H_\calJ$.
\end{thm}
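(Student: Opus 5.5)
The plan is to verify the canonical choice directly. The existence part of the statement then follows from it. Write $C_i=\cW(M_\star(G\setminx\{a_i\}))$, so that $\lam_i=-C_i$. Let $G^{(j)}=G\cup\{b'_j\}$ be the balanced instance on $\calK_{n+1,n+1}$ in which $b'_j$ is a copy of $b_j$, with $W_{ib'_j}=W_{ij}$, and let $D_j=\mu_j$ be its optimal cost. The gauge-transformed weights are $W'_{ij}=W_{ij}+C_i-D_j$. For each $k$ we must show two things for the minor without row $k$: that $W'_{ij}\geq 0$ for all $i\neq k$ and all $j$, and that $W'_{i\,\pi^{(k)}(i)}=0$ for all $i\neq k$. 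We will in fact prove something independent of $k$: $W'_{ij}\geq 0$ for all $i,j$, with equality on every edge of $H_\calJ$. Since $M_{\{a_k\}}\subseteq H_\calJ$, this gives the Hungarian property for each of the $n+1$ instances simultaneously.

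\emph{Inequality.} Take $M_{\{a_i\}}$ and add the edge $\edge{a_i}{b'_j}$. This is a perfect matching of $G^{(j)}$ of cost $C_i+W_{ij}$, hence $D_j\leq W_{ij}+C_i$, i.e.\ $W'_{ij}\geq 0$.

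\emph{Equality on $H_\calJ$.} Let $N$ be an optimal matching of $G^{(j)}$. It matches $b_j$ to some $a_{i}$ and $b'_j$ to some $a_{i'}$, with $i\neq i'$.
\begin{itemize}
\item $N\setminx\edge{a_{i'}}{b'_j}$ is a perfect matching of $G\setminx\{a_{i'}\}$, so $D_j\geq W_{i'j}+C_{i'}$. Combined with the inequality above, this is an equality, and $N\setminx\edge{a_{i'}}{b'_j}$ is optimal, i.e.\ equals $M_{\{a_{i'}\}}$ by uniqueness (Proposition~\ref{prop.38765874}).
\item Symmetrically, $N\setminx\edge{a_i}{b_j}$, with $b'_j$ renamed $b_j$, equals $M_{\{a_i\}}$.
\end{itemize}
Hence $\edge{a_i}{b_j}\in M_{\{a_{i'}\}}$ and $\edge{a_{i'}}{b_j}\in M_{\{a_i\}}$. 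So both edges lie in $H_\calJ$, and both have $W'=0$. By Corollary~\ref{cor.setting1}, $b_j$ has degree exactly $2$ in $H_\calJ$, so these are precisely the two edges of $H_\calJ$ at $b_j$. Thus $W'_{ij}=0$ for every $\edge{a_i}{b_j}\in H_\calJ$, and in particular on every $M_{\{a_k\}}$. This proves $H_{\rm gauge}\supseteq H_\calJ$ and the first claim of the theorem.

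\emph{Reverse inclusion $H_{\rm gauge}\subseteq H_\calJ$.} Suppose $W'_{ij}=0$. Then $M_{\{a_i\}}\cup\edge{a_i}{b'_j}$ is an optimal matching of $G^{(j)}$. The argument above shows that $a_i$ is then an $H_\calJ$-neighbour of $b_j$, provided we may apply it to this particular optimal matching. The subtlety, which I expect to be the main obstacle, is that $G^{(j)}$ is not generic: every 4-cycle $(a,b_j,a',b'_j)$ has zero alternating sum, so optimal matchings of $G^{(j)}$ are at best unique up to exchanging $b_j$ and $b'_j$. I will handle this by working modulo that swap. Two optimal matchings of $G^{(j)}$ that are not related by the swap have a symmetric difference that, after identifying $b_j$ with $b'_j$, contains an alternating cycle or path whose alternating sum is a genuine alternating sum in $G$. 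This is nonzero by genericity of $W$, contradicting the joint optimality. Hence the pair $\{i,i'\}$ is determined by $j$, and any zero entry $W'_{ij}$ has $a_i$ among the two $H_\calJ$-neighbours of $b_j$. This gives $H_{\rm gauge}=H_\calJ$.
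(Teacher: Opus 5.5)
Your proof is correct for generic $W$, and it takes a genuinely different, more elementary route than the paper.

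\textbf{How the paper argues.} The paper works inside the tree. It identifies $M_\star(G\cup\{b'_j\})$ as the union $M'(j)\cup M''(j)$ of the dimer coverings of the two branches of $H_\calJ$ at $b_j$, read off from the colour sets. It rules out competitors because every loop of the (possibly figure-eight-shaped) symmetric difference is semi-alternating in $\{a'(j)\}$ or $\{a''(j)\}$. It then gets $W'_{ij}>0$ off $H_\calJ$ by closing the tree path from $a_i$ to $b_j$ with $\edge{a_i}{b_j}$. That cycle is semi-alternating in the colour of the second neighbour $\tilde a$ of $b_j$.

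\textbf{How you argue.} You replace all of this with direct cost comparisons. $W'\geq 0$ is just the feasibility of $M_{\{a_i\}}\cup\edge{a_i}{b'_j}$, and it holds for every $W$. Both inclusions between $H_\calJ$ and $H_{\rm gauge}$ then come from one sandwich. An optimal $N$ of $G^{(j)}$ splits in two ways as an optimal $M_{\{a\}}$ plus one edge, and uniqueness of the $M_{\{a\}}$ finishes the job. In particular, strict positivity off $H_\calJ$ needs only genericity, not the tree structure. The degree-two property of $b_j$ enters only for $H_\calJ\subseteq H_{\rm gauge}$.

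\textbf{What each buys.} The paper's version stays in the colour and semi-alternating-cycle language used throughout, and it makes the branch structure of $M_\star(G\cup\{b'_j\})$ explicit. Yours is much shorter, and it yields the identity $\sum_i C_i=\sum_j D_j$ as a by-product.

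\textbf{Two small remarks.}

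First, the obstacle you anticipate at the end is not there. Your sandwich argument was carried out for an \emph{arbitrary} optimal $N$ of $G^{(j)}$, so it applies verbatim to $N=M_{\{a_i\}}\cup\edge{a_i}{b'_j}$. With $a_m$ the partner of $b_j$ in $N$, it gives $\edge{a_i}{b_j}\in M_{\{a_m\}}\subseteq H_\calJ$ directly. The uniqueness-modulo-swap detour is therefore unnecessary. As sketched, it also glosses over the component of the symmetric difference through both $b_j$ and $b'_j$. After identification that component is a figure-eight: two cycles of $G$ whose alternating sums are only known to add up to zero. Genericity alone does not exclude this, so you would have to swap each loop separately, as the paper does.

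Second, the existence claim is meant for arbitrary $W$, and it does not follow from the generic case without an argument. In your set-up it takes one line. $W'\geq 0$ always holds, and for each $k$ the sum of $W'$ over any optimal matching of $G\setminx\{a_k\}$ equals $\sum_i C_i-\sum_j D_j$. So the Hungarian property (for one, equivalently all, $k$) is equivalent to $\sum_i C_i=\sum_j D_j$. Your proof establishes this identity for generic $W$. It extends to all $W$ by continuity, since the $C_i$ and $D_j$ are piecewise linear in $W$.
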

\noindent
Above, by $b'_j$ we denote a second copy of the vertex $b_j$.
In other words, by a slight abuse of language, $\calM(G\cup \{b'_j\})$
is defined as the set of spanning subgraphs of $G$ such that all
vertices have degree 1, except for $b_j$, that has degree 2, and
$M_\star(G\cup \{b'_j\}) \in \calM(G\cup \{b'_j\})$ is defined as the
subgraph of minimal cost in this set.

\begin{figure}
% \begin{gather}
\begin{center}
\setlength{\unitlength}{18pt}
% % [inline block 8: 11 envs, 15245 chars in 2 pieces, piece 1 here, a bare % at each other -> data_tex | \begin{picture}(16.75,10.3)(-6.25,0) \begin{picture}(19.75,10.3)(-6.25,0)...]

% ------------------------------------
\if\faifig1  
\hspace*{-152pt}%
\setlength{\unitlength}{18pt}
%
\else [...TikZ\ code...] \fi
\end{center}
%\end{gather}
\caption{\label{fig.exGauge}%
Top left: cost matrix and gauge parameters for an example of size 4,
shown on top right. Bottom: optimal configurations $M_i$ and $M^{(j)}$.
}
\end{figure}

\begin{proof}
It is sufficient to establish the statements above concerning matrices
$W$ which are generic, as the statements for $W$ not necessarily
generic are deduced by continuity.

Let us start by proving that $W'_{ij}=0$ if 
$\edge{a_i}{b_j}\in H_\calJ$.  Indeed, the tree $H_\calJ$ has an
interesting property. First, for all vertices $a_i$, the forest
$H_\calJ \setminx a_i$ admits a dimer covering (this by the very
definition of $H_\calJ$, namely the unique dimer covering is the
subset of $e \in H_\calJ$ such that
$\{a_i\}\in \calC(e)$, or, in other words, the optimal matching 
$M_i:=M_\star(G\setminx \{a_i\})$).
% is the unique dimer covering of the forest $H_\calJ \setminx a_i$,
Then, for all vertices $b_j$, call $a'(j)$ and $a''(j)$ the two
neighbors of $b_j$ (in any order), and call $H'(j)$ the portion of
$H_\calJ$ containing the edge $\edge{b_j}{a'(j)}$ and all its
associated branch (as seen from $b_j$), and similarly for
$H''(j)$. Then both $H'(j)$ and $H''(j)$ admit a dimer covering. The
dimer covering $M'(j)$ of $H'(j)$ is the set of $e \in H'(j)$ such
that $\{a''(j)\}\in \calC(e)$, while the dimer covering $M''(j)$ of
$H''(j)$ is the set of $e \in H''(j)$ such that
$\{a'(j)\}\in \calC(e)$. In fact, all edges $e$ in $M'(j)\cup M''(j)$
have $\{a'(j)\},\{a''(j)\}\in \calC(e)$, except for the two edges
incident to $b_j$ and $b'_j$, which have only one of the two colors.

We claim that the optimal subgraph
$M^{(j)}:=M_\star(G\cup \{b'_j\})$
is the union of the unique dimer coverings $M'(j)$ and
$M''(j)$ of the trees $H'(j)$ and $H''(j)$.
This is seen, again, by excluding the existence of certain
semi-alternating cycles.

Given any two 
$M_1, M_2 \in \calM(G\cup \{b'_j\})$, their symmetric difference is a
collection of disjoint cycles, except possibly for a 8-shaped subgraph
(i.e., a graph with the topology of a ``bouquet graph'' $B_2$), in
which the vertex of degree 4 is $b_j$.  This is a simple variant of
Lemma \ref{lem:diff} adapted to this case, and its proof is a minor
variation of the one for the lemma.

For $M_1$, $M_2$ as above, say that 
$M_1 \symdif M_2=C_1 \cup C_2 \cup \ldots \cup C_k$, where a $C_j$ is
either a cycle disjoint from the rest of $M_1 \symdif M_2$, or one of
the two handles of the 8-shape subgraph (because these are cycles of
even length, thus the colors of the two edges of the handle incident
to the hub of the bouquet must be distinct).  Each $C_\alpha$ can be
swapped separately (say from $M_1$), to obtain another configuration
$M_1 \symdif C_\alpha \in \calM(G\cup \{b'_j\})$.  So, for each
$C_\alpha$ that is semi-alternating in some color
$U \in \calJ$, the sign of $\cW(M_1 \symdif C_\alpha)-\cW(M_1)$ is
determined by the parity of the semi-alternating sum (i.e., if the
edges $e$ with
$U\in \calC(e)$ are those in $M_1$ or those in $M_2$).

In particular, let us assume, for the sake of contradiction, that
$M'(j) \cup M''(j)$ and $M^{(j)}$ are distinct. Thus their symmetric
difference is a non-empty union of cycles in the sense above.
However, for any $M$, each cycle on $\big( M'(j) \cup M''(j)
\big)\symdif M$ must be semi-alternating in at least one of the two
colors $\{a'(j)\}$ and $\{a''(j)\}$, because the only two edges $e$ in
$M'(j) \cup M''(j)$ such that
$\big\{\{a'(j)\},\{a''(j)\} \big\} \not\subseteq \calC(e)$ are the two
edges incident to $b_j$, that cannot be simultaneously in the same
alternating cycle, and with the same parity.  In turn, this would
give a contradiction with the optimality of 
$M_\star(G\setminx \{a'(j)\})$ or of
$M_\star(G\setminx \{a''(j)\})$.

Once that this fact has been established, it is easy to see
that, for any $j$, if $i$ is the index such that
$a_i=a'(j)$, then
$M^{(j)}=M'(j) \cup M''(j)=M_i \cup \edge{a_i}{b_j}$. As a result,
\be
\begin{split}
W'_{ij}
&=w_{ij}-\lam_i-\mu_j
=w_{ij}+\cW(M_\star(G\setminx \{a_i\}))-\cW(M_\star(G\cup \{b'_j\}))
\\
&=w_{ij}+\cW(M_i)-\cW(M^{(j)})
%(M'(j) \cup M''(j))
=0
\ef.
\end{split}
\ee
Now we shall prove that, if $\edge{a_i}{b_j} \not\in H_\calJ$, then
$W'_{ij}$ is strictly positive.  Call $P$ the path on $H_\calJ$
connecting $a_i$ and $b_j$, and call $\tilde{a}$ the neighbour of
$b_j$ on $H_\calJ$ not in $P$.\footnote{Note that, at this point, in
  order to know that $\tilde{a}$ exists, it is crucial that we have
  established in the previous section that all $B$-vertices have
  degree 2, or at least the fact that the number of $A$-vertices is
  larger than the number of $B$-vertices, so that there must be an
  $A$-vertex not contained in $P$. This is one of the ingredients
  that, surprisingly, make the structure of simultaneous Hungarian
  gauges more rigid than the structure of Hungarian gauges for a
  single instance.}
It is easily seen that $P=(e_1,e_2,\ldots,e_{2\ell+1})$ has odd
length, and that $\{\tilde{a}\}\in \calC(e_{2j+1})$ for all
$j=0,1,\ldots,\ell$. As $w'_{e_k}=0$ for all the edges $e_k$ of $P$,
and, by our contradictory hypothesis, $w'_{\edge{a_i}{b_j}}\leq 0$, we
would have a semi-alternating cycle with non-positive alternating sum,
that is, in the matching
$M=M_\star(G \setminx \tilde{a})$, the matching 
$\tilde{M}=M \symdif (P \cup \edge{a_i}{b_j})$ obtained by swapping the
cycle $P \cup \edge{a_i}{b_j}$ would have $\cW(\tilde{M}) \leq \cW(M)$, thus
contradicting the fact that $M$ is the unique optimal configuration in
$\calM(G \setminx \tilde{a})$.
\end{proof}
\noindent
An example of simultaneous Hungarian gauges for one given instance is
provided in Figure~\ref{fig.exGauge}.

%%%%%%%%%%%%%%%%%%%%%%%%%%%%%%%%%%%%%%%%%%%%%%%%%%%%%%%
\section{A second setting where the matching subgraph is a spanning tree}
\label{sec.teoremaserio1b}
%%%%%%%%%%%%%%%%%%%%%%%%%%%%%%%%%%%%%%%%%%%%%%%%%%%%%%%

\noindent
Now, let us introduce a special family of graphs, that we
shall call \emph{symmetric graphs}:
\begin{defn}
\label{def.symgraph}
A graph $G=(V,E)$, equipped with a weight function
$w$, is \emph{symmetric} if there exist
a partition of the vertex sets
$V=V_{\rm l} \cup V_{\rm r}$ 
(`l' and `r' stand for `left' and `right')
and an involution 
$\iota: V_{\rm l} \leftrightarrow V_{\rm r}$ 
such that:
\begin{itemize}
\item for all $\edge{u}{v} \in E$, 
either both $u$ and $v$ are in $V_{\rm l}$ (or both in $V_{\rm r}$),
or they belong to distinct sets and are images of each other under the
involution $\iota$.
%% either both $u$ and $v$ are in 
%% %
%% $V_{\rm l}$ (or they are both in $V_{\rm r}$), or
%% $v=\iota u$.
\item
if an edge $e=\edge{u}{v}$ is in $E$, then also 
$\iota e =\edge{\iota v}{\iota u}$ is in $E$, and has the same weight,
$w_e=w_{\iota e}$.
\end{itemize}
Notions of graph theory, like edges, paths, cycles,\ldots, will have
the prefix `sym-' if they are stable under the involution. For
example, a \emph{sym-edge} is an edge of the form $\edge{v}{\iota v}$,
a \emph{sym-path} is a path $P=(e_1,\ldots,e_k)$ such that 
$\iota e_i=e_{k+1-i}$, and so~on.
% We call \emph{bridge edges} the edges of the form $\edge{v}{\iota v}$.
\end{defn}
\noindent
Note that, although in symmetric graphs there exist edges of the same
weight, 
this does not make non-generic weights any more likely.
% this does not make it easier to have non-generic weights. 
Indeed, if a cycle is fully contained (say) on the left part
of $G$, there are no repeated variables $w_e$, and if a cycle visits
both the left and the right part of $G$, then it also uses some
sym-edges,
% of the form $\{v,\iota v\}$, 
whose edge-weights appear only once as independent variables.

\begin{figure}
\[
\if\faifig1  
% [inline block 9: 1 envs, 4968 chars -> data_tex | \begin{tikzpicture}%[baseline={([yshift=-.5ex]current bounding box.center)}] %    \node[label={90:{\small $1$}},circle,i...]

\else [...TikZ\ code...] \fi
\]
\caption{\label{fig.exsymgr}%
Example of symmetric weighted graph, in the sense of
  Definition~\ref{def.symgraph}. The dashed line denotes an axis of
  reflection implementing the involution~$\iota$.}
\end{figure}

Note also that the exact representation of reservoir settings,
introduced in Section~\ref{sec.reservintro}, is in this framework,
with $V_{\rm l}=V\cup S$ and $V_{\rm r}=V'\cup S'$, and with $\iota
v_j = v'_j$ and $\iota s_j = s'_j$.  In that case, the weights of
sym-edges are zero. Nonetheless, it remains true that the hypothesis
of generic weights is plausible. Indeed, 
%% in order to have a cycle with zero alternating sum identically in
%% terms of the $w_{v_i,v_j}$'s and $w_{v_i,s}$'s, seen as independent
%% variables, it should use all these variables either zero or two times
%% (and with opposite sign),
in order to have a cycle with a zero alternating sum identically in
terms of the independent variables $w_{v_i,v_j}$ and $w_{v_i,s}$, the
cycle must use each such variable either zero or two times (with
opposite signs)
so it must be a cycle that uses only sym-edges (which are of zero
weight) and pairs of edges $e$ and $\iota e$ (which use the same
variable). Then, it is a symmetric cycle using two sym-edges. However,
in this case the pairs $e$ and $\iota e$ appear in the same parity
class along the cycle, and thus with the same sign in the alternating
sum.

We shall produce variants of the statements of the previous sections,
adapted to the case of symmetric weighted bipartite graphs. We start
with a definition of symmetric admissible removal:
\begin{defn}[Symmetric admissible removal]
  Let $G=(V,E)$ be a symmetric (weighted) graph. A subset $U\subseteq
  V$ is a \emph{symmetric admissible removal} if the restriction $G_U$
  allows for a perfect matching, and is a symmetric weighted graph. In
  other words, $U$ is a sym-subset of $V$, that is $\iota U=U$. We
  denote by $\calV_{\rm sym}(G)$ the set of symmetric admissible
  removals for~$G$.
%% , that is
%% \[
%% \calV(G)\coloneqq\{U\subseteq V\ \vert\ 
%% \calM(G_U)\neq\emptyset\}.
%% \]
\end{defn}
\noindent
Remark that, in a symmetric graph $G$ with generic weights, the
(unique) perfect matching of minimal weight $M_{\star}$ is symmetric, i.e.,
if $e \in M_{\star}$, then also $\iota e \in M_{\star}$. 
%% if $\{u,v\} \in M_{\star}$, then also 
%% %
%% $\{\iota v,\iota u\} \in M_{\star}$. 
Similarly, in a setting with symmetric removals, also the quantities
$\calC(e)$ and $\calC_v$ are symmetric (that is, 
$\calC(e)=\calC(\iota e)$ and $\calC_v=\calC_{\iota v}$).  Note that
these facts imply as a corollary the statement on the exact
realization of systems with a reservoir, announced in equation
(\ref{eq.9837628x}). Indeed, in those notations we called $U$ and $U'$
the restriction of the symmetric admissible removal to the left and
right parts, respectively, and we have that 
$M_\star(G^{\rm res}_{U\cup U'})$ is a symmetric graph, so that, if
the edge $\edge{v_i}{v_j}$ contributes with its weight $w_{v_i\,v_j}$
to $w(M_\star(G_U))$, then the edges $\edge{v_i}{v_j}$ and
$\edge{v'_i}{v'_j}$ contribute with their weights
$w_{v_i,v_j}=w_{v'_i,v'_j}$ to
$w(M_\star(G^{\rm res}_{U\cup U'}))$, 
and similarly if
$\edge{v_i}{s}$
contributes with $w_{v_i,s}$ to
$w(M_\star(G_U))$, then 
$\edge{v_i}{s_i}$ and $\edge{v'_i}{s'_i}$ contribute with 
$w_{v_i,s_i}$ and $w_{v'_i,s'_i}$
to $w(M_\star(G^{\rm res}_{U\cup U'}))$, while if the sym-edge
$\edge{s_i}{s'_i}$ is in $M_\star(G^{\rm res}_{U\cup U'})$, then it does
not contribute to the weight, as $w_{s_i,s'_i}=0$.

Keeping in mind the statement of
Lemma~\ref{lem:diff} (for arbitrary matchings, thus not necessarily
symmetric), we shall comment on the pertinent variant of
Corollary~\ref{cor.coppia}:
\begin{corr}
\label{cor.coppiasym}
Let $G$ be a symmetric graph and 
$\calJ=\{U_1,U_2\}\in\calV_{\rm sym}(G)$.  The graph $H_{\calJ}^*$
contains no cycles and consists of alternating paths only. The paths
are either all contained in the left or right part of $G$ (and come in
symmetric pairs, i.e.\ there are paths 
$P_{\rm l}^{(i)}$ and $P_{\rm r}^{(i)}$, with 
$\iota P_{\rm l}^{(i)} = P_{\rm r}^{(i)}$), or are fixed points of the
involution (i.e., are paths $P_{\rm b}^{(j)}$, and 
$\iota P_{\rm b}^{(j)}$ is the path $P_{\rm b}^{(j)}$, traversed in
reverse order). The former contain no sym-edges, and the latter
(that thus are \emph{sym-paths})
are of odd length, and contain a unique sym-edge in the middle.

In particular, if $U_1=\emptyset$ and $U_2=\{u,\iota u\}$,
% with $u \in V_{\rm l}$, 
then $H_{\calJ}^*$ consists of a single alternating sym-path
connecting $u$ to $\iota u$, and if $U_1=\{v,\iota v\}$, and
$U_2=\{u,\iota u\}$, with $u,v \in V_{\rm l}$, then $H_{\calJ}^*$
consists either of two alternating sym-paths, connecting $u$ to 
$\iota u$ and $v$ to $\iota v$, or of a pair of (non-sym) paths,
connecting $u$ to $v$ and $\iota u$ to $\iota v$.
\end{corr}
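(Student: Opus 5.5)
The plan is to start from Lemma~\ref{lem:diff} and Corollary~\ref{cor.coppia}, which already apply because they require only generic weights. Hence $H^*_\calJ$ is acyclic and consists of paths alternating in $U_1$ and $U_2$, with endpoints in $U_1\symdif U_2$. What remains is to see how these paths interact with the involution $\iota$.

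First, by uniqueness of the optimum under generic weights (Proposition~\ref{prop.38765874}) and the symmetry of $G_{U_k}$, the matchings $M_{U_1}$ and $M_{U_2}$ are $\iota$-invariant. Therefore $\iota$ maps $M_{U_1}\symdif M_{U_2}$ to itself, and it permutes its connected components. A component $P$ either is mapped onto a different component $\iota P$, or is mapped onto itself.

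Consider first $\iota P\neq P$. Then $P$ contains no sym-edge, since a sym-edge is fixed by $\iota$ and would lie in both $P$ and $\iota P$. Every non-sym edge joins two vertices on the same side, so a connected $P$ without sym-edges lies entirely in $V_{\rm l}$ or entirely in $V_{\rm r}$. This gives the symmetric pairs $P^{(i)}_{\rm l}$, $P^{(i)}_{\rm r}$.

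Now consider $\iota P=P$. The involution acts on $P$ as a graph automorphism of a path. It cannot fix $P$ pointwise, because no vertex is fixed by $\iota$ (it swaps $V_{\rm l}$ and $V_{\rm r}$). So it acts as the reversal. A reversal of a path fixes either a middle vertex (even length) or a middle edge (odd length). A fixed vertex is impossible, so the length is odd and the middle edge is $\iota$-invariant, i.e.\ a sym-edge $\edge{v}{\iota v}$.

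Uniqueness of that sym-edge needs one more step. Any other sym-edge on $P$ would also be fixed as a set, and under the reversal it would have to be mapped to its mirror position along $P$. A fixed edge under reversal must therefore be the middle one, so there is no other. This is the step where care is needed: distinguishing ``the edge is mapped to itself as a set'' from the action of the reversal on positions along the path.

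For the particular cases, count endpoints using the remark after Lemma~\ref{lem:diff}: there are $\tfrac12|U_1\symdif U_2|$ paths, with endpoints in $U_1\symdif U_2$.

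If $U_1=\emptyset$ and $U_2=\{u,\iota u\}$, there is one path joining $u$ and $\iota u$. It cannot lie on one side, so it is a sym-path.

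If $U_1=\{v,\iota v\}$ and $U_2=\{u,\iota u\}$, there are two paths with endpoint set $\{u,\iota u,v,\iota v\}$, and $\iota$ permutes them.
\begin{itemize}
\item If each path is $\iota$-invariant, its endpoints are swapped by $\iota$, which gives the paths $u$--$\iota u$ and $v$--$\iota v$.
\item If $\iota$ exchanges the two paths, each path lies on one side, so it joins $u$ to $v$ (left) or $\iota u$ to $\iota v$ (right).
\end{itemize}
The pairing $u$--$\iota v$ is excluded, because such a path would be neither one-sided nor $\iota$-invariant, having endpoints $\{u,\iota v\}$ with image $\{\iota u, v\}$.
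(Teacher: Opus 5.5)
Your proof is correct and takes the same route the paper intends. The paper only calls the statement straightforward from Corollary~\ref{cor.coppia} and the $\iota$-invariance of the optimal matchings, and it singles out exactly your key point: $\iota$ acts on an invariant path as its reversal, so the middle edge is the only edge it fixes, and hence the unique sym-edge. You also make explicit the steps the paper leaves to the reader: $\iota$ has no fixed vertices, which forces odd length; non-invariant components contain no sym-edge and so lie on one side; and the endpoint count settles the two special cases.
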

\noindent
The statement of the corollary is rather straightforward, the only
point worth emphasizing is that sym-paths contain a \emph{unique}
sym-edge (located at the middle, since sym-edges must be fixed under
the reversal of the path).  We will exploit this in an important way
in what follows.

The point of studying symmetric graphs and removals is that it will
provide a second, different setting in which (in a suitable sense) the
matching subgraph $H_{\calJ}$ is a spanning tree. Again, we will
require that $G$ is bipartite (and connected, which implies the
existence of at least one sym-edge, which in turn implies that
$\iota A = B$). We will write $A_{\rm l}$ for $A \cap V_{\rm l}$, etc.
Also, in the rest of this section (and only here) we adopt the
notational convention that when naming the vertices $a_1$, $a_2$,\ldots
and $b_1$, $b_2$,\ldots, it is understood that $b_j = \iota a_j$.
Also, for a set $U=\{u_1,\ldots,u_k\}$, we write 
$U=\{u_1,\ldots,u_j;u_{j+1},\ldots,u_k\}$ to state concisely that 
$U\cap V_{\rm l}=\{u_1,\ldots,u_j\}$ and
$U\cap V_{\rm r}=\{u_{j+1},\ldots,u_k\}$.
Then, the analogue of the crucial Lemma~\ref{lem:tripla}
% (of which, for simplicity, we consider a smaller set of cases ???????????? VERO??)
% only the case in which all $U_j$'s are in $A$) 
is
\begin{lem}
\label{lem:triplasym}
Let $\bipg{G}{A}{B}{E}$ be a generic-weighted symmetric bipartite graph and let
$\calJ=\{U_1,U_2,U_3\}\subseteq\calV_{\rm sym}(G)$. Call $U\coloneqq U_1
\cap U_2 \cap U_3$, and $U'_i\coloneqq U_i\setminx U$. 
Suppose that we are in one of the seven cases below:
\begin{center}
\begin{tabular}{ccc}
\toprule
$U'_1$ & $U'_2$ & $U'_3$ \\
\midrule
$\{a_1;b_1\}$ & $\{a_2;b_2\}$ & $\varnothing$ \\
$\{a_1,b_2;b_1,a_2\}$ & $\varnothing$ & $\{b_2;a_2\}$ \\
$\{b_2;a_2\}$ & $\{b_1;a_1\}$ & $\{b_1,b_2;a_1,a_2\}$ \\
\midrule
$\{a_1;b_1\}$ & $\{a_2;b_2\}$ & $\{a_3;b_3\}$ \\
$\{a_1,b_3;b_1,a_3\}$ & $\{a_2,b_3;b_2,a_3\}$ & $\varnothing$ \\
$\{a_1,b_2,b_3;b_1,a_2,a_3\}$ & $\{b_3;a_3\}$ & $\{b_2;a_2\}$ \\
$\{b_2,b_3;a_2,a_3\}$ & $\{b_1,b_3;a_1,a_3\}$ & $\{b_1,b_2;a_1,a_2\}$ \\
\bottomrule
\end{tabular}\end{center}
Then the graph $H_{\calJ}^*$ is a forest
% a tree, or a forest with two components related by $\iota$, 
and all vertices in 
%
% $B_{\rm l}\cap V_{H_{\calJ}^*}$
$(B_{\rm l} \setminx(U_1 \cup U_2 \cup U_3))\cap V_{H_{\calJ}^*}$
have degree~$2$.
\end{lem}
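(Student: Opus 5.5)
First reduce the seven rows to a few canonical ones. The symmetric analogue of Proposition~\ref{prop.tradeaforb} should apply: add a new leaf $b$ to some $a\in V_\calJ\cap A_{\rm l}$ and its mirror edge $\edge{\iota b}{\iota a}$, with zero weight on both. This trades $a$ for $b$ in the removals, and does so symmetrically. The matching subgraph changes only by the two pendant edges, so the forest property and the degree-2 property on $B_{\rm l}$ are preserved in both directions. Using this, rows 1--3 reduce to row 3, and rows 4--7 reduce to row 7, exactly as in the proof of Lemma~\ref{lem:tripla}. In row 7 each removal contains only left $B$-vertices and right $A$-vertices.

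Next, take the row-7 case and run the contraction of bicolored edges from the proof of Lemma~\ref{lem:tripla}. Contract each path $(e_1,e,e_2)$ to a single edge of weight $w_{e_1}-w_e+w_{e_2}$, and do it simultaneously with its $\iota$-image, so the reduced graph stays symmetric. A sym-edge can be bicolored; its neighbours then form a symmetric path, and contracting it yields a new sym-edge. The reduced $H^*_\calJ$ is a symmetric, properly 3-edge-coloured bipartite graph, cubic except at the six leaves $b_1,b_2,b_3,a_1,a_2,a_3$. It has no alternating cycle (Remark~\ref{rmk.28976954}). By Corollary~\ref{cor.coppiasym}, each two-colour subgraph consists of alternating paths joining the leaves, and each sym-path in it carries exactly one sym-edge. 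For a pair of colours the endpoints are $\{b_i,a_i,b_j,a_j\}$, so the two-colour subgraph is either two sym-paths $b_i\!\to\!a_i$, $b_j\!\to\!a_j$, or a mirror pair $b_i\!\to\!b_j$, $a_i\!\to\!a_j$. The claim is that the reduced graph on the left side is a tripod, or a disjoint union of such pieces and single sym-edge connectors. Left $B$-vertices of degree 3 must not exist, and no cycle may appear.

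Then rule out degree-3 left $B$-vertices and cycles with the inequality-summing method of Lemma~\ref{lem:tripla}. Split by the number $s$ of pairs of colours whose two-colour subgraph is of sym-path type. If $s=0$, every alternating path stays on one side. The left half is then a connected 3-coloured graph with leaves $b_1,b_2,b_3$ and no alternating cycles, and it must carry no sym-edges at all. This is exactly the situation of Lemma~\ref{lem:tripla}, whose semi-alternating-cycle argument applies verbatim and yields the Y-graph. If $s\ge1$, follow the spine, namely a sym-path $P$ alternating in two colours, now with a single sym-edge in its middle. Write down the analogues of the cycles $L_k$ (a third-colour chord together with a spine segment) and of the cycles $C_j$ (semi-alternating in the third colour). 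Chords can now also be sym-edges, or can join the two halves of the spine through their mirror images. Use the identity $w_e=w_{\iota e}$ to fold the right half onto the left. The folded coefficients should again satisfy the recursion $f_k+f_{k+1}=1$, which forces $I[w]\equiv0$ and hence a contradiction unless the graph is minimal. Cycles passing through two sym-edges must be excluded separately, using the observation that a symmetric cycle using two sym-edges gives $e$ and $\iota e$ the same sign.

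The main obstacle is the case $s\ge1$: the spine recursion must remain consistent across the sym-edge in the middle of the spine. The fix I would try is to fold via $\iota$ and check that the sym-edge behaves like the vertex $\alpha_\kappa$ of Lemma~\ref{lem:tripla}, that is, as a reflection point with $n^\pm$ swapped. I would first verify this on the small 3-Hamiltonian-like examples before attempting the general argument.
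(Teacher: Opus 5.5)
\textbf{There is a genuine gap: the case $s\ge 1$ is not proven.}

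Your reduction to rows 3 and 7, the symmetric contraction of bicoloured edges, and your $s=0$ case are sound and match the paper. If no pair of colours is of sym-path type, every edge lies on a one-sided alternating path, there are no sym-edges, and the left half is literally an instance of Lemma~\ref{lem:tripla}. The case $s\ge 1$, however, carries all the difficulty, and for it you give only a plan (``should again satisfy the recursion'', ``the fix I would try'').

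That case is also larger than you set it up. You only treat row 7, but in row 3 one has $U'_i\symdif U'_3=\{b_i;a_i\}$ for $i=1,2$, so row 3 lies entirely in the case $s\ge 1$. There (and also in row 7) the H-shaped component, a sym-edge with four pendant leaves, can occur and must be in your target list.

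The spine does not transfer as stated. For a sym-path-type pair $\{U_i,U_j\}$ the two-colour subgraph consists of \emph{two} disjoint sym-paths $b_i\to a_i$ and $b_j\to a_j$, possibly in different components. They are linked only by third-colour chords, which may themselves be sym-edges. The recursion of Lemma~\ref{lem:tripla} instead needs a single leaf-to-leaf spine with one special vertex $\alpha_\kappa$.

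Your remark that $e$ and $\iota e$ enter a symmetric cycle with the same sign only shows that its alternating sum is not identically zero, which is a genericity fact. It gives no inequality forbidding such a cycle in $H^*_{\calJ}$.

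\textbf{What is missing} is a preliminary control of the sym-edges of each connected symmetric component $H$, done before any summing of inequalities.
\begin{itemize}
\item Let $m_i^{\pm}$ be the number of sym-edges of colour $U_i$ whose $B$-endpoint is on the left/right, and let $\nu_i=1$ iff $b_i\in H$. Proper 3-colouring forces $m_i^+-m_i^--\nu_i$ to be independent of $i$.
\item A sym-edge of colour $U_i$ is the unique middle edge of the $\{U_i,U_j\}$-alternating sym-path through it, whose endpoints are $b_i,a_i$ or $b_j,a_j$ according to its orientation. This gives $m_i^{\pm}\le 1$ and $m_i^+m_j^-=0$ for $i\ne j$.
\item This leaves seven component types. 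Two are settled by alternating cycles alone.
\item Each remaining type reduces to the already proven Lemma~\ref{lem:tripla} via a non-symmetric 3-Hamiltonian graph $H'$ built from the left half of $H$. Depending on the type, you glue the right endpoints of the sym-edges into one new cubic vertex (adding a new leaf $b_3$ if needed), or replace two $U_3$-coloured sym-edges by one $U_3$-edge between their left endpoints, or keep a single sym-edge as the edge to the leaf $b_3$.
\end{itemize}
Gluing the two half-spines at a new vertex is exactly what produces the single spine your recursion needs. Each semi-alternating cycle of $H'$ through the new vertex or edge lifts to a symmetric semi-alternating cycle of $H$ whose inequality is exactly twice its own, provided each new edge carries half the weight of the sym-edges it replaces. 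Since $H'$ has at least two $U_3$-edges in every case except the H-shaped one, the semi-alternating analysis of Lemma~\ref{lem:tripla} finishes the proof.

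Your ``folding by $\iota$'' points in this direction, but without these counting constraints and explicit constructions the argument does not close.
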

\noindent
The following proof shares its general methodology with the proof of
Lemma~\ref{lem:tripla}, although it requires a more elaborate case
analysis due to the symmetries involved. However, as illustrated by
the upcoming Figures~\ref{fig.3Usym_2a} and \ref{fig.3Usym_2b} and
further discussed in Remark~\ref{rmk.symispivot}, the exhaustive list
of cases can be conceptually understood as the possible
``duplications'' or pivoting constructions originating from the base
case of Lemma~\ref{lem:tripla}. With this unifying structural picture
in mind, the technical verification of the cases should appear less
formidable and much more natural.

\begin{proof} 
Observe that, as was already the case for Lemma~\ref{lem:tripla}, and
in light of Proposition~\ref{prop.tradeaforb}, 
the first two rows of the table can be reduced to the third row, 
and the following three rows can be reduced to the last one, 
by adding extra vertices to $G$, with suitable weights, i.e., 
for a given $\calJ \subseteq\calV_{\rm sym}(G)$,
we can trade a pair of vertices $a$, $b$ in some $U$ (with
$a \in V_{\rm l}$ and $b \in V_{\rm r}$) by introducing two new
vertices $b' \in V_{\rm l}$ and $a' \in V_{\rm r}$, two new edges
$\edge{a}{b'}$ and $\edge{a'}{b}$ in $E$ with weight
$w_{a,b'}=w_{a',b}=0$, and letting $U'=U \setminx \{a,b\}$
(and $M_{U'} \symdif M_U=\{\edge{a}{b'},\edge{a'}{b}\}$) if 
$\{a,b\} \subseteq U$, and $U'=U \cup \{b',a'\}$ (and $M_{U'}=M_U$) otherwise.
This modification leaves $H_{\calJ}$ essentially unchanged, up to the
addition of edges $\edge{a}{b'}$, which are attached to $H_{\calJ}$
on $a$, and have $b'$ as a leaf, on the left part of the graph
(and their analogous transformation on the right part), so the
properties of $H_{\calJ}$ relevant to us (to be connected, acyclic,
and having $\deg(b)=2$ for all $b \in V_{\rm l}$ not in a $U_j$) are
left unchanged. So it is sufficient to consider the last case of each
of the two portions of the table above, which are the cases in which
the elements of $U'_j$ on the left part are all $B$-vertices. In these
cases, and with our labeling, $\calC_{b_1}=\{U_2,U_3\}$ (and so on),
thus $b_1$ must be a leaf of $H_{\calJ}^*$, and the edge 
$e=\{a,b_1\} \in H_{\calJ}^*$ must have $\calC(e)=\{U_1\}$, and so
on. Again, we will use blue, red and green for colors $U_1$, $U_2$ and
$U_3$, respectively.

We will repeat the manipulations of Lemma~\ref{lem:tripla}, in order
to simplify chains of length 3. For sym-edges, the removal is just as
before. For non-sym-edges, of course, the manipulations must be
implemented simultaneously on the left and right part of the graph, so
as to preserve the symmetry.

Performing these operations as long as possible, in any order, will
leave us again with a graph
% $\tilde{H}_\calJ^*$ 
in which no edge is bicoloured, that, again with abuse of language, we
will just call~$H_\calJ^*$ as before.

Now the graph $H_\calJ^*$ has two or three $B$-vertex leaves on the
left side, namely the vertices $\{b_1,b_2\}$, or $\{b_1,b_2,b_3\}$, in
the two cases of the table, the symmetric $A$-vertex leaves on the
right side, and all other vertices of degree 3. Furthermore,
$H_\calJ^*$ is bipartite, and the edges are properly
3-colored. However, unlike Lemma~\ref{lem:tripla}, now $H_\calJ^*$ is
not necessarily connected, as it may have up to three connected
components (all the components of $H_\calJ^*$ must contain at least
two leaves, as a component with no leaves would contain some
alternating cycle, which is not allowed, and this bounds the possible
number of components).

In particular, in the two cases of the table under analysis, the graph
obtained through our sequence of manipulations can be one of the
following (we use a dashed line to describe which portions of the
graph are on the left):
\if\faifig1  
\begin{align}
% H_\calJ^*
&
% \in
\left\{\rule{0pt}{15pt}\right.
% nessuno
% [inline block 10: 7 envs, 7494 chars -> data_tex | \begin{tikzpicture}[baseline={([yshift=-.5ex]current bounding       box.center)}]...]

\left.\rule{0pt}{20pt}\right\}
\label{gr:triplo1r4}
\end{align}
\else \be \label{gr:triplo1r4} [...TikZ\ code...] \ee \fi
Our goal is to prove that the cases above are all that can occur, and
in particular there are no $B$-vertices of degree $3$ on the left
side. In other words, up to permutations of colors, all components $H$
of $H_\calJ^*$ must occur either as a pair $(H,\iota H)$, with $H$ all
contained in the left part of $G$, and consisting of three edges only,
% ``shaped like a Y'' 
as in equation (\ref{gr:triplo}), or as a
symmetric component $H=\iota H$, consisting of a single sym-edge,
% (thus ``shaped like an I''), 
or of one sym-edge and its four
incident leaf edges.
%  (thus ``shaped like a H''), 
The following picture provides our claimed list of all the
possibilities:
\if\faifig1  
\begin{align}
\label{eq.claimlistH}
&
% [inline block 11: 3 envs, 2153 chars -> data_tex | \begin{tikzpicture}[baseline={([yshift=-.5ex]current bounding       box.center)}]...]

\end{align}
\else \be \label{eq.claimlistH} [...TikZ\ code...]\ee \fi
Let us start to prove this, and call $H$ a component of $H_\calJ^*$.
As $H_\calJ^*$ is symmetric, if $H$ has no sym-edges, then it must
be all contained in the left part of $G$, and have a symmetric
counterpart $\iota H$ all contained in the right part of $G$. We shall
prove that, in this case, the only possibility is the first element of
the list in (\ref{eq.claimlistH}).
But in this case the fact that $G$ is symmetric plays no role, as we
only use the vertices and edges on the left part, and we can conclude
in light of Lemma~\ref{lem:tripla}.  

So let us assume that $H$ is a connected symmetric graph with a
non-zero number of sym-edges.  Call $m_i^+$ the number of sym-edges of
color $U_i$ with the $B$-vertex endpoint on the left, and $m_i^-$
those with $B$-vertex endpoint on the right.  Call $\nu_i$ the
indicator function $\nu_i=1$ if $b_i \in H$ and $\nu_i=0$ if $b_i
\not\in H$.

The fact
that $H$ is properly 3-colored implies that there exists a constant
$c \in \mathbb{Z}$ such that $m_i^+-m_i^--\nu_i=c$ for all three~$i$'s.
However, it must also be the case that $m_i^+,m_i^- \leq 1$ for all
$i$, and, for all $i \neq j$, it cannot be that both $m_i^+$ and
$m_j^-$ are non-zero. Indeed, let $e$ be a sym-edge of color $U_i$,
and with $B$-vertex endpoint on the left. Consider the alternating
component $P_{e,i,j}$ in colors $U_i$ and $U_j$ going through $e$. As
there are no alternating cycles in $H$, the graph $P_{e,i,j}$ must be
a path connecting $b_i$ to $a_i$. Similarly, if $e$ is a sym-edge
of color $U_i$, and with $B$-vertex endpoint on the right, the
alternating component $P_{e,i,j}$ in colors $U_i$ and $U_j$ going
through $e$ must be a path connecting $b_j$ to $a_j$. As these paths,
when they exist, are unique and have a unique sym-edge, in their
middle step, the claims above follow.  More precisely, if 
$m_i^+ \geq 2$, let $e'$ and $e''$ be two distinct such sym-edges, and
$j \neq i$. Then a contradiction comes from the fact that $P_{e',i,j}$
and $P_{e'',i,j}$ must both exist, be disjoint, and both have $b_i$ as
endpoint.  If $m_i^- \geq 2$, let $e'$ and $e''$ be two distinct such
sym-edges, and
$j \neq i$. Then a contradiction comes from the fact that $P_{e',i,j}$
and $P_{e'',i,j}$ must both exist, be disjoint, and both have $b_j$ as
endpoint.  Finally, for $i \neq j$, if $m_i^+>0$ and $m_j^->0$, let
$e'$ and $e''$ be one such sym-edge of each of these types,
respectively. Then a contradiction comes from the fact that
$P_{e',i,j}$ and $P_{e'',j,i}$ must both exist, be disjoint, and both
have $b_i$ as endpoint.

So, from the analysis of the consequences of the symmetry, and of not
having alternating cycles, there are relatively few possibilities left
over for the parameters $m_i^{\pm}$ of a connected component $H$ (up to
permutation of the indices of the $b_j$'s, and of the colors). Namely,
there are three possibilities for the list $(\nu_1,\nu_2,\nu_3)$, each
declined in multiple variants. When the number of $b_j$'s in the
component is $1$ or $2$, there are two cases:
$(m_i^+=\nu_i\,;\,m_i^-=0)$ and
$(m_i^+=0\,;\,m_i^-=1-\nu_i)$, while when all $\nu_i$'s are 1, then either
$(m_i^+=1\,;\,m_i^-=0)$ for all $i$, or $(m_i^+=0\,;\,m_i^-=1)$ for all $i$,
or $m_i^+=m_i^-=1$ for a single color $i$, and $0$ elsewhere.  Thus,
the lists of integers
\[
\big(
(\nu_1,\nu_2,\nu_3)\,;\,(m_1^+,m_2^+,m_3^+)\,;\,(m_1^-,m_2^-,m_3^-)
\big)
\]
that can
occur are summarised in the following table, where the right-most
column states what we shall prove in the following:
\be
\label{eq.tab7cases}
\begin{array}{rcccc}
\toprule
\textrm{name} & 
(\nu_1,\nu_2,\nu_3) & (m_1^+,m_2^+,m_3^+) & (m_1^-,m_2^-,m_3^-)
&
\textrm{possible graphs $H$}
\\
\midrule
\textrm{(1a)} & (1,0,0) & (1,0,0) & (0,0,0) & 
\if\faifig1  
\begin{tikzpicture}[baseline={([yshift=-.5ex]current bounding
      box.center)}]
\draw[dashed] (0,-.15) -- (0,.15); 
    \node[inner sep=2.5pt,rectangle,fill=gray!50,draw] (4) at (-.75,0)   {};
    \node[circle,inner sep=2pt,fill=white,draw] (8) at (.75,0)   {};
    \draw[very thick,blue] (4) to (8);
  \end{tikzpicture}
\else [...TikZ\ code...] \fi
 \\
\textrm{(1b)} & (1,0,0) & (0,0,0) & (0,1,1) & \textrm{none} \\
\textrm{(2a)} & (1,1,0) & (1,1,0) & (0,0,0) & \textrm{none} \\
\textrm{(2b)} & (1,1,0) & (0,0,0) & (0,0,1) & 
\if\faifig1  
\begin{tikzpicture}[baseline={([yshift=-.5ex]current bounding
      box.center)}]
\draw[dashed] (0,-.58) -- (0,.15); 
    \node[circle,inner sep=2pt,fill=white,draw] (1) at (-.25,-.2165)   {};
    \node[inner sep=2.5pt,rectangle,fill=gray!50,draw] (3) at (-.75, -0.433013)   {};
    \node[inner sep=2.5pt,rectangle,fill=gray!50,draw] (4) at (-.75,0)   {};
    \draw[very thick,red] (1) to (3);
    \draw[very thick,blue] (1) to (4);
    \node[inner sep=2.5pt,rectangle,fill=gray!50,draw] (5) at (.25,-.2165)   {};
    \node[circle,inner sep=2pt,fill=white,draw] (7) at (.75, -0.433013)   {};
    \node[circle,inner sep=2pt,fill=white,draw] (8) at (.75,0)   {};
    \draw[very thick,red] (5) to (7);
    \draw[very thick,blue] (5) to (8);
\draw[very thick,green!75!blue] (1) to (5);
  \end{tikzpicture}
\else [...TikZ\ code...] \fi
 \\
\textrm{(3a)} & (1,1,1) & (1,1,1) & (0,0,0) & \textrm{none} \\
\textrm{(3b)} & (1,1,1) & (0,0,0) & (1,1,1) & \textrm{none} \\
\textrm{(3c)} & (1,1,1) & (0,0,1) & (0,0,1) & \textrm{none} \\
\bottomrule
\end{array}
\ee
The first situation, in (1a), is easy. Suppose that $H$ does not
consist of a single edge. Thus the vertex adjacent to $b_1$ is some $a
\neq a_1$, of degree 3. The alternating component in color $U_2$ and
$U_3$ (i.e., red and green) passing through $a$ cannot be a path, as
there are no leaves whose incident edge has these colors, thus it must
be a cycle, which is not allowed.

Similarly, for (1b), the alternating component in color $U_2$ and
$U_3$ passing through (for example) the sym-edge of color $U_2$
cannot be a path, as there are no leaves of corresponding color, thus
it must be a cycle, which is not allowed.

In the other five cases, we cannot conclude solely through the
analysis of alternating cycles, and we must once again rely on
semi-alternating cycles.

The cases (2a), (3a) and (3b) are treated in a similar way. In all
these cases, there must be a symmetric path $P_1$, alternating in
colors $U_1$ and $U_2$, connecting $b_1$ to $a_1$, and a similar path
$P_2$, disjoint from $P_1$, connecting $b_2$ to $a_2$.  There cannot
be any other vertex $v$, or any other edge $e$ of color $U_1$ or
$U_2$, besides those contained in $P_1$ or $P_2$, as the component
alternating in colors $U_1$ and $U_2$ passing through $v$ or $e$ would
be a cycle. Thus, all other edges besides the paths $P_1$ and $P_2$
are of color $U_3$.  Note that, as $H$ is connected, in particular
$P_1$ is connected to $P_2$, so there must exist at least one edge of
color $U_3$, contained in the left part, with one endpoint in $P_1$
and one in $P_2$.

As illustrated in Figure~\ref{fig.3Usym_2a}, for the case (2a), the
graph $H'$ consisting of the left part of $H$, the two sym-edges
connected (on the right) to a new $A$-vertex $a$ of degree 3, and a
new leaf $B$-vertex $b_3$ connected to $a$ (with $\edge{a}{b_3}$ of
color $U_3$) is a 3-Hamiltonian graph (with no prescribed symmetries),
and it has at least two edges of color $U_3$ (the one incident to
$b_3$, and one of those connecting the two paths, and contained on the
left part).  Thus, it has the same form as the graphs already analysed
in Lemma~\ref{lem:tripla}, in the subtle analysis of semi-alternating
cycles.  Similarly, for the case (3a), the graph $H'$ consisting of
the left part of $H$, and the three sym-edges connected (on the right)
to a new $A$-vertex $a$ of degree 3, is a 3-Hamiltonian graph
% of the form already analysed in the lemma, 
and has at least two edges of color $U_3$ (again, the one incident to
$b_3$, and one of those connecting the two paths, and contained in the
left part).  Finally, for the case (3b), the graph $H'$ consisting of
the left part of $H$, and the three sym-edges connected (on the right)
to a new $B$-vertex $b$ of degree 3, is a 3-Hamiltonian graph
% is of the form already analysed in the lemma, 
and has at least two edges of color $U_3$.  So, in all of these three
cases, we can conclude in light of the analysis in
Lemma~\ref{lem:tripla}.
% These situations are illustrated in Figure~\ref{fig.3Usym_2a}.

\begin{figure}[t]
\[
% caso 2a
\setlength{\unitlength}{30pt}
% [inline block 12: 3 envs, 22285 chars -> data_tex | \begin{picture}(12,2.8) % \put(0,0){\line(1,0){12}}...]

%%%%%
\]
\caption{\label{fig.3Usym_2a}Graphs $H$ and $H'$ in the three cases of
  the table in which $b_1 \leftrightarrow a_1$ and $b_2
  \leftrightarrow a_2$, that is (2a), on top, (3a), in the middle, and
  (3b), at the bottom. We have deliberately chosen examples of graphs
  $H$ such that the corresponding graph $H'$ is the same 3-Hamiltonian
  graph, which, in all three cases, has been ``doubled'' at a pivot
  point of the following types: the $A$-vertex adjacent to $b_3$, for
  case (2a), an $A$-vertex not adjacent to $b_3$, for case (3a), and a
  non-leaf $B$-vertex, for case (3b).}
\end{figure}

Also the cases (2b) and (3c) are treated in a similar way.  Now, the
component alternating in colors $U_1$ and $U_2$ starting from $b_1$
must be a path $P$ ending in $b_2$ and all contained in the left part,
while $\iota P$ connects $a_1$ and $a_2$ and is contained in the right
part. There cannot be any other vertex $v$, or any other edge $e$ of
color $U_1$ or $U_2$, besides those contained in $P$ and in $\iota P$,
as the component alternating in colors $U_1$ and $U_2$ passing through
$v$ or $e$ would be a cycle. Thus, all other edges besides the paths
$P$ and $\iota P$ are of color~$U_3$.

As illustrated in Figure~\ref{fig.3Usym_2b}, for the case (2b), the
graph $H'$ obtained by taking the left part of $H$ plus the sym-edge
is a 3-Hamiltonian graph.  Note that, in this case, we only know that
it has at least one green edge, not two as in the cases analysed
above, and indeed the corresponding row in the table of
(\ref{eq.tab7cases}) allows for the small $H$-shaped diagram.
%% is of the same form of the graphs
%% already analysed in Lemma~\ref{lem:tripla}.
Finally, for the case (3c), the graph $H'$ obtained by taking the left
part of $H$ plus a new edge of color $U_3$, formed by connecting the
two left endpoints of the sym-edges, is a 3-Hamiltonian graph,
and has at least two edges of color $U_3$ (the newly introduced one,
and the one incident to $b_3$).  So also in these cases we can
conclude in light of the results of Lemma~\ref{lem:tripla}.
%  These situations are illustrated in Figure~\ref{fig.3Usym_2b}.
This completes our proof.
\end{proof}

\begin{figure}[t]
\[
% caso 3b
\setlength{\unitlength}{30pt}
% [inline block 13: 1 envs, 21538 chars -> data_tex | \begin{picture}(12,8) % \put(0,0){\line(1,0){12}}...]

%%%%%
\]
\caption{\label{fig.3Usym_2b}Graphs $H$ and $H'$ in the two cases of
  the table in which $b_1 \leftrightarrow b_2$ and 
  $a_1 \leftrightarrow a_2$, that is (2b), on top, and (3c), at the
  bottom. Now, for reasons of compactness, the `left' part of $H$ is
  above the dashed line (not on the left). We have deliberately chosen
  examples of graphs $H$ such that the corresponding graph $H'$ is the
  same 3-Hamiltonian graph (coinciding with the one in
  Figure~\ref{fig.3Usym_2a}) which, in both cases, has been
  ``doubled'' at a pivot point of the type: the vertex $b_3$, for case
  (2b), and an edge of color $U_3$ not incident to $b_3$, for case
  (3c).}
\end{figure}

\begin{rmk}
\label{rmk.symispivot}
As we have seen, the ``symmetric 3-Hamiltonian graphs'' $H$, that is,
the symmetric graphs $H$ that cannot be excluded solely on the basis
of alternating cycles, and require an analysis of semi-alternating
cycles, are close relatives of the (generic) 3-Hamiltonian graphs. The
connection is more clear in the opposite direction: for any
3-Hamiltonian graph $H'$, with $n$ $A$-vertices and $n+2$
$B$-vertices, we can obtain several symmetric 3-Hamiltonian graphs
$H$, namely, one graph of type (2a), with $2n$ vertices of each class,
one of type (2b), with $2n+1$ vertices of each class, (these two
graphs, by `pivoting' $H'$ at the leaf $b_3$, in two different ways),
$n$ graphs of type (3b) and $n$ graphs of type (3a), all with $2n$
vertices of each class, one for each non-leaf green edge of $H'$ (by
pivoting at the $B$-vertex or at the $A$-vertex endpoint of the green
edge), and $n$ graphs of type (3c), with $2n+2$ vertices of each class
(by pivoting at the green edge in a symmetric way).  These
constructions are illustrated in the Figures~\ref{fig.3Usym_2a}
and~\ref{fig.3Usym_2b}, by following the arrows in the inverse
direction.

These facts have immediate consequences on the structural
understanding of symmetric 3-Hamiltonian graphs, and on their
enumeration (in light of the enumeration of generic 3-Hamiltonian
graphs obtained above).
\end{rmk}

\noindent
Again, once the above fact for sets $\calJ$ of cardinality 3 is established,
we can deduce similar consequences on settings of more practical
interest, where $|\calJ|$ is arbitrarily large. If we follow the same
scheme as in the section for generic graphs, we should now prove a
structure theorem on sets~$\calJ$.

Before doing this, as our main interest is in the restriction of
$H_\calJ$ to the left part of the graph, let us define this notion:
\begin{defn}[Left-restriction]
\label{def.leftrestr}
Let $G=(V_{\rm l} \cup V_{\rm r},E)$ be a symmetric graph. We define
its \emph{left-restriction} $G^{\rm (l)}$ as the graph with vertex-set
$V_{\rm l} \cup \{\sigma \}$, and edges $e=\edge{u}{v}$, if
$\edge{u}{v}$ is in $E(G)$ and both $u$ and $v$ are in $V_{\rm l}$,
and $e=\edge{v}{\sigma}$, if
$\edge{v}{\iota v}$ is in~$E(G)$.
\end{defn}
\noindent
Note that, deliberately, we use the symbol $\sigma$ for the extra
vertex emerging from this procedure. Indeed, this vertex has a role
analogous, but slightly different from the extra reservoir vertex $s$
in the setting of Section~\ref{sec.reservintro}.

We also say that $\calJ$ is \emph{left-single-path} if, for all
$U_1,U_2 \in \calJ$, the left-restriction of 
$M_{U_1} \symdif M_{U_2}$ consists of a single path. That is, if
$M_{U_1} \symdif M_{U_2}$ consists either of a single sym-path 
$P\equiv\iota P$, or of two sym-paths, $P_1\equiv\iota P_1$ and
$P_2\equiv\iota P_2$ (that are connected through $\sigma$ after the
restriction), or of two symmetric paths, $P$ and $\iota P$, contained
in the left and in the right parts, respectively, and containing no
sym-edges. This occurs when
$|(U_i \symdif U_j) \cap V_{\rm l}|\in \{1,2\}$ for all $U_i,U_j \in \calJ$.

Similarly, as the sets $U \in \calV_{\rm sym}(G)$
are determined by their left part, $U \cap V_{\rm l}$, we shall
more concisely describe these subsets.
\begin{lem}
\label{lem.formaJsym}
Let $\bipg{G}{A}{B}{E}$ be a symmetric bipartite graph, and 
$\calJ \subseteq \calV_{\rm sym}(G)$ of cardinality at least 2 such
that $\bigcap_{U \in J} U=\varnothing$, for all $U_i$, $U_j \in \calJ$
we have $d_{ij} :=|(U_i \symdif U_j) \cap V_{\rm l}|=2$, and for all $U_i$,
$U_j$, $U_k \in \calJ$
% and, for all subsets $\calJ' \subseteq \calJ$ of cardinality 3, 
the hypotheses of Lemma \ref{lem:triplasym} hold.  Then there exist a
set $A_0 \subseteq A_{\rm l}$ and a set $B_0 \subseteq B_{\rm l}$,
such that $|\calJ|=|A_0|+|B_0|$, and the collection $\calJ$ is of the
form
\begin{equation}
\label{eq.3478687645sym}
\calJ \cap V_{\rm l} = \{ \{a\} \cup B_0 \}_{a \in A_0} \cup
\{ B_0 \setminx b \}_{b \in B_0}
%% \calJ = \{ \{a\} \cup B_0; \{\iota a\} \cup \iota B_0 \}_{a \in A_0} \cup
%% \{ B_0 \setminx b; \iota B_0 \setminx \iota b \}_{b \in B_0}
\ef.
\end{equation}
If we also allow
$d_{ij}=|(U_i \symdif U_j) \cap V_{\rm l}|=1$
% , and $|\calJ|\geq 4$,
then there is a second possibility,
\begin{equation}
\label{eq.3478687645sym2}
\calJ \cap V_{\rm l} = \{ \{a\} \cup B_0 \}_{a \in A_0} \cup
\{ B_0 \setminx b \}_{b \in B_0} \cup \{ B_0 \}
%% \calJ = \{ \{a\} \cup B_0; \{\iota a\} \cup \iota B_0 \}_{a \in A_0} \cup
%% \{ B_0 \setminx b; \iota B_0 \setminx \iota b \}_{b \in B_0}
\ef.
\end{equation}
\end{lem}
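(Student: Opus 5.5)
Since every $U\in\calJ$ is a sym-subset, it is determined by its left part $U^{\rm l}:=U\cap V_{\rm l}$. The plan is to reduce to the proof of Lemma~\ref{lem.formaJ}, run on the left parts alone. The hypothesis $\bigcap_{U\in\calJ}U=\varnothing$ gives $\bigcap U^{\rm l}=\varnothing$, and $d_{ij}=|U_i^{\rm l}\symdif U_j^{\rm l}|$. A triple $\{U_i,U_j,U_k\}$ satisfies the hypotheses of Lemma~\ref{lem:triplasym} exactly when its left parts, after removing their common intersection, match one of the seven rows of that table. The first step is therefore to restate that table in terms of left parts. The last four rows are the four rows of the table of Lemma~\ref{lem:tripla}, with the left $B$-vertices $b_j=\iota a_j$ playing the role of the $B$-vertices there. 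The first three rows are degenerate versions involving only $a_1,a_2,b_2$ (or $b_1,b_2$), with one set having left part $\varnothing$ relative to the common intersection.

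The first case is $d_{ij}=2$ for all pairs. Here the first three rows never occur: in each of them some pair of sets, for instance $\{a_1;b_1\}$ against $\varnothing$, has $d=1$. So every triple fits one of the four rows of Lemma~\ref{lem:tripla}, read on the left parts. Lemma~\ref{lem.formaJ} used only three facts: the set-theoretic shape of the triples (an $A$-vertex lies in exactly one or in all sets of a triple, a $B$-vertex in exactly two or all), $|U_i\symdif U_j|=2$, and the empty total intersection. None of these uses the graph. Repeating its argument word for word on $\{U^{\rm l}\}_{U\in\calJ}$ gives $|U^{\rm l}\cap A|\le 1$ and $|\tilde B_0\setminx U^{\rm l}|\le1$, where $\tilde B_0=\bigcup_U U^{\rm l}\cap B$. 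It then excludes the $(0,0)/(1,1)$ alternative and yields (\ref{eq.3478687645sym}) with $B_0=\tilde B_0$ and $A_0=\bigcup_U U^{\rm l}\cap A$. As in Lemma~\ref{lem.formaJ}, one also checks directly that (\ref{eq.3478687645sym}) meets all the requirements.

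The second case allows $d_{ij}\in\{1,2\}$. By the triangle-like structure, $d_{ij}=1$ forces $U_i^{\rm l}\subset U_j^{\rm l}$ with a one-element difference (or the reverse). Running the counting argument again, each $U^{\rm l}$ is assigned the pair $p(U)=(|U^{\rm l}\cap A|,\,|\tilde B_0\setminx U^{\rm l}|)$, and the parity argument of Lemma~\ref{lem.formaJ} now allows pairs $(0,0)$ alongside $(1,0),(0,1)$. The first three rows of the table in Lemma~\ref{lem:triplasym} show that the set with relative left part $\varnothing$ is $B_0$ itself, with pair $(0,0)$. Such a set is unique, since two distinct sets with pair $(0,0)$ would both equal $\tilde B_0$. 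Pairs $(1,1)$ remain excluded by the same triple argument as before: a triple containing a $(1,1)$ set and a $(0,0)$ or another $(1,1)$ set matches no row of the enlarged table. The remaining sets are then of the form $\{a\}\cup B_0$ or $B_0\setminx b$, and the minimality argument from the end of Lemma~\ref{lem.formaJ} shows that none are missing. This gives (\ref{eq.3478687645sym2}).

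The main obstacle is the bookkeeping in this second case: checking that each mixed triple with some $d=1$ corresponds exactly to one of the first three table rows, and that no configuration with two distinct "hub" sets survives. I would settle this by enumerating triples by their multiset of pairwise distances, which can only be $(1,1,2)$, $(1,2,2)$ or $(2,2,2)$, and comparing each against the table.
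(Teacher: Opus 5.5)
There is a genuine gap in how you exclude the pair $(1,1)$ in the second case. The first two counting steps, $|U^{\rm l}\cap A|\le 1$ and $|\tilde B_0\setminus U^{\rm l}|\le 1$, do go through, because rows 1--3 of the table in Lemma~\ref{lem:triplasym} have the same incidence pattern as the rows of Lemma~\ref{lem:tripla}. In Lemma~\ref{lem.formaJ}, however, parity did the work: a single $(1,1)$ set forces every set into $\{(0,0),(1,1)\}$, so all partners of a $(1,1)$ set are of that restricted kind. Once $d=1$ is allowed this link is gone. The set $U=\{a\}\cup\tilde B_0\setminus b$ is at distance $1$ from the $(1,0)$ set $\{a\}\cup\tilde B_0$ and from the $(0,1)$ set $\tilde B_0\setminus b$, so your argument does not address a lone $(1,1)$ set among $(1,0)$ and $(0,1)$ sets. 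Moreover, the local claim you state is false. After removing the common part, $\{a\}\cup\tilde B_0\setminus b$, $\{a'\}\cup\tilde B_0\setminus b$, $\tilde B_0\setminus b$ reduce to $\{a\},\{a'\},\varnothing$, which is the first row of the table of Lemma~\ref{lem:triplasym}. Likewise $\{a\}\cup\tilde B_0\setminus b$, $\{a\}\cup\tilde B_0\setminus b'$, $\{a\}\cup\tilde B_0$ reduce to its third row. So enumerating triples by their pattern of pairwise distances, as you propose, cannot settle this by itself. The pairs $p(U)$ refer to the global set $\tilde B_0$, and the contradiction only appears once you use that some member of $\calJ$ contains $b$ and some member misses $a$. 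The paper avoids this bookkeeping differently: it splits $\calJ$ by the parity of $|U\cap V_{\rm l}|$, applies Lemma~\ref{lem.formaJ} inside each class (where all distances are $2$), and then identifies directly which sets can be at distance $1$ from a whole class.

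Here is a repair within your framework. If $U=\{a\}\cup\tilde B_0\setminus b\in\calJ$, the constraints $d\le 2$ and $p\in\{0,1\}^2$ leave only five possible other members: $\{a\}\cup\tilde B_0$, $\tilde B_0\setminus b$, $\tilde B_0$, $\{a'\}\cup\tilde B_0\setminus b$ and $\{a\}\cup\tilde B_0\setminus b'$. If $\tilde B_0\in\calJ$, every triple $\{U,\tilde B_0,X\}$ reduces to one of $(\{a\},\{b\},\{a,b\})$, $(\{a\},\{b\},\varnothing)$, $(\{a\},\{b\},\{a'\})$, $(\{a,b'\},\{b,b'\},\{a,b\})$, and none of these is in the table. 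Otherwise, take a member containing $b$ and a member missing $a$. Since they are at distance at most $2$ from each other, they must be $\{a\}\cup\tilde B_0$ and $\tilde B_0\setminus b$, and together with $U$ they reduce to $(\{a\},\{a,b\},\varnothing)$, again excluded.

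The first branch needs a third set, and this exposes a real exception. For $|\calJ|=2$, the left parts $\{\{a\},\{b\}\}$ satisfy all hypotheses but are of neither form; so do $\{\{a,a'\},\varnothing\}$ and $\{\{b,b'\},\varnothing\}$. The triple condition is vacuous, and every symmetric removal is admissible in $\calK^{\rm (sym)}_{n,m}$. So your remark that the proof of Lemma~\ref{lem.formaJ} does not use the graph fails precisely in its two-set case. That case relies on the balance $|U\cap A|-|U\cap B|=|A|-|B|$ imposed by admissibility, which is automatic for symmetric removals. The statement therefore needs $|\calJ|\ge 3$; the paper's proof, which defers this case to Lemma~\ref{lem.formaJ}, shares the oversight.

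Finally, your description of rows 1--3 is inaccurate. In the second row the set with empty relative left part is $B_0\setminus b_2$, not the hub $B_0=\{b_2\}$, and the third row has no empty set. Nothing depends on this, since the $(0,0)$ set is $\tilde B_0$ by definition.
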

\begin{proof}
First of all, observe that the construction given above in
(\ref{eq.3478687645sym}) and (\ref{eq.3478687645sym2}) is easily
verified to be compatible with the corresponding requirements, namely,
(\ref{eq.3478687645sym2}) is compatible with the whole table in the
lemma, while (\ref{eq.3478687645sym}) is compatible with the bottom
part of the table (i.e., the last four rows of seven).
% table. 
We shall prove that no other structures are possible.

The case of equation (\ref{eq.3478687645sym}), under the condition
$|(U_i \symdif U_j) \cap V_{\rm l}|=2$, gives a set of equations on
the left part of the $U$'s which is identical to the equations for the
$U$'s in Lemma~\ref{lem.formaJ}, so this case is proven in light of
the previous results. We have to deal with the case of equation
(\ref{eq.3478687645sym2}), under the condition 
$d_{ij} \in \{1,2\}$.
% $|(U_i \symdif U_j) \cap V_{\rm l}| \in \{1,2\}$.

Clearly, sets $U_i$ and $U_j$ with $d_{ij}=2$ must
have cardinality of the same parity, while sets with $d_{ij}=1$ 
must have opposite parity. Thus, all collections $\calJ$ with 
$d_{ij} \in \{1,2\}$ must be of the form $\calJ=\calJ_+ \cup \calJ_-$,
with elements $U$ in the two sets having opposite parity of
cardinality, and $\calJ_+$, $\calJ_-$ being as in
Lemma~\ref{lem.formaJ}, for some sets $A_0^{\pm}$ and $B_0^{\pm}$,
whenever of cardinality at least 2.

If both $\calJ_\pm$ have cardinality 1, then their single sets $U_+$ and
$U_-$ differ by a single vertex (of type $A$ or $B$), so we are in the
conditions of equation (\ref{eq.3478687645sym2}) with
$(A_0,B_0)=(\{a\},\varnothing)$ or
$(A_0,B_0)=(\varnothing,\{b\})$. Otherwise, say that $\calJ_+$ has
cardinality at least 2.
% We shall now determine the conditions for $(A_0^+,B_0^+)$ to be
% compatible with $(A_0^-,B_0^-)$.
If $A_0^+$ and $B_0^+$ are both non-empty, there exists a single
set $U$ which has parameter $d=1$ for all sets $U' \in \calJ_+$,
namely $U=B_0^+$, and again we are in the conditions of equation
(\ref{eq.3478687645sym2}). If one of the two sets is empty, then the
only possibility is that one of the two following facts hold:
\begin{align}
\label{eq.3478687645sym2spora}
\calJ \cap V_{\rm l} &= \{ \{a_1\}, \{a_2\}, \{a_1,a_2\} \}
\ef;
&
\calJ \cap V_{\rm l} &= \{ \{b_1\}, \{b_2\}, \varnothing \}
\ef.
\end{align}
However, none of the two possibilities above is compatible with the
table in the lemma. As the case analysis is complete, we can conclude.
\end{proof}
\noindent
\begin{rmk}
\label{rmk.sigmapassacol32}
Note that, while in the setting of equation (\ref{eq.3478687645sym})
it is possible that $H_\calJ$ does not contain any sym-edge (and thus,
that after the left-restriction $\sigma$ constitutes a singleton), in
the setting of equation (\ref{eq.3478687645sym2}) every connected
component of the graph $H_\calJ^*$ must contain at least one sym-edge,
as $M_{U_1}\symdif M_{U_2}$ contains one sym-edge if
$|U_1 \symdif U_2|=1$.
\end{rmk}

\noindent
We can then establish
\begin{lem}
\label{lem:co2sym}
Let $\bipg{G}{A}{B}{E}$ be a generic-weighted symmetric bipartite graph,
and $\calJ$
as in equation
(\ref{eq.3478687645sym2}) of
Lemma \ref{lem.formaJsym}.
Then the graph $H_{\calJ}^*$ consists only of symmetric components
with a unique sym-edge,
and all vertices in $(B \setminx B_0) \cap V_{\rm l}$
have degree $2$, while all the vertices in $B_0$ are leaves.
\end{lem}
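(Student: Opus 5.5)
The plan is to mirror the proof of Lemma~\ref{lem:co2}, replacing Lemma~\ref{lem:tripla} by Lemma~\ref{lem:triplasym}, and adding a sym-edge count that exploits Corollary~\ref{cor.coppiasym} and Remark~\ref{rmk.sigmapassacol32}.

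\emph{Degrees.} First we handle the degrees of left $B$-vertices. Suppose a vertex $b\in (B\setminx B_0)\cap V_{\rm l}$ had degree at least $3$ in $H_\calJ^*$. Pick three incident edges $e_1,e_2,e_3$ and colors $U_i\in\calC(e_i)$. Then $\calJ'=\{U_1,U_2,U_3\}$ satisfies the hypotheses of Lemma~\ref{lem:triplasym}; by Lemma~\ref{lem.formaJsym}, every triple taken from the family (\ref{eq.3478687645sym2}) falls in one of the seven rows of that table. Since $H^*_{\calJ'}\subseteq H^*_\calJ$ and degrees only grow when colors are added, $b$ would already have degree $3$ in $H^*_{\calJ'}$, which contradicts the lemma.

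The exact values then follow from counting colors, as in Lemma~\ref{lem:co2}. A vertex $b\in B_0$ belongs to every $U$ except $B_0\setminx b$ (on the left), so $\calC_b=\calJ\setminx\{B_0\setminx b\}$ and $b$ is a leaf. A vertex $b\in B_{\rm l}\setminx B_0$ has $\calC_b=\varnothing$. If it had degree $1$, its edge would carry every color and would be an isolated dimer outside $H^*_\calJ$, so it has degree exactly $2$.

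\emph{Components.} Next we show that every component of $H_\calJ^*$ is symmetric and contains exactly one sym-edge. Symmetry of $H_\calJ$ means $\iota$ permutes the components. By Remark~\ref{rmk.sigmapassacol32}, every component contains at least one sym-edge, so each component is mapped to itself by $\iota$.

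To get uniqueness, I would argue on the left-restriction $H^{(\rm l)}$, where all sym-edges become edges to the single vertex $\sigma$. I plan to show two things.
\begin{enumerate}
\item $H_\calJ^*$ has no cycle. This can be adapted from the second proof of Theorem~\ref{thm:Jtree}. Left $B$-vertices have degree $\le2$, so colors propagate along a cycle in the same way. The case of a cycle that crosses sym-edges needs care, because right $B$-vertices are $\iota$ of left $A$-vertices and may have high degree. Here I would use the symmetric version of that argument, pairing the cycle $C$ with $\iota C$.
\item Two sym-edges $f_1\ne f_2$ in the same component would create a cycle. If $H$ is a tree and symmetric, the tree path from $f_1$ to $f_2$ on the left, together with its $\iota$-image on the right, closes up into a cycle. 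This uses that $\iota$ fixes each sym-edge and swaps its endpoints.
\end{enumerate}
So once acyclicity is available, uniqueness of the sym-edge follows immediately.

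\emph{Main obstacle.} The hardest step is acyclicity, and more precisely ruling out symmetric cycles through two sym-edges. My first attempt would be a local colour argument. Take two sym-edges $f_1,f_2$ in one component and choose $U\in\calC(f_1)$, $U'\in\calC(f_2)$ with $U\notin\calC(f_2)$ and $U'\notin\calC(f_1)$; if no such choice exists, the sets $\calC(f_1)$ and $\calC(f_2)$ are nested, and I would handle that case separately. By Corollary~\ref{cor.coppiasym}, $M_U\symdif M_{U'}$ consists of sym-paths each containing a unique sym-edge, or of a pair of non-sym paths. Combining this with the left degree-$2$ structure should force $f_1$ and $f_2$ onto sym-paths that cannot both lie in the same connected structure without producing an alternating cycle. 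Alternatively, one can reduce to triples and invoke the case table (\ref{eq.tab7cases}), where each symmetric component is shown to have a single sym-edge. I expect the cleanest route is this reduction to triples: a forbidden configuration with two sym-edges in one component should already appear within some $\calJ'$ of size $3$, using the same monotonicity-in-colors reasoning as in the degree step.
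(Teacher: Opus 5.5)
Your treatment of the degrees matches the paper's proof exactly: a left $B$-vertex of degree $\ge 3$ gives a triple $\calJ'$ contradicting Lemma~\ref{lem:triplasym}, and the count of $\calC_b$ for $b\in B_0$ and $b\notin B_0$ does the rest. So does obtaining $\iota$-invariance and at least one sym-edge per component from Remark~\ref{rmk.sigmapassacol32}. Two sym-edges in one component do force a cycle, provided you choose two of them joined by a path with no further sym-edge, so that the path stays on one side.

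The gap is the remaining step. You never prove that no component contains two sym-edges, and the route you call cleanest fails. Monotonicity in colours only certifies local configurations: a vertex of degree $3$ is already seen by the three colours of its edges. Whether ``$f_1,f_2$ lie in one component'' is a global question. The path of $H_\calJ$ joining them generally uses colours outside any triple $\calJ'$, so in $H_{\calJ'}$ the two sym-edges typically lie in different components. Lemma~\ref{lem:triplasym} and table~(\ref{eq.tab7cases}) allow this; already for two colours, Corollary~\ref{cor.coppiasym} permits two disjoint sym-paths.

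Your ``same'' propagation as in the second proof of Theorem~\ref{thm:Jtree} also fails as stated. Both endpoints of a sym-edge are of the same kind: left $B$/right $A$, where incident colour sets are complementary, or left $A$/right $B$, where they are only disjoint. The alternation that drives the propagation therefore breaks at every sym-edge. Your $M_U\symdif M_{U'}$ argument is not brought to a conclusion either.

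The missing idea is to propagate colours only along the left arc between two \emph{consecutive} sym-edges, and transfer the result to the right by $\iota$. The paper supplies this in the proof of Theorem~\ref{thm:Jtreesym}; the proof of this lemma only cites it as already known.

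Take a sym-edge $e_0$ on a cycle, the next sym-edge $e_\ell$, and the sym-cycle $C'=(e_0,e_1,\ldots,e_\ell,\iota e_{\ell-1},\ldots,\iota e_1)$.
\begin{itemize}
\item Suppose a sym-edge has a left $B$-endpoint $b$. Then $b\notin B_0$ since it lies on a cycle, so $b$ and $\iota b$ have degree $2$. The chain $(a,b,\iota b,\iota a)$ then contracts to a sym-edge $\edge{a}{\iota a}$ with colour set $\calC(\edge{a}{b})$ and weight $2w_{a,b}-w_{b,\iota b}$, which preserves alternating sums.
\item After this, both left endpoints are $A$-vertices and $\ell$ is odd.
\item Let $U\in\calC(e_0)$. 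Colour sets are disjoint at left $A$-vertices and complementary at left $B$-vertices (degree $2$, $\calC_b=\varnothing$). Hence $U\in\calC(e_j)$ for every even $j<\ell$, and so $U\notin\calC(e_\ell)$.
\item Thus $\calC(e_0)\cap\calC(e_\ell)=\varnothing$. Propagating backwards from any $U'\in\calC(e_\ell)$ puts $U'$ in $\calC(e_j)$ for every odd $j$.
\item Since $\calC(\iota e)=\calC(e)$, the cycle $C'$ is alternating in $U,U'$, contradicting Remark~\ref{rmk.28976954}.
\end{itemize}
Cycles lying entirely on the left are excluded by the colour argument of the second proof of Theorem~\ref{thm:Jtree}. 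Their component contains a sym-edge, so such a cycle must have an $A$-vertex of degree $\ge 3$.
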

\begin{proof} 
The case $|\calJ|=2$ is obvious, and the case $|\calJ|=3$ is the
statement of Lemma \ref{lem:triplasym}. Also, the fact that each
component contains at most one sym-edge is already established, and
the fact that it contains at least one sym-edge is implied by the
remark above.  Now, if $|\calJ|\geq 4$, we shall prove that all left
$B$-vertices in $H_\calJ^*$ not in a $U_j$ are of degree $2$.
This is proven essentially by the same argument as in the proof of
Lemma~\ref{lem:co2}. By contradiction, if $H_\calJ^*$ has a left $B$-vertex
$b$ of degree 3 or larger, let $e_1$, $e_2$ and $e_3$ be distinct
edges incident to $b$ (note that at most one of them can be a
sym-edge), and (up to renaming the $U_j$'s) $U_i \in \calC(e_i)$ for
$i=1,2,3$. Then, calling $\calJ'=\{U_1,U_2,U_3\}$, we have that
$H_{\calJ'}^* \subseteq H_\calJ^*$, and that $b$ has degree $3$
already in $H_{\calJ'}^*$, which is not possible in light of
Lemma~\ref{lem:triplasym}.

Let us determine which left $B$-vertices have degree 1, and which have
degree 2, in light of the structure theorem,
Lemma~\ref{lem.formaJsym}.  A vertex $b \in B_0$ is present in $G_U$
only for the corresponding set $U=B_0\setminx b$, so
$\calC_b=\calJ\setminx U$ and there exists a single edge $e$, incident
to $b$, with $\calC(e)=\{U\}$, so $b$ is a leaf. A vertex $b \not\in
B_0$ is present in all $G_U$'s, so $\calC_b=\varnothing$. If it has
degree 1, then there is an incident edge $e$ such that
$\calC(e)=\calJ$, so $e$ is an isolated edge of $H_\calJ$, and thus it
is not in $H_\calJ^*$. So it must have degree 2.
\end{proof}
\noindent
Now we are ready to establish the second theorem of this paper:
\begin{thm} 
\label{thm:Jtreesym}
\label{th:spantreesym}
In the conditions of Lemma \ref{lem:co2sym}, namely, for a triple
$(G,w,\calJ)$, with $\bipg{G}{A}{B}{E}$ bipartite, generic weights, and
$\calJ$ of the form 
\begin{align*}
\calJ_{\rm l} 
&= \{ \{a\} \cup B_0 \}_{a \in A_0} \cup \{
B_0 \setminx b \}_{b \in B_0}
\cup \{B_0\}
\\
\calJ
&=\{U \cup \iota U\}_{U \in \calJ_{\rm l}}
\end{align*}
for some $A_0 \subseteq A_{\rm l}$ and $B_0\subseteq B_{\rm l}$,
% given by equation (\ref{eq.3478687645}))
% Assume $ J\subseteq C$ with $| J|\geq 2$. 
the subgraph $H_{\calJ}^*$ is a forest where each component is
symmetric and contains a unique sym-edge. If $A_0=A_{\rm l}$, then
$H_{\calJ}=H_{\calJ}^*$ is a spanning forest of $G$, while its
left-restriction $H_{\calJ}^{\rm (l)}=(H_{\calJ}^*)^{\rm (l)}$ is a
spanning tree of $G^{\rm (l)}$, the left-restriction of $G$.
\end{thm}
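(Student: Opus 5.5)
The plan is to mirror the first proof of Theorem~\ref{thm:Jtree}: combine the local degree information from Lemma~\ref{lem:co2sym} with an Euler-type count, applied to each symmetric component of $H_\calJ^*$ after left-restriction. By Lemma~\ref{lem:co2sym}, every component $K$ of $H_\calJ^*$ is symmetric and contains exactly one sym-edge $e_K=\edge{v}{\iota v}$. Its left-restriction $K^{\rm (l)}$ is then the left half of $K$ plus the single edge $\edge{v}{\sigma}$. So $K$ is a tree if and only if the left half $K_{\rm l}=K\cap V_{\rm l}$ is a tree. Indeed, $K$ is made of $K_{\rm l}$, its mirror image $\iota K_{\rm l}$, and the one bridge $e_K$; in particular $K_{\rm l}$ must be connected, since $K$ is connected and $e_K$ is the only edge joining the two halves.

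To show $K_{\rm l}$ is acyclic, I would use the second-proof argument of Theorem~\ref{thm:Jtree}, which is purely local.
\begin{itemize}
\item Left $B$-vertices not in $B_0$ have degree $2$ in $H_\calJ^*$, counting a possible sym-edge, and those in $B_0$ are leaves.
\item Take a cycle $C=(e_1,\dots,e_{2\ell})$ in $K_{\rm l}$. If every vertex of $C$ had degree $2$ in $K$, then $C$ would be all of $K$, contradicting that $K$ contains a sym-edge. So some $A$-vertex $a$ on $C$ has degree at least $3$, lying between $e_{2\ell}$ and $e_1$ (the $B$-vertices of $C$ have degree exactly $2$ in $K$).
\item Some color $U$ belongs to $\calC(e)$ for an edge $e$ at $a$ off the cycle. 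Because consecutive colour sets at a degree-$2$ $B$-vertex $b\notin B_0$ partition $\calJ$ (as $\calC_b=\varnothing$), $U$ must lie in $\calC(e_{2}),\calC(e_{2\ell-1}),\calC(e_4),\dots$ alternately.
\item Going around the cycle, this alternation gives the same parity contradiction as before.
\end{itemize}
Alternatively, an Euler count works: the edges of $K$ are twice the left $B$-degree sum, corrected for $e_K$, and one compares with $|V(K)|$. I would present the local argument to avoid bookkeeping of how $A_0$ and $B_0$ vertices split among components.

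For the spanning statement, assume $A_0=A_{\rm l}$. Every left $A$-vertex then lies in some $U$, hence in $V_\calJ$, hence in $H_\calJ^*$. Any isolated dimer of $H_\calJ\setminx H_\calJ^*$ is either a left edge, with a left $A$-endpoint, or a sym-edge, with an $A$-endpoint on one side; either way it contains an $A$-vertex of $V_\calJ$, or its mirror does. Since $\calC_v=\calC_{\iota v}$, this is impossible, so $H_\calJ=H_\calJ^*$ and it spans $G$.

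For the left-restriction, each component contributes its tree $K_{\rm l}$ attached to $\sigma$ by exactly one edge. The union over components is therefore a tree on $V_{\rm l}\cup\{\sigma\}$: components are glued at the single vertex $\sigma$, with one edge each, so no cycles are created, and the result is connected. This gives a spanning tree of $G^{\rm (l)}$.

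The main obstacle is the acyclicity step inside $K_{\rm l}$. The paper's local argument must be adapted to the degree-$2$ $B$-vertex incident to the sym-edge, where one of the two edges leaves the left part. If a cycle passes through such a vertex it cannot stay in $K_{\rm l}$, but the partition property $\calC(e)\cup\calC(e')=\calJ$ still holds there, which is what the alternation argument needs. I would check this carefully. I would also check the case where $a\in A_0$ lies on the cycle: there $\calC_a\neq\varnothing$, and the missing colors at $a$ have to be handled like the extra branch.
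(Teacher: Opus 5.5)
The gap is in your first step. You take from Lemma~\ref{lem:co2sym} that every component $K$ of $H_\calJ^*$ contains \emph{exactly one} sym-edge, and then only prove that the left half $K_{\rm l}$ is acyclic. That reverses where the difficulty lies. The proof of Lemma~\ref{lem:co2sym} merely asserts that ``at most one sym-edge'' is already established, but nothing before it proves this for $|\calJ|\geq 4$. Corollary~\ref{cor.coppiasym} and Lemma~\ref{lem:triplasym} cover only two and three colours. The degree bound is local (three incident edges involve only three colours), but two sym-edges lying in one component is a global property and does not reduce to triples. In the paper, uniqueness of the sym-edge is exactly what the proof of this theorem establishes. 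The absence of cycles inside one half, which you reprove by the local colour-propagation argument, is dispatched there in one line by appeal to Theorem~\ref{thm:Jtree}. As written, your proof does not exclude a component whose left half is a tree containing the left endpoints of two sym-edges; such a component has a cycle, and its left-restriction has a cycle through $\sigma$. (The two worries in your last paragraph are not real obstacles. A left $B$-vertex carrying the sym-edge has degree $1$ in $K_{\rm l}$. Vertices of $A_0$ on the cycle only enter through disjointness of colour sets at a vertex, which always holds.)

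The missing step goes as follows. Since no cycle lies within one half, any cycle uses at least two sym-edges. Take a sym-edge $e_0$ and the next sym-edge $e_\ell$ along the cycle, so that (up to applying $\iota$) the stretch $(e_1,\dots,e_{\ell-1})$ lies in the left part. Form the sym-cycle $C'=(e_0,e_1,\dots,e_\ell,\iota e_{\ell-1},\dots,\iota e_1)$. If a sym-edge has a left $B$-vertex endpoint $b$, then $b$ and $\iota b$ have degree $2$ and $\calC_b=\calC_{\iota b}=\varnothing$. So the chain $(a,b,\iota b,\iota a)$ has colour sets $\calC,\calJ\setminx\calC,\calC$. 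It can be contracted to a sym-edge $\edge{a}{\iota a}$ with colour set $\calC$ and weight $2w_{a,b}-w_{b,\iota b}$, which preserves alternating sums. Thus both left endpoints may be taken to be $A$-vertices, and $\ell$ is odd. For $U\in\calC(e_0)$, disjointness at $A$-vertices and the partition property at the degree-$2$ left $B$-vertices give $U\in\calC(e_j)$ for all even $j<\ell$, hence $U\notin\calC(e_\ell)$. So $\calC(e_0)\cap\calC(e_\ell)=\varnothing$. For $U'\in\calC(e_\ell)$, the same propagation from the other end puts $U'$ in $\calC(e_j)$ for all odd $j$. Since $\calC(\iota e)=\calC(e)$, the cycle $C'$ is alternating in $U,U'$, contradicting Remark~\ref{rmk.28976954}.

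A smaller point concerns your isolated-dimer argument for $H_\calJ=H_\calJ^*$. It does not cover a sym-edge $\edge{b}{\iota b}$ with $b\in B_{\rm l}\setminx B_0$, whose $A$-endpoint $\iota b$ lies on the right and is not in $V_\calJ$. Such an edge can lie in every $M_U$ (e.g.\ in $\calK^{\rm (sym)}_{1,1}$ with $B_0=\varnothing$ and $w_{b,\iota b}$ very negative), and then $H_\calJ\neq H_\calJ^*$. This does not affect the forest and spanning-tree conclusions. That dimer is itself a symmetric component with one sym-edge, and it just adds the pendant edge $\edge{b}{\sigma}$ to $H_\calJ^{\rm (l)}$.
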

\begin{proof}
Curiously, in this case we could not devise a proof that is just a
minor variant of the first proof presented for the general case
Theorem \ref{thm:Jtree}. We will instead produce a proof that is
similar in spirit to the second proof of that theorem.

We know already most of the claimed properties of our graph
$H_\calJ^*$, and what we need to establish is that each component $H$
is acyclic. We already know from Theorem \ref{thm:Jtree} that there
are no cycles completely contained in the left part. So $H$ is acyclic
iff it has exactly one sym-edge.  If there is a cycle, it contains at
least two distinct sym-edges.  If there is such a cycle
$C=(e_0,e_1,e_2,\ldots) \subseteq H$, let $e_0$ be a sym-edge, and let
$e_\ell$ be the first subsequent sym-edge along the cycle.  Then there
exists also a sym-cycle, that uses exactly the two sym-edges $e_0$ and
$e_{\ell}$, namely
$C'=(e_0,e_1,e_2,\ldots,e_{\ell-1},e_\ell, \iota
e_{\ell-1},\ldots,\iota e_2,\iota e_1) \subseteq H$. This cycle has
length at least 4, and, if the left endpoints of the sym-edges are in
the same vertex class, it has length at least 6.
% where the left part of the cycle is the path $P=(e_0,e_1,\ldots,e_{\ell})$.  
The left endpoint of each of the sym-edges may be an $A$-vertex or a
$B$-vertex. In the latter case, we would have a symmetric chain of
length 3, that we can contract into a single sym-edge with $A$-vertex
left endpoint:
\begin{equation*}
\if\faifig1  
\begin{tikzpicture}%[baseline={([yshift=-.5ex]current bounding box.center)}]
    \node[circle,inner sep=2pt,fill=white,draw=black!20] (1) at (0,0)   {};
    \node[inner sep=2.5pt,rectangle,fill=gray!50,draw] (2) at (1,0)   {};
    \node[circle,inner sep=2pt,fill=white,draw] (3) at (2,0)   {};
    \node[inner sep=2.5pt,rectangle,fill=gray!30,draw=none] (4) at (3,0)   {};
\draw[thick,dashed] (1.5,-.9) -- (1.5,.9); 
    \draw[very thick] (2) to[edge node={node [above] {\colorbox{white}{\small\color{black} $w'$}}}] (3);
    \draw[very thick] (1) to[edge node={node [above] {\small\color{black} $w$}}] (2);
    \draw[very thick] (3) to[edge node={node [above] {\small\color{black} $w$}}] (4);
    \draw[very thick] (2) to[edge node={node [below] {\colorbox{white}{\small\color{black} $\calJ \setminx \calC$}}}] (3);        
    \draw[very thick] (1) to[edge node={node [below] {\small\color{black} $\calC$}}] (2);
    \draw[very thick] (3) to[edge node={node [below] {\small\color{black} $\calC$}}] (4);
  \end{tikzpicture}
\else [...TikZ\ code...] \fi
\qquad
  \raisebox{26pt}{$\to$}
  \quad
\if\faifig1  
\begin{tikzpicture}%[baseline={([yshift=-.5ex]current bounding box.center)}]
    \node[circle,inner sep=2pt,,draw=black!20] (1) at (0,0)   {};
    \node[inner sep=2.5pt,rectangle,fill=gray!30,draw=none] (4) at (1,0)   {};
\draw[thick,dashed] (.5,-.9) -- (.5,.9); 
    \draw[very thick] (1) to[edge node={node [above] {\colorbox{white}{\small\color{black} $2w-w'$}}}] (4);
    \draw[very thick] (1) to[edge node={node [below] {\colorbox{white}{\small\color{black} $\calC$}}}] (4);
  \end{tikzpicture}
\else [...TikZ\ code...] \fi
\end{equation*}
After this operation, the cycle has length at least 2.  So, w.l.o.g.,
we can think that both our sym-edges have an $A$-vertex left endpoint,
and thus that $\ell\geq 1$ is an odd integer. As the sets $\calC(e)$
are non-empty, we can call $U$ one element of $\calC(e_0)$, and $U'$
one element of $\calC(e_\ell)$. If $\calC(e_0) \cap \calC(e_\ell) \neq
\varnothing$, then we can take $U'=U$. However, $\calC(e_1) \subseteq
\calJ \setminx \calC(e_0)$, and $\calC(e_2) = \calJ \setminx
\calC(e_1)$ (because $B$-vertices have degree 2), thus
%\be
%
$\calC(e_2) = \calJ \setminx \calC(e_1) \supseteq \calC(e_0)$
%
%\ee
and $U \in \calC(e_2)$, and similarly $U \in \calC(e_4)$,
and so on, which is in contradiction with the fact that $\ell$ is odd.
If instead $\calC(e_0) \cap \calC(e_\ell) = \varnothing$, then 
$U$ and $U'$ are distinct. However, as we have seen, 
$U \in \calC(e_{j})$ for all $0 \leq j < \ell$ even, and, by a
symmetric argument,
$U' \in \calC(e_{j})$ for all $0 < j \leq \ell$ odd, thus the cycle is
alternating in colors $U$ and $U'$, which is not allowed.  This
completes our proof.
\end{proof}

\begin{rmk}
In the conditions of Theorem \ref{thm:Jtreesym}, with $A_0=A_{\rm (l)}$, the
projection of the left part 
$\bar{H}^{\rm (l)}_\calJ=\overline{(H_\calJ)^{\rm (l)}}$ is spanning
on $\overline{G^{\rm (l)}}$.
\end{rmk}

\begin{rmk}
\label{rmk.bridgerootC}
A careful analysis of the proof shows a further interesting property
of $H_\calJ$ in the setting of Theorem \ref{thm:Jtreesym}.  Calling
$C_{\alpha}$ the set of colors $U\in \calJ$ such that 
$a \in A_0 \cap V(H_{\alpha})$ or $b \in B_0 \cap V(H_{\alpha})$, and
calling $e_\alpha$ the only sym-edge of the component $H_\alpha$,
then $\calC(e_\alpha)=C_{\alpha}$ if the endpoint of $e_\alpha$ on the
left is a $B$-vertex, and $\calC(e_\alpha)=\calJ \setminx C_{\alpha}$
if the endpoint of $e_\alpha$ on the left is an $A$-vertex.
\end{rmk}

\noindent
Up to completing the weight function with sufficiently high values, we
can always imagine to ``complete'' a symmetric bipartite graph 
to a graph, that we shall call $\calK_{n,m}^{\rm (sym)}$, with
$n=|A_{\rm l}|$ and $m=|B_{\rm l}|$, composed of the left part
$G_{\rm l}=(A_{\rm l}\cup B_{\rm l},A_{\rm l}\times B_{\rm l})=\calK_{n,m}$, 
the right part 
$G_{\rm r}=(A_{\rm r}\cup B_{\rm r},A_{\rm r}\times B_{\rm r})=\calK_{m,n}$, 
and all the $m+n$ edges between $a_i$ and $b_i = \iota a_i$.

For example, the graph $\calK_{3,5}^{\rm (sym)}$ is the following:
\[
\if\faifig1  
% [inline block 14: 1 envs, 5394 chars -> data_tex | \begin{tikzpicture}[baseline={([yshift=-.5ex]current bounding box.center)}]     \node[inner sep=2.5pt,rectangle,fill=gra...]

\else [...TikZ\ code...] \fi
\]
In this case, we do not need to check for the fact that the
$\calM(G_U)$'s are non-empty, and we only need to ensure the
genericity of the weights (which holds almost-surely in various
randomized settings). Thus, in summary, we have determined a second
interesting bipartite setting:
\begin{corr}
\label{cor.setting1sym}
In a triple $(G,w,\calJ)$, where 
$G=\calK_{n,m}^{\rm (sym)}$ is the bipartite graph
$(A\cup B, A_{\rm l} \times B_{\rm l} \cup 
A_{\rm r} \times B_{\rm r} \cup \big\{ \edge{a_i}{\iota a_i}
\big\})$,
% \cup \big\{ \{b_j,\iota b_j\} \big\} 
$w$ is a generic symmetric weight function, and
$\calJ=\{ \{a,\iota a\} \}_{a \in A_{\rm l}} \cup \{ \varnothing\}$,
we have that $H_\calJ=H_\calJ^*$ is a spanning forest on
$\calK_{n,m}^{\rm (sym)}$, with all left $B$-vertices and right
$A$-vertices of degree 2, and the projection of the left-restriction,
$\bar{H}^{\rm (l)}_\calJ$, is a spanning tree on $\calK_{n+1}$.
\end{corr}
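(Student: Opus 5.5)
The plan is to obtain Corollary~\ref{cor.setting1sym} as a specialization of Theorem~\ref{thm:Jtreesym}, plus a projection argument as in Remark~\ref{rmk.projForIsFor}.

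\textbf{Matching the hypotheses.} Take $A_0=A_{\rm l}$ and $B_0=\varnothing$. The collection of the theorem then reduces to
\[
\calJ_{\rm l}=\{\{a\}\}_{a\in A_{\rm l}}\cup\{\varnothing\},
\]
and symmetrizing gives exactly $\calJ=\{\{a,\iota a\}\}_{a \in A_{\rm l}}\cup\{\varnothing\}$. We also check that every $U\in\calJ$ is a symmetric admissible removal. Indeed, $G_U$ is symmetric, and on $\calK^{\rm (sym)}_{n,m}$ the needed perfect matching is easy to exhibit. Pair each $b\in B_{\rm l}$ with $\iota b$ via its sym-edge. Pair the left $A$-vertices not in $U$ with their images via sym-edges as well. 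Since the weights are finite on all listed edges, $\calM(G_U)\neq\varnothing$; completing with large weights does not affect this. Genericity is assumed.

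\textbf{Applying Theorem~\ref{thm:Jtreesym}.} It gives that $H_\calJ=H_\calJ^*$ is a spanning forest, each of whose components is symmetric with a unique sym-edge. It also gives that the left-restriction $H^{\rm (l)}_\calJ$ is a spanning tree of $G^{\rm (l)}$. Lemma~\ref{lem:co2sym} with $B_0=\varnothing$ shows that all left $B$-vertices have degree~2, and by the symmetry $\iota$ all right $A$-vertices have degree~2.

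\textbf{Projection.} Regard $H^{\rm (l)}_\calJ$ as a bipartite graph in which $\sigma$ is an extra $A$-type vertex; its edges $\edge{v}{\sigma}$ come from sym-edges with left endpoint $v$. Two cases have to be distinguished.
\begin{itemize}
\item A sym-edge with left endpoint in $A_{\rm l}$ yields an edge $\edge{a}{\sigma}$ between two $A$-type vertices. This is a genuine edge of the projection, in the sense that $\sigma$ behaves like a $B$-neighbour of degree~1.
\item A sym-edge with left endpoint $b\in B_{\rm l}$ makes $b$ adjacent to $\sigma$. In that case $b$ has degree~2 in the restriction, so its projection is an edge $\edge{a}{\sigma}$.
\end{itemize}
To avoid this case distinction, the cleanest route is to subdivide each $A$–$\sigma$ edge with a dummy $B$-vertex. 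The result is a tree in which all $B$-type vertices have degree~2. Remark~\ref{rmk.projForIsFor} then applies: the projection of a tree with all $B$-vertices of degree 1 or 2 is a tree, spanning on the $A$-type vertices. There are $n+1$ such vertices, namely $A_{\rm l}\cup\{\sigma\}$, so $\bar H^{\rm (l)}_\calJ$ is a spanning tree on $\calK_{n+1}$.

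The main obstacle is this bookkeeping of how $\sigma$ enters the projection. In particular, we must check that no $B$-vertex of degree~1 survives in the restriction, since such a vertex would drop out of the projection, and that no multi-edges arise. Since $B_0=\varnothing$, no left $B$-vertex is a leaf, so the first issue does not occur. Multi-edges cannot arise because $\bar{H}$ inherits acyclicity from the tree.
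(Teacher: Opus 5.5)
Your route is the paper's own: specialize Theorem~\ref{thm:Jtreesym} and Lemma~\ref{lem:co2sym} to $A_0=A_{\rm l}$, $B_0=\varnothing$, then apply the projection remark. Your admissibility check via sym-edges and your subdivision of the $A$--$\sigma$ edges just make explicit what the paper leaves implicit.

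\emph{The gap.} The degree-$2$ claim is false in general, and so are $H_\calJ=H^*_\calJ$ and your closing remark that no left $B$-vertex is a leaf. You inherit this from the statements of Theorem~\ref{thm:Jtreesym} and Lemma~\ref{lem:co2sym}, whose proofs only control vertices of $H^*_\calJ$. Suppose $b\in B_{\rm l}$ has degree~$1$, with edge $e$.
\begin{itemize}
\item Since $\calC_b=\varnothing$, we get $\calC(e)=\calJ$.
\item Every $a\in A_{\rm l}$ is removed by some $U$, so $e$ must be the sym-edge $\edge{b}{\iota b}$.
\item $V_\calJ=A_{\rm l}\cup B_{\rm r}$ contains neither $b$ nor $\iota b$, so $e$ is an isolated dimer outside $H^*_\calJ$, and nothing forbids this.
\end{itemize}
For a concrete instance, take $n=m=1$, $A_{\rm l}=\{a_1\}$, $B_{\rm l}=\{b_2\}$ (so $b_1=\iota a_1$, $a_2=\iota b_2$). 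Put weight $5$ on $\edge{a_1}{b_2}$ and $\edge{a_2}{b_1}$, and weights $0.1$, $0.2$ on the sym-edges $\edge{a_1}{b_1}$, $\edge{a_2}{b_2}$. This is generic, since the only cycle has alternating sum $\pm 9.7$. Both $M_\varnothing$ and $M_{\{a_1,b_1\}}$ contain $\edge{a_2}{b_2}$, so $b_2$ and $a_2$ have degree~$1$. The same happens to all of $B_{\rm l}$ whenever the sym-edges are much cheaper than the other edges.

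\emph{What survives.} The main conclusion still holds, and your subdivision argument proves it once the degree-$2$ claim is dropped.
\begin{itemize}
\item Each component of $H^*_\calJ$ has a unique sym-edge, so its left half is a subtree attached to $\sigma$ at a single point.
\item Each extra dimer $\edge{b}{\iota b}$ becomes a leaf $b$ at $\sigma$.
\item Hence $H^{\rm (l)}_\calJ$ is still a spanning tree of $G^{\rm (l)}$.
\item After subdividing the $A$--$\sigma$ edges, every $B$-type vertex has degree $1$ or $2$, which is exactly the hypothesis of Remark~\ref{rmk.projForIsFor}.
\end{itemize}
So $\bar{H}^{\rm (l)}_\calJ$ is a spanning tree on $A_{\rm l}\cup\{\sigma\}$, i.e.\ on $\calK_{n+1}$. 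The correct degree statement is that left $B$-vertices and right $A$-vertices have degree $1$ or $2$. Degree $1$ occurs exactly when the vertex's sym-edge lies in every $M_U$, and $H_\calJ$ is $H^*_\calJ$ together with these isolated sym-edges.
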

\noindent
We shall call the \emph{second bipartite standard setting} the type of
triple in the corollary above.

The case of symmetric graphs obtained when implementing the setting
with a reservoir differs only slightly. Now, in the monopartite
setting, we ``complete'' the symmetric graph, with vertices $v_i$,
$v'_i$, $s_i$ and $s'_i$, to a graph consisting of two copies of
$\calK_{n}$, connected by $n$ chains of length 3 between homologous
vertices, $(v_i,s_i,s'_i,v'_i)$, that we may call
$\calK_{n}^{\rm (res)}$.  For example, the graph
$\calK_{5}^{\rm (res)}$, shown alongside its original reservoir formulation, is the following:
\[
\if\faifig1
% [inline block 15: 2 envs, 17565 chars in 2 pieces, piece 1 here, a bare % at each other -> data_tex | \begin{array}{l} \begin{tikzpicture}[baseline={([yshift=-.5ex]current bounding box.center)}]...]

\else [...TikZ\ code...] \fi
\]
The case of the reservoir bipartite setting allows for a similar construction.
Now the graph $G$ consists of a
$\calK_{n,m}=(A \cup B, A \times B)$, a
$\calK_{m,n}=(B' \cup A', B' \times A')$, and chains of length 3
between homologous vertices, $(a_i,s_i,s'_i,b'_i)$ and
$(b_j,s_j,s'_j,a'_j)$. 
We can call $\calK_{n,m}^{\rm (res)}$ this graph.
Thus $\calK_{n,m}^{\rm (res)}$ has overall $4(n+m)$
vertices.
For example, the graph $\calK_{3,5}^{\rm (res)}$, shown alongside its
original reservoir formulation, is the following:
\[
\if\faifig1
%
\else [...TikZ\ code...] \fi
\]
For future convenience, it is useful to formulate a specialization of 
Corollary~\ref{cor.setting1sym}:
\begin{corr}
\label{cor.setting1symres}
In a triple $(G,w,\calJ)$, where 
$G=\calK_{n,m}^{\rm (res)}=((A\cup C) \cup (B \cup D),E)$, with 
$E$ containing the edges $\edge{a_i}{b_j}$,
$\edge{a_i}{d_i}$
and 
$\edge{b_i}{c_i}$ with endpoints on the same side, and the sym-edges
$\edge{c_i}{d_i}$ (with $d_i = \iota c_i$),
%% =\{(a_i\,b_j)\}_{\substack{a_i \in A,\\ b_j\in B,\\ (a_i\,b_j)\in E}}
%% \cup \{(a_i\,d_i)\}_{\substack{a_i\in A,\\ (a_i\,s)\in E}}
%% \cup \{(b_i\,c_i)\}_{\substack{b_i\in B,\\ (b_i\,s)\in E}}
%% \cup \{
%% \}_{\substack{1 \leq i \leq n_2,\\ 1\leq j \leq n_1}}$,
%
$w$ is a generic symmetric weight function with 
$w_{c_i,d_i}=0$,
%$w_{c_i,\iota c_i}=w_{d_i,\iota d_i}=0$,
and
$\calJ=\{ \{a,\iota a\} \}_{a \in A_{\rm l}} \cup \{ \varnothing\}$,
we have that $H_\calJ=H_\calJ^*$ is a spanning forest on
$\calK_{n,m}^{\rm (res)}$, with all left $B$-vertices and $D$-vertices
and right $A$-vertices and $C$-vertices of degree 2, and the
projection of the left-restriction, $\bar{H}^{\rm (l)}_\calJ$, is a
spanning tree on $\calK_{n+1}$.
\end{corr}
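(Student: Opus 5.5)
We reduce Corollary~\ref{cor.setting1symres} to Theorem~\ref{thm:Jtreesym}, as the second bipartite standard setting of Corollary~\ref{cor.setting1sym} was; the only differences are the length-3 chains and the zero sym-edge weights. First we check the hypotheses. The graph $\calK_{n,m}^{\rm (res)}$ is symmetric, with $V_{\rm l}=A\cup B\cup C\cup D$ on one side, the involution sending $c_i\mapsto d_i=\iota c_i$, and the remaining vertices mapped to their primed copies. It is bipartite with classes $A\cup C$ (plus right copies) and $B\cup D$. The only sym-edges are the $\edge{c_i}{d_i}$, whose left endpoints are $C$-vertices of class $A$ or $D$-vertices of class $B$. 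The family $\calJ=\{\{a,\iota a\}\}_{a\in A}\cup\{\varnothing\}$ is exactly the form in Theorem~\ref{thm:Jtreesym} with $B_0=\varnothing$ and $A_0=A$. Every $G_U$ admits a perfect matching: after removing $a_k,\iota a_k$, take $\edge{c_k}{d_k}$ for the freed chain, and use the other chains suitably. Genericity was argued in the text following Definition~\ref{def.symgraph}: with zero sym-edge weights, no alternating sum vanishes identically. So the theorem applies, and $H_\calJ^*$ is a forest of symmetric components, each containing a unique sym-edge.

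Next we show the forest is spanning, i.e.\ $H_\calJ=H_\calJ^*$. An isolated dimer $e$ would have $\calC(e)=\calJ$, so both endpoints avoid $V_\calJ$. Each left $A$-vertex lies in $V_\calJ$, hence in $H^*$. For a $B$-, $C$- or $D$-vertex, we follow the chain $(a_i,d_i,c_i,b'_i)$: comparing $M_\varnothing$ with $M_{\{a_i,\iota a_i\}}$, the symmetric difference is a sym-path from $a_i$ through the sym-edge $\edge{d_i}{c_i}$ or similar. We use the degree structure instead. By Lemma~\ref{lem:co2sym} every left $B$-vertex not in $B_0$ (here all of $B\cup D$ on the left) has degree exactly $2$ in $H^*$, provided it lies in $H^*$. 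To see that it does, note that a vertex $v$ of class $B$ covered by the same edge in all $M_U$ forms a dimer whose other endpoint is a left $A$-vertex in $V_\calJ$ (contradiction), or a $C$-vertex $c_i$. In the latter case the dimer is $\edge{b_i}{c_i}$ for all $U$; the sym-edge $\edge{c_i}{d_i}$ is then never used, and $d_i$ must always be matched to $a_i$, contradicting $a_i\in U$ for some $U$. This argument also places every $C$-vertex in $H^*$. Symmetry gives the corresponding degree-$2$ statement for right $A$- and $C$-vertices.

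Finally, for the tree statement we take the left-restriction. Each component of $H_\calJ$ has exactly one sym-edge, so in $H^{\rm (l)}_\calJ$ all components are glued at $\sigma$ into a single tree spanning $V_{\rm l}\cup\{\sigma\}$. We then apply Remark~\ref{rmk.projForIsFor}, treating $\sigma$ as an $A$-type vertex adjacent to the $D$-vertices. The $C$-vertices have $\sigma$-edges too, so we first contract each chain piece $(b_j,c_j,\sigma)$ via Remark~\ref{rmk.reservoiriseasy} and the contraction used in the proof of Theorem~\ref{thm:Jtreesym}. This turns $\sigma$ into a genuine extra $A$-vertex, with all $B$-type vertices of degree $1$ or $2$. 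The projection is then a spanning tree on the $n+1$ vertices $A\cup\{\sigma\}$, i.e.\ on $\calK_{n+1}$. The main obstacle is this last bookkeeping: making sure $C$-vertices, which are class $A$ but lie on chains, do not spoil the degree-$\le2$ hypothesis of Remark~\ref{rmk.projForIsFor}. We would handle it by contracting each $C$-vertex of degree $2$ with its sym-edge into a single edge to $\sigma$.
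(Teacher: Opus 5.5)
Applying Theorem~\ref{thm:Jtreesym} with $A_0=A$, $B_0=\varnothing$ is fine, and it gives that $H^*_\calJ$ is a forest of symmetric components, each with a unique sym-edge. The gap is the spanning step, where you have misread the chains of $\calK^{\rm (res)}_{n,m}$ (the index notation in the statement invites this). The chains are $(a_i,d_i,\iota d_i,\iota a_i)$ and $(b_j,c_j,\iota c_j,\iota b_j)$. So the sym-partner of $c_j$ is $\iota c_j$, whose other neighbour is the right class-$A$ vertex $\iota b_j$, which belongs to no $U$. Hence $\edge{b_j}{c_j}\in M_U$ for every $U$ gives no contradiction; it only says that $b_j$ is always sent to the reservoir. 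Your case list also omits $d_i$ matched to $\iota d_i$ in every $M_U$. Both situations occur, so the step cannot be repaired. For $n=m=1$ the graph is the $8$-cycle $(a_1,b_1,c_1,\iota c_1,\iota b_1,\iota a_1,\iota d_1,d_1)$. With $w_{a_1b_1}=100$, $w_{a_1d_1}=1$, $w_{b_1c_1}=2$, both $M_\varnothing$ and $M_{\{a_1,\iota a_1\}}$ contain $\edge{b_1}{c_1}$, so $\calC(\edge{b_1}{c_1})=\calJ$ and $b_1$ has degree $1$. With $w_{a_1b_1}=1$, $w_{a_1d_1}=w_{b_1c_1}=100$, instead $\calC(\edge{d_1}{\iota d_1})=\calJ$ and $d_1$ has degree $1$. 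Thus $H_\calJ=H^*_\calJ$ and the degree-$2$ claims for left $B$- and $D$-vertices are false as literally stated. Your later assertion that every component of $H_\calJ$ has exactly one sym-edge also fails: an isolated dimer $\edge{b_j}{c_j}$ has none, and it stays detached from $\sigma$ in the left-restriction of Definition~\ref{def.leftrestr}.

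What survives, and what is actually used later, is the tree on $\calK_{n+1}$ once the reservoir is wired, i.e.\ every left $C$-vertex is identified with $\sigma$, regarded as a class-$A$ vertex. By Lemma~\ref{lem:T}, $H_\calJ\setminus H^*_\calJ$ consists of dimers $e$ with $\calC(e)=\calJ$. Since each $a_i$ is removed by some $U$, these are sym-edges $\edge{d_i}{\iota d_i}$, $\edge{c_j}{\iota c_j}$ or left edges $\edge{b_j}{c_j}$, and after wiring all of them become pendant edges at $\sigma$. The left part of each component of $H^*_\calJ$ is a tree attached to $\sigma$ through its unique sym-edge. One therefore obtains a spanning tree on $A\cup B\cup D\cup\{\sigma\}$ whose $B$- and $D$-vertices have degree $2$ (those in $H^*_\calJ$, by Lemma~\ref{lem:co2sym}) or $1$. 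Remark~\ref{rmk.projForIsFor} then gives the spanning tree on $A\cup\{\sigma\}$. The paper gives no separate proof and presents the statement as a specialization of Corollary~\ref{cor.setting1sym}. That is made precise by contracting every chain via Remark~\ref{rmk.reservoiriseasy} (new sym-edge weights $2w_{a_id_i}$ and $2w_{b_jc_j}$), which turns $\calK^{\rm (res)}_{n,m}$ into $\calK^{\rm (sym)}_{n,m}$ with $A_0=A_{\rm l}$. That route spares you the direct spanning argument, but it inherits the same caveat: a left $b_j$ whose sym-edge lies in every $M_U$ is an isolated dimer there too.
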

\noindent
%% We shall call the \emph{reservoir monopartite standard setting} the type
%% of triple above.
We shall call the \emph{reservoir bipartite standard setting} the type
of triple above.

%%%%%%%%%%%%%%%%%%%%%%%%%%%%%%%%%%%%%%%%%%%%%%%%%%%%%%%
\section{The hyper-Assignment Problem}
\label{sec.ktuples}
%%%%%%%%%%%%%%%%%%%%%%%%%%%%%%%%%%%%%%%%%%%%%%%%%%%%%%%

\noindent
In this section, we discuss how the properties established above
extend to a variant of the problem that we refer to as the
\emph{hyper-Assignment Problem}.  This provides a natural stepping
stone between the ordinary Assignment Problem and Hitchcock's full
transportation model (which we discuss at length in 
Section~\ref{sec.AssCapa}): 
% Appendix~\ref{app.AssCapa}): 
%
while being a particular intermediate restriction, it turns out to be
precisely the most general setting in which all the structural
theorems established above continue to hold.
%% This is an intermediate case between
%% ordinary Assignment and Hitchcock's generalization of the
%% Assignment Problem (that we discuss at length in Appendix
%% \ref{app.AssCapa}).

In this model, for integers $n \geq m$,
we have a matrix of costs $W$, of dimension $n \times m$, describing
the weights of the edges of the complete bipartite graph $G=\cK_{n,m}$
(say, with $n$ vertices in set $A$ and $m$ vertices in set $B$), and a
set of parameters $\{\gb_j\}_{1\leq j \leq m}$, valued in $\bN^+$ and
summing up to $n$.

A hyper-assignment $F$ is a forest on $G$ in which each component is a
star graph with a $B$-vertex $b_j$ as a hub, incident to exactly $\gb_j$
$A$-vertices. Then, the optimal hyperassigment $F_{\star}(W)$ is the
one minimizing the cost function
\be
\cW(F)=\sum_{\edge{a}{b}\in F}W_{ab}
\ef.
\ee
%
%% In other words, introducing the \emph{transportation matrix} $T$ such
%% that $T_{ij}=1$ if $\edge{a_i}{b_j}\in E(F)$ and $T_{ij}=0$ otherwise,
%% the optimal hyper-assignment 
%% corresponds to the matrix $T$
%% % , valued in $\{0,1\}$, 
%% minimizing the cost function
%% $\sum_{i,j}T_{ij}W_{ij}$ subject to the marginal constraints $\sum_j
%% T_{ij}=1$ for all $i$ and $\sum_i T_{ij}=\gb_j$ for all $j$
In other words, introducing the \emph{transportation matrices} $T$ as
matrices with non-negative entries subject to the marginal constraints
$\sum_j T_{ij}=1$ for all $i$ and $\sum_i T_{ij}=\gb_j$ for all $j$,
and defining the optimal transportation matrix as the one minimizing
the cost function $\sum_{i,j}T_{ij}W_{ij}$, it turns out that the
optimal matrix $T$ is valued in $\{0,1\}$, yielding $T_{ij}=1$ if
$\edge{a_i}{b_j}\in E(F)$ and $T_{ij}=0$ otherwise.

Following the same paradigm as in the previous sections, in particular
adapting the notion of first bipartite standard setting introduced in
Corollary~\ref{cor.setting1}, we establish the following
\begin{thm}
\label{thm.setting1hyper}
Given a triple $(G,w,\calJ)$, where 
$\bipg{G}{A}{B}{E}=\calK_{n+1,m}$, $w$ is a generic weight function,
and $\calJ=\{ \{a\} \}_{a \in A}$, 
and a vector $\vb \in (\bN^+)^m$ with
$\sum_j \gb_j=n$,
we have that $H_\calJ=H_\calJ^*$ is
a spanning tree on $\calK_{n+1,m}$, with $B$-vertices $b_j$ of degree
$\gb_j+1$, and $\bar{H}_\calJ$ is a 
% $(k+1)$-uniform 
spanning hypertree on
$\calK_{n+1}$.
\end{thm}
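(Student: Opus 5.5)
The plan is to reduce the hyper-Assignment instance to an ordinary Assignment instance by splitting each hub, and then apply Theorem~\ref{thm:Jtree}. Replace each $B$-vertex $b_j$ by $\gb_j$ copies $b_j^{(1)},\ldots,b_j^{(\gb_j)}$, all carrying the same weights $W_{a b_j}$. This gives an ordinary bipartite instance on $\calK_{n+1,n}$, and hyper-assignments of $G\setminx\{a\}$ correspond to perfect matchings of the split graph minus $a$, up to the $\prod_j \gb_j!$ relabellings of copies. These weights are not generic, so I would perturb them by adding $\eps\,\phi(a,k)$ to $w_{a,b_j^{(k)}}$, with $\phi$ generic.

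For $\eps$ small and $W$ generic in the hyper sense, the optimal matching $M_{\{a\}}$ of the split instance projects, under the map identifying the copies, onto the unique optimal hyper-assignment $F_\star(W^{\{a\}|\varnothing})$. Uniqueness of the latter should follow from a hyper-analogue of Proposition~\ref{prop.38765874}. The relevant cycles live in the hypergraph, meaning cycles in $G$ that may revisit a hub, and alternating sums on such cycles are nonzero by genericity of $w$. Theorem~\ref{thm:Jtree}, with $A_0=A$ and $B_0=\varnothing$, then says that the split matching tree $H^\eps$ is a spanning tree in which every copy $b_j^{(k)}$ has degree $2$.

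The next step is to project $H^\eps$ to $G$ by identifying the copies of each $b_j$. The image is connected, spans $G$, and has $\sum_j 2\gb_j = 2n$ edge slots, counted with multiplicity. The key claim is that no two copies of $b_j$ share a neighbour $a$, so that no parallel edges collapse. If this holds, the image has $2n$ distinct edges on $(n+1)+m$ vertices. That is too many for a tree, so the naive projection must be refined.

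The refinement is to observe that the image is the union of the projected optimal hyper-assignments, hence equal to $H_\calJ$ in $G$. Its edge count is $\sum_j \deg_{H_\calJ}(b_j)$, and the claim to establish is $\deg(b_j)=\gb_j+1$. The plan is to count, for fixed $j$, how the $n+1$ removals distribute over the neighbours of $b_j$. For each neighbour $a$ we have $\calC(\edge{a}{b_j})\subseteq\calJ\setminx\{\{a\}\}$, and each removal $\{a'\}$ lies in exactly $\gb_j$ of the sets $\calC(\edge{a}{b_j})$.

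To get the exact degree, I would adapt the structural argument of the second proof of Theorem~\ref{thm:Jtree} and Lemma~\ref{lem:tripla} directly to hubs of capacity $\gb_j$. The point is that inside the split tree the $2\gb_j$ edges at the copies of $b_j$ must reconnect through the copies in a chain-like manner. Consecutive copies in the split tree share an $A$-neighbour precisely when a swap $b_j^{(k)}\leftrightarrow b_j^{(k')}$ is free. Such swaps are exactly the $\eps$-ties, and as $\eps\to0$ the optimal split matchings can be chosen consistently. Hence the copies of $b_j$ together with their neighbours form a subtree with $\gb_j+1$ distinct $A$-vertices, and the hub acquires degree $\gb_j+1$.

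Given that, the edge count is $\sum_j(\gb_j+1)=n+m$ on $n+1+m$ vertices. Connectivity is inherited from $H^\eps$, so $H_\calJ$ is a spanning tree, and $H_\calJ=H_\calJ^*$ since every $A$-vertex is in some removal. The projection replaces each star at $b_j$ by a hyperedge of size $\gb_j+1$, which gives a spanning hypertree on $\calK_{n+1}$, exactly as in Remark~\ref{rmk.projForIsFor}.

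The main obstacle I expect is the previous step, namely proving $\deg(b_j)=\gb_j+1$. This amounts to showing that the copies of $b_j$ in the split tree are linked in a path-like arrangement that uses exactly $\gb_j+1$ distinct $A$-neighbours, independently of how the $\eps$-perturbation breaks the ties. If the perturbation argument becomes unwieldy, the fallback is to redo Lemma~\ref{lem:tripla} with capacitated hubs. There one would reduce bicoloured chains as before, and then run the semi-alternating-cycle cancellation $I[w]=0$ on the resulting 3-colored graph, where hubs now have degree up to $2\gb_j$.
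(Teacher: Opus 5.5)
\textbf{There is a genuine gap at the step you yourself flag as the main obstacle.} Your skeleton matches the paper's: split each $b_j$ into $\beta_j$ copies, perturb by $\epsilon\phi$, apply Theorem~\ref{thm:Jtree} to get a spanning tree $H^\epsilon$ in which every copy has degree $2$, then merge and count edges. The final count is also correct, but only once $\deg(b_j)=\beta_j+1$ is known, and that degree claim is the whole content of the theorem. You do not prove it.

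Your third paragraph also points the wrong way. The copies of $b_j$ \emph{must} share $A$-neighbours, and the ``refinement'' does not change the graph; it only renames the quantity to be computed.

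The claim can be made precise as follows. Let $F_j\subseteq H^\epsilon$ consist of the $2\beta_j$ edges incident to copies of $b_j$. Then $F_j$ is a forest because $H^\epsilon$ is a tree, and counting vertices and edges gives $\deg_{H_\calJ}(b_j)=\beta_j+c(F_j)$. So what you need is that $F_j$ is connected.

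Your justification does not touch the real danger. You argue that copies share a neighbour ``precisely when a swap is free'', that such swaps ``are the $\epsilon$-ties'', and that matchings ``can be chosen consistently''. The actual danger is that the path in $H^\epsilon$ between two copies of $b_j$ runs through a copy of a different hub $b_s$. Nothing in a generic split instance forbids this. What forbids it is genericity of the \emph{unsplit} weights, which your argument never uses. The fallback via a capacitated Lemma~\ref{lem:tripla} is not an argument as stated.

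\textbf{The missing ingredient is Lemma~\ref{lem.hypermatchNOiji}.} For small $\epsilon$, the path $M_{\{a\}}(\epsilon)\symdif M_{\{a'\}}(\epsilon)$ never visits a copy of one hub, then a copy of a different hub, then again a copy of the first hub. The proof goes in three steps.
\begin{itemize}
\item An $A$-vertex sitting between two consecutive copies of the same hub is joined to that hub in both merged hyper-assignments, so such runs collapse after merging.
\item Take the first return to an already visited hub. The corresponding segment merges into a simple cycle of $\calK_{n+1,m}$ alternating between $F_\star(G\setminx a)$ and $F_\star(G\setminx a')$.
\item This cycle has nonzero alternating sum by genericity of $w$. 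Swapping it therefore lowers the cost of one of the two hyper-assignments, a contradiction. (The paper phrases this step through continuity in $\epsilon$ of the alternating sum.)
\end{itemize}

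With this lemma, suppose $b_j^{(\alpha)}$ and $b_j^{(\beta)}$ lie in distinct components of $F_j$, with neighbours $a_1^{\pm}$ and $a_2^{\pm}$; these are four distinct vertices. For the appropriate signs $\nu_1,\nu_2$, the tree path $M_{\{a_1^{\nu_1}\}}\symdif M_{\{a_2^{\nu_2}\}}$ reads $a_1^{\nu_1},b_j^{(\alpha)},a_1^{-\nu_1},\ldots,a_2^{-\nu_2},b_j^{(\beta)},a_2^{\nu_2}$. It must pass through a copy of some $b_s$ with $s\neq j$, since otherwise it would lie entirely in $F_j$. That is exactly the pattern the lemma excludes, so $c(F_j)=1$, which gives $\deg(b_j)=\beta_j+1$.
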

\noindent
We shall call the \emph{hyper-assignment standard setting} the type of
triple in the theorem above.

At first glance, one might be tempted to think that a generic instance
of the hyper-Assignment Problem of size $n \times m$ is simply related
to a (non-generic) instance of the ordinary bipartite Matching Problem
(i.e., the Assignment Problem) of size $n$, obtained by replacing each
$B$-vertex $b_j$ with $\gb_j$ identical copies sharing the exact same
costs (meaning each column of the cost matrix is replicated $\gb_j$
times). The ground states
$\{ \pi_{\star} \}$ of this instance, seen as permutations from the
row- to the column-indices, are the image of a single ground state
under the action of the subgroup $\otimes_j \mathfrak{S}_{\gb_j}$ (on the
column index). 

Furthermore, one could attempt to handle this non-generic instance
through a standard perturbation and limiting procedure, for instance,
by introducing a new cost matrix $P$ with i.i.d.\ entries in $[0,1]$
and considering the perturbed instances with costs $W+\eps P$ for
$\eps \in \bR^+$. For some small enough open interval 
$\eps \in \;]0,\eps_{\rm min}[$, 
the ground state is unique and independent of $\eps$.

%% And, yet again, we can treat this non-generic instance
%% by performing a perturbation and a limit procedure (for example, we
%% can take a new matrix of costs $P$ with entries i.i.d.\ in $[0,1]$,
%% and consider the instances with costs $W+\eps P$, for 
%% %
%% $\eps \in \bR^+$. For some interval $\eps \in \;]0,\eps_{\rm min}[$ 
%% %
%% the ground state is unique and independent of $\eps$).

We will perform these operations back and forth in the forthcoming
analysis (calling \emph{splitting} the procedure of introducing
multiple copies and perturbing the weights, and \emph{merging} the
inverse process).

So, one could argue that the theorem above is a direct consequence of
all that we said in the previous sections on the ordinary bipartite
matching.  This is not the case, as in fact a crucial ingredient is
still missing, and is provided in the following lemma. Now an
admissible removal $U$ is a subset of $A$, and a vector 
$\vb' \preceq \vb$, such that $\sum_j (\gb_j-\gb'_j) = n-|A|$, and the
associated instance admits at least one hyper-assignment. Then we have
\begin{lem}
\label{lem.hypermatchNOiji}
Let $\bipg{G}{A}{B}{E} \subseteq \cK_{n,m}$ be a weighted bipartite
graph, with generic weight function $w$, and let $U_1$, $U_2$ be two
admissible removals.
% (that is, the sets of hyper-assignments $\cM(G_{U_1})$ and
% $\cM(G_{U_2})$ are non-empty).
Let $M_1$ and $M_2$ be the two optimal hyper-assignments for $G_{U_1}$
and $G_{U_2}$, respectively. Then $H=M_1 \symdif M_2$ does not contain
any cycle that visits two or more distinct $B$-vertices.
\end{lem}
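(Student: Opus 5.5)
Suppose, for a contradiction, that $H=M_1\symdif M_2$ contains a cycle $C$ visiting at least two distinct $B$-vertices. The plan is to cut $C$ into arcs between consecutive $B$-visits and swap an odd collection of these arcs between $M_1$ and $M_2$. This produces two new hyper-assignments, one admissible for $U_1$ and one for $U_2$, whose total cost equals $\cW(M_1)+\cW(M_2)$. That contradicts the uniqueness of the optima.

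First I describe the local structure of $H$. Every $A$-vertex not in $U_1\cup U_2$ has degree $1$ in both $M_1$ and $M_2$, so in $H$ it has degree $0$ or $2$, with one edge from each matching. $B$-vertices may have larger degree in $H$. Along $C$, write the edges as $(e_1,\dots,e_{2\ell})$. Because $G$ is bipartite and every $A$-vertex on $C$ has one $M_1$-edge and one $M_2$-edge, the colours alternate around each $A$-vertex. At a $B$-vertex $b$, the two edges of $C$ incident to $b$ may carry the same colour or different colours.

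Next I split $C$ at its $B$-vertices into arcs $Q_1,\dots,Q_r$, each running from one $B$-visit to the next, with $r\geq 2$ because at least two distinct $B$-vertices occur. If some $B$-vertex is visited more than once, I first shorten $C$ to a sub-cycle that is still simple and still meets two distinct $B$-vertices; this is always possible. Each arc $Q_k$ has even length and alternates in colour, so it has a first-edge colour $c_k\in\{1,2\}$ and a last-edge colour $\bar c_k\neq c_k$. Swapping a single arc $Q_k$, that is, replacing $M_1$ by $M_1\symdif Q_k$ and $M_2$ by $M_2\symdif Q_k$, keeps every interior $A$-vertex at degree $1$ in both new matchings. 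At the two endpoint $B$-vertices, the load in $M_1$ changes by $+1$ at one end and $-1$ at the other, and $M_2$ changes in exactly the opposite way. The total cost $\cW(M_1')+\cW(M_2')=\cW(M_1)+\cW(M_2)$ is preserved exactly. The problem is that individual $B$-loads may violate $\vb'$.

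To restore the loads, I swap all arcs of $C$ except one. Equivalently, I swap $C$ minus a single arc, or more generally a set of consecutive arcs forming a path $P$ from a $B$-vertex $b$ to a distinct $B$-vertex $b'$ along $C$. The complement arc $P'=C\setminx P$ joins the same two endpoints. I then need to choose a pair $(b,b')$ so that the load transfers cancel. I would show that the degree imbalance $\deg_{M_1}(b)-\deg_{M_2}(b)$ forces the existence of suitable endpoints along $C$, by tracking how the colours at the two ends of each arc change sign around $C$. A cleaner route is to apply the splitting construction described above. Expand each $b_j$ into $\gb_j'$ copies with perturbed costs $W+\eps P$; then $M_1$ and $M_2$ become perfect matchings $\tilde M_1,\tilde M_2$, and Lemma~\ref{lem:diff} applies to them. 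A cycle through two distinct $B$-vertices lifts to a collection of alternating paths and cycles in the expanded graph. By Remark~\ref{rmk.28976954}, alternating cycles there have $\walt\neq0$, so they cannot occur. The remaining alternating paths join copies of $B$-vertices, and after merging they recreate $C$. Swapping the path segments between distinct copies yields $M_1',M_2'$ that are feasible and have the same total cost, while being distinct from $M_1,M_2$. Genericity of $w$, in the sense that the alternating sum of $C$ is nonzero, then makes one of the two strictly cheaper, which is the desired contradiction.

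The main obstacle is this final bookkeeping step: showing that some choice of sub-collection of arcs (or of copies after splitting) yields feasible hyper-assignments for both $U_1$ and $U_2$ simultaneously. The argument must use that the cycle passes through two distinct $B$-vertices, since a cycle returning to the same $b$ causes no load problem but also gives no contradiction. I would first try the two-arc decomposition $C=P\cup P'$ at two distinct $B$-vertices $b\neq b'$ of $C$, chosen so that the $B$-degree changes at $b$ and $b'$ cancel. I would then check the resulting inequalities $\walt(P)<0$ and $-\walt(P)<0$ coming from the optimality of $M_1$ and of $M_2$ respectively; their sum gives $0<0$, in the same way as in the proof of Lemma~\ref{lem.nopairF}.
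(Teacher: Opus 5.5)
The step that fails is the exchange along an open piece of $C$. If $P\subseteq C$ runs from $b$ to $b'\neq b$, then $b$ meets exactly one edge of $P$. So $M_1\symdif P$ changes the load at $b$ by $\pm1$, and $M_2\symdif P$ changes it by $\mp1$, whatever the colours are. No choice of $(b,b')$ makes these transfers cancel, so $M_1\symdif P$ is not a competitor of $M_1$. Consequently the inequalities $\walt(P)<0$ and $-\walt(P)<0$ that you want to add are simply not available. In Lemma~\ref{lem.nopairF} they come from genuine cycles, closed by the extra edges $e'_1,e'_2$.

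The only exchanges that preserve all loads are along closed walks whose colours alternate at \emph{every} vertex, $B$-vertices included. Your $C$ need not be of this kind. Colours do alternate at its $A$-vertices, but $C$ may contain $B$-vertices where both cycle edges lie in $M_1$, alternating with $B$-vertices where both lie in $M_2$. In that case no sub-collection of arcs gives a feasible exchange.

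Your ``cleaner route'' has the same hole. You assert that swapping segments ``between distinct copies'' yields feasible $M'_1,M'_2$. This is false for segments joining copies of different $B$-vertices. You also never show that a segment joining two copies of the \emph{same} $B$-vertex exists. Moreover, the relevant alternating sum is that of such a closed segment, not that of $C$.

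The missing ingredient is a hypothesis you never use. The lemma is needed, in Theorem~\ref{thm.setting1hyper} and later, only for $U_1=\{a\}$, $U_2=\{a'\}$ with the same $\vb$. In that case, after splitting, the difference of the lifted optimal matchings is a \emph{single} alternating path from $a$ to $a'$. A cycle in the merged graph then forces this one path to visit $b_{j_0}^{\alpha}$, then other columns, then $b_{j_0}^{\gamma}$. Take the first such return. The segment in between is an even alternating path, so after merging it closes up at $b_{j_0}$ into a simple cycle with one $M_1$-edge and one $M_2$-edge at every vertex. This cycle can be exchanged in $M_1$ and in $M_2$ with opposite, non-zero cost changes, which is a contradiction. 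This is the paper's proof.

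There is also a split-free version. Since $\deg_{M_1}(b)=\deg_{M_2}(b)$ for all $b$, the graph $M_1\symdif M_2$ decomposes into one alternating trail from $a$ to $a'$ plus closed alternating trails. Any closed trail, or any return of the open trail to a $B$-vertex, is such an exchangeable closed walk. Hence $M_1\symdif M_2$ is a simple path.

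Without a hypothesis of this kind no argument can work. Take sub-instances on $\{a_1,a_2,a_3,a_4\}$ and on $\{a_1,a_2,a_5,a_6\}$, with both $B$-capacities equal to $2$. Let $a_3,a_4$ be cheap only to $b_2$, and $a_5,a_6$ cheap only to $b_1$. Then $M_1\ni\edge{a_1}{b_1},\edge{a_2}{b_1}$ and $M_2\ni\edge{a_1}{b_2},\edge{a_2}{b_2}$. So $(b_1,a_1,b_2,a_2)$ is a cycle of $M_1\symdif M_2$ of exactly the non-alternating type on which your construction gets stuck. It is assembled from two different lifted paths, each visiting each column only once.
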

\begin{proof}
Let us first translate the statement above in the setting where we go
back to the ordinary assignment, and we break the degeneracy of the
ground state through an infinitesimal perturbation. Let us call
$b_i^{\alpha}$, for $\alpha=1,2,\ldots,\gb_i$, the $\gb_i$ copies of the
$B$-vertex $b_i$. If some $\gb'_i$ is non-zero in $U_1$ or $U_2$,
choose a whatever definite choice on which vertices are kept (say, the
first $\gb_i-\gb'_i$).

For each value of $\eps$ except for a finite subset of $\bR^+$, the
optimal matchings $M_1(\eps)$ and $M_2(\eps)$ are unique, and the
lemmas of the previous sections apply directly. Any component of
$H(\eps)=M_1(\eps) \symdif M_2(\eps)$ is an open path with only its
endpoints in $U_1 \symdif U_2$.
%either an open path or a cycle.
This is all we know for arbitrary instances of the assignment
problem. The claim of the lemma is that, if 
$\eps \in [0,\eps_{\rm min}[$,
none of these paths visits a $B$-vertex $b_i^{\alpha}$, then a
$b_j^{\beta}$, and then a $b_i^{\gamma}$, in this order along the
path.

Let us prove this by contradiction. Any total ordering of $U_1 \symdif
U_2$ induces an ordering and orientation of the paths composing
$H(\eps)$, so that, under our contradictory hypothesis, there exists a first
counterexample, where we choose to order the counterexamples by the
position of the $b_i^{\gamma}$ vertex. In this counterexample, there
are no vertices $b_i^{\delta}$ within the path from $b_i^{\alpha}$ to
$b_i^{\gamma}$, and no other $B$-vertices within this path have the
same column index (otherwise it would not be the first one in the
ordering). Thus, a generic such path has the form
$P=(b_{j_0}^{\alpha},a_{i_0},b_{j_1}^{\beta_1},a_{i_1},\ldots,a_{i_{\ell-1}},b_{j_\ell}^{\beta_\ell},a_{i_\ell},b_{j_0}^{\gamma})$
with $\ell \geq 1$ and all $i_h$'s and $j_h$'s distinct. The
alternating cost of this path, $W(P,\eps)$, has the obvious
Lipshitzianity property
\be
|W(P,\eps)-W(P,\eps')| \leq 2 (\ell+1) |\eps-\eps'|
< 2n |\eps-\eps'|
\ef.
\ee
% is given by the alternating cost of the path at $\eps
From the generality hypothesis on the hyper-assignment instance, we
know that $W(P,0)\neq 0$, because it corresponds to the alternating
cost of the non-trivial cycle
$\tilde{P}=(b_{j_0},a_{i_0},b_{j_1},a_{i_1},\ldots,a_{i_{\ell-1}},b_{j_\ell},a_{i_\ell})$
of the original instance. So, for $\eps$ small enough, $W(P,\eps)$ and
$W(P,0)$ have the same (non-zero) sign. Whichever this sign, it would
lead to a contradiction either on the optimality of $M_1=M_1(0)$, or on the
optimality of~$M_2=M_2(0)$.
\end{proof}
\noindent
With this lemma at hand, we are now ready to prove our theorem above.
\begin{proof}[Proof of Theorem \ref{thm.setting1hyper}]
Let us work again in the ``splitting'' setting, in which we go back to
the ordinary assignment by taking $\gb_j$ copies of the $B$-vertex
$b_j$, and we break the degeneracy of the groud state through an
infinitesimal perturbation.

For each value of $\eps$ except for a finite subset of $\bR^+$, the
optimal matchings $\{M_U(\eps)\}_{U\in \calJ}$ are unique, and the
matching subgraph $H_\calJ(\eps)$ is defined.  What we know to start
with is that this graph is a spanning tree, and each $B$-vertex
$b_j^{\alpha}$ has degree 2. Thus, if we perform the merging operation
towards the original graph, each $B$-vertex $b_j$ has degree in the
range $\{2,\ldots,2\gb_j\}$. Part of our claim is that, for $\eps$
small enough, this degree is always $\gb_j+1$.

First of all, it is easily seen that the degree is at least $\gb_j+1$.
Indeed, call $A_\alpha$ the set of vertices in $A$ which are
neighbours of $b_j^{\alpha}$ (in particular $|A_\alpha|=2$). If we
have $|A_\alpha \cap A_\beta|=2$, for some $\alpha \neq \beta$, then
we have a cycle of length 2, which is not allowed. Similarly, if we
had an ordered sublist $(\alpha_1,\alpha_2,\ldots,\alpha_h)$ in $[k]$
% $\{1,\ldots,k\}$ 
such that 
$a_{i_1}\in A_{\alpha_1}\cap A_{\alpha_2}$,
$a_{i_2}\in A_{\alpha_2}\cap A_{\alpha_3}$, \ldots,
$a_{i_h}\in A_{\alpha_h}\cap A_{\alpha_1}$, 
then again we would have a cycle in our tree.

Let us call $A_{(j)}$
% $\partial b_j$ 
the union of the $A_\alpha$'s, and $B_{(j)}$ the set
$\{b_j^{\alpha}\}$. Then we can construct an auxiliary bipartite graph
$F=(A_{(j)}\cup B_{(j)},E)$ by stating that an edge $\edge{a}{\ga}$ is present if the
corresponding $A$-vertex $a$ is in the set $A_{\alpha}$ of the neighbours
of $b_j^{\alpha}$. What we determined above is that
this graph must be a forest, as otherwise we would have a cycle of the
form above. Calling $c(F)$ the number of connected components in $F$,
we have that $\deg_{H_\calJ}(b_j)=\gb_j+c(F)$ and our claim on the degree
is equivalent to the fact that $F$ is indeed a spanning tree.

Let us prove this by contradiction. Say that $b_j^\alpha$ and
$b_j^\beta$ are vertices of $B_{(j)}$ in distinct components of $F$,
and call $a_1^{\pm}$ and $a_2^{\pm}$ the vertices such that
$A_\alpha=\{a_1^+,a_1^-\}$ and $A_\beta=\{a_2^+,a_2^-\}$. Now consider
the four paths $P_{\nu_1,\nu_2}=M_{a_1^{\nu_1}} \symdif
M_{a_2^{\nu_2}}$, for $\nu_1,\nu_2\in\{+1,-1\}$.  One of these paths
must be of the form
$P_{\nu_1,\nu_2}=(a_1^{\nu_1},b_j^\alpha,a_{1}^{-\nu_1},\ldots,a_{2}^{-\nu_2},b_j^\beta,a_2^{\nu_2})$
where in the intermediate dots there is at least one vertex
$b_s^\gamma$ with $s \neq j$ (as otherwise $b_j^\alpha$ and
$b_j^\beta$ would be in the same component of $F$). But this is
exactly the circumstance that is excluded by
Lemma~\ref{lem.hypermatchNOiji}.

So we conclude that $|A_{(j)}|=\gb_j+1$, and that, when performing the
projection $\bar{H}_\calJ$, the resulting hyperedges, which are given
by the partitions of $A_{(j)}$ according to the connected component of
$F$, are in fact composed of the sets $A_{(j)}$ \emph{tout court}
(because $c(F)=1$). 
As the properties of being a forest and of being connected are
preserved by the operation $\bar{\cdot}$ (when the starting graph has
no isolated vertices), we deduce that $\bar{H}_\calJ$ is a
% $(k+1)$-uniform 
spanning hypertree on $A$, as claimed.
\end{proof}

\noindent
In this framework, the sets $\calC(e)$ satisfy a property analogous to
that of the ordinary Assignment case, which now requires shifting from
standard sets to \emph{multisets}, and from ordinary unions to
\emph{multiset unions} (denoted by $\uplus$), which add element
multiplicities (e.g., $\{1,2,2,4\}\uplus\{2,3\}=\{1,2,2,2,3,4\}$). We
also adopt the notation $\times$ for scalar multiset replication, as
in $2 \times \{1,3,4\}=\{1,1,3,3,4,4\}$.

%% In this framework, the sets $\calC(e)$ satisfy properties analogous to
%% the one of the ordinary Assignment case, which however requires to
%% pass from the use of sets to the use of \emph{multisets}, and from the
%% notion of union to the one of \emph{multiset union} (that we denote by
%% $\uplus$), which sums element multiplicities, as in
%% $\{1,2,2,4\}\uplus\{2,3\}=\{1,2,2,2,3,4\}$. We also use the symbol
%% $\times$ for a form of exterior product, as in 
%% %
%% $2 \times \{1,3,4\}=\{1,1,3,3,4,4\}$.

Specifically, the following hold: (1)~A set $\calC(e)$ is non-empty if
and only if $e \in H_\calJ$; (2)~all sets $\calC(e)$ are standard sets
without repetitions; (3)~for every $1 \leq i \leq n$, the sets
$\calC(\edge{a_i}{b_j})$ form an ordinary partition of
$[n]\setminus\{i\}$; (4)~for every $1 \leq j \leq m$, the sets
$\calC(\edge{a_i}{b_j})$ form a partition with respect to multiset
union of the set $\gb_j \times [n]$, namely
$\biguplus_i \calC(\edge{a_i}{b_j})=\gb_j \times [n]$.
%% We have that: (1) A set $\calC(e)$ is non-empty if and only if $e \in
%% H_\calJ$; (2) all sets $\calC(e)$ are ordinary sets, with no
%% repetitions; (3) for all $1 \leq i \leq n$, the sets
%% $\calC(\edge{a_i}{b_j})$ form an (ordinary) partition of the set
%% $[n]\setminx i$; (4) for all $1 \leq j \leq m$, the sets
%% $\calC(\edge{a_i}{b_j})$ form a partition w.r.t.\ multiset union of
%% the set $\gb_j \times [n]$, that is 
%% %
%% $\biguplus_i \calC(\edge{a_i}{b_j})=\gb_j \times [n]$.  
All of the properties above, except for the last one, mirror those of
the ordinary case; the final property differs solely through the
substitutions $\cup \to \uplus$ and $[n] \to \gb_j \times [n]$. This
is an immediate consequence of the corresponding statements for the
sets $\calC(\edge{a_i}{b_j^{\alpha}})$ derived after splitting the
instance and computing the matching subgraph, combined with their
behavior under the merging procedure.

%% All of the properties above except for the last one are just as in the
%% ordinary case, and the last one differs only by the substitution
%% %
%% $\cup \to \uplus$ and $[n] \to \gb_j \times [n]$. This is an immediate
%% consequence of the ordinary statement on the sets
%% $\calC(\edge{a_i}{b_j^{\alpha}})$ obtained after splitting the
%% instance and calculating the matching subgraph, and its implications
%% on the sets $\calC(\edge{a_i}{b_j})$ obtained after the merging
%% procedure.

% ---------- SEPARAZIONE OVVIA --------- %

%%%%%%%%%%%%%%%%%%%%%%%%%%%%%%%%%%%%%%%%%%%%%%%%%%%%%%%
\section{Geometry of matching trees on finite-dimensional domains}
\label{sec.caso2d}
%%%%%%%%%%%%%%%%%%%%%%%%%%%%%%%%%%%%%%%%%%%%%%%%%%%%%%%

\noindent
From this section onward, we will consider the case in which the
weighted graphs under analysis are associated to point configurations
on a manifold.

Let $\Omega$ be a $d$-dimensional connected differentiable manifold
with a connection. Then, geodesic curves and their lengths are
defined. Let $|\gamma|$ denote the length of the curve $\gamma$.  We
will denote by $\gamma_{x,y}$ the shortest arc of geodesic from the
point $x$ to $y$ (which is unique up to a subset of zero measure of
$\Omega \times \Omega$ --- when this is not the case one says that $x$
and $y$ \emph{form a conjugate pair}).  If $\Omega$ has a boundary, we
call $\gamma_x$ the shortest arc of geodesic from the point $x$ to
some point $u(x)$ on $\partial\Omega$ (which is unique up to a subset
of zero measure of $\Omega$ --- when this is not the case one says
that $x$ \emph{lies on the cut locus of $\partial \Omega$}). Then we have:
% Our definition of embedding goes as follows:
\begin{defn}[Geodesic embedding]
Given, on one side, the pair $(G,w)$, with $G=(V,E)$ a graph and
$w:E\to \bR$ its associated weight function, and, on the other side,
the triple $(\Omega,X,f)$, where $\Omega$ is a $d$-dimensional
differentiable manifold with a connection, 
$X=\{x_v\}_{v \in V}$ is
a set of points in $\Omega$ in bijection with $V$, and 
$f:\bR^+ \to \bR$ is a strictly monotone function, we say that
$(\Omega,X,f)$ \emph{provides a geometric realisation} of $(G,w)$ if,
for all $e=\edge{u}{v} \in E$, the length of the shortest geodesic
% $\gamma_e:\bx_u \to \bx_v$ is such that $f(|\gamma_e|)=w_e$.
$\gamma_e:x_u \to x_v$ is such that $f(|\gamma_e|)=w_e$.  
If $G=(V\cup \{s\},E)$ is a graph in the setting with a reservoir, and
$\Omega$ is a manifold with boundary $\partial \Omega$, we shall
implement geometrically the graphical setting with reservoir, by
considering the boundary as the location of the reservoir, that is, by
further requiring that the length of $\gamma_{x_v}$ is such that
$f(|\gamma_{x_v}|)=w_{v,s}$.
\end{defn}
\noindent
When $\Omega$ is a convex subset of $\bR^d$ (with the flat metric), we
will just write $\|x-y\|$ for $|\gamma_{x,y}|$.  In this situation, a
common choice of function $f$ is to set $f(|\gamma|) = |\gamma|^p
=\|x-y\|^p$ for $p>0$. When this is the case, we say that we deal with
the problem \emph{with exponent $p$}. As affine transformations of the
weights do not alter the resulting matching subgraphs, i.e.\ the
weight functions $w_e$ and $w'_e=a w_e+b$ (with $a \in \bR^+$ and
$b\in \bR$) give the same graph $H_\calJ$, choosing 
$f(|\gamma|) = |\gamma|^p$ is equivalent to choose
\be
\label{eq.3867875}
f(|\gamma|) = \frac{|\gamma|^p-1}{p}
\ef.
\ee
This alternate prescription has a sensible limit also for $p\to 0$,
that is just $f(|\gamma|) = \ln |\gamma|$, and has the standard
monotonicity (shorter paths have smaller costs) also for $p<0$.
Indeed, also the more general class of weights (\ref{eq.3867875}) for
$p \in \bR$, (that is well-defined for $p \leq 0$ provided that all
adjacent vertices are distinct), consists of functions $f$ preserving
the covariance of the problem under rescaling.  In fact it is
essentially the largest family that preserves this covariance.  So we
have a family of problems, interesting at the aim of taking a
thermodynamic limit, where the exponent parameter $p$ is an arbitrary
real number.

The datum of $(G,\calJ,\Omega,X,f)$ denotes a setting $(G,w,\calJ)$,
together with a geometric realization.  When we are in the
``monopartite standard setting'' (i.e., when $G=\calK_{2n+1}$ and
$\calJ$ is the set of singletons), we will use the shorter notation
$(\calK_{2n+1},\{\{v\}\}_{v \in V(G)},\Omega,X,f)\equiv(\Omega,X,f)$
(where implicitly $2n+1=|X|$),
while when we are in the ``first bipartite standard setting''
(i.e., $G=\calK_{n+1,n}$ and $\calJ$ is the set of singletons of
the first vertex-set), we will use the shorter notation
$(\calK_{n+1,n},\{\{a\}\}_{a \in A},\Omega,(X,Y),f)\equiv(\Omega,(X,Y),f)$, 
with the understanding that $x_j=x_{a_j}$ and $y_j=y_{b_j}$, and that $n=|X|-1=|Y|$.

In order to perform our analysis, it is convenient to work under a
genericity hypothesis also at the level of the embedding, that we now
introduce:
\begin{defn}[Generic geodesic embedding]
In the monopartite setting, we say that the embedding is
\emph{generic} if the resulting weight function $w$ is generic, and
the point configuration is geometrically generic, meaning that:
\begin{itemize}
    \item the points in $X$ are all distinct;
    \item no pair of adjacent points forms a conjugate pair;
    \item pairs of geodesic arcs $\gamma_{e'}$ and $\gamma_{e''}$ may
      only intersect transversally.
\end{itemize}
In the reservoir setting, we additionally require that no point lies
on the boundary cut locus, and that the pairs $(u(x_v),\theta(x_v))$
--- consisting of the boundary endpoint and its arrival angle in a
local chart --- are all distinct. Finally, all relevant geodesics
(that is, both the internal arcs $\gamma_{x_v, x_u}$ and the
reservoir/boundary arcs $\gamma_{x_v}$) may only intersect
transversally.
%% %% In the bipartite setting we shall only require that no pair of points
%% %% in distinct classes forms a conjugate pair, while 
%% In the reservoir setting (where the boundary of the domain plays the
%% role of the reservoir) we shall also require that none of the points
%% lie on the cut locus of $\partial \Omega$, and that, calling
%% $(u(x),\theta(x))$ the endpoint on the boundary of the geodesic
%% $\gamma_{x}$ and its arrival angle (in some local chart around $u$),
%% % (this is well-defined because of the condition above), 
%% then the pairs $(u(x_v),\theta(x_v))$ are all distinct for all
%% boundary geodesics $\gamma_{x_{v}}$. Analogously to the previous case,
%% we require that the set of all geodesics arcs 
%% %
%% (both the $\gamma_{x_v, x_u}$'s and the $\gamma_{x_v}$'s), if they
%% intersect pairwise, they do so transversally.
\end{defn}
\noindent
A possible exception to the genericity requirement above is that we
may allow that $x_{v_1}$ and $x_{v_2}$ do coincide, whenever the
number of common neighbours of $v_1$ and $v_2$ is at most 1. Indeed,
if $v_1$ and $v_2$ had two common neighbours $u_1$ and $u_2$ we would
have a cycle $C$, passing through the vertices $(v_1,u_1,v_2,u_2)$ in
this order, with $\walt(C)=0$, obviously in contradiction with our
requirement in Definition~\ref{def.genwei1}. But when this situation
does not occur there is no obvious obstruction to the properties of
genericity of the weights that we need in our proofs. In particular,
in the bipartite or bipartite with reservoir settings, it is not
forbidden that vertices in different classes may be placed in the same
position. It is a possibly counterintuitive fact of Optimal Assignment
that if $w_{a,b}=0$, and $w_{a',b'}>0$ for all other edges, it may be
the case that $\edge{a}{b}\not\in M_{\star}(G)$ (for example, this is
the case when $W=\begin{psmallmatrix} 0 & 1 \\ 1 &
3\end{psmallmatrix}$), as the optimal matching is the result of a
complicated global optimization, thus coinciding vertices as above do
not have in general a trivial role.

% the nearest points on the boundary $u(x_{v})$ are all distinct.
%% We will say that the embedding is \emph{generic} if the resulting
%% weight function $w$ is generic, and the embedding is generic in the
%% ordinary geometric sense, namely the $x_v$'s are all distinct, the
%% % the $\bx_v$'s are all distinct, and the
%% $\gamma_e$'s intersect transversally, the geodesic arc of minimal
%% length is unique for all edges, and there are no triples of distinct
%% vertices $(u_1,u_2,v)$ with $x_v \in \gamma_{\{u_1,u_2\}}$.
%% ---
%% We assume the set of points $(X,Y)$ is generic, in the sense that no
%% $x_i$ or $y_j$ lies on the cut locus of $\partial \Omega$, and no
%% $y_j$ is conjugate to any $x_i$ (nor do they lie on the corresponding
%% focal loci).
%% ---
% \noindent

Now, we need a notion that is the core of the main results of the
forthcoming sections.
\begin{defn}[Non-crossing embedding of a graph]
Let $\Omega$, $X$ and $G=(V,E)$ be as above (that is, $x_v$ is a
bijection between $V$ and $X$, and the embedding is generic).  We say
that the embedding of $G$ in $\Omega$ is \emph{non-crossing} if there
are no pairs of edges $\edge{u}{v}$ and $\edge{u'}{v'}$ such that
$\gamma_{x_u,x_v}$ and $\gamma_{x_{u'},x_{v'}}$ have an internal point
in common.
\end{defn}
\noindent
This notion extends naturally to the hypergraph setting.
Recall that a set $S \subseteq \Omega$ is said to be
\emph{geodetically convex} if, for all pairs of points $x, y \in S$,
% there is a unique 
and all shortest geodesic $\gamma$ connecting them, the curve $\gamma$
is contained in $S$.  The \emph{geodesic convex hull} of a set $X$
% (when it exists) 
is defined as the intersection of all geodetically convex sets $S$
containing~$X$.
% \footnote{This set may not exist
\begin{defn}[Non-crossing embedding of a hypergraph]
Let $\Omega$, $X$ and the hypergraph $G=(V,\mathcal{E})$ be as above. 
%% Let $\Omega$ be a manifold as above, let $G=(V,\mathcal{E})$ a hypergraph,
%% and let $\{x_v\}$ be a bijection between the vertices of $G$ and a
%% collection of points in $\Omega$.
% 
%% Under suitable conditions on the hyperedges $A$ of $G$ (in particular,
%% that for all ordered lists $(x_1,\ldots,x_k)$ of distinct vertices in
%% a hyperedge $A$, the concatenation of $\gamma_{x_1,x_2}$,
%% $\gamma_{x_2,x_3}$, \ldots, $\gamma_{x_k,x_1}$ is contractible), we
%% can define $\Gamma_A \subseteq \Omega$, the \emph{geodesic convex hull
%%   of $A$},
We define $\Gamma_A \subseteq \Omega$, the embedding of the hyperedge
$A$, as the geodesic convex hull of the set $\{x_v\}_{v \in A}$. In
particular, if $G$ is an ordinary graph then $\Gamma_{\edge{u}{v}}$ is
$\gamma_{x_u,x_v}$, while if $\Omega$ is a portion of $\bR^d$
then $\Gamma_A$ is the ordinary convex hull of the set $\{x_v\}_{v \in A}$.
We say that the embedding of $G$ is \emph{non-crossing} if there are
no $A \neq A' \in E$ such that $\Gamma_A$ and $\Gamma_{A'}$ have an
internal point in common.  
\end{defn}
\noindent
Generic embeddings have $\dim(\Gamma_A)=\min(\dim(\Omega),|A|-1)$, and
generic sets of dimension $d'$ and $d''$ in a manifold of dimension
$d$ are in general non-crossing whenever $d>d'+d''$, so that the
embedding of $G$ is in general non-crossing provided that $d$ is
larger than the sum of the cardinalities of the two largest
hyperedges, minus 2, while, when this condition is not met (as for
example for the embedding of graphs in dimension 2, or of 3-uniform
hypergraphs in dimension up to 4), establishing that a family of
(generic) embedded graphs is non-crossing is a non-trivial statement.

Recall that an important ingredient of the previous sections is the
Definition~\ref{def.proj} of the notion of ``projection'' (that we
apply to the matching subgraph $H$ when working in a bipartite
setting). We have to provide some supplementary prescriptions for how
to deal with the embedding. If $\Omega$ is simply connected (in the
case of flat metrics), the obvious definition for the embedding of
graphs (or even of hypergraphs) works as expected. However, when the
manifold $\Omega$ has a non-trivial fundamental group $\pi_1(\Omega)$
(e.g., when $\Omega$ is a cylinder, or a torus), we need some
precisions.

Whenever we refer to oriented geodesic arcs, if we write
$\gamma_{x_a,x_b}$, it is intended that the arc is oriented from $x_a$
to $x_b$.  Consequently, for example, the concatenation
$\gamma_{x_a,x_b} \circ \gamma_{x_b,x_a}$ traces a backtracking path
that is contractible to a point.

If we introduce arbitrarily some orientation of the edges, i.e.\ for
$e=\edge{a}{b}$ we may have, for example, $\vec{e}=(a,b)$ and
$\cev{e}=(b,a)$, then we can use the synonims
$\gamma_{\vec{e}}=\gamma_{x_a,x_b}$ and
$\gamma_{\cev{e}}=\gamma_{x_b,x_a}$.\footnote{Of course, bipartite
  graphs come with a canonical orientation, e.g.\ with geodesics going
  from the $A$-vertex to the $B$-vertex.}

%, that is, for example, the concatenation $\gamma_{x_a,x_b} \circ
% \gamma_{x_b,x_a}$ is a closed circuit homotopic to a point.

%% For the embedding of a hypergraph, we will require the following: for
%% each $\Gamma_A$, embedding of the hyperedge $A$ realized as the
%% geodesic convex hull of the set $\{x_v\}_{v \in A}$, and each ordered
%% lists $(x_1,\ldots,x_k)$ of distinct vertices in $A$, we require that
%% the concatenation of $\gamma_{x_1,x_2}$, $\gamma_{x_2,x_3}$, \ldots,
%% $\gamma_{x_k,x_1}$ is contractible.  This implies that the whole set
%% $\Gamma_A$ is homotopically contractible, and topologically a sphere,
%% and it does not contain any pair of conjugate points.  

First, to prevent hyperedges from wrapping around non-contractible cycles of
the manifold (such as winding around a cylinder or a handle), we
require that for any ordered list of distinct vertices in $A$, the
closed circuit formed by concatenating the boundary geodesic segments
is contractible in $\Omega$. This topological condition ensures that
each $\Gamma_A$ is homeomorphic to a closed ball, and 
$\partial \Gamma_A$ is homeomorphic to a sphere, so that $\Gamma_A$
acts locally as a genuine convex hull, akin to its flat $\mathbb{R}^d$
counterpart, and rules out global pathological wrappings at the level
of the elementary constituents of the (hyper-)graph.  Note that this
requirement is automatically satisfied in the case of an ordinary
graph.

Then, in the case of bipartite graphs, we choose to define the
projection of the embedding as follows:
\begin{defn}[Geodesic projection]
\label{def.projgeo}
Let $\bipg{G}{A}{B}{E}$ be a (weighted) bipartite graph, and let
$(\Omega,(X,Y),f)$ provide a geometric realisation of $(G,w)$, with
$X=\{x_a\}_{a\in A}$ and $Y=\{y_b\}_{b\in B}$. Let $\bar{G}=(A,E')$,
the projection of $G$, be as in Definition~\ref{def.proj}. Then, the
embedding of $\bar{G}$ is determined by the definition of the arcs
$\gamma_{e'}$, for $e' \in E'$.  
For $\vec{e}'=(a_1,a_2) \in E'$ coming from the pair
$\vec{e}_1=(a_1,b)$ and $\vec{e}_2=(a_2,b)$, we will set
$\gamma_{\vec{e}'}$ to be the shortest geodesic from $x_{a_1}$ to
$x_{a_2}$ having the same homotopy class as the concatenation
$\gamma_{\vec{e}_1} \circ \gamma_{\cev{e}_2}$ (i.e., the path going
from $x_{a_1}$ to $y_b$ and then from $y_b$ to $x_{a_2}$).
%% For $\vec{e}'=(a_1,a_2) \in E'$ coming from the pair
%% $\vec{e}_1=(a_1,b)$ and $\vec{e}_2=(a_2,b)$, we will set
%% $\gamma_{\vec{e}'}$ to be the shortest geodesic from $x_{a_1}$ to
%% $x_{a_2}$, with the same topology as $\gamma_{\vec{e}_1} \circ
%% \gamma_{\cev{e}_2}$ (i.e., the concatenation of $\gamma_{\vec{e}_1}$
%% and the reverse of $\gamma_{\vec{e}_2}$,
%% i.e.\ $\gamma_{x_{a_1},x_{a_2}}$ is the shortest geodesics with the
%% same topology of the concatenation of $\gamma_{x_{a_1},y_b}$ and
%% $\gamma_{y_b,x_{a_2}}$).
%
% , that is, such that $\gamma_{\vec{e}_1} \circ \gamma_{\cev{e}_2}
% \circ \gamma_{\cev{e}'}$ is a contractible closed curve.
\end{defn}
\noindent
Note that the curve $\gamma_{\vec{e}'}$ always exists, i.e.\ the set
of geodesics from $x_1$ to $x_2$ with the required topology is
non-empty, as it contains at least the minimum-length geodesic
obtained by steepest descent (for the functional $S[\gamma]=|\gamma|$)
from
$\gamma_{\vec{e}_1} \circ \gamma_{\cev{e}_2}$.

Also note that, with this description, a hypergraph obtained from a
projection automatically satisfies the condition given above.  Namely, we
required that for each $\Gamma_A$, embedding of the hyperedge $A$, and
each ordered lists $(x_1,\ldots,x_k)$ of distinct vertices in $A$, the
pointed cycle obtained as the concatenation of $\gamma_{x_1,x_2}$,
$\gamma_{x_2,x_3}$, \ldots, $\gamma_{x_k,x_1}$ is contractible.  But,
from our construction we know that
this cycle is homotopic to the concatenation of 
$\gamma_{x_1,y}$, $\gamma_{y,x_2}$, $\gamma_{x_2,y}$,
$\gamma_{y,x_3}$, \ldots, $\gamma_{x_k,y}$, $\gamma_{y,x_1}$, in this
order, that is essentially a star in which each arm is traversed
twice, so it is indeed contractible.

As an example of the subtlety of this definition, if $\Omega$ is a
portion of a cylinder, with
$(x^{(1)},x^{(2)})\sim(x^{(1)}+1,x^{(2)})$, then, in a graph $G$
containing the edges $e_1=\edge{x_1}{y}$ and $e_2=\edge{x_2}{y}$ with
$x_1=(0,a)$, $x_2=(2/3,b)$ and $y=(1/3,c)$, the geodesics
$\gamma_{x_1,y}$ and $\gamma_{x_2,y}$ are the obvious shortest
geodesics, 
$\gamma_{x_1,y}(t)=(t/3,(1-t)a+t c)_{t\in[0,1]}$
and $\gamma_{x_2,y}(t)=((1+t)/3,(1-t)c+t b)_{t\in[0,1]}$,
but the geodesic $\gamma_{x_1,x_2}$ appearing in the
embedding of $\bar{G}$ is the curve $\gamma(t)=(2t/3,(1-t)a+t b)$ 
instead of the shortest one 
$\gamma(t)=(1-t/3,(1-t)a+t b)$,
as depicted below:
\[
\if\faifig1  
\begin{tikzpicture}[scale=2.5]
\begin{scope}[dashed,decoration={
    markings,
    mark=at position 0.5 with {\arrow{>}}}
    ] 
    \draw[postaction={decorate}] (0,0)--(0,1);
    \draw[postaction={decorate}] (1,0)--(1,1);
\end{scope}
\node[label={270:{\small $x_1$}},inner sep=2.5pt,
circle,fill=white,draw
] (x1) at (.1666,.2) {}; 
\node[label={270:{\small $x_2$}},inner sep=2.5pt,
circle,fill=white,draw
] (x2) at (.83333,.4) {}; 
\node[label={90:{\small $y$}},inner sep=2.5pt,
rectangle,fill=gray!50,draw
] (y) at (.5,.7) {}; 
\draw[thick,dashed] (x1) -- (0,.3); 
\draw[thick,dashed] (x2) -- (1,.3); 
\draw[very thick,blue] (x1) -- node[sloped,below]{\rule{0pt}{0pt}$\gamma_{(x_1,x_2)}$\rule{0pt}{0pt}} (x2); 
\draw[very thick,red] (x1) -- node[sloped,above]{\rule{0pt}{0pt}$\gamma_{(x_1,y)}$} (y); 
\draw[very thick,red] (y) -- node[sloped,above]{\rule{0pt}{0pt}$\gamma_{(y,x_2)}$} (x2); 
\end{tikzpicture}
\else [...TikZ\ code...] \fi
\]
The main reason for this definition is that the following
Theorem~\ref{th:treenoncrossToro}, the analogue of
Theorem~\ref{th:treenoncross} for cylinders and tori, would not hold
if we did choose the na\"ive definition, namely that also the geodesic
$\gamma_{x_1,x_2}$ is just the shortest one. A counterexample to the
statement of the theorem when using this different definition is
provided in Appendix~\ref{app.torosbagliato}.

\begin{figure}[t]
\includegraphics[height=0.4\columnwidth]{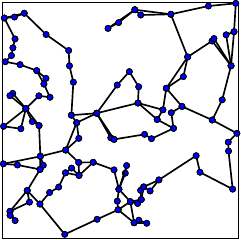}\quad
\includegraphics[height=0.4\columnwidth]{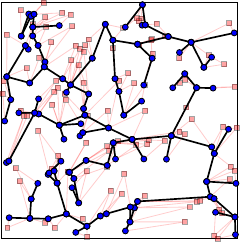}
%\quad\includegraphics[height=0.4\columnwidth]{monotorus.png}
\caption{\label{fig:mono}% 
Examples of embedded graphs $H_\calJ$ and $\bar{H}_\calJ$. In this
case, $\Omega$ is the (flat) unit square,
% we are in the standard monopartite (left) or bipartite (right)
% setting,
and the sets of points are just i.i.d.\ and uniform in $\Omega$.
Left: the monopartite standard setting,
% the complete graph $G=(V,E)=\calK_{2n+1}$ 
with $2n+1=101$ points,
% $\calJ=\{ \{v\} \}_{v \in V}$, as discussed in Example ??????, 
and $f(x)=x$, that is, $w_{u,v}=\|x_u-x_v\|$.  The graph
$H_\calJ$ is shown. Note that it has no bridges, according to
Lemma~\ref{lem:nobridge}, and is non-crossing, according to
Corollary~\ref{corr.1pnoncross}, proven below.
Right: the first bipartite standard setting
% the complete bipartite graph $G=\calK_{n+1,n}=(A\cup B,E)$ 
with
$n=|B|=|A|-1=100$ and $f(x)=x^2$.
$A$-vertices and $B$-vertices are in blue and red, respectively.  The
graph $\bar{H}_\calJ$ is shown in black and blue, while $H_\calJ$ is
shown in shaded red. Note that both $H_\calJ$ and $\bar{H}_\calJ$ are
trees, according to Theorem~\ref{thm:Jtree} and
Remark~\ref{rmk.projForIsFor}, and that $\bar{H}_\calJ$ is
non-crossing, according to Theorem~\ref{th:treenoncross}, proven
below.}
\end{figure}

Now the exact realization of a system with reservoir 
$G=(V \cup \{s\};E)$, with $|V|=n$, in terms of a graph $G^{\rm
  res}=(V\cup S \cup S' \cup V',E^{\rm res})$, with
$|S|=|V|=|S'|=|V'|=n$, described on page~\pageref{pg.reservoir} in
Section~\ref{sec.reservintro}, has also a geometric interpretation: we
can imagine that $G^{\rm res}$ is realised on a manifold without
boundary, consisting of two identical copies of $\Omega$, say $\Omega$
and $\Omega'$, with $\partial \Omega$ and $\partial \Omega'$ sewn
together in the natural way, and the involution $\iota$ exchanges
homologous points in $\Omega$ and $\Omega'$.\footnote{This is a
  classical construction in Differential Geometry, in particular when
  dealing with cobordism, and Poincar\'e Duality. If $\Omega$ is a
  differentiable manifold with boundary, its \emph{double} is obtained
  by gluing two copies of $\Omega$ together along their common
  boundary. Precisely, the double is $\Omega \times \{0,1\}/\sim$,
  where $(x,0)\sim(x,1)$ for all $x \in \partial \Omega$.}
The vertices in $V$ and $V'$ are in $\Omega$ and $\Omega'$,
respectively, as well as those in $S$ and $S'$. However, the latter
are on the two boundaries, which are sewn together, which justifies
the choice of having $w_{s_j,s'_j}=0$.  Under this construction, the
whole graph $G$ is embedded onto
$\Omega \cup \Omega'$, and the weights coincide with those of the
customary embedding for generic graphs and manifolds.

If we insist in giving a graphical representation also for the subgraph
$H^{\rm (l)}_{\calJ}$, defined in the generic symmetric setting, while
working in the setting of $G^{\rm res}$ for bipartite graphs with a
reservoir, we have that the $v_j$'s are inside $\Omega$, the $s_j$'s
are on $\partial \Omega$, and we have an extra vertex, $\sigma$,
attached to all and only the $s_j$'s. The most fruitful way to imagine
$\sigma$ is that it represents $\partial \Omega$ as a whole.

Thus, the collection of the geodesics $\gamma_{x_i,x_j}$ for
$\edge{v_i}{v_j}\in H_\calJ$, the geodesics $\gamma_{x_i}$, from $x_i$
to its nearest point $u(x_i)$ on $\partial \Omega$, for
$\edge{v_i}{s_i}\in H_\calJ$, and the boundary $\partial \Omega$,
constitutes the geometric embedding of the left part of $H_\calJ$.

We will use the notation $(\Omega,X,f)_{\rm res}$ to denote 
a setting in which the graph $G$ is a reservoir graph isomorphic to
$\calK_{n+1}$, with vertex set $V=\{v_1,\ldots,v_n\}$ (plus the
reservoir vertex $s$). 
% With abuse of notation, we will use the same
We will use $(\Omega,X,f)_{\rm sym}$
% notation 
for the extended construction, on $\Omega \cup \Omega'$, that
transforms the system with a reservoir into a special instance of
symmetric graph, as explained above. In this case the choice of
$\calJ$ is
$\calJ=\varnothing \cup \{ \{v_j,v'_j\} \}_{j=1,\ldots,n}$.

% QUESTO SETTING AL MOMENTO NON HA NOME (SAREBBE IL SECOND MONOPARTITE
% STD SETTING versione reservoir) PERCHE' NON SO SE SERVE DAVVERO.....

Note however a major difference in the case of bipartite graphs: while,
in the generic case, for $G$ bipartite and $\calJ$ as in
Theorem~\ref{thm:Jtree}, the graph $H_{\calJ}$ is a tree in which all
$B$-vertices (not in a set $U_j$) are of degree 2, now this property
holds for the $B$-vertices inside $\Omega$, and the $A$-vertices
inside $\Omega'$, so that the projection $\bar{H}_{\calJ}$ is
tree-like only if restricted to $\Omega$ (i.e., in
Theorem~\ref{thm:Jtreesym} the projected spanning graph is indeed
$\bar{H}^{\rm (l)}_{\calJ}$, or, almost equivalently, in the reservoir
setting of Section~\ref{sec.reservintro} the subgraph $H_{\calJ}$ is
the subgraph of the extended graph
$H^{\rm res} \subseteq G^{\rm res}$, and the graph which is a tree
rooted on $s$, with properties analogous to $\bar{H}_{\calJ}$, is
$\overline{(\calL H^{\rm res})}$, that is the projection of 
$\calL H^{\rm res}$).

In the bipartite case, the collection of the geodesics
$\gamma_{x_i,y_j}$ for $\edge{a_i}{b_j}\in H_\calJ$, the geodesics
$\gamma_{x_i}$, from $x_i$ to its nearest point $u(x_i)$ on 
$\partial \Omega$, for $\edge{a_i}{d_i}\in H_\calJ$, and the geodesics
$\gamma_{y_j}$, for $\edge{b_j}{c_j}\in H_\calJ$, and the boundary
$\partial \Omega$, constitutes the geometric embedding of the left
part of $H_\calJ$, which is a forest of rooted trees, where each tree
has a unique vertex on $\partial \Omega$. In other words, it is a
spanning tree (i.e.\ a tree visiting all vertices of $X\cup Y$) with
``wired boundary conditions''. 

Then, for the subgraph $\bar{H}^{\rm (l)}_{\calJ}$ we will perform the
projection as in the ordinary case, by contracting the
$B$-vertices. This accounts to replace pairs of geodesics
$\gamma_{x_{a_1},y_b}$, $\gamma_{x_{a_2},y_b}$, by the shortest
geodesic $\gamma_{x_{a_1},x_{a_2}}$ with the same topology of
$\gamma_{x_{a_1},y_b} \circ \gamma_{y_b,x_{a_2}}$.  The graph
$\bar{H}^{\rm (l)}_{\calJ}$, which is the most interesting one for
applications in Statistical Mechanics, is again a forest of rooted
trees, where each tree has a unique vertex on $\partial \Omega$. In
other words, it is a spanning tree on the pertinent graph (i.e.\ a
tree visiting all vertices of $X$) with ``wired boundary conditions''.

A remark is in order to deal with boundary vertices, images of the
vertex sets $S$ and $S'$.  If we have a geodesic to the boundary
$\gamma_{y_b}$, it goes from $y_b$ to some point $u(y_b)$ on 
$\partial \Omega$, image of a $C$-vertex and the point on the boundary
nearest to $y_b$. So if $\edge{a}{b}$ and $\edge{b}{c}$ are edges of
$H_{\calJ}$, we should add the shortest geodesic from $x_a$ to
$u(y_b)$ with the same topology of the concatenation of
$\gamma_{x_{a},y_b}$ and $\gamma_{y_b}$, in analogy with the customary
case.  If we have a geodesic to the boundary $\gamma_{x_a}$, then it
goes from $x_a$ to some point $u(x_a)$ on $\partial \Omega$, image of
a $D$-vertex.  In $H_\calJ$, this vertex is connected to $s'$, so in
our rule we should concatenate the geodesic from $x_a$ to $u(x_a)$,
and the one from $u(x_a)$ to $\iota u(x_a)$. However, in fact
$u(x_a)$ and $\iota u(x_a)$ are the same point of 
$\partial \Omega \equiv \partial \Omega'$, so that the concatenated
geodesic coincides with $\gamma_{x_a}$.

In finite-dimensional Statistical Mechanics, ensembles of spanning
trees (uniform, minimal,\ldots) have two natural boundary conditions:
``free'' and ``wired'' (see e.g.~\cite{BLPSforests,kytolaetc}).
Essentially, the Sections~\ref{sec.teoremaserio1} and
\ref{sec.teoremaserio1b} are devoted to set up theorems adapted to
these two cases, as the reservoir setting is a specialization of the
setting of Section~\ref{sec.teoremaserio1b}.

Similarly to what is done above, we will use the notation
$(\Omega,(X,Y),f)_{\rm res}$ to denote a graphical realization of a
bipartite graph with reservoir, and the notation
$(\Omega,(X,Y),f)_{\rm sym}$ for the construction of the associated
extended graph, that thus denotes a graphical realization of the
``second bipartite standard setting'', specialised to instances coming
from systems with reservoir.
\medskip

\noindent
The goal of this section is to identify choices of $\Omega$ and $f$
such that, within generic instances $(G,w)$ that admit a geometric
representation, the matching subgraphs $H_{\calJ}$ or their projection
$\bar{H}_{\calJ}$ have further properties w.r.t.\ the case of generic
$(G,w)$ that we have analysed above. In particular, for $d=2$, we will
try to identify situations in which the graphs $H_{\calJ}$ or
$\bar{H}_{\calJ}$ are guaranteed to be non-crossing.  Yet again, as
was the case in Section~\ref{sec.teoremaserio1}, we anticipate that
the most relevant results are obtained when the underlying graph $G$
is bipartite.

%%%%%%%%%%%%%%%%%%%%%%%%%%%%%%%%%%%%%%%%%%%%%%%%%%%%%%%
\section{\texorpdfstring{Non-crossing matching trees at $p=1$}{Non-crossing matching trees at p=1}}
%%%%%%%%%%%%%%%%%%%%%%%%%%%%%%%%%%%%%%%%%%%%%%%%%%%%%%%

\noindent
Let us consider first the $p=1$ case, which will be relatively simple.
We start with a simple remark:
\begin{rmk}
\label{rmk.diagop1}
For an arbitrary two-dimensional manifold $\Omega$ as above, given
$X=\{x_1,x_2\}$ and $Y=\{y_1,y_2\}$, in generic position, for the
embedding of $G=\calK_{2,2}$ with the two families of vertices given
by $X$ and $Y$, at $p=1$, the optimal matching $M_{\star}$ is always
non-crossing.
\end{rmk}
\noindent
% This remark holds both in the ordinary ($G=\calK_4$) case, and in the
% bipartite ($G=\calK_{2,2}$) case. Let us start from the latter.
Indeed, suppose that the optimal matching is
$M_{\star}=\{\edge{x_1}{y_1},\edge{x_2}{y_2}\}$, and that the geodesics
$\gamma_{x_1,y_1}$ and $\gamma_{x_2,y_2}$ cross, say at a
point $z$. As our arcs of geodesics $\gamma_e$ are those of minimal
length, it is clear that they do satisfy the triangle inequality,
i.e., $|\gamma_{x,y}|\leq |\gamma_{x,z}|+|\gamma_{z,y}|$, with
equality only if $z \in \gamma_{x,y}$. This implies that
\be
\begin{split}
|\gamma_{x_1,y_1}|
+|\gamma_{x_2,y_2}|
&=
|\gamma_{x_1,z}|+|\gamma_{z,y_1}|+|\gamma_{x_2,z}|+|\gamma_{z,y_2}|
% \\ &
\geq
|\gamma_{x_1,y_2}|+|\gamma_{x_2,y_1}|
\end{split}
\ee
so that the matching $M_1=\{\edge{x_1}{y_2},\edge{x_2}{y_1}\}$ has a smaller
weight than $M_{\star}$. We have a strict inequality whenever $z$ is
not on both the geodesics $\gamma_{x_1,y_2}$ and $\gamma_{x_2,y_1}$,
which is always the case under the condition of generic embedding.

Similarly, we get
\begin{rmk}
\label{rmk.diagop1mono}
For an arbitrary two-dimensional manifold $\Omega$ as above, given
$X=\{x_1,x_2,x_3,x_4\}$, in generic position, for the embedding of
$G=\calK_{4}$ with point configuration $X$, at $p=1$, of the three
possible matchings, the two matchings of smallest weight are 
always non-crossing.
\end{rmk}
\noindent
The argument is just as in Remark~\ref{rmk.diagop1}, repeated
twice.  If the geodesics
$\gamma_{x_1,x_2}$ and $\gamma_{x_3,x_4}$ cross, say at a
point $z$, we have
\begin{subequations}
\begin{align}
|\gamma_{x_1,x_2}|
+|\gamma_{x_3,x_4}|
&=
|\gamma_{x_1,z}|+|\gamma_{z,x_2}|+|\gamma_{x_3,z}|+|\gamma_{z,x_4}|
% \\ &
\geq
|\gamma_{x_1,x_3}|+|\gamma_{x_2,x_4}|
\\
|\gamma_{x_1,x_2}|
+|\gamma_{x_3,x_4}|
&=
|\gamma_{x_1,z}|+|\gamma_{z,x_2}|+|\gamma_{x_3,z}|+|\gamma_{z,x_4}|
\geq
|\gamma_{x_1,x_4}|+|\gamma_{x_2,x_3}|
,
\end{align}
\end{subequations}
so that, again because of the genericity condition, both matchings 
$\{\edge{x_1}{x_3},\edge{x_2}{x_4}\}$
and
$\{\edge{x_1}{x_4},\edge{x_2}{x_3}\}$
have a smaller weight than
$\{\edge{x_1}{x_2},\edge{x_3}{x_4}\}$.

Note that the hypothesis $p=1$ above is crucial: it is easy to
construct counterexamples (even just on $\bR^2$) for all other values of
$p \in \bR$.

As a result of the two remarks above, at $p=1$, pairs $\{e_1,e_2\}$ such that
their geodesics do intersect form a set of ``forbidden pairs'', in the
sense of Definition~\ref{def.forbpair}. Thus, as a consequence of the
general criterium of Lemma~\ref{lem.nopairF}, we get
\begin{corr}
\label{corr.1pnoncross}
For any $2$-dimensional connected differentiable
manifold with a connection $\Omega$, 
in the settings $(\Omega,X,f(x)=x)$ and $(\Omega,(X,Y),f(x)=x)$,
the embedded graph $H_{\calJ}$ is non-crossing.
\end{corr}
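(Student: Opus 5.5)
The plan is to combine Lemma~\ref{lem.nopairF} with Remarks~\ref{rmk.diagop1} and \ref{rmk.diagop1mono}. We argue by contradiction: suppose two edges $e_1=\edge{u_1}{v_1}$ and $e_2=\edge{u_2}{v_2}$ of $H_\calJ$ have geodesics $\gamma_{e_1}$, $\gamma_{e_2}$ sharing an internal point $z$. The aim is to show that $\{e_1,e_2\}$ is a forbidden pair in the sense of Definition~\ref{def.forbpair}. Lemma~\ref{lem.nopairF} then excludes such a pair from $H_\calJ$, because in both standard settings $\calJ$ consists of singletons and is therefore single-path.

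First, the degenerate cases. The two edges cannot share an endpoint. If they did, their geodesics would be two shortest arcs leaving a common point, and under the genericity assumptions (distinct points, transversal intersections, no conjugate pairs) they meet only at that endpoint. So the four endpoints are distinct.

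Next, the weight inequalities, with $p=1$ so that $w_e=|\gamma_e|$.

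\begin{itemize}
\item \emph{Bipartite setting} $(\Omega,(X,Y),f(x)=x)$. The edges $e'_1=\edge{u_1}{v_2}$ and $e'_2=\edge{u_2}{v_1}$ exist because $G=\calK_{n+1,n}$. Split each crossing geodesic at $z$ and apply the triangle inequality for minimal geodesic lengths, as in Remark~\ref{rmk.diagop1}. This gives $w_{e_1}+w_{e_2}\geq w_{e'_1}+w_{e'_2}$. Equality would force $z$ to lie on both $\gamma_{u_1,v_2}$ and $\gamma_{u_2,v_1}$, which genericity of the embedding rules out. The inequality is therefore strict, which is exactly the bipartite forbidden-pair condition.
\item \emph{Monopartite setting} $(\Omega,X,f(x)=x)$. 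The same splitting, done twice as in Remark~\ref{rmk.diagop1mono}, gives strict inequalities against both alternative pairings $\{e'_1,e'_2\}$ and $\{e''_1,e''_2\}$. These edges exist since $G=\calK_{2n+1}$, so $\{e_1,e_2\}$ is again forbidden.
\end{itemize}

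The point needing most care is the strictness of the inequality. Under a generic geodesic embedding, equality in the triangle inequality would require the concatenations $\gamma_{u_1,z}\circ\gamma_{z,v_2}$ and $\gamma_{u_2,z}\circ\gamma_{z,v_1}$ to be minimal geodesics. Each such concatenation has a corner at $z$, since the two geodesics cross transversally there, and a minimizing geodesic cannot have a corner. This corner argument is what I would write out explicitly. It also covers manifolds with nontrivial topology, because only minimal lengths and the triangle inequality are used and no homotopy considerations enter.

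With the pair shown to be forbidden, Lemma~\ref{lem.nopairF} yields the contradiction, so the embedded graph $H_\calJ$ is non-crossing.
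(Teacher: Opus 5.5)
Your proposal is correct and follows essentially the paper's route. You use Remarks~\ref{rmk.diagop1} and~\ref{rmk.diagop1mono} to show that at $p=1$ any crossing pair of edges is a forbidden pair in the sense of Definition~\ref{def.forbpair}, and then apply Lemma~\ref{lem.nopairF}, which is valid because the singleton collections $\calJ$ of both standard settings are single-path. Two of your details sharpen the paper's argument: the corner argument makes explicit the strict inequality that the paper attributes to genericity of the embedding, and your separate treatment of edges sharing an endpoint covers a case the paper leaves implicit.
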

\noindent
Note that, at the aim of applying the crucial Lemma~\ref{lem.nopairF},
it is important that the standard monopartite and bipartite settings
are \emph{single-path} in the sense of
Definition~\ref{def.singlepath}. It is easy to construct
counterexamples to the corollary above, with the geometric embedding
of triples $(G,w,\calJ)$ when $\calJ$ is not single-path. For example,
when $G=\calK_4$, $X$ consists of the vertices of a square, and
$\calJ$ is the set of all pairs of points.

This corollary generalises to matching subgraphs $H_{\calJ}$ (and to
arbitrary manifolds) the well-known fact that, in the $p=1$ Euclidean
setting, optimal matchings $M_{\star}(G)$ are non-crossing.

%%%%%%%%%%%%%%%%%%%%%%%%%%%%%%%%%%%%%%%%%%%%%%%%%%%%%%%
\section{\texorpdfstring{Non-crossing projected matching trees at
    $p=2$}{Non-crossing projected matching trees at p=2}}
\label{sec.caso2dp2}
%%%%%%%%%%%%%%%%%%%%%%%%%%%%%%%%%%%%%%%%%%%%%%%%%%%%%%%

%-------------------------------------------------------
\subsection{Non-crossing criteria and generalities}
\label{ssec.nccgen}
%-------------------------------------------------------

Before delving into the more subtle question involving $\bar{H}_\calJ$,
let us first establish a slightly simpler criterium for a setting to
be guaranteed to be non-crossing.
We shall concentrate only on the first bipartite standard setting.
\begin{rmk}
\label{rmk.crossifpath}
Given a family $\calX=\{ (\Omega,(X,Y),f) \}$ of bipartite settings
with a geometric realization, closed under taking subsets (given
$X'\subseteq X$ and $Y'\subseteq Y$ with 
$|X|=|Y|+1$ and $|X'|=|Y'|+1$, 
if $(\Omega,(X,Y),f) \in \calX$, then
$(\Omega,(X',Y'),f) \in \calX$), then there exists a 
$(\Omega,(X,Y),f) \in \calX$ crossing iff there exists a 
$(\Omega,(X,Y),f) \in \calX$, with $|X|=n+1$, such that
$\bar{H}_\calJ$ is the path $(x_1,x_2,\ldots,x_{n+1})$, and the
geodesics $\gamma_{x_1,x_2}$ and $\gamma_{x_n,x_{n+1}}$ (given by the
prescription of Definition~\ref{def.projgeo}) do cross.
\end{rmk}
\noindent
Let us illustrate the validity of this remark. Of course, we need to
prove only one of the implications, as the other one is obvious.  

Say that $\gamma_{x_a,x_b}$ and $\gamma_{x_c,x_d}$ do cross.  As
$\bar{H}_\calJ$ is a spanning tree, there is a unique path connecting
one of the endpoints of $\edge{x_a}{x_b}$ to one of those of
$\edge{x_c}{x_d}$, without visiting the other two endpoints. Up to
renaming the vertices, we can say that the path connects $x_b$ to
$x_c$, and has length $\ell$. This implies that $\bar{H}_\calJ$
contains the path $(x_a,x_b,x'_1,\ldots,x'_{\ell-1},x_c,x_d)$, for
certain vertices $x'_j$, and that $H_\calJ$ contains the path
$(x_a,y'_0,x_b,y'_1,x'_1,\ldots,y'_{\ell-1},x'_{\ell-1},y'_{\ell},x_c,y'_{\ell+1},x_d)$,
for certain vertices $y'_j$.  
This path is also characterised as the
subset of edges $e$ of $H_\calJ$ such that the singleton sets of $X'$,
that is $\tilde{X}'=\{\{x_a\},\{x_b\},\{x'_1\},\ldots\}$, has
non-trivial intersection with $\calC(e)$ (while, for the other edges $e$,
either $\tilde{X}' \subseteq \calC(e)$ or $\tilde{X}' \cap \calC(e)=\varnothing$).

Now consider the triple
$(\Omega,(X',Y'),f)$ for
$X'=\{x_a,x_b,x'_1,\ldots,x'_{\ell-1},x_c,x_d\}$ and
$Y'=\{y'_0,y'_1,\ldots,y'_{\ell},y'_{\ell+1}\}$.  This triple is in
$\calX$. Furthermore, crucially, its associated graph $H_\calJ$
coincides with the path above, as implied by the characterisation via
$\tilde{X}' \cap \calC(e)$, and as a special case of
Lemma~\ref{lem.subtreeIsHJ}.  We deduce that $(\Omega,(X',Y'),f)$ has
the required characteristics, and we can conclude.

\medskip
\noindent
If $f(x)=x^p$ and $\Omega$ is a prortion of $\bR^d$, we can identify a group of
invariances of the problem: $(\bR^d,(X,Y),x^p)$ is crossing iff
$(\bR^d,(gX,gY),x^p)$ is crossing, where
$gX=\{g \circ x_1, g \circ x_2,\ldots\}$, and $g$ is a translation, or
rotation, or dilation (or a combination of them). As a result, we can
``standardise'' the sets $(X,Y)$, for example by setting that the
crossing of the two edges is at the origin, one of the crossing edges
is a portion of the horizontal axis, and one of its endpoints is at
unit distance from the origin.

A further invariance property arises when $p=2$.
The determination of the pairs $(X,Y)$ (with $|X|=n+1$) such that the
segments $[x_1,x_2]$ and $[x_n,x_{n+1}]$ do cross goes through the
determination of the optimal assignments for cost matrices which are
$n \times n$ minors of the matrix 
$W=(W_{ij})_{1\leq i \leq n+1; 1\leq j \leq n}$, with
$W_{ij}=\|x_i-y_j\|^2$. 
However, in light of the gauge invariance discussed in
Remark~\ref{rmk.gaugeinv}, we also have
\begin{corr}
The matching subgraph $H_{\calJ}$ associated to the bipartite problem
with cost matrix $W_{ij}=\|x_i-y_j\|^2$ coincides with the one
with cost matrix $W'_{ij}=-2 \langle x_{i}, y_{j}\rangle$.
% x_i \cdot y_j$
\end{corr}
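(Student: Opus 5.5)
Expanding the squared distance gives
\[
W_{ij}=\|x_i-y_j\|^2=\|x_i\|^2+\|y_j\|^2-2\langle x_i,y_j\rangle .
\]
So $W'_{ij}=W_{ij}-\lambda_i-\mu_j$ with $\lambda_i=\|x_i\|^2$ and $\mu_j=\|y_j\|^2$. This is precisely a gauge transformation in the sense of Remark~\ref{rmk.gaugeinv}. The plan is therefore to show that one single choice of gauge parameters, on all $2n+1$ vertices, leaves every optimal matching entering the definition of $H_\calJ$ unchanged.

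For each $U=\{a_k\}\in\calJ$, the instance $G_U$ is the square $n\times n$ minor $W^{\{k\}|\varnothing}$. Restricting the gauge vectors to the remaining rows and to all columns gives again a gauge transformation of that square matrix. By Remark~\ref{rmk.gaugeinv}, every assignment cost is shifted by the same constant $c_k=\sum_{i\neq k}\lambda_i+\sum_j\mu_j$. The ordering of all perfect matchings of $G_U$ by weight is therefore unchanged, and $M_\star(G_U)$ under $W$ coincides with $M_\star(G_U)$ under $W'$. Taking the union over $U\in\calJ$ (Definition~\ref{def.matchsubg}) gives the same $H_\calJ$. The sets $\calC(e)$ coincide as well.

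The one point to check is genericity, so that $M_U$ is well defined for $W'$ whenever it is for $W$. The alternating sum $\walt(C)$ of any even cycle in the bipartite graph is invariant under the gauge. Each vertex on the cycle is incident to exactly two cycle edges, of opposite parity, so its $\lambda_i$ or $\mu_j$ cancels. Hence Definition~\ref{def.genwei1} holds for $W'$ exactly when it holds for $W$, and nothing else is needed. The constant factor $-2$ is immaterial: it is a positive rescaling of $-\langle x_i,y_j\rangle$, and it is what the gauge produces directly anyway.
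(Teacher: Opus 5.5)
Your proof is correct and follows the paper's own argument: the paper simply notes that $W'$ is obtained from $W$ by the gauge transformation of Remark~\ref{rmk.gaugeinv} with $\lambda_i=\|x_i\|^2$ and $\mu_j=\|y_j\|^2$. Your extra checks (that this single gauge restricts to each minor $W^{\{k\}|\varnothing}$, and that alternating sums, hence genericity, are gauge-invariant) spell out what the paper leaves implicit.
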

\noindent
Indeed, this corresponds to choose $\lambda_i=|x_i|^2$ and
$\mu_j=|y_j|^2$.  The form of the matrix $W'$ is structurally simpler
than the one of $W$, as the entries are bilinear in the $x_i$'s and
$y_j$'s.  This implies that there is a larger group of invariances of
the problem: besides the previous transformations, now
$(\bR^d,(X,Y),x^2)$ is crossing also iff $(\bR^d,(BX,B^{-1}Y),x^2)$ is
crossing, where
$B$ is in $\mathrm{GL}(d,\bR)$.  On top of this, we can translate the
points in $X$, and the points in $Y$, separately, as (e.g.) the
transformation $y_j \to y_j + z$ corresponds to the gauge
transformation $\lam_i = 2 \langle x_i,z \rangle$ and the
transformation $x_i \to x_i + z$ corresponds to the gauge
transformation $\mu_j = 2 \langle z,y_j \rangle$.

This is a well-known fact in Optimal Transportation Theory. When the
cost is $\|x-y\|^2$, and the two measures are continuous and non-zero,
the convex potential $\phi(x)$ (appearing in Brenier's Theorem) and
its conjugate $\psi(y)$ satisfy the optimality relation
\be
\phi(x) + \psi(y) \le \|x-y\|^2
\ef;
\ee
and the equality occurs iff the infinitesimal mass in $x$ is
transported in $y$, which, in turns, occurs when $y = x -
\frac{1}{2}\nabla \phi(x)$.

An analogous theorem holds for the cost function 
$-2\langle x,y\rangle$, and the pair of conjugate potentials
$\phi^{\rm new}(x)=\phi(x)-\|x\|^2$ and 
$\psi^{\rm new}(y)=\psi(y)-\|y\|^2$, namely
\be
\phi^{\rm new}(x)+\psi^{\rm new}(y)
\le -2\langle x,y\rangle
\ef;
\ee
and the equality occurs iff the infinitesimal mass in $x$ is
transported in $y$, which, in turns, occurs when 
$y=-\frac{1}{2}\nabla \phi^{\rm new}(x)$.  With a version of the
Brenier's Theorem (and of the Monge--Ampére equation) in place for the
bilinear cost, the covariances discussed above are easily deduced also
in the setting of Optimal Transportation.

As a result, although this is not strictly needed in our forthcoming
analysis, we could ``standardise'' the sets $(X,Y)$ even more, for
example by setting that the crossing of the two edges is at the
origin, the two crossing edges are portions of the first two axis, and
one endpoint of each of these edges is at unit distance from the
origin.

In this section, from this point onward, we will restrict to the case
$p=2$, that will turn out to be the only case in which $\bar{H}_\calJ$
is guaranteed to be non-crossing for a class of 2-dimensional domains.

%-------------------------------------------------------
\subsection{The case of the plane}
\label{ssec.noncrossplane}
% \texorpdfstring{The case of the first bipartite standard setting, $p=2$ and $\Omega\subset \bR^2$}
% {The case of the first standard setting, p=2 and the plane}}
%-------------------------------------------------------
%
In this section we will make some use of a diagrammatic notation (with
grids of dots) explained in detail in
Appendix~\ref{app.altproofValid1st2nd}. This is done here in very
simple settings, the graphical explanations are not compulsory to the
understanding of the proof, are provided only as a visualisation aid,
and are also supplemented by an alternative graphical
representation. For these reasons, we postpone all the pertinent
definitions of our grids of dots to
Appendix~\ref{app.altproofValid1st2nd}, where its role becomes more
prominent.

We will start our analysis from the case in which $\Omega$ is a
portion of $\bR^2$, and the function $f$ is $f(x)=x^2$ (that is,
$p=2$).  We will consider embedded instances in the ``first bipartite
standard setting'' of Corollary~\ref{cor.setting1}, that is, the graph
$G$ is just the complete bipartite graph,
$G\equiv \calK_{n+1,n}=(A\cup B,E)$, with $A$ being the set of $n+1$
vertices, associated to the points in $X=\{x_a\}$, and $B$ the set of
$n$ vertices, associated to $Y=\{y_b\}$, we have 
$\calJ=\{ \{a\} \}_{a \in A}$, and we assume that the embedding is
generic.  Thus, $H_\calJ$ is a spanning tree on $G$, with all vertices
in $B$ of degree 2, and $\bar{H}_\calJ$ is a spanning tree on
$\bar{G}=\calK_{n+1}$. Both these graphs have a representation on
$\Omega$, where the $\gamma_e$'s are straight segments.

We will not need to specify $\Omega$ in what follows, as the datum of
$X$ and $Y$ is sufficient (and we can think that $\Omega$ is some
compact containing the convex hull of $X \cup Y$).

The goal of this section is to prove the following
Theorem~\ref{th:treenoncross}. Before doing this, we need to introduce
the main tools in the proof.

Let us introduce a symbol for minors.  Given a $n \times m$ matrix
$W$, and a set $I \subseteq [n]$ of rows, and $J \subseteq [m]$ of
columns, we denote by $W^{I|J}$ the $(n-|I|)\times(m-|J|)$ minor of
$W$ with the rows with indices in $I$, and the columns with indices in
$J$, removed.

Let us also introduce a symbol for a suitable alternating sum. For
$1 \leq k \leq n+1$, and $\pi \in \mathfrak{S}_n$
\be
W(k,\pi)
:=
\sum_{i=1}^n
\left(
W^{\{k\}|\varnothing}_{ii}
-
W^{\{k\}|\varnothing}_{i\,\pi(i)}
\right)
.
\ee
Now we are ready to define:
\begin{defn}
\label{def.validmat}
A matrix $W \in \bR(n+1,n)$ is 
\emph{valid}
% \emph{valid in the first sense} 
if, for all
$1 \leq k \leq n+1$, and all $\pi \in \mathfrak{S}_n$ distinct from
the identity,
\be
W(k,\pi)
<0
\ef.
\ee
\end{defn}
\noindent
The importance of this definition comes from the observation
\begin{prop}
\label{prop.crossifvalid}
Given an instance in the first bipartite standard setting, with
$n=|A|-1=|B|$, and costs $w_{a_i,b_j}$,
the tree $H_{\calJ}$ is the path
$(a_1, b_1, a_2, b_2,\ldots, a_{n}, b_{n}, a_{n+1})$ if and only if
% (minus) 
the matrix of costs $W$ (with $W_{ij}=w_{a_i,b_j}$) is valid.
%  in the first sense.
\end{prop}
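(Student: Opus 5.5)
The plan is to reduce the statement to a characterisation of each optimal matching $M_k:=M_\star(G\setminx\{a_k\})$. First, if $H_\calJ$ is the path $(a_1,b_1,\ldots,a_n,b_n,a_{n+1})$, then $M_k$ is the unique perfect matching of this path with $a_k$ removed. Concretely, $b_i$ is matched to $a_i$ for $i<k$ and to $a_{i+1}$ for $i\geq k$. Relabel the rows of the minor $W^{\{k\}|\varnothing}$ in increasing order. Then this matching is exactly the identity permutation on the minor: row $i$ of the minor is $a_i$ for $i<k$ and $a_{i+1}$ for $i\geq k$. So "$H_\calJ$ is the path" is equivalent to "for every $k$, the optimal assignment of $W^{\{k\}|\varnothing}$ is the identity". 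The "only if" part of this equivalence is the computation just made. The "if" part holds because $H_\calJ$ is by definition the union of the $M_k$, and the union of these identity matchings is the path: the edge $(a_i b_i)$ appears for $k>i$, and the edge $(a_{i+1} b_i)$ appears for $k\leq i$.

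Second, I would translate optimality into the sign condition. The identity is the unique optimal assignment of $W^{\{k\}|\varnothing}$ iff $\cW(\mathrm{id})<\cW(\pi)$ for all $\pi\neq\mathrm{id}$. Since $W(k,\pi)=\cW(\mathrm{id})-\cW(\pi)$, this is exactly $W(k,\pi)<0$. Taking this over all $k$ gives validity in the sense of Definition~\ref{def.validmat}.

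The only subtle point is the role of genericity. Validity already forces strict uniqueness of each optimum, so the "if" direction needs no extra hypothesis beyond the well-definedness of $M_k$. For "only if", the standing generic-weights assumption of the first bipartite standard setting (Proposition~\ref{prop.38765874}) ensures that the identity is the unique optimum, so the inequalities are strict rather than weak. I do not expect a real obstacle here. The one step to check carefully is the index bookkeeping between rows of the minor and the vertices $a_i$, so that the matching read off the path coincides with the identity permutation and not with a shift.
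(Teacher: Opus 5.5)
Your proof is correct and follows essentially the same route as the paper: validity is equivalent to every $M_\star(G\setminx a_k)$ being the ``identity'' matching of the minor $W^{\{k\}|\varnothing}$, and that in turn is equivalent to $H_\calJ$ being the path. The only cosmetic difference is in the path $\Rightarrow$ matchings direction: you use uniqueness of the perfect matching of the path with $a_k$ removed, while the paper appeals to the leaf-pruning reconstruction of the sets $\calC(e)$ in Remark~\ref{rmk.reconstr}.
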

\noindent
Indeed, the matrix $W$ is valid if and only if, for all $k$, the
optimal matching for the graph $G\setminx a_{k}$ is
\[
M_\star(G\setminx a_{k})=\{
\edge{a_1}{b_1},
\ldots,
\edge{a_{k-1}}{b_{k-1}},
\edge{a_{k+1}}{b_k},
\ldots,
\edge{a_{n+1}}{b_n}\}
\]
and (also from Remark~\ref{rmk.reconstr}) this is a necessary and
sufficient condition for $H_{\calJ}$ to be our path, so that (using
the shortcut $U=k$ for the color $U=\{a_k\}$) the edges 
$e_{j}=\edge{a_j}{b_j}$ have $\calC(e_j)=\{j+1,j+2,\ldots,n+1\}$, while the
edges $e'_{j}=\edge{a_j}{b_{j+1}}$ have $\calC(e'_j)=\{1,2,\ldots,j\}$.

Now consider $G$ as embedded on a two-dimensional manifold $\Omega$,
with $x_i=x_{a_i}$ and $y_i=y_{b_i}$. The proposition above implies
that there exist crossing configurations in $\Omega$ if and only if
there exist sets of points $(X,Y)$ such that $\gamma_{x_1,x_2}$ and
$\gamma_{x_n,x_{n+1}}$ do cross, and the associated matrix $W$ is
valid. If $\Omega$ is not simply-connected, the arcs of geodesics
$\gamma_{x_1,x_2}$ and $\gamma_{x_n,x_{n+1}}$ are those with the
topology of $\gamma_{x_1,y_1}\circ\gamma_{y_1,x_2}$ and
$\gamma_{x_n,y_{n}}\circ\gamma_{y_n,x_{n+1}}$.

We will need the following simple observation:
\begin{rmk}
\label{rmk.nuovaeq2n1n}
If $W \in \bR(n+1,n)$ is valid, then
$W_{21}+W_{nn}-W_{n1}-W_{2n}<0$.
\end{rmk}
\noindent
To see this, it suffices to take the sum of the two inequalities
$W(1,\pi_1)+W(n+1,\pi_2)$, with (in cycle notation)
\begin{align}
\pi_1&=(123\cdots n-1)(n)
\ef;
&
\pi_2&=(1)(n\,n-1\,n-2\cdots 2)
\ef.
\end{align}
In the diagrammatic notation of Appendix~\ref{app.altproofValid1st2nd}:
\be
\label{eq.innuovaeq}
\setlength{\unitlength}{10pt}
\thicklines
\raisebox{-27pt}{\begin{picture}(5,6)
  \linethickness{0.5pt}
  \multiput(0,0)(1,0){6}{\line(0,1){6}} 
  \multiput(0,0)(0,1){7}{\line(1,0){5}}
  \linethickness{1.5pt}
  \put(0,0){\line(0,1){6}}
  \put(0,0){\line(1,0){5}}
  \put(0,6){\line(1,0){5}}
  \put(5,0){\line(0,1){6}}
\put(0.5,4.5){\gogre{\line(1,0){4}}}
\put(0.5,1.5){\gogre{\line(1,0){4}}}
\put(0.5,1.5){\gogre{\line(0,1){3}}}
\put(4.5,1.5){\gogre{\line(0,1){3}}}
\dotbr{4}{0}{1}
\dotbr{0}{4}{4}
\end{picture}}
\;=\;
\raisebox{-27pt}{\begin{picture}(5,6)
\put(0,5){\goyel{\rule{50pt}{10pt}}}
  \linethickness{0.5pt}
  \multiput(0,0)(1,0){6}{\line(0,1){6}} 
  \multiput(0,0)(0,1){7}{\line(1,0){5}}
  \linethickness{1.5pt}
  \put(0,0){\line(0,1){6}}
  \put(0,0){\line(1,0){5}}
  \put(0,6){\line(1,0){5}}
  \put(5,0){\line(0,1){6}}
  \put(0,5){\line(1,0){5}}
  \put(0,6){\line(1,0){5}}
  \put(0,0){\line(0,1){6}}
%
% \put(0.5,6.5){\gogre{\line(1,0){5}}}
\put(0.5,1.5){\gogre{\line(1,0){3}}}
\put(0.5,1.5){\gogre{\line(0,1){3}}}
% \put(5.5,0.5){\gogre{\line(0,1){6}}}
\put(2.5,2.5){\gogre{\line(1,0){1}\line(0,-1){1}}}
\put(1.5,3.5){\gogre{\line(1,0){1}\line(0,-1){1}}}
\put(0.5,4.5){\gogre{\line(1,0){1}\line(0,-1){1}}}
\dotbr{0}{1}{4}
\dotbr{1}{2}{3}
\dotbr{2}{3}{2}
\dotbr{3}{0}{1}
\dotx{4}{0}
\end{picture}}
\;+\;
\raisebox{-27pt}{\begin{picture}(5,6)
\put(0,0){\goyel{\rule{50pt}{10pt}}}
  \linethickness{0.5pt}
  \multiput(0,0)(1,0){6}{\line(0,1){6}} 
  \multiput(0,0)(0,1){7}{\line(1,0){5}}
  \linethickness{1.5pt}
  \put(0,0){\line(0,1){6}}
  \put(0,0){\line(1,0){5}}
  \put(0,6){\line(1,0){5}}
  \put(5,0){\line(0,1){6}}
  \put(0,0){\line(1,0){5}}
  \put(0,1){\line(1,0){5}}
  \put(5,0){\line(0,1){6}}
\put(1.5,4.5){\gogre{\line(1,0){3}}}
\put(4.5,1.5){\gogre{\line(0,1){3}}}
\put(3.5,1.5){\gogre{\line(1,0){1}}}
\put(2.5,2.5){\gogre{\line(1,0){1}\line(0,-1){1}}}
\put(1.5,3.5){\gogre{\line(1,0){1}\line(0,-1){1}}}
\put(1.5,4.5){\gogre{\line(0,-1){1}}}
\dotbr{1}{4}{4}
\dotbr{2}{1}{3}
\dotbr{3}{2}{2}
\dotbr{4}{3}{1}
\dotx{0}{5}
\end{picture}}
\ee
%
% =======================================================
%
We can illustrate these inequalities through an
alternative graphical representation: we will draw a hypothetical
counterexample, that is, a path $H_\calJ$ in which the extremal edges
of its projection $\bar{H}_\calJ$ do cross, and describe an inequality
through the following formalism:
\begin{itemize}
\item one $A$-vertex, marked in green, corresponds to the row removed from $W$;
\item some edges of $\calK_{\ell+1,\ell} \supseteq H_\calJ$ are marked in orange, blue
  or green, but blue and green edges can occupy only edges of
  $H_\calJ$;
\item blue and orange edges form an alternating cycle;
\item blue and green edges, and the green vertex, form a monomer-dimer
  covering of $H_\calJ$.
\end{itemize}
Then, the sum of the costs of the blue edges, minus the sum of the costs
of the orange edges, must be non-positive. The green components are only
used to certify in a visual way that the blue edges can be completed
to a monomer-dimer configuration of the form described above.

For example, here on a path with $\ell=6$, we have:
\be
% gin{center}
\label{eq.3478Imgs1}
\if\faifig1
\makebox[0pt][c]{
% PRIMO (finito)
% [inline block 16: 3 envs, 11006 chars -> data_tex | \begin{tikzpicture}[scale=1.2] \node[label={90:{\small $x_1$}},circle,inner sep=2pt,fill=white,draw] (x1) at (-1,0) {}; ...]

}
\else [...TikZ\ code...] \fi
\ee
% \end{center}
%
where the resulting graph on the RHS indeed reads
$\langle x_2-x_6,y_1-y_6\rangle<0$. Note that the diagram on the RHS
can \emph{not} be completed to a monomer-dimer configuration on
$H_{\calJ}$ with a single monomer (that is, our inequality does not
result from a single constraint of our family). Nonetheless, in
light of what is shown above, since it is a positive linear combination of
elementary constraints, it provides a legitimate inequality.

With these properties at hand, we are ready to prove the main theorem
of this section. When $p=2$ we already know that $H_{\calJ}$ is not
guaranteed to be non-crossing. Nonetheless, we will prove the
surprising fact:
\begin{thm}
\label{th:treenoncross}
When $\Omega$ is a portion of $\bR^2$, and $p=2$, in the first
bipartite standard setting the embedding of the tree $\bar{H}_{\calJ}$
is non-crossing.
\end{thm}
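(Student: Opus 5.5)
By Remark~\ref{rmk.crossifpath}, I would reduce the theorem to the path case. The family of planar configurations at $p=2$ is closed under taking subsets, so it suffices to exclude one situation. In it, $H_\calJ$ is the path $(a_1,b_1,a_2,\ldots,b_n,a_{n+1})$, and the segments $[x_1,x_2]$ and $[x_n,x_{n+1}]$ cross. (In the plane the geodesics of Definition~\ref{def.projgeo} are just these straight segments.) By Proposition~\ref{prop.crossifvalid}, this is equivalent to the cost matrix $W_{ij}=\|x_i-y_j\|^2$ being valid. By the gauge-invariance corollary, I may replace $W$ by $W'_{ij}=-2\langle x_i,y_j\rangle$. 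Every inequality $W(k,\pi)<0$, and every positive combination of them, then becomes a bilinear inequality in the $x$'s and $y$'s. I would also use the invariances to normalise: the crossing point at the origin, $x_1,x_2$ on the first axis on opposite sides of $0$, and $x_n,x_{n+1}$ on the second axis on opposite sides of $0$.

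The core step is to extract, as in Remark~\ref{rmk.nuovaeq2n1n}, a small set of ``long-range'' inequalities that involve only the four endpoints $x_1,x_2,x_n,x_{n+1}$ and the two extremal $B$-points $y_1,y_n$. Remark~\ref{rmk.nuovaeq2n1n} already gives $W_{21}+W_{nn}-W_{n1}-W_{2n}<0$, i.e.\ a sign condition on $\langle x_2-x_n,\,y_1-y_n\rangle$. I would look for analogous combinations $W(k_1,\pi_1)+W(k_2,\pi_2)$, each a pair of staircase cycles as in \eqref{eq.innuovaeq} with the removed rows chosen near the two ends of the path, that produce sign conditions on the other pairs:
\[
\langle x_1-x_n,\,y_1-y_n\rangle,\qquad
\langle x_2-x_{n+1},\,y_1-y_n\rangle,\qquad
\langle x_1-x_{n+1},\,y_1-y_n\rangle .
\]
I would also use the short-range inequalities coming from the leaves $a_1$ and $a_{n+1}$: since $\calC(\edge{a_1}{b_1})=\{2,\ldots,n+1\}$, the $2$-cycles at the two ends give conditions such as $\langle x_2-x_1,\,y_1-y_j\rangle$ having a definite sign. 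The diagrammatic formalism of \eqref{eq.3478Imgs1} is the tool for certifying that each proposed combination is a positive sum of elementary constraints: blue and orange edges form an alternating cycle, and the blue edges complete to a monomer-dimer covering of $H_\calJ$.

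The final step is geometric. Write $v=y_1-y_n$. The crossing hypothesis means that $x_n$ and $x_{n+1}$ lie strictly on opposite sides of the line through $x_1,x_2$, and symmetrically. I would show that the collected sign conditions on $\langle x_i-x_j, v\rangle$, for $i\in\{1,2\}$ and $j\in\{n,n+1\}$, force the linear functional $\langle\cdot,v\rangle$ to separate $\{x_1,x_2\}$ from $\{x_n,x_{n+1}\}$, or to order them inconsistently. That is impossible when the convex hulls of the two pairs intersect at the crossing point. Concretely, in the normalised coordinates, write the crossing point $0$ as a convex combination of $x_1,x_2$ and also of $x_n,x_{n+1}$. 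Then $0=\langle 0,v\rangle$ would have to be simultaneously strictly positive and non-positive, giving the same $0<0$ contradiction as in the proof of Lemma~\ref{lem:tripla}. This is Farkas-style: I expect the contradiction to appear as a positive combination of valid inequalities that vanishes identically.

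The main obstacle is the middle step: finding, for arbitrary path length $n$, the right permutations $\pi$ and removed rows $k$ whose sums give the needed inequalities on the remaining pairs, particularly those involving the leaves $x_1$ and $x_{n+1}$. Remark~\ref{rmk.nuovaeq2n1n} covers only the inner pair $(x_2,x_n)$. I would first attempt staircase cycles that sweep between columns $1$ and $n$ with the monomer at row $1$ or $n+1$, in analogy with \eqref{eq.innuovaeq}. If that fails, I would weight the inequalities by the crossing parameters (the convex coefficients), since $p=2$ and flatness enter precisely through the bilinearity of $W'$, which is what makes such a weighted cancellation possible.
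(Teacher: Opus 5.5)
Your route is the paper's. You reduce to a path by Remark~\ref{rmk.crossifpath}, translate to validity by Proposition~\ref{prop.crossifvalid}, pass to bilinear costs, and conclude because $\langle\cdot,\,y_1-y_n\rangle$ strictly separates $\{x_1,x_2\}$ from $\{x_n,x_{n+1}\}$. As written, though, the proposal does not establish the inequalities this needs. You call the derivation of three of them the main obstacle and never fix the sign of any of the four, so the core step is missing. The missing idea is simple. It needs no staircase combinations, no leaf inequalities, no normalisation and no weighting by the convex coefficients. Validity says that $M_\star(G\setminx a_r)=\{\edge{a_i}{b_i}\}_{i<r}\cup\{\edge{a_i}{b_{i-1}}\}_{i>r}$. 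Whenever $\edge{a_i}{b_s}$ and $\edge{a_j}{b_t}$ lie in one such matching, a single transposition gives $W_{is}+W_{jt}-W_{it}-W_{js}<0$, that is, $\langle x_i-x_j,\,y_s-y_t\rangle>0$.

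Write $v=y_1-y_n$. Three of the four inequalities then come directly from single matchings:
\begin{itemize}
\item $\edge{a_1}{b_1},\edge{a_n}{b_n}\in M_\star(G\setminx a_{n+1})$, giving $\langle x_1-x_n,v\rangle>0$;
\item $\edge{a_1}{b_1},\edge{a_{n+1}}{b_n}\in M_\star(G\setminx a_r)$ for any $2\le r\le n$, giving $\langle x_1-x_{n+1},v\rangle>0$;
\item $\edge{a_2}{b_1},\edge{a_{n+1}}{b_n}\in M_\star(G\setminx a_1)$, giving $\langle x_2-x_{n+1},v\rangle>0$.
\end{itemize}
Only $\edge{a_2}{b_1}$ and $\edge{a_n}{b_n}$ never coexist, since the first needs $r=1$ and the second $r=n+1$. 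That is precisely why Remark~\ref{rmk.nuovaeq2n1n} sums two constraints, and it gives $\langle x_2-x_n,v\rangle>0$ with the same orientation. With all four signs aligned, your closing argument works: for the crossing point $z$, $\langle z,v\rangle\ge\min_{i\in\{1,2\}}\langle x_i,v\rangle>\max_{j\in\{n,n+1\}}\langle x_j,v\rangle\ge\langle z,v\rangle$. Finally, you must assume $n\ge3$, because Remark~\ref{rmk.nuovaeq2n1n} is empty for $n=2$. The case $n=2$ is trivial anyway, since the two segments share the endpoint $x_2$.
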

\begin{proof}
In light of Remark~\ref{rmk.crossifpath}, it is sufficient to prove
that there does not exist a pair $(X,Y)$, with $|X|=\ell+1$ and
$|Y|=\ell$, such that the tree $H_{\calJ}$ is the path
$(x_1,y_1,x_2,y_2,\ldots,x_\ell,y_\ell,x_{\ell+1})$, and the segments
$[x_1,x_2]$ and $[x_\ell,x_{\ell+1}]$ do cross.  Also, in light of
Proposition\ \ref{prop.crossifvalid}, we shall prove that there does
not exist a pair $(X,Y)$ such that the segments $[x_1,x_2]$ and
$[x_\ell,x_{\ell+1}]$ do cross, and the matrix $W=(W_{ij})$, with
$W_{ij}=-\langle x_{i}, y_{j}\rangle$, is valid.

Note that we can assume that $\ell \geq 3$, as for $\ell=2$ it is
clearly seen that the segments $[x_1,x_2]$ and $[x_2,x_3]$ cannot
cross at an internal point. Introduce the shorthands
$\xi_{a,b}\coloneqq x_a-x_b$ and $\eta_{a,b}\coloneqq y_a-y_b$.
Call $R$ the rotation counterclockwise by
90 degrees, that is $R(x,y)=(-y,x)$.
Also, for $k \geq 3$, and $u_j$'s
some non-zero vectors in the plane, let us adopt
the symbol $\prec$ in expressions as
\[
u_1 \prec u_2 \prec \cdots \prec u_k
\]
to state that the list of vectors is
in counterclockwise cyclic order, i.e.
\[
\arg(u_1)<\arg(u_2)<\ldots<\arg(u_k)<\arg(u_1)+2\pi
\ef.
\]
% Up to translation, we can assume that the segments $[x_1,x_2]$
% and $[x_{\ell},x_{\ell+1}]$ cross at the origin. 
A simple application of this notation is the equivalence of the conditions:
\be
\langle u,v \rangle > 0
\qquad \Longleftrightarrow \qquad
R^{-1} v \prec u \prec R v
\ef.
\ee
The crossing condition states that the polygon with vertices
$(x_1,x_{\ell+1},x_2,x_{\ell})$ is a convex quadrilater (so that the
diagonals do cross), which means that one of the following must be
true (depending if the sequence of vertices above is in clockwise or
counterclockwise order)
\begin{subequations}
\label{eqs.4357856187main}
\begin{gather}
\xi_{1,\ell+1} \prec \xi_{\ell+1,2} \prec \xi_{2,\ell} \prec \xi_{\ell,1}
\ef;
\\
%% \xi_{1,\ell+1} \succ \xi_{\ell+1,2} \succ \xi_{2,\ell} \succ \xi_{\ell,1}
\xi_{\ell,1} \prec \xi_{2,\ell}  \prec \xi_{\ell+1,2}  \prec \xi_{1,\ell+1}  
% \xi_{1,\ell} \prec \xi_{\ell,2} \prec \xi_{2,\ell+1} \prec \xi_{\ell+1,1}
\ef.
\end{gather}
\end{subequations}
The validity conditions
% (in the first sense) 
take a specially simple
form when the stability is checked against a transposition.
In our list of $(k,\pi)$'s, we have $3\binom{\ell}{2}$ transpositions,
with rows and columns $\big((i,j),(i,j)\big)$, $\big((i,j+1),(i,j)\big)$, 
and $\big((i+1,j+1),(i,j)\big)$. If the transposition involves the
rows and columns $\big((i,j),(k,h)\big)$, the corresponding inequality
takes the simple form
\be
\langle \xi_{i,j},\eta_{k,h} \rangle >0
\ef.
\ee
We have three such transpositions involving the variables
$\{x_1,x_2,x_{\ell},x_{\ell+1}\}$
and $\{y_1,y_{\ell}\}$, namely those with
$\big((i,j),(k,h)\big) \in \{
\big((1,\ell),(1,\ell)\big), 
\big((1,\ell+1),(1,\ell)\big), 
\big((2,\ell+1),(1,\ell)\big)\}$. 
In the diagrammatic notation of
Appendix~\ref{app.altproofValid1st2nd}, these are
\begin{align}
\label{eq.876987627}
% 1
\setlength{\unitlength}{10pt}
\raisebox{-27pt}{% [inline block 17: 6 envs, 12687 chars in 2 pieces, piece 1 here, a bare % at each other -> data_tex | \begin{picture}(5,6) \put(0,0){\goyel{\rule{50pt}{10pt}}}...]
}
&
\end{align}
or, in our alternative graphical representation,
\be
% gin{center}
\label{eq.3478Imgs1b}
\if\faifig1
\makebox[0pt][c]{
% UNO
%
% ----------------
}
\else [...TikZ\ code...] \fi
\ee
%% We can illustrate these inequalities through an alternative graphical
%% representation: we will draw a hypothetical counterexample, that is, a
%% path $H_\calJ$ in which the extremal edges of its projection
%% $\bar{H}_\calJ$ do cross, and describe an inequality through the
%% following formalism:
%% \begin{itemize}
%% \item one $A$-vertex is marked in green, and corresponds to the row
%%   removed from $W$;
%% \item some edges of $\calK_{\ell+1,\ell} \supseteq H_\calJ$ are marked in orange, blue
%%   or green, but blue and green edges can occupy only edges of
%%   $H_\calJ$;
%% \item blue and orange edges form an alternating cycle;
%% \item blue and green edges, and the green vertex, form a monomer-dimer
%%   covering of $H_\calJ$.
%% \end{itemize}
%% Then, the sum of the costs of blue edges, minus the sum of the costs
%% of the orange edges, must be negative.\footnote{It must be
%%   non-positive in general, and strictly negative under our hypothesis
%%   of generic weights.} The green components are only used to certify
%% in a visual way that the blue edges can be completed to a
%% monomer-dimer configuration.
Furthermore, we have the expression discussed in
Remark~\ref{rmk.nuovaeq2n1n}, that, although being a combination of
two inequalities for $W(k,\pi)$'s, takes the form of a transposition,
namely $\big((2,\ell),(1,\ell)\big)$.
So overall we get the four inequalities
\begin{subequations}
\label{eqs.4357856187bmain}
\begin{align}
\langle
\xi_{1,\ell},
\eta_{1,\ell}
\rangle
&>0
\ef;
&
\langle
\xi_{1,\ell+1},
\eta_{1,\ell}
\rangle
&>0
\ef;
\\
\langle
\xi_{2,\ell},
\eta_{1,\ell}
\rangle
&>0
\ef;
&
\langle
\xi_{2,\ell+1},
\eta_{1,\ell}
\rangle
&>0
\ef;
\end{align}
\end{subequations}
which can also be stated as
\begin{subequations}
\label{eqs.4357856187cmain}
\begin{align}
R^{-1} \eta_{1,\ell} &\prec
\xi_{1,\ell+1} \prec 
R \eta_{1,\ell}
\ef;
&
R \eta_{1,\ell} &\prec
\xi_{\ell+1,2} \prec 
R^{-1} \eta_{1,\ell}
\ef;
\\
R^{-1} \eta_{1,\ell} &\prec
\xi_{2,\ell} \prec 
R \eta_{1,\ell}
\ef;
&
R \eta_{1,\ell} &\prec
\xi_{\ell,1} \prec
R^{-1} \eta_{1,\ell}
\ef.
\end{align}
\end{subequations}
These inequalities are not compatible with the two possibilities in
(\ref{eqs.4357856187main}),\footnote{More generally, the statements
$c_- \prec a_i \prec c_+$ for all $i$, 
$c_+ \prec b_j \prec c_-$ for all $j$, 
and $a_{i_1} \prec b_{j_1} \prec a_{i_2} \prec b_{j_2}$ for some
$(i_1,i_2,j_1,j_2)$ are incompatible.}
so we have found a contradiction. This
completes the proof.

Another way of concluding is by observing that the function
$\phi(x)=\langle x,\eta_{1,\ell} \rangle$ defines an order over the
four points $\{x_1,x_2,x_\ell,x_{\ell+1}\}$, and that 
the inequalities (\ref{eqs.4357856187bmain}) mean that 
$\{x_1,x_2\}$ are `smaller' than $\{x_\ell,x_{\ell+1}\}$, thus there
must exist a line
% hyperplane 
orthogonal to $\eta_{1,\ell}$ that separates them, so that the
corresponding segments cannot cross.
\end{proof}
\noindent
Two alternative proofs, of which the second one is more long but
possibly more transparent at the level of the underlying mechanisms,
are given in Appendix~\ref{app.altproofValid1st2nd}.

%-------------------------------------------------------
\subsection{The case of the cylinder and the torus}
%% \texorpdfstring{The case of the first bipartite standard setting,
%%     $p=2$ and $\Omega$ a cylinder or a torus}{The case of p=2 and the
%%     cylinder or the torus}}
\label{ssec.noncrossToro}
%-------------------------------------------------------
%
Now, let us consider the case in which $\Omega$ is a (flat) cylinder,
with periodicity $\w$ (that is, $\Omega=\bR^2/\sim$, with 
$x\sim x+\w$), or a (flat) torus, with periodicities $(\w_1,\w_2)$
(that is, $\Omega=\bR^2/\sim$, with $x\sim x+\w_1$ and $x\sim x+\w_2$).

We shall prove an analogue of Theorem~\ref{th:treenoncross}:
\begin{thm}
\label{th:treenoncrossToro}
When $\Omega$ is a cylinder or a torus, and $p=2$, in the first
bipartite standard setting the embedding of the tree $\bar{H}_{\calJ}$ is
non-crossing.
\end{thm}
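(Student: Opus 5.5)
We reduce to the planar argument of Theorem~\ref{th:treenoncross} by working on the universal cover $\bR^2$. First, Remark~\ref{rmk.crossifpath} and Lemma~\ref{lem.subtreeIsHJ} apply verbatim on the cylinder or torus. Neither uses flatness or simple connectivity; they only use the combinatorics of the sets $\calC(e)$ and the fact that restricting to a subtree with degree-2 $B$-vertices yields a valid sub-instance. It therefore suffices to exclude a configuration in which $H_\calJ$ is the path $(x_1,y_1,x_2,\ldots,y_\ell,x_{\ell+1})$ (equivalently, by Proposition~\ref{prop.crossifvalid}, the cost matrix $W$ is valid) while the projected arcs $\gamma_{x_1,x_2}$ and $\gamma_{x_\ell,x_{\ell+1}}$ cross.

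Next, we lift the path to $\bR^2$. Each edge $\edge{x_i}{y_j}$ of $H_\calJ$ is realised by a shortest geodesic, i.e.\ a segment in a definite lift. Start from a lift $\tilde x_1$ and follow the path edge by edge, lifting each segment to obtain $\tilde x_1,\tilde y_1,\tilde x_2,\ldots,\tilde x_{\ell+1}$ in $\bR^2$. By Definition~\ref{def.projgeo}, the arc $\gamma_{x_1,x_2}$ is the projection of the straight segment $[\tilde x_1,\tilde x_2]$, and likewise $\gamma_{x_\ell,x_{\ell+1}}$ is the projection of $[\tilde x_\ell,\tilde x_{\ell+1}]$. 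This is precisely why the homotopy prescription was imposed. A crossing on $\Omega$ means that for some lattice translate $\tau$ (an element of $\bZ\w$ or $\bZ\w_1+\bZ\w_2$), the segments $[\tilde x_1,\tilde x_2]$ and $[\tilde x_\ell+\tau,\tilde x_{\ell+1}+\tau]$ cross in $\bR^2$.

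The key step is to obtain the four inequalities (\ref{eqs.4357856187bmain}) for these translated points. For lifted points, the true cost satisfies $W_{ij}=\min_{\sigma}\|\tilde x_i-\tilde y_j-\sigma\|^2\leq\|\tilde x_i-\tilde y_j-\sigma\|^2$ for every lattice vector $\sigma$, with equality for the lifts chosen along $H_\calJ$. Each validity inequality used in the planar proof is an alternating sum over a cycle in which the ``blue'' (path) edges carry their true costs, i.e.\ equalities, and the ``orange'' edges are subtracted. Replacing each orange cost by the larger quantity $\|\tilde x_i-\tilde y_j-\sigma\|^2$ for any choice of $\sigma$ preserves the strict inequality. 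Here $W(k,\pi)<0$ means (blue) $-$ (orange) $<0$, and enlarging the orange terms keeps it negative. We then choose, for each orange edge, the translate consistent with placing $\tilde x_\ell,\tilde x_{\ell+1},\tilde y_\ell$ shifted by $\tau$ while $\tilde x_1,\tilde x_2,\tilde y_1$ stay fixed. The relevant cycles in (\ref{eq.876987627}) and Remark~\ref{rmk.nuovaeq2n1n} involve blue edges spanning the whole path. The blue edges are summed with their true lifts, so the total lattice shift along each cycle is consistent, and the quadratic terms cancel exactly as in the plane. We therefore obtain $\langle\tilde\xi,\tilde\eta_{1,\ell}\rangle>0$ for the four pairs, with $\tilde\eta_{1,\ell}=\tilde y_1-\tilde y_\ell-\tau$ and the corresponding translated $\tilde\xi$.

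The separating-line argument at the end of the proof of Theorem~\ref{th:treenoncross} then shows that $\{\tilde x_1,\tilde x_2\}$ and $\{\tilde x_\ell+\tau,\tilde x_{\ell+1}+\tau\}$ lie on opposite sides of a line orthogonal to $\tilde\eta_{1,\ell}$, so these segments do not cross. Since $\tau$ was arbitrary, this contradicts the assumed crossing on $\Omega$.

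The main obstacle is the bookkeeping in the key step. We must check that, in each cycle inequality, the blue edges are exactly those whose lifts we fixed along the path. Then the quadratic norms $\|\tilde x_i\|^2$ and $\|\tilde y_j\|^2$ cancel pairwise even with the shift by $\tau$ applied to a block of vertices. The planar cancellation relied on each vertex appearing once positively and once negatively, and a uniform translation of a consecutive block of path vertices preserves this, so I expect it to go through. The torus adds nothing beyond the cylinder, since $\tau$ simply ranges over a rank-2 lattice.
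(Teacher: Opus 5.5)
There is a gap at $\ell=2$, that is, when the two crossing edges of $\bar{H}_\calJ$ share a vertex. In the plane this case is vacuous, because two segments issuing from $x_2$ cannot cross at internal points; that is why the planar proof may assume $\ell\ge 3$. On a cylinder or torus, however, $[\tilde x_1,\tilde x_2]$ can perfectly well cross $[\tilde x_2+\tau,\tilde x_3+\tau]$ with $\tau\neq 0$. So Remark~\ref{rmk.crossifpath} does produce this configuration, and it must be excluded.

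Your key step does not go through there. Remark~\ref{rmk.nuovaeq2n1n} needs $\ell\ge 3$: for $\ell=2$ both $\pi_1$ and $\pi_2$ are the identity. The would-be fourth inequality is $\langle \tilde x_2-(\tilde x_2+\tau),\tilde\eta\rangle>0$, with $\tilde\eta=\tilde y_1-\tilde y_2-\tau$. It pairs $x_2$ with its own translate and is not among the validity inequalities, so your separating-line step has only three of the four inequalities it needs.

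The missing input is the minimality of the lifts of the two path edges at $x_2$. The inequalities $\|\tilde x_2-\tilde y_1\|^2\le\|\tilde x_2+\tau-\tilde y_1\|^2$ and $\|\tilde x_2-\tilde y_2\|^2\le\|\tilde x_2-\tilde y_2-\tau\|^2$ add up to $\langle \tau,\tilde\eta\rangle\le 0$, which is the fourth inequality in weak form. Put $\phi=\langle\cdot,\tilde\eta\rangle$. Together with the three strict inequalities this gives $\min(\phi(\tilde x_1),\phi(\tilde x_2))\ge\max(\phi(\tilde x_2+\tau),\phi(\tilde x_3+\tau))$. Equality can hold only if $\phi(\tilde x_2)=\phi(\tilde x_2+\tau)$, and then the two segments meet the common level line only at the distinct points $\tilde x_2$ and $\tilde x_2+\tau$. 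So they cannot cross. This is essentially what the paper does in Appendix~\ref{app.casiL3}, where $\ell=2$ is treated separately.

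For $\ell\ge 3$ your argument is correct, and it genuinely streamlines the paper's proof. The paper first analyses the winding of the cycle $(x_0,x_2,\ldots,x_\ell,x_0)$. It reduces the contractible case to the plane, excludes non-primitive windings by self-intersection and minimality of $\ell$, and normalizes to winding $(1,0)$ via $\mathrm{SL}(2,\bZ)$ and a torus-to-cylinder lift. Only then does it apply one fixed shift pattern $\tilde K$ with period $\w$. Your observation that $W_{ij}\le\|\tilde x_i-\tilde y_j-\sigma\|^2$ for \emph{every} lattice vector $\sigma$ lets you use the crossing translate $\tau$ directly, so none of these reductions is needed.

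On the bookkeeping you left open, the block picture is not literally consistent. The path edge $\edge{x_\ell}{y_{\ell-1}}$ straddles the two blocks, so the individual long-cycle inequalities $W(1,\pi_1)$ and $W(\ell+1,\pi_2)$ are not translates of planar ones. What works is to keep every path entry, blue or orange, at its true lift, and to shift only the four off-path entries. Concretely, replace $\tilde y_\ell$ by $\tilde y_\ell+\tau$ in $W_{1\ell}$ and $W_{2\ell}$, and translate $\tilde x_\ell,\tilde x_{\ell+1}$ by $\tau$ in $W_{\ell 1}$ and $W_{\ell+1\,1}$; this is exactly the paper's $\tilde K$ pattern. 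The path entries then cancel in the sum and leave a consistently shifted $4$-cycle.
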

\begin{proof}
The proof uses many ideas and concepts already appearing in the proof
of Theorem \ref{th:treenoncross}, that will not be repeated here.  Let
us assume by contradiction that $H_\calJ$ is the path
$P=(x_1,y_1,x_2,\ldots,y_{\ell},x_{\ell+1})$, and that the segments
$[x_1,x_2]$ and $[x_{\ell},x_{\ell+1}]$ (with the same topology of the
paths $(x_1,y_1,x_2)$ and $(x_{\ell},y_{\ell},x_{\ell+1})$,
respectively, where $[x_i,y_i]$ and $[x_{i+1},y_i]$ are the shortest
segments on $\Omega$ with these endpoints) do cross.  Call $x_0$ the
intersection of these two segments. Thus the path
$C=(x_0,x_2,x_3,\ldots,x_{\ell},x_0)$, where the steps $[x_i,x_{i+1}]$
with $2\leq i \leq \ell-1$ are constructed with the same prescription
above, is an oriented cycle on $\Omega$, and it has a winding number
$k \in \bZ$, on a cylinder (that is, if the segments of the path are
followed by continuity, on $\bR^2$, then $C$ has endpoints separated
by the vector $k \w$), or a winding number $(k_1,k_2) \in \bZ^2$, on a
torus (that is, if the segments of the path are followed by
continuity, on $\bR^2$, then $C$ has endpoints separated by the vector
$k_1 \w_1+k_2 \w_2$). 
% In principle, we could 
Let us discuss more in detail the case of the torus, which is more
general (we can imagine the cylinder as a torus with one periodicity
vector long enough so that the shortest geodesics never use it).

Call $\hat{x}_i$ and $\hat{y}_j$ a set of coordinates in $\bR^2$,
equivalent to the $x_i$'s and $y_j$'s (i.e., on the torus,
$\hat{x}_i=x_i+h_1(i)\, \w_1+h_2(i)\, \w_2$ for some $h_1(i)$ and
$h_2(i)$ in $\bZ$), chosen in the following way:
\begin{itemize}
\item $\hat{x}_1$ is the point in $\bR^2$ with coordinates equal to
  those of $x_1$;
% , equivalent to $x_1$, nearest  to the origin;
\item $\hat{y}_1$ is the point in $\bR^2$, equivalent to $y_1$, such
  that the segment $[\hat{x}_1,\hat{y}_1]$ has the same length as
  $\gamma_{x_1,y_1}$;
\item $\hat{x}_2$ is the point in $\bR^2$, equivalent to $x_2$, such
  that the segment $[\hat{x}_2,\hat{y}_1]$ has the same length as
  $\gamma_{x_2,y_1}$, and so on.
\end{itemize}
This choice correspond to require that the path $\hat{P}$ on $\bR^2$ 
(interpreted as the natural covering $\Omega \times \bZ^{\times 2}$)
is the `lifting' of the path $P$ on $\Omega$, and in particular it
has segments of the same lengths of those of $P$. Furthermore, 
the statement above on the cycle $C$ means that, for the set of
coordinates $\hat{x}_i$ and $\hat{y}_j$, the values 
$(k_1,k_2) \in \bZ^{\times 2}$ are those such that there exist 
$t,s\in \;]0,1[\,$ with
\be
x_0 = (1-t) \hat{x}_1 +t \hat{x}_2 = s \hat{x}_\ell +(1-s) \hat{x}_{\ell+1}
-(k_1 \w_1+k_2 \w_2)
\ef.
\ee
% (we will call $\hat{x}_0$ the position above).
Now, suppose that $(k_1,k_2)=(0,0)$, that is, the cycle $C$ is
contractible on $\Omega$. Then we can conclude from Theorem
\ref{th:treenoncross}. Indeed, let us compare the costs
$w_{ij}=|\gamma_{x_i,y_j}|^2$ for the instance on $\Omega$, with the
costs $\hat{w}_{ij}=|\gamma_{\hat{x}_i,\hat{y}_j}|^2$ for the instance
on (a portion of) $\bR^2$, with coordinates $\hat{x}_i$ and
$\hat{y}_j$. We have that $w_{ij}\leq \hat{w}_{ij}$ in general, and
that $w_{ij}= \hat{w}_{ij}$ if $i=j$ or $i=j+1$, that is, if
$[x_i,y_j]$ is one of the segments appearing in the path $P$.  This
implies that the matrix $W$ for the instance on
$\Omega$  is valid only if the matrix
$\hat{W}$ for the instance on $\bR^2$ is valid, but we know from
Theorem \ref{th:treenoncross} that there are no valid matrices of
costs $\hat{W}$
% $W_{ij}=\hat{W}_{ij}-|\hat{x}_i|^2-|\hat{y}_j|^2$ 
such that $[\hat{x}_1,\hat{x}_2]$ and
$[\hat{x}_{\ell},\hat{x}_{\ell+1}]$ do cross. In other words, it is
harder to find a counterexample, consisting of a contractible path, on
a torus, than it is to find a counterexample \emph{tout court} on the
plane (where the very notion of a contractible path is trivial, and we
have already proven that the latter does not exist).
So we can assume that $(k_1,k_2)\neq (0,0)$.
Now, if 
$\mathrm{gcd}(k_1,k_2)\neq 1$ (or, in the cylinder, $k \neq \pm 1$),
the cycle $C$ has a self-intersection, thus there exists a value
$\ell'<\ell$ such that the restriction of the instance to
$\{x_1,\ldots,x_{\ell'+1}\}$ and $\{y_1,\ldots,y_{\ell'}\}$ is
crossing, so we can exclude this case by requiring that the value of
$\ell$ is minimal for having a crossing. Thus, up to reversing the
path, or performing a reflection of the domain, on the cylinder we can
assume that $k=1$. Similarly, on the torus, up to applying a change of
basis in $\mathrm{SL}(2,\bZ)$, we can assume that $(k_1,k_2)=(1,0)$.

Now, for the case of the torus with basis $(\w_1,\w_2)$, we can choose
a set of coordinates $\tilde{x}_i$ and $\tilde{y}_j$ on the cylinder
with periodicity $\w_1$, similarly to what we have done above for the
plane (that is, we lift $P$ in the torus to a $\tilde{P}$ on the
cylinder, that is the $\Omega \times Z$ cover obtained from the
unwrapping of the periodicity under translations by $\w_2$). Then we
have that the costs $w_{ij}=|\gamma_{x_i,y_j}|^2$ for the instance on
the torus, and the costs
$\tilde{w}_{ij}=|\gamma_{\tilde{x}_i,\tilde{y}_j}|^2$ for the instance
on the cylinder, satisfy $w_{ij}\leq \tilde{w}_{ij}$ in general, and
$w_{ij}= \tilde{w}_{ij}$ if $i=j$ or $i=j+1$, that is, if $[x_i,y_j]$
is one of the segments appearing in the path $P$.
% that is, if $w_{ij}$ is one of the weights appearing in the path. 
This implies that the matrix $W$ of costs for the instance on the
torus with $(k_1,k_2)=(1,0)$ is valid only if the matrix $\tilde{W}$
of costs for the instance on the cylinder with $k=1$ is valid. As a
result, if we can prove that there are no crossing paths with winding
number $k=1$ on the cylinder, and a valid matrix of costs, we have
implicitly also solved the case of the torus.

Thus our aim is to prove that, in $\bR^2$, there does not exist an
instance with $X=\{\hat{x}_1,\ldots,\hat{x}_{\ell+1}\}$,
$Y=\{\hat{y}_1,\ldots,\hat{y}_{\ell}\}$, such that
there exist $t,s\in \;]0,1[\,$ and
\be
\label{eq.8964624675}
(1-t) \hat{x}_1 +t \hat{x}_2 = s \hat{x}_\ell +(1-s) \hat{x}_{\ell+1}
-\w
% = 0
\ee
(or, in other words, the segments $[\hat{x}_1,\hat{x}_2]$ and
$[\hat{x}_{\ell}-\w,\hat{x}_{\ell+1}-\w]$ cross),
% at the origin), 
and the matrix of costs $W$ for the associated instance on the
cylinder (with periodicity $\w$ and positions $x_i$'s and $y_j$'s)
% $x_i=(x_i^{(1)},x_i^{(2)})=(\mathrm{mod}(\hat{x}_i^{(1)},1),\hat{x}_i^{(2)})$
% and similarly for $y_j$, 
is valid.

Introduce the $(\ell+1)\times \ell$ matrix $K_{ij}=\argmin_k
|\hat{x}_i-\hat{y}_j-k \w|^2$.  Note that $|K_{i+1\,j}-K_{ij}|\leq 1$
and $|K_{i\,j+1}-K_{ij}|\leq 1$ (but we do not use this fact), and
that, by construction of the $\hat{x}_i$'s and $\hat{y}_j$'s,
$K_{ii}=K_{i+1\,i}=0$, that is
\be
K=
\left(
\begin{array}{rcccl}
%{p[10pt]p[10pt]p[10pt]p[10pt]p[10pt]}
0 & \ast & \ast & \ast & \cdots \\
0 &    0 & \ast & \ast & \cdots \\
\ast & 0 &    0 & \ast & \cdots \\
&& \vdots &&\\
\cdots & \ast & \ast & 0    & 0 \\
\cdots & \ast & \ast & \ast & 0 
\end{array}
\right)
\ee
We have
\begin{align}
\hat{W}_{ij}
&=
|\hat{x}_i-\hat{y}_j|^2
\geq
|\hat{x}_i-\hat{y}_j-K_{ij} \w|^2
= W_{ij}
\ef,
\end{align}
and
$\hat{W}_{ii}-W_{ii}=\hat{W}_{i+1\,i}-W_{i+1\,i}=0$.

Furthermore, let $\tilde{K}$ be the matrix
\be
\tilde{K}=
\left(
\begin{array}{rcccl}
%{p[10pt]p[10pt]p[10pt]p[10pt]p[10pt]}
 0 &  1 &  1 &  1 & 1 \\
 0 &  0 &  1 &  1 & 1 \\
-1 &  0 &  0 &  1 & 1 \\
-1 & -1 &  0 &  0 & 1 \\
-1 & -1 & -1 &  0 & 0 \\
-1 & -1 & -1 & -1 & 0 
\end{array}
\right)
\ee
(or, at our purposes, any other matrix with
$\tilde{K}_{ii}=\tilde{K}_{i+1\,i}=0$, 
$\tilde{K}_{1\,\ell}=\tilde{K}_{2\,\ell}=1$
and
$\tilde{K}_{\ell\,1}=\tilde{K}_{\ell+1\,1}=-1$),
and define $\tilde{W}$ as
\begin{align}
\tilde{W}_{ij}
&=
|\hat{x}_i-\hat{y}_j-\tilde{K}_{ij} \w|^2
\geq
|\hat{x}_i-\hat{y}_j-K_{ij} \w|^2
= W_{ij}
\ef.
\end{align}
Note that, again, 
$\tilde{W}_{ii}-W_{ii}=\tilde{W}_{i+1\,i}-W_{i+1\,i}=0$.
These constructions imply in particular that $W$ is valid only if
$\tilde{W}$ is valid, so at our purposes it is enough to show that 
$\tilde{W}$ is never valid.

So we have reached an equivalent formulation of the problem, purely in
the plane.  Dropping $(\hat{\cdot})$'s for simplicity of notation,
what we will actually prove is that, in $\bR^2$, there does not exist
an instance with $X=\{{x}_1,\ldots,{x}_{\ell+1}\}$,
$Y=\{{y}_1,\ldots,{y}_{\ell}\}$,
%% $X=\{\hat{x}_1,\ldots,\hat{x}_{\ell+1}\}$,
%% $Y=\{\hat{y}_1,\ldots,\hat{y}_{\ell}\}$, 
and a non-zero vector $\w$, such that
there exist $t,s\in \;]0,1[\,$ and
\be
\label{eq.8964624675toro}
(1-t) {x}_1 +t {x}_2 = s {x}_\ell +(1-s) {x}_{\ell+1}
-\w
% = 0
\ee
(or, in other words, the segments $[{x}_1,{x}_2]$ and
$[{x}_{\ell}-\w,{x}_{\ell+1}-\w]$ cross), and the matrix $\tilde{W}$,
obtained with the choice of shifts $\tilde{K}$, is valid.

The case $\ell=1$ is clearly excluded, and the (simpler) case $\ell=2$
is treated separately in Appendix~\ref{app.casiL3}.  So we will assume
that $\ell \geq 3$.

For $(i,j)\in \{(1,\ell), (1,\ell+1),(2,\ell), (2,\ell+1)\}$
% \{1,2\}\times\{\ell,\ell+1\}$ 
define
$\tilde{\xi}_{ij}=\xi_{ij}+\w={x}_i-{x}_j+\w$, and for 
$(i,j)\in \{(1,\ell-1),(1,\ell),(2,\ell)\}$ define
$\tilde{\eta}_{ij}=\eta_{ij}+\w={y}_i-{y}_j+\w$.
Then the validity of $\tilde{W}$ requires in particular that the four
inequalities associated to the transpositions involving the rows and
columns $\big((i,j),(k,h)\big)$, with $k=1$, $h=\ell$, $i\in\{1,2\}$
and $j\in\{\ell,\ell+1\}$ (that are the four transpositions appearing
in the proof of Theorem~\ref{th:treenoncross}). What turns out is that
the corresponding inequalities coincide with those in equations
(\ref{eqs.4357856187bmain}), with $\xi$'s and $\eta$'s replaced by
$\tilde{\xi}$'s and $\tilde{\eta}$'s. Remark that this fact is true in
particular because, crucially, the values $\tilde{K}_{12}$ and
$\tilde{K}_{\ell+1\,\ell-1}$ do not appear in the evaluation of these
inequalities.  In general the condition coming from the transposition
$\{(i,k),(j,h)\} \to \{(i,h),(j,k)\}$ has the form
\be
\label{eq.4976275}
|x_i-y_k-\tilde{K}_{ik} \w|^2
+
|x_j-y_h-\tilde{K}_{jh} \w|^2
-
|x_i-y_h-\tilde{K}_{ih} \w|^2
-
|x_j-y_k-\tilde{K}_{jk} \w|^2
<0
\ef.
\ee
Now, in our four cases, we always have
\be
\begin{pmatrix}
\tilde{K}_{ik} & \tilde{K}_{ih} \\
\tilde{K}_{jk} & \tilde{K}_{jh} 
\end{pmatrix}
=
\begin{pmatrix}
0&1\\
-1&0
\end{pmatrix}
\ee
that is a matrix of the form $M_{ij}=a_i+b_j+c$,
so that we can write (\ref{eq.4976275}) as
\be
\begin{split}
0
&>
|x_i-y_k|^2
+
|x_j-y_h|^2
-
|x_i-y_h+\w|^2
-
|x_j-y_k-\w|^2
\\
&=
|x_i-y_k|^2
+
|(x_j-\w)-(y_h-\w)|^2
-
|x_i-(y_h-\w)|^2
-
|(x_j-\w)-y_k|^2
\\
&=
-2
\langle 
x_i-x_j+\w,
y_k-y_h+\w
\rangle
=
-2
\langle 
\tilde{\xi}_{ij},\tilde{\eta}_{kh}
\rangle
\ef,
\end{split}
\ee
as claimed.
Also, while in the proof of Theorem \ref{th:treenoncross}  we
ask that the segments $[x_1,x_2]$ and $[x_{\ell},x_{\ell+1}]$ cross at
the origin, which can be stated as the fact that there exist 
$t,s\in \;]0,1[\,$ such that
\be
\label{eq.8964624675ggg}
(1-t) x_1 +t x_2 = s x_\ell +(1-s) x_{\ell+1}
\ee
that in turn can be restated into the existence of $t,s\in \;]0,1[\,$ such that
\be
\label{eq.54429765}
(1-t+s)\,\xi_{1,\ell+1} 
+t \,\xi_{2,\ell+1} 
- s \,\xi_{1,\ell} = 0
\ef,
\ee
the crossing condition in this setting, i.e.\ the equation
(\ref{eq.8964624675toro}) above, can be restated~as
\begin{equation*}
\begin{split}
0
&=
(1-t) \hat{x}_1 +t \hat{x}_2 - s \hat{x}_\ell -(1-s) \hat{x}_{\ell+1}
+\w
\\
&=
(\hat{x}_1-\hat{x}_{\ell+1}+\w)
-t ((\hat{x}_1-\hat{x}_{\ell+1})-(\hat{x}_2-\hat{x}_{\ell+1}))
-s ((\hat{x}_1-\hat{x}_\ell)-(\hat{x}_1-\hat{x}_{\ell+1}))
\\
&=
(1-t+s)\,\tilde{\xi}_{1,\ell+1}
+t\,\tilde{\xi}_{2,\ell+1}
-s\,\tilde{\xi}_{1,\ell}
\ef,
\end{split}
\end{equation*}
that is equation (\ref{eq.54429765}) with the $\xi$'s replaced by the
$\tilde{\xi}$'s. In other words, both the requirements on cyclic
orderings (\ref{eqs.4357856187main}) and (\ref{eqs.4357856187cmain})
hold, up to adopting the shifted expressions. So, this produces a
contradiction by the same mechanism that we exploited in the previous
proof.
\end{proof}

%-------------------------------------------------------
\subsection{The case of the M\"obius Strip and the Klein Bottle}
%-------------------------------------------------------
%
As well known, the Teichm\"uller space of both the M\"obius Strip and
the Klein Bottle is the positive half-line, encoding the aspect ratio
$\rho=|\w_2|/|\w_1|$ of a rectangle with base $\w_1$ and height
$\w_2$, i.e.\ a fundamental region of the two varieties, where the
vertical sides are open (in the M\"obius Strip) or identified (in the
Klein Bottle), while the horizontal sides are identified up to a
reflection. For example, a typical crossing configuration on the Klein
Bottle (if it existed) would be as in
Figure~\ref{fig.crossKleinBot}.

\begin{figure}[t]
\[
\if\faifig1  
% [inline block 18: 1 envs, 15984 chars -> data_tex | \begin{tikzpicture}[scale=2.5]   \path[clip] (-1.05,-1.05) rectangle (2.05,2.55);...]

\else [...TikZ\ code...] \fi
\]
\caption{\label{fig.crossKleinBot}%
Example of potential crossing configuration on the Klein Bottle. Here
$\rho=|\w_2|/|\w_1|=1.5$.}
% and $(k_1,k_2)=(1,1)$.}
\end{figure}

We will provide analogues of Theorems \ref{th:treenoncross} and
\ref{th:treenoncrossToro} for these surfaces.  The idea of proof is
based on the following remark.
\begin{lem}
\label{lem.2cover}
Let $\Omega^{\rm c}$ be a manifold as above. Assume that $\Omega^{\rm c}$ is a
2-covering of a manifold $\Omega$, and that $g:\Omega^{\rm c} \to \Omega^{\rm c}$ is
an isomorphism that exchanges the two copies of $\Omega$. Let
$(X,Y)$ be a generic-weight configuration on $\Omega$
%  such that $H_\calJ$ is the path $P=(x_1,y_1,x_2,\ldots,y_{\ell},x_{\ell+1})$,
and call
$x'_i=g\circ x_i$, and $y'_i=g\circ y_i$.  If $(X,Y)$ is a crossing
configuration on $\Omega$, then there exist crossing
configurations in $\Omega^{\rm c}$.
\end{lem}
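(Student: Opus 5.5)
The natural strategy is a lifting argument, modelled on the comparison used in the proof of Theorem~\ref{th:treenoncrossToro}, where costs on the quotient were compared with costs on a cover.

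\textbf{Step 1: reduce to a path.} By Remark~\ref{rmk.crossifpath}, applied to the family of configurations on $\Omega$ (which is closed under taking subsets), we may assume that $H_\calJ$ on $\Omega$ is the path $P=(x_1,y_1,x_2,\ldots,y_\ell,x_{\ell+1})$. We may further assume that the projected geodesics $\gamma_{x_1,x_2}$ and $\gamma_{x_\ell,x_{\ell+1}}$ cross at a point $x_0$. By Proposition~\ref{prop.crossifvalid}, this means that the cost matrix $W_{ij}=|\gamma_{x_i,y_j}|^2$ on $\Omega$ is valid.

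\textbf{Step 2: lift the path to the cover.} Lift $P$ to a path $\hat P=(\hat x_1,\hat y_1,\ldots,\hat x_{\ell+1})$ in $\Omega^{\rm c}$ by following the shortest geodesics $\gamma_{x_i,y_i}$ and $\gamma_{x_{i+1},y_i}$ by continuity, starting from a chosen lift $\hat x_1$. Each lifted point $\hat x_i$ or $\hat y_j$ is either the point $x_i,y_j$ itself or its image $x'_i,y'_j$ under $g$. The lifted edges along the path have the same lengths as on $\Omega$. For every other pair, the distance in $\Omega^{\rm c}$ between the lifts is at least the distance in $\Omega$, since the covering map does not increase length. So the lifted cost matrix $\hat W$ satisfies $\hat W_{ij}\ge W_{ij}$, with equality for $i=j$ and $i=j+1$.

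Each validity inequality $W(k,\pi)<0$ consists of path terms, which are unchanged, minus off-path terms, which can only increase. Hence it remains true for $\hat W$, and $\hat W$ is valid. This is the same monotonicity used for the torus and cylinder.

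\textbf{Step 3: realise a crossing configuration on the cover.} Take the configuration in $\Omega^{\rm c}$ consisting of the points $\{\hat x_i\}$ and $\{\hat y_j\}$. By Proposition~\ref{prop.crossifvalid}, its matching tree is the lifted path. Its projected edges $\hat\gamma_{\hat x_1,\hat x_2}$ and $\hat\gamma_{\hat x_\ell,\hat x_{\ell+1}}$ are defined via the homotopy classes of the lifted concatenations. By Definition~\ref{def.projgeo}, these are exactly the lifts of $\gamma_{x_1,x_2}$ and $\gamma_{x_\ell,x_{\ell+1}}$ in the corresponding homotopy classes.

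\textbf{Main obstacle.} The crossing point $x_0$ lifts to two points of $\Omega^{\rm c}$, and the two lifted segments may pass through different lifts of $x_0$. In that case they do not cross; this is the analogue of the nonzero winding number case in Theorem~\ref{th:treenoncrossToro}.

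To handle this, I would consider the closed curve $C=(x_0,x_2,\ldots,x_\ell,x_0)$ in $\Omega$. If $C$ lifts to a closed curve in $\Omega^{\rm c}$, the segments meet at the same lift of $x_0$ and we are done.

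If $C$ does not lift to a closed curve, its lift joins a point $\hat x_0$ to $g\hat x_0$. This is where $g$ is used: I would work with the full doubled configuration $X\cup gX$, $Y\cup gY$ (equivalently, with the image of $\hat P$ under $g$). Then $g$ applied to the lift of $\gamma_{x_\ell,x_{\ell+1}}$ passes through $\hat x_0$ and crosses the lift of $\gamma_{x_1,x_2}$ there.

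What remains is to certify that the relevant subconfiguration, the concatenation of $\hat P$ and $g\hat P$ joined appropriately, is again valid. I would try to prove this by the same monotonicity comparison. If that fails, I would fall back on arguing that some sub-path of length less than $\ell$ already crosses, contradicting the minimality of $\ell$, as in the $\gcd$ argument of Theorem~\ref{th:treenoncrossToro}.
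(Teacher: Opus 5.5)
Your Steps 1--3 are sound, but they only settle the case in which the cycle $C$ lifts to a closed curve. In that case the lifted path alone is valid by your monotonicity comparison and crosses on $\Omega^{\rm c}$; this is the case the paper disposes of directly in the proof of Theorem~\ref{th:treenoncrossKleinB}. The case where the lift of $C$ is open is the one the lemma exists for, and there you have no argument.

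Your observation that $g$ maps the lift of $\gamma_{x_\ell,x_{\ell+1}}$ to a curve through $\hat x_0$ crossing the lift of $\gamma_{x_1,x_2}$ is correct, and it is exactly why a doubled configuration is crossing. However, $X\cup gX$, $Y\cup gY$ has $2\ell+2$ $A$-points and only $2\ell$ $B$-points, so it is not an instance of the first bipartite standard setting. You must add a new $B$-point $\tilde y$, placed so that the matching tree of the enlarged instance contains both $\hat P$ and $g\hat P$.

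Monotonicity cannot do this. It controls the within-copy and cross-copy entries, since cover distances dominate quotient distances, but the column of $\tilde y$ has no counterpart on $\Omega$. Asking for the glued object to be a path certified by validity (Definition~\ref{def.validmat}) is also too rigid, because you cannot prescribe at which $A$-vertices $\tilde y$ will attach.

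Your fallback fails as well. In Theorem~\ref{th:treenoncrossToro} the shortening used that a non-primitive winding vector forces $C$ to self-intersect. An open lift through a 2-cover (e.g.\ an orientation-reversing loop on the Klein bottle) gives no such self-intersection, and the lemma has no minimality of $\ell$ to contradict.

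The missing idea is to certify the enlarged tree with a simultaneous Hungarian gauge rather than with validity.
\begin{itemize}
\item Take the canonical gauge $(\lambda_i,\mu_j)$ of $(X,Y)$ from Section~\ref{sec.hunggauge}. Since $g$ is an isometry and cover distances dominate quotient distances, this gauge gives non-negative reduced costs on both copies, vanishing exactly on $\hat P\cup g\hat P$. The cross-copy reduced costs are strictly positive; this is where your monotonicity observation genuinely enters.
\item Let $i^*$ maximize $\lambda_i$, and move $\tilde y$ along a curve $c(t)$ from $x_{i^*}$ to $g x_{i^*}$. Consider the minimum over $i$ of $f(|\gamma_{x_i,c(t)}|)-\lambda_i$ and $f(|\gamma_{g x_i,c(t)}|)-\lambda_i$. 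It is attained only at $x_{i^*}$ for $t=0$ and only at $g x_{i^*}$ for $t=1$. Hence at some switching time it is attained, generically exactly once, in each copy.
\item Put $\tilde y$ there, with $\mu_{\tilde y}$ equal to that minimum. This gives a simultaneous Hungarian gauge whose zero set is a spanning tree with all $B$-vertices of degree $2$.
\item Such a tree must coincide with the matching subgraph. Removing any $A$-vertex leaves a unique perfect matching of zero reduced cost, so every optimal $M_U$ lies in the tree.
\end{itemize}
The projected tree therefore contains both lifted copies and crosses at $\hat x_0$. This construction works whether or not $C$ lifts to a closed curve, so no case split is needed.
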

\begin{proof}
We construct a crossing configuration $(\tilde{X},\tilde{Y})$ on
$\Omega^{\rm c}$ from the configuration $(X,Y)$.
% Up to taking a smaller configuration, we can assume that
Say that $|Y|=|X|-1=\ell$.
We choose
$\tilde{X}=\{x_1,\ldots,x_{\ell+1},x'_1,\ldots,x'_{\ell+1}\}$ and
$\tilde{Y}=\{y_1,\ldots,y_{\ell},y'_1,\ldots,y'_{\ell},\tilde{y}\}$,
where the position $\tilde{y}$ is still to be determined, while, for
the other positions, we choose the copy of $x_1$ in $\Omega^{\rm c}$
arbitrarily, and the copy of the other points so that
$x_i$ and $x'_i$ are in different copies, as well as $y_j$ and $y'_j$,
and, for $e=\edge{x_i}{y_j}\in H_\calJ$,
that the geodesic $\gamma_{x_i,y_j}$
is shorter than $\gamma_{x_i,y'_j}$.
%% The fact that $(X,Y)$ is a crossing configuration on $\Omega$
%% implies that, on $\Omega^{\rm c}$, 
%% $\gamma_{x_1,x_2}$ intersects $\gamma_{x'_{\ell},x'_{\ell+1}}$ and 
%% $\gamma_{x'_1,x'_2}$ intersects $\gamma_{x_{\ell},x_{\ell+1}}$.

Call $\lam_i$ and $\mu_j$ the canonical simultaneous gauge parameters
for the pair $(X,Y)$. As $g$ is a isomorphism, they are also the gauge
parameters for the pair $(X',Y')$, and, as have chosen the copies of
$x_i$'s and $y_j$'s in the natural way, it must be that the quantities
$W_{i,\ell+j}-\lam_i-\mu_j$, associated to the cost of the geodesics
$\gamma_{x_i,y'_j}$, and $W_{\ell+1+i,j}-\lam_i-\mu_j$, associated to
the cost of the geodesics $\gamma_{x'_i,y_j}$, are positive.

Call $i^* \in \{1,\ldots,\ell+1\}$ the index realising the maximum of
the $\lam_i$'s. Let $c(t):[0,1]\to \Omega^{\rm c}$ be a continuous curve
connecting $x_{i^*}$ to $x'_{i^*}$.  Consider the last column of the
matrix $\tilde{W}$, when we set $\tilde{y}=c(t)$, after the gauge
transformation, namely
\be
\begin{split}
&\big( \tilde{W}_{i,2\ell+1}-\lam_i
\big)_{1 \leq i \leq 2\ell+2}
=\big(
f(|\gamma_{x_1,c(t)}|)-\lam_1,
\ldots,
\\
& \qquad
f(|\gamma_{x_{\ell+1},c(t)}|)-\lam_{\ell+1},
f(|\gamma_{x'_1,c(t)}|)-\lam_1,
\ldots,
f(|\gamma_{x'_{\ell+1},c(t)}|)-\lam_{\ell+1}
\big)
%% =\big(
%% f(\|x_1-\gamma(t)\|),
%% \ldots,
%% f(\|x_{\ell+1}-\gamma(t)\|),
%% f(\|x'_1-\gamma(t)\|),
%% \ldots,
%% f(\|x'_{\ell+1}-\gamma(t)\|)
%% \big)
\ef.
\end{split}
\ee
Note that the interval $[0,1]$ is partitioned into a collection of
intervals $A_1$,\ldots,$A_k$, where
$\argmin(\tilde{W}_{i,2\ell+1})\subseteq \{1,\ldots,\ell+1\}$, and a
collection of intervals $B_1$,\ldots,$B_k$, where
$\argmin(\tilde{W}_{i,2\ell+1})\subseteq \{\ell+2,\ldots,2\ell+2\}$.
From our choice of curve, we know that $t=0$ is the left
endpoint of $A_1$, because $\argmin(\tilde{W}_{i,2\ell+1})=\{i^*\}$ in
this case, and $t=1$ is the right endpoint of $B_k$, because
$\argmin(\tilde{W}_{i,2\ell+1})=\{\ell+1+i^*\}$. So there are
$2k-1\geq 1$ points in $[0,1]$ which are endpoints of an interval
$A_i$ and a $B_j$ (namely, the $k$ right endpoints of $A_i$, which are
left endpoints of $B_i$, and the $k-1$ left endpoints of $A_{i+1}$,
which are right endpoints of $B_i$). Any of these points is a good
choice for $\tilde{y}$. Indeed, if we subtract from the column of
$(\tilde{W}'_{i,2\ell+1}-\lam_i)$ the minimum of the column, we must
get two zeroes, one at a position $i^*_1$ in the range
$\{1,\ldots,\ell+1\}$ and one at a position $i^*_2$ in the range
$\{\ell+2,\ldots,2\ell+2\}$ (in general there may be more than two
zeroes, but if the weights are generic the number of zeroes must be
exactly two).  Our choice of gauge is Hungarian, and the subgraph
$H_{\rm gauge}$ associated to the set of zeroes is the union of
$H_\calJ$ for $(X,Y)$, of $g \circ H_\calJ$, and of the two edges
$\edge{x_{i^*_1}}{\tilde{y}}$ and
$\edge{x'_{i^*_2-\ell-1}}{\tilde{y}}$.  As this graph is a tree, it
must coincide with the matching subgraph $\tilde{H}_{\calJ}$ of the
instance $(\tilde{X},\tilde{Y})$. Thus, the projection
$\overline{\tilde{H}_{\calJ}}$ consists of the two copies of the
projection, $\overline{H_{\calJ}}$ and $g\circ \overline{H_{\calJ}}$,
plus the geodesics connecting $x_{i^*_1}$ to $x'_{i^*_2-\ell-1}$ with
the same topology of
$\gamma_{x_{i^*_1},\tilde{y}}\circ \gamma_{\tilde{y},x'_{i^*_2-\ell-1}}$.
In particular, it must be crossing.
%% , that is 
%% %
%% $\gamma_{x_1,x_2}$ intersects $\gamma_{x'_{\ell},x'_{\ell+1}}$ and 
%% $\gamma_{x'_1,x'_2}$ intersects $\gamma_{x_{\ell},x_{\ell+1}}$.
% $[x_1,x_2]$ intersects $[x'_{\ell},x'_{\ell+1}]$ and $[x'_1,x'_2]$
% intersects $[x_{\ell},x_{\ell+1}]$.
\end{proof}
\noindent
The importance of this lemma in the context of this section, of
course, comes from the fact that a 2-covering of a M\"obius Strip is a
finite cylinder of base $2 \w_2$ and height $\w_1$, and a 
2-covering of a Klein Bottle is a torus with $\tau=2i \rho$.

We shall prove
\begin{thm}
\label{th:treenoncrossKleinB}
If $\Omega$ is a M\"obius Strip or
a Klein Bottle, and $p=2$, then in the first bipartite standard
setting there are no crossing configurations.
\end{thm}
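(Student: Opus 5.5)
The plan is a proof by contradiction that pulls a hypothetical crossing up to a double cover, using Lemma~\ref{lem.2cover} together with Theorem~\ref{th:treenoncrossToro}. Suppose $\Omega$ is a M\"obius Strip (respectively a Klein Bottle) and that some generic configuration $(X,Y)$ in the first bipartite standard setting at $p=2$ has a crossing $\bar{H}_\calJ$.

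The first step is to fix the orientable double cover $\Omega^{\rm c}$. For the M\"obius Strip this is a flat finite cylinder of base $2\w_2$ and height $\w_1$. For the Klein Bottle it is a flat torus with periods $(\w_1,2\w_2)$, that is, $\tau=2i\rho$. The deck transformation $g$ is the glide reflection: a translation by $\w_2$ composed with the reflection that reverses the $\w_1$ direction. Then $g$ is an isometry of $\Omega^{\rm c}$ exchanging the two sheets. Because it is an isometry, geodesic lengths and hence the costs $f(|\gamma|)=|\gamma|^2$ are preserved, which is what Lemma~\ref{lem.2cover} requires.

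Next I check that the geodesic structure matches. The covering map is a local isometry, so every shortest geodesic $\gamma_{x_i,y_j}$ on $\Omega$ lifts to a geodesic of the same length on $\Omega^{\rm c}$. The homotopy-class prescription of Definition~\ref{def.projgeo} for projected arcs also lifts: the lift of $\gamma_{x_1,x_2}$ is the shortest geodesic homotopic to the lifted concatenation through $y_1$. Consequently a crossing of $\gamma_{x_1,x_2}$ and $\gamma_{x_\ell,x_{\ell+1}}$ on $\Omega$ lifts to a crossing of two projected arcs on $\Omega^{\rm c}$. These arcs may lie in the same sheet or in sheets exchanged by $g$. The construction in Lemma~\ref{lem.2cover} places both $H_\calJ$ and $g\circ H_\calJ$ in $\tilde{H}_\calJ$, so either case yields a crossing there.

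With this in place, Lemma~\ref{lem.2cover} produces a crossing configuration $(\tilde X,\tilde Y)$ on the flat cylinder or flat torus $\Omega^{\rm c}$. This contradicts Theorem~\ref{th:treenoncrossToro}, which is exactly the required conclusion.

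I expect the main obstacle to be the genericity of the doubled configuration. Its weights contain repeated values, since $g$ is an isometry and so $w_{x'_i,y'_j}=w_{x_i,y_j}$. I must make sure there is no alternating cycle with vanishing sum, so that Theorem~\ref{th:treenoncrossToro}, which assumes generic weights, applies. The approach is the same as the argument after Definition~\ref{def.symgraph} for symmetric graphs: any cycle that uses both sheets must use edges between sheets, including the new vertex $\tilde y$, whose weights are independent variables. If some degeneracy is unavoidable, I would instead perturb $\tilde y$ slightly along the curve $c(t)$ and argue by continuity that the crossing, being an open transversal condition, survives the perturbation.
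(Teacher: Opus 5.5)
Your proposal follows the paper's argument: contradiction, passage to the orientable double cover through Lemma~\ref{lem.2cover}, and a contradiction with Theorem~\ref{th:treenoncrossToro}. The paper differs only in first asking whether the lift of the cycle $C=(x_0,x_2,\ldots,x_\ell,x_0)$ closes up (in which case a single-sheet lift of $(X,Y)$ already crosses on the torus) and doubling only otherwise, whereas your observation that the doubled tree contains both $H_\calJ$ and $g\circ H_\calJ$ covers both cases at once.

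Your genericity paragraph goes beyond what the paper checks, but its justification does not transfer from symmetric graphs. Cross-sheet weights come in equal pairs ($w_{x_i,y'_j}=w_{x'_i,y_j}$), so they are not independent variables, and moving $\tilde y$ cannot affect cycles that avoid $\tilde y$. What really suffices is that the simultaneous gauge built in the lemma vanishes exactly on the doubled tree and is strictly positive elsewhere, which already makes every optimal matching of the doubled instance unique.
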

\begin{proof}
We will treat only the case of the Klein Bottle, which is more
general. The proof is by contradiction. Suppose that $(X,Y)$ is a crossing
configuration. 
Just as in the proof of Theorem~\ref{th:treenoncrossToro},
call $x_0$, and consider the cycle
$C=(x_0,x_2,x_3,\ldots,x_{\ell},x_0)$ on $\Omega$. Let 
$\Omega^{\rm c}$ be the 2-covering of $\Omega$. If the lift of $C$ to
$\Omega^{\rm c}$ is still a closed curve, then we would have a
crossing configuration on the torus, which is excluded by
Theorem~\ref{th:treenoncrossToro}. Otherwise, we can apply the
construction of Lemma~\ref{lem.2cover}, to get a configuration of size
$2\ell+1$ that is crossing on the torus, again something which is
excluded by Theorem~\ref{th:treenoncrossToro}. As we have reached a
contradiction, we can conclude that there are no crossing
configurations on the Klein Bottle.
\end{proof}

%-------------------------------------------------------
\subsection{The case of the reservoir setting}
\label{ssec.noncrossreservoir}
%-------------------------------------------------------

\noindent
The goal of this section is to prove an analog of
Theorem~\ref{th:treenoncross} in the reservoir bipartite standard
setting, embedded on a domain with boundary $\Omega$, and where the
boundary $\partial \Omega$ plays the role of the reservoir --- that
is, the left-restriction of instances of symmetric bipartite graphs,
embedded on the double manifold
$\Omega \cup_{\partial \Omega} \Omega'$ --- where all edges
$\edge{a_i}{b_j}$ within one copy of the domain, and all reservoir
edges $\edge{a_i}{d_i}$ and $\edge{b_i}{c_i}$ between a vertex and its
reservoir counterpart, are present (as described on
page~\pageref{pg.reservoir} in Section~\ref{sec.reservintro}).

Our theorem will hold under the hypothesis that $p=2$ and $\Omega$ is
a compact of $\bR^2$ (or, in the generalization of the forthcoming
Section \ref{sec.ktuplesGeom}, a compact of~$\bR^d$).

Let us start with a remark, that simplifies the proof of our theorem,
and essentially states that we are free to include all the edges
$\edge{a_i}{d_j}$ and $\edge{b_i}{c_j}$ between ordinary $A$-vertices
and reservoir $B$-vertices (and vice versa), also with distinct
indices:
\begin{prop}
\label{prop.addresedgestoothers}
The bipartite reservoir standard setting embedded on $\Omega$, when
$p=2$ and $\Omega \subset \bR^d$,
% a portion of $\bR^d$, 
is equivalent to the analogous setting in which also the edges
$\edge{x_j}{v_k}$ and $\edge{y_j}{u_k}$ for $j\neq k$ are allowed.
\end{prop}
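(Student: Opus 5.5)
The plan is to reduce the claim to the formulation with a single reservoir vertex $s$ of unconstrained degree, where it becomes almost a triviality, and then transport it back to the doubled symmetric graph $\calK_{n,m}^{\rm (res)}$ through the correspondence $\calL$ of Section~\ref{sec.reservintro}, that is, equation~(\ref{eq.9837628x}). I read the extra vertices $v_k$ and $u_k$ as the reservoir copies sitting at the boundary points $u(x_k)$ and $u(y_k)$. An edge $\edge{x_j}{v_k}$ with $j\neq k$ then carries weight $\|x_j-u(x_k)\|^2$. After contracting the reservoir gadget, it becomes an additional edge parallel to $\edge{a_j}{s}$. The same holds for $\edge{y_j}{u_k}$.

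\emph{Step 1 (weight comparison).} For $\Omega\subset\bR^d$ with the flat metric, $u(x_j)$ is the nearest point of $\partial\Omega$ to $x_j$. Hence $\|x_j-u(x_k)\|^2 \geq \mathrm{dist}(x_j,\partial\Omega)^2 = w_{a_j,s}$. Equality forces $u(x_k)=u(x_j)$ (uniqueness of the nearest point off the cut locus), and this equality case is excluded by the genericity assumptions on boundary endpoints and arrival angles. So every new edge is strictly heavier than the original reservoir edge at the same ordinary vertex, and the same holds for the $B$-side.

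\emph{Step 2 (reservoir picture).} In the setting where $\deg(s)$ is unconstrained, take any configuration $M$ that uses a new edge at $a_j$. Replacing that edge by $\edge{a_j}{s}$ gives another valid configuration, because $s$ has no degree constraint, and by Step~1 its cost is strictly smaller. Hence for every $U\in\calJ$ the optimal configuration never uses new edges, it is unchanged, and so is $H_\calJ$. I will also check that genericity survives the enlargement. A cycle with vanishing alternating sum would have to involve a new edge. In the contracted picture, however, the new edges are parallel edges at $s$ whose weights are independent functions of the positions, so they cannot cancel identically.

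\emph{Step 3 (back to the symmetric graph).} Enlarge $\calK_{n,m}^{\rm (res)}$ symmetrically, adding $\edge{a_j}{d_k}$ together with its image $\edge{a'_j}{c'_k}$ (in the notation of the right copy), and similarly on the $B$-side. Then show that $\calL$ still maps optimal symmetric matchings onto optimal reservoir configurations. This is the step I expect to be the main obstacle. Using $\edge{a_j}{d_k}$ forces a rearrangement of the chains $(a_j,d_j,c_j,\ldots)$ and $(a_k,d_k,c_k,\ldots)$: one must verify that the symmetric matching can be repaired, for instance by swapping in $\edge{a_j}{d_j}$ and the sym-edge $\edge{c_k}{d_k}$, while releasing $c_j$ to its own sym-edge, at a cost change equal to twice the difference from Step~1. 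I would first try to exhibit this as a semi-alternating cycle of length~4 or~6 through the two chains. Its alternating sum is $2(w_{a_j,s}-\|x_j-u(x_k)\|^2)<0$, and Proposition~\ref{prop.swapcyc} then excludes the new edges from every $M_U$, $U\in\calJ$.
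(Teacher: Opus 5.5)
Your Steps 1 and 2 are correct, but they only settle the contracted single-reservoir problem. The statement concerns the doubled symmetric graph, and the correspondence $\calL$ of \eqref{eq.9837628x} was established only for the chain gadget, so everything rests on Step 3. There your proposed repair does not cover the general case. Your $6$-cycle $(a_j,d_k,\iota d_k,\iota a_j,\iota d_j,d_j)$, with cost change $2(w_{a_j,d_j}-w_{a_j,d_k})<0$, exists only if the reservoir vertex $d_j$ displaced by $\edge{a_j}{d_k}$ is matched through its own sym-edge. The $4$-cycle exists only if $d_j$ is matched to $a_k$. In general, $d_j$ can instead be taken by a third vertex $a_i$, $i\neq j,k$, through another cross edge. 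The natural $4$-cycle $(a_j,d_k,a_i,d_j)$ then has alternating sum $w_{a_j,d_k}-w_{a_i,d_k}+w_{a_i,d_j}-w_{a_j,d_j}$. Step 1 does not control its sign, and no short improving cycle need exist.

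The missing idea is to follow the whole chain of displaced reservoir vertices; this is the entire content of the paper's proof, which works directly on the doubled graph with no detour through the contracted picture. If $\edge{x_{j_2}}{v_{j_1}}\in M_\star$, then $v_{j_2}$ is matched either to $\iota v_{j_2}$ or, by a further cross edge, to some $x_{j_3}$, and so on. The chain either closes into a cycle of cross edges or ends at a sym-edge. In either case, replace each cross edge $\edge{x_{j_{i+1}}}{v_{j_i}}$ by the own edge $\edge{x_{j_{i+1}}}{v_{j_{i+1}}}$; in the open case, also use the sym-edge at $v_{j_1}$ and mirror on the right. This gives a semi-alternating cycle of arbitrary length whose alternating sum is a sum of terms each signed by Step 1.

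Alternatively, you can keep your architecture and close Step 3 with a bound instead of a swap. Take any perfect matching $M$ of the enlarged doubled graph with a symmetric removal. Ordinary vertices have neighbours only on their own side, so the left--left edges of $M$, after contracting the left reservoir vertices to $s$, form an admissible configuration of your relaxed parallel-edge reservoir problem. The right--right edges, mirrored back, form another such configuration, and sym-edges cost zero. Hence $w(M)$ is at least twice the relaxed optimum. By Step 2 that optimum uses only own edges, and it lifts to a symmetric matching of exactly twice its cost. So an optimal $M$ has both halves optimal for the relaxed problem, and therefore contains no cross edge.
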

% \noindent
% This happens because, 
\begin{proof}
The underlying mechanism is that, in the extended graph, the edges
$\edge{x_j}{v_k}$ and $\edge{y_j}{u_k}$ are never taken by the optimal
matching $M_\star(G)$, as an argument of alternating cycles will show.
% , with $G$ a symmetric graph as in the bipartite reservoir standard setting. 
Indeed, suppose by contradiction that (e.g.)  the edges
$\edge{x_{j_2}}{v_{j_1}}$ is taken by $M_\star$.  Then $v_{j_2}$ is
not matched to $x_{j_2}$, so it is either matched to its mirror image
$\iota v_{j_2}$, or to some other vertex $x_{j_3}$. Thus, taking all
the edges of the form above, and concatenating them in the natural
way, we have some finite lists of edges, that either produces a cyclic
sequence, as in
\[
\{\edge{x_{j_2}}{v_{j_1}},\edge{x_{j_3}}{v_{j_2}},\ldots,\edge{x_{j_\ell}}{v_{j_{\ell-1}}},\edge{x_{j_1}}{v_{j_\ell}}\}
\]
or a path ending in a sym-edge, as in
\[
\{\edge{x_{j_2}}{v_{j_1}},\edge{x_{j_3}}{v_{j_2}},\ldots,\edge{x_{j_\ell}}{v_{j_{\ell-1}}},\edge{v_{j_\ell}}{\iota
v_{j_\ell}}\}
\]
In both cases, we have semi-alternating cycles
%  matchings of smaller weight 
contradicting the optimality hypothesis on the matching. In the first
case, swapping the cycle accounts to take instead the edges
\[
\{\edge{x_{j_1}}{v_{j_1}},\edge{x_{j_2}}{v_{j_2}},\ldots,\edge{x_{j_{\ell-1}}}{v_{j_{\ell-1}}},\edge{x_{j_\ell}}{v_{j_\ell}}\}
\]
while in the second case we take
\[
\{\edge{x_{j_2}}{v_{j_2}},\edge{x_{j_3}}{v_{j_3}},\ldots,\edge{x_{j_\ell}}{v_{j_{\ell}}},\edge{v_{j_1}}{\iota
v_{j_1}}\}
\]
(where we only listed the edges swapped on the left part of the graph,
and the sym-edges, while the modifications on the right part are
immediately deduced by symmetry).
\end{proof}
\noindent
We will produce two proofs of the main theorem of this section, one of
which is presented here, and the second one is postponed to
Appendix~\ref{ssec.2ndproofNonCrossReserv}. The second proof, although
more lengthy, has the advantage of using only
Theorem~\ref{th:treenoncross} in the plane as a black box, plus
tedious but conceptually simple arguments of continuity.
Conversely, for the proof presented here we shall construct again, in
the present setting, the analogs of the
inequalities~(\ref{eqs.4357856187bmain}). We will do this alongside
the proof of the main theorem, which is the following:
\begin{thm}
\label{th:treenoncrosssym}
When $\Omega \subset \bR^2$, and $p=2$, in the bipartite reservoir
standard setting the embedding of the tree $\bar{H}_{\calJ}^{\rm (l)}$
is non-crossing.
\end{thm}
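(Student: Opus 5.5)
We mimic the proof of Theorem~\ref{th:treenoncross}, transported to the reservoir setting. By Proposition~\ref{prop.addresedgestoothers}, we may work in the enlarged graph where every ordinary vertex may be joined to every reservoir vertex. At $p=2$ in $\bR^2$, the cost of a reservoir edge from $x$ is $\|x-u(x)\|^2$. We complete the argument in two steps.

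First we reduce to a minimal counterexample. The reduction follows Remark~\ref{rmk.crossifpath} and Lemma~\ref{lem.subtreeIsHJ}, adapted to the left-restriction, where $\sigma$ plays the role of an extra vertex identified with $\partial\Omega$. Suppose two edges of $\bar{H}^{\rm (l)}_\calJ$ cross. Take the unique path in the tree $\bar{H}^{\rm (l)}_\calJ$ joining them. If this path avoids $\sigma$, restricting to its colors yields an instance whose matching tree is just that path. The claim is then exactly Theorem~\ref{th:treenoncross}, because removing the reservoir edges only raises costs outside the path, so validity is inherited as in the torus argument. The genuinely new case is when the path passes through $\sigma$, or when one of the crossing edges is a boundary geodesic $\gamma_{x}$ or a projected edge from $x_a$ to $u(y_b)$.

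In that case we write $H$ as a path from an interior vertex to the boundary, ending in a sym-edge. We then derive the analogue of the validity conditions of Definition~\ref{def.validmat} from the semi-alternating cycles of Remark~\ref{rmk.28976954}, using the colour structure of Remark~\ref{rmk.bridgerootC}.

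Second, we produce the four transposition inequalities analogous to (\ref{eqs.4357856187bmain}). The main ingredient is that a boundary point $u$ behaves like a point $y$ (or $x$) placed at $u(x)$, except that the reservoir cost is the minimum over boundary points. Hence every alternative reservoir edge costs at least $\|x-u\|^2$ for any fixed $u\in\partial\Omega$. This lets us replace reservoir costs by squared distances to the fixed points $u(x_i)$ or $u(y_j)$. The replacement is an upper bound off the tree and an equality on the tree, exactly the mechanism of the matrices $K,\tilde K$ in the proof of Theorem~\ref{th:treenoncrossToro}.

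After this replacement we have a planar instance with bilinear costs $-2\langle x,y\rangle$, up to gauge. The four inequalities $\langle\xi_{ij},\eta_{kh}\rangle>0$ then say that a linear functional $\phi(x)=\langle x,\eta\rangle$ separates the endpoints of the two crossing segments. This contradicts the crossing condition (\ref{eqs.4357856187main}) through the cyclic-order argument.

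The main obstacle is the case where a crossing edge is a boundary segment $[x,u(x)]$, or where the path goes through $\sigma$, since then the "$\eta$" vector involves a boundary point. There I would first try the following. Use that $u(x)-x$ is normal to $\partial\Omega$ and that $u$ minimizes the distance. Together these give the extra inequality $\|x'-u(x)\|^2\ge\|x'-u(x')\|^2$, which should supply the missing fourth inequality. If that fails, the fallback is the doubling picture: reflect locally across the tangent line at $u(x)$, so that the sym-edge becomes an ordinary planar edge to $\iota x$, and then invoke Theorem~\ref{th:treenoncross} directly.
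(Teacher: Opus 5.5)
\textbf{There is a genuine gap: the case where the two crossing edges lie in different rooted components is not proved.}

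Your treatment of crossing edges inside one rooted component is essentially the paper's. You restrict to that component or path, keep the root as an ordinary vertex at its boundary point, and note that off-tree costs can only increase ($\|y-u\|^2\ge\|y-u(y)\|^2$, plus Proposition~\ref{prop.addresedgestoothers}). Validity then transfers and Theorem~\ref{th:treenoncross} applies.

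The gap is the case you flag yourself as the main obstacle. There the path joining the edges in $\bar H^{\rm (l)}_\calJ$ passes through $\sigma$, and this is the only genuinely new case.

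\begin{itemize}
\item Replacing reservoir costs by squared distances to fixed boundary points cannot encode it. The alternating cycles linking the two trees cross the reservoir between two distinct boundary points $u_\alpha\neq u_\beta$ at zero cost. That is not a squared distance between fixed points, so your planar reduction does not apply.
\item The minimisation inequality $\|x'-u(x)\|^2\ge\|x'-u(x')\|^2$ plays no role in producing the fourth inequality.
\item The reflection fallback fails for three reasons. The double of $\Omega$ is not flat: its curvature sits on the seam $\partial\Omega$, and non-crossing at $p=2$ genuinely fails with curvature. A reflection across one tangent line matches the geometry only near one point. Even there, the planar edge $[x,\iota x]$ costs $4h^2$ with $h=\|x-u(x)\|$, whereas the contracted chain $x\to u(x)\to\iota u(x)\to\iota x$ has weight $h^2-0+h^2=2h^2$.
\end{itemize}

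The missing idea is that no boundary geometry is needed. The four inequalities (\ref{eqs.4357856187bmain}) follow directly from stability of the reservoir optimal configurations. Label the path as if the two roots were joined, with the reservoir playing the role of the middle $B$-vertex.

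On a component not containing the removed vertex, every $M_U$ restricts to the same matching, since a tree has at most one perfect matching. This supplies common colours for three of the four transpositions:
\begin{itemize}
\item $\{x_{\ell+1}\}$ for the pair $(x_1y_1),(x_\ell y_\ell)$;
\item $\varnothing$ for the pair $(x_1y_1),(x_{\ell+1}y_\ell)$;
\item $\{x_1\}$ for the pair $(x_2y_1),(x_{\ell+1}y_\ell)$.
\end{itemize}

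The fourth inequality, $\langle\xi_{2,\ell},\eta_{1,\ell}\rangle>0$, is obtained as in Remark~\ref{rmk.nuovaeq2n1n}. Sum the stability inequalities of $M_{\{x_1\}}$ and $M_{\{x_{\ell+1}\}}$ against the two configurations that shift along the whole path and exchange which of the two roots uses the reservoir. The two reservoir costs and all path terms cancel. This is what Figure~\ref{fig.seidise2T} encodes, after which the separating-line argument concludes.

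Alternatively, as in the paper's second proof, replace the reservoir between the two roots by a chain of $\sim1/\eps$ points along $\partial\Omega$ (Remark~\ref{rmk.facciorese}). Its total cost vanishes at $p=2$ because $f(x)/x\to0$, and Theorem~\ref{th:treenoncross} together with the genericity gap then concludes.
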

\begin{proof}
The proof is by contradiction, so we assume that there are two edges
in $\bar{H}_\calJ$, $e=\edge{a_i}{a_j}$ and $e'=\edge{a_k}{a_\ell}$, that
when embedded in $\Omega$ cross each other (note that for each edge, one of the two
endpoints may be a reservoir point, for example we could have
$e=\edge{c_i}{a_j}$).

Recall that, in the bipartite reservoir standard setting, the graph
$\bar{H}_{\calJ}$ is a forest in which each component has exactly one
reservoir vertex (which we call the \emph{root} of the corresponding
tree). Thus, there are two possible cases:
\begin{itemize}
\item $e$ and $e'$ belong to the same component;
\item $e$ and $e'$ belong to distinct components.
\end{itemize}
Let us start by considering the first case. So we have a single tree
$T_\alpha$, component of $H_\calJ$,
rooted at the boundary of $\Omega$, with two or more crossing edges.
Consider the instance of the bipartite standard setting (not the
reservoir case) consisting only of this tree, with the small caveat
that if the tree is rooted at a $c_j$ vertex, this vertex is included
as an $A$-vertex of the instance; while if it is rooted at a $d_j$
vertex, this vertex is included as a $B$-vertex, and the corresponding
$c_j$ vertex is included as an $A$-vertex of the instance,
embedded in the same position as $d_j$.

If the tree $T_\alpha$ is a portion of the matching subgraph $H_\calJ$
in the reservoir setting, it is \emph{a fortiori} the whole matching
subgraph for the instance of the ordinary setting we have
constructed. Indeed, the reservoir setting includes all potential
choices of matchings of the ordinary setting, plus other matchings
obtained by using the reservoir (i.e., the bridge edges of the double
manifold) at places other than the root of the tree. (This reasoning
is somewhat analogous to the argument used in
Section~\ref{ssec.noncrossToro}, showing that a counterexample on the
torus with support cycle of winding number $(0,0)$ would imply a
counterexample in the plane.)

For the second case we cannot just reduce to an ordinary bipartite
instance, and we shall instead reduce to a setting that is slightly in
the fashion of a reservoir setting, where only the two roots of the
two trees $T_\alpha$ and $T_\beta$ have an associated reservoir
vertex.

This would lead us to develop an adapted version of the notion of
valid matrix w.r.t.~Definition \ref{def.validmat}, and a new version
of Proposition \ref{prop.crossifvalid}. Or, alternatively, we could
proceed as in the alternate proof of
Appendix~\ref{ssec.2ndproofNonCrossReserv}, to produce a combinatorial
tool that `connects' the two reservoir points in a way that mimics an
ordinary standard bipartite setting.

In fact, we choose to do neither of the two things: we leave the
(simple) appropriate generalization of Proposition
\ref{prop.crossifvalid} to the reader, and avoid the grid notation
used (for example) in (\ref{eq.innuovaeq}) (which becomes a bit
cumbersome in presence of reservoir vertices), and only adopt the
graph notation, as (for example) in (\ref{eq.3478Imgs1b}). In this
case the reservoir vertices are easily visualized: they are drawn on
the boundary of the example domain, and they can be either white (that
is, present in the sub-instance $G_U$) or green (that is, absent from
the sub-instance $G_U$), with no constrain on their total number in a
diagram. We will use this notation to explain, through a generic
example, why analogs of the inequalities~(\ref{eqs.4357856187bmain})
hold also in this case (recall that in the ordinary setting the whole
construction and analysis of valid matrices was aimed at the sole
purpose of deriving these inequalities, and then the rest of the proof
followed from algebraic reasonings based on them).

As before, we will construct alternating cycles with red and blue
edges, where the blue edges are on $H_\calJ$. The new rules, adapted
to the mechanisms underlying the reservoir setting, are that we have
up to one green vertex that is not a reservoir vertex, and as many
green edges on $H_\calJ$ as desired, but such that blue and green
edges, plus green vertices, determine a monomer-dimer configuration on
$H_\calJ$.  As in the ordinary case, three of our four inequalities
are derived directly from one choice of edge coloring, while the last
inequality is obtained from the ``sum'' of two diagrams.
% (that is, we have two diagrams, each one producing an inequality of
% the form $X_i\leq 0$, and our inequality is $X_1+X_2\leq 0$).

In order to mimic the notation for the ordinary setting, we choose to
label the vertices ``as if'' the two roots are connected, that is, the
vertices of the crossing edges are $\edge{x_1}{x_2}$ and
$\edge{x_\ell}{x_{\ell+1}}$, where we have $\ell+1$ $A$-vertices
overall on the path connecting the crossing edges on $H_\calJ$, and
for the rest the indices are consecutive along the two paths.  Also,
in order to produce an example encompassing both mechanisms for the
two types of rootings at the reservoir, we make one of the trees
rooted at a $c_i$ vertex, and one rooted at a $d_j$ vertex.

The first three inequalities, analogs of those in
(\ref{eq.876987627}), are shown in the top row of
Figure~\ref{fig.seidise2T}, while the last inequality is derived from
a combination of two elementary diagrams, as illustrated in the bottom
row of Figure~\ref{fig.seidise2T}.

\begin{figure}
% \begin{align*}
% gin{center}
% \label{eq.3478Imgs2}
\setlength{\unitlength}{.33\textwidth}
% [inline block 19: 1 envs, 25164 chars -> data_tex | \begin{picture}(3,2) % \put(0,2){\line(1,0){1}}...]

%\end{align*}
\caption{\label{fig.seidise2T}The diagrams illustrating the validity
  of the inequalities (\ref{eqs.4357856187bmain}) also in the
  reservoir setting, and for edges in distinct components. Here $\ell=6$.}
\end{figure}

We recognize that, although via a slightly different combinatorial
reasoning, we obtain the very same inequalities as in
(\ref{eqs.4357856187bmain}). Thus, the subsequent algebraic steps do
not need to be repeated, and we can conclude immediately.
\end{proof}

%%%%%%%%%%%%%%%%%%%%%%%%%%%%%%%%%%%%%%%%%%%%%%%%%%%%%%%
% 
\section{\texorpdfstring{Non-crossing projected matching hypertrees at
    $p=2$}{Non-crossing projected matching hypertrees at p=2}}
% {The hyper-Assignment Problem}
% \texorpdfstring{A $k$-to-1 variant}{A k-to-1 variant}}
\label{sec.ktuplesGeom}
%%%%%%%%%%%%%%%%%%%%%%%%%%%%%%%%%%%%%%%%%%%%%%%%%%%%%%%

\noindent
In the previous section we have established (among other things) that,
for costs induced by the geometric embedding on a portion of $\bR^2$
and exponent $p=2$, the projected matching tree $\bar{H}_\calJ$ is
always non-crossing, that is, its edges never (properly) cross each
other, when represented as straight segments between the pairs of
$A$-vertices matched to the same $B$-vertex.  We also mentioned that
the analogous notion in higher dimension is not of interest, as
generically pairs of segments in dimension 3 or higher are not
coplanar.

Nonetheless, in Section \ref{sec.ktuples} we have introduced a variant
of the problem, namely the hyper-Assignment Problem, where we have
integer parameters $\gb_j \geq 1$ (the ordinary case corresponds to
all $\gb_j=1$) and the projected matching subgraph is a hypertree with
hyperedges $A_{(j)}$ of size $\gb_j+1$, instead that just an ordinary
tree.  As a consequence, its hyperedges, in their natural embedding
for a geometric realisation in $\bR^d$, consist of polytopes whose
dimension is generically $\min(d,\gb_j)$, and in particular, when
$d\geq \gb_j$, are generically $(\gb_j)$-dimensional simplices.
Figure~\ref{fig.cactus} shows an example in $d=3$, with all $\gb_j$
equal to $3$, so that all the hyperedges are in fact tetrahedra.

\begin{figure}[tb!]
\[
\includegraphics[scale=0.85]{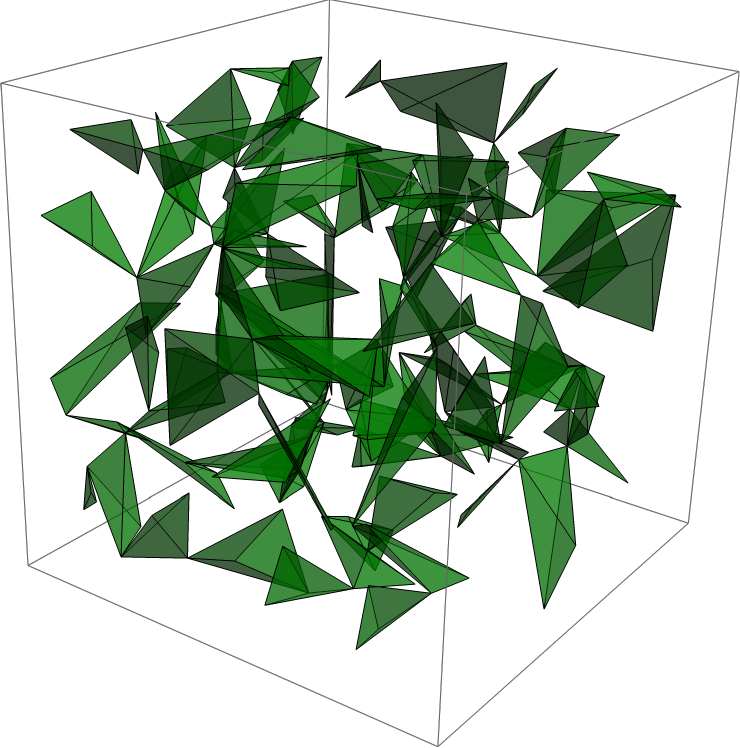}
\]
\caption{\label{fig.cactus}Example of matching hypergraph
  $\bar{H}_\calJ$ at $d=3$ and $p=2$, in the uniform case $\ga_i=1$
  and $\gb_j=3$ (so that in the tree $H_\calJ$ all the $B$-vertices
  have degree $4$, and the projection is a $4$-uniform hypertree,
  whose embedding is a ``cactus of tetrahedra''). The tetrahedra do
  not cross, as a special case of Theorem~\ref{th:treenoncrossHyper}.}
\end{figure}

Thus, it becomes interesting to investigate whether, in this variant
and when $d \leq 2\max_j \gb_j$, the projected spanning (hyper)tree is
almost-surely non-crossing (while the answer is trivially positive,
because the subspaces containing two hyperedges are almost surely
transversal, whenever $d>2 \max_j \gb_j$, as well as it was the case
for ordinary assignment and $d>2$). In this section we answer
positively to this question. In particular we prove the following
\begin{thm}
\label{th:treenoncrossHyper}
When $\Omega$ is a portion of $\bR^d$, and $p=2$, in the 
hyper-assignment standard setting
% first bipartite standard setting 
the embedding of the hypertree $\bar{H}_{\calJ}$
is non-crossing.
\end{thm}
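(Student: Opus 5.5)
To prove Theorem~\ref{th:treenoncrossHyper}, I will reduce to a separation statement, in the spirit of the closing remark in the proof of Theorem~\ref{th:treenoncross}. Two hyperedges $A_{(j)}$ and $A_{(j')}$ have convex hulls $\Gamma_{A_{(j)}}$ and $\Gamma_{A_{(j')}}$ with disjoint relative interiors as soon as some linear functional $\phi(x)=\langle x,\eta\rangle$ satisfies $\phi(x_a)\le\phi(x_{a'})$ for all $a\in A_{(j)}$ and all $a'\in A_{(j')}$, with at most one common vertex attaining equality. In a hypertree two hyperedges share at most one vertex. So the whole task is to produce, for each pair of hyperedges, the family of strict inequalities $\langle x_a-x_{a'},\eta\rangle>0$ (or $<0$, uniformly), with $\eta$ a difference of two $y$'s.

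The first step works in the splitting picture of Section~\ref{sec.ktuples}. I replace each $b_j$ by $\gb_j$ copies and perturb with $\eps P$. By Theorem~\ref{thm.setting1hyper}, for small $\eps$ the tree $H_\calJ(\eps)$ is an ordinary first-standard-setting tree. Its projection is a tree on $A$ whose edges refine the hyperedges: each $A_{(j)}$ becomes a subtree, through the forest $F$ of the proof of Theorem~\ref{thm.setting1hyper}. Fix two hyperedges $A_{(j)}\ne A_{(j')}$ and take the unique path in the hypertree joining them. Lifting it gives a path in $H_\calJ(\eps)$ from some $b_j^\alpha$ to some $b_{j'}^\beta$.

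For $a,a'$ chosen in $A_{(j)}$ and $A_{(j')}$, I consider the subtree spanned by $a$, $b_j^\alpha$, the connecting path, $b_{j'}^\beta$ and $a'$, together with the second neighbours of $b_j^\alpha$ and $b_{j'}^\beta$. By Lemma~\ref{lem.subtreeIsHJ} this restricted instance is a valid path instance, and Proposition~\ref{prop.crossifvalid} applies. In the gauge $W_{ij}=-2\langle x_i,y_j\rangle$, the four transposition inequalities (\ref{eqs.4357856187bmain}), including the combined one from Remark~\ref{rmk.nuovaeq2n1n}, then give $\langle x_a-x_{a'},\,y_{b_j}-y_{b_{j'}}\rangle>0$ in the $\eps\to0$ limit. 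Letting $\eps\to0$ only weakens these inequalities to non-strict ones. Genericity of the unperturbed instance then restores strictness, as in the proof of Lemma~\ref{lem.hypermatchNOiji}: each such quantity is an alternating sum on a genuine cycle of $\calK_{n+1,m}$.

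The key point is that $\eta=y_{b_j}-y_{b_{j'}}$ is the same for every choice of $a\in A_{(j)}$ and $a'\in A_{(j')}$, because all copies of $b_j$ sit at the same point. Hence the single functional $\phi$ separates the two vertex sets, which gives non-crossing directly and in any dimension $d$. The main obstacle is in the previous step. I must check that the path instance in the split tree really has $a$ and $a'$ as its extreme $A$-vertices and the copies of $b_j$ and $b_{j'}$ as its extreme $B$-vertices. When $a$ sits deep in the subtree of $A_{(j)}$ rather than adjacent to $b_j^\alpha$, the path may pass through several copies of $b_j$ before leaving the hyperedge. I would handle this by choosing the copy $b_j^\alpha$ adjacent to $a$ inside the forest $F$, and rerouting within $F$, which is connected by Theorem~\ref{thm.setting1hyper}.

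If the two hyperedges share a vertex, the separation holds with that common vertex on the separating hyperplane. The convex hulls then meet only at that vertex, so they have no common internal point.
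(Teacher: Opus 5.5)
Your proposal is correct and is essentially the paper's argument. The paper also obtains, for every pair of hyperedges, the four inequalities (\ref{eqs.4357856187bmain}) from ordinary valid path sub-instances (the configurations $X_{h,k}$ of the star--path--star structure in Remark~\ref{rmk.crossifpathHyper}), and it concludes with the same hyperplane orthogonal to $y'-y''$.

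The difference is that the paper reads these path instances off a Hungarian gauge of the merged instance, where every vertex of $A_{(j)}$ is adjacent to $b_j$, so no $\eps\to0$ limit is needed and your ``main obstacle'' never arises. In your split picture the obstacle is harmless anyway, and no rerouting is needed (paths in a tree are unique, so none is possible either):
\begin{itemize}
\item For any $a\in A_{(j)}$ other than the exit vertex $u$ (the vertex of $A_{(j)}$ closest to $A_{(j')}$), the tree path toward $A_{(j')}$ starts with a copy of $b_j$. The other neighbour of that copy lies in $A_{(j)}$, so all four inequalities are separation inequalities.
\item The exit vertex $u$ itself is covered by taking it as the second $A$-vertex of a path that starts at an $F$-neighbour of $u$.
\end{itemize}
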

\noindent
Before providing the proof, let us outline more clearly the setting
and recall some of the main features from the previous section.

We are in the case in which $\Omega$ is a portion of $\bR^d$, and the
cost function $f$ is $f(x)=x^2$.  We consider embedded instances in
the ``hyper-assignment standard setting'' of Theorem
\ref{thm.setting1hyper}, that is the hyper-assignment generalization
of the ``first bipartite standard setting'' of
Corollary~\ref{cor.setting1}, that is, the graph $G$ is just the
complete bipartite graph,
$G\equiv \calK_{n+1,m}=(A\cup B,E)$, with $A$ being the set of $n+1$
vertices, associated to the points in $X=\{x_a\}$, and $B$ the set of
$m$ vertices, with parameters $\gb_j \in \bN^+$ such that 
$\sum_j \gb_j = n$, associated to $Y=\{y_{b_j}\}$, we have
$\calJ=\{ \{a\} \}_{a \in A}$, and we assume that the embedding is
generic.  Thus, $H_\calJ$ is a spanning tree on $G$, with vertices
$b_j \in B$ of degree $\gb_j+1$, and $\bar{H}_\calJ$ is a 
% $(k+1)$-uniform
spanning hypertree on $\bar{G}=\calK_{n+1}$, with $m$ hyperedges. Both
these graphs have a representation on $\Omega$, in the first case the
$\gamma_e$'s are straight segments, while in the second case the
$\gamma_{A_{(j)}}$'s are convex polytopes with up to $\gb_j+1$
vertices, and, if $d\geq \gb_j$ for all $j$, are just
$\gb_j$-dimensional simplices.

We say that $\bar{H}_\calJ$ is \emph{non-crossing} if any two embedded
hyperedges either share a single $A$-vertex, because they are incident
in the hypertree, or they have empty intersection.

Let us call $\gamma_j\equiv \gamma_{A_{(j)}}$ the hyperedge resulting
from the projection of $b_j$, that is, the subset of $A$ of
cardinality $\gb_j+1$ consisting of all the vertices $a_i$ which are
adjacent to $b_j$ in $H_{\calJ}$. The condition of being non-crossing
means that, if 
$\gamma_{j}=\{a_i,a_{i_1},\ldots,a_{i_{\gb_j}}\}$ and
$\gamma_{j'}=\{a_i,a_{i'_1},\ldots,a_{i'_{\gb_{j'}}}\}$, that is, the
two hyperedges are incident, then the equation 
\be
\label{eq.hyperconvcomb1}
t_1 x_{i_1} + \cdots + t_k x_{i_k}
+\Big(1-\sum_\alpha t_\alpha\Big) x_i
% +(1-t_1-\cdots-t_k) x_i
=
s_1 x_{i'_1} + \cdots + s_k x_{i'_k}
+\Big(1-\sum_\alpha s_\alpha\Big) x_i
% +(1-s_1-\cdots-s_k) x_i
\ee
has only the trivial solution $t_\alpha=s_\beta=0$ in the cartesian
product of simplices $t_\alpha, s_\beta \in [0,1]$, 
$t_1+\cdots+t_k\leq 1$ and
$s_1+\cdots+s_k\leq 1$.
Similarly, if
$\gamma_{j}=\{a_{i_0},a_{i_1},\ldots,a_{i_{\gb_j}}\}$ and
$\gamma_{j'}=\{a_{i'_0},a_{i'_1},\ldots,a_{i'_{\gb_{j'}}}\}$,
%% $\gamma_{j}=\{a_{i_0},a_{i_1},\ldots,a_{i_k}\}$ and
%% $\gamma_{j'}=\{a_{i'_0},a_{i'_1},\ldots,a_{i'_k}\}$, 
with all indices distinct, that is, the two hyperedges are not
incident, then the equation
\be
\label{eq.hyperconvcomb2}
t_1 x_{i_1} + \cdots + t_k x_{i_k}
+\Big(1-\sum_\alpha t_\alpha\Big) x_{i_0}
%+(1-t_1-\cdots-t_k) x_{i_0}
=
s_1 x_{i'_1} + \cdots + s_k x_{i'_k}
+\Big(1-\sum_\alpha s_\alpha\Big) x_{i'_0}
%+(1-s_1-\cdots-s_k) x_{i'_0}
\ee
has no solution in the cartesian product of simplices above.

We will not need to specify $\Omega$ in what follows, as the datum of
$X$ and $Y$ is sufficient (and we can think that $\Omega$ is some
compact containing the convex hull of $X \cup Y$).

We need an analogue of Remark \ref{rmk.crossifpath}, on the minimal
form of a crossing counterexample. In this case this reads
\begin{rmk}
\label{rmk.crossifpathHyper}
There exists a crossing projected matching hypertree in a generic
instance in the standard hyper-assignment setting, with its natural
geometric realization in $\bR^d$, if and only if there exists the
datum of $\gb'+\gb''+\ell+1$ points
$X=\{x_0,\ldots,x_\ell,x'_1,\ldots,x'_{\gb'},x''_1,\ldots,x''_{\gb''}\}$,
with $\ell \geq 0$, $\ell+2$ points
$Y=\{y',y'',y_1,\ldots,y_{\ell}\}$, and parameters $\gb(j')=\gb'$,
$\gb(j'')=\gb''$, and all other $\gb_j$'s equal to 1, such that the
matching tree $\bar{H}_{\calJ}$ consists of the path
$(y',x_0,y_1,x_1,\ldots,y_{\ell},x_{\ell},y'')$ connecting the two
star graphs in which $y'$ is connected to the $x'_j$'s and $y''$ is
connected to the $x''_j$'s, and the equation (\ref{eq.hyperconvcomb1})
on the pair $\{x_0,x'_1,\ldots,x'_{\gb'}\}$ and
$\{x_0,x''_1,\ldots,x''_{\gb''}\}$ (for $\ell=0$), or equation
(\ref{eq.hyperconvcomb1}) on the pair $\{x_0,x'_1,\ldots,x'_{\gb'}\}$
and $\{x_{\ell},x''_1,\ldots,x''_{\gb''}\}$ (for $\ell>0$), are not
satisfied.
\end{rmk}
\noindent
The structure of the graph $H_\calJ$
% and $\bar{H}_\calJ$ 
is the following:
\begin{center}
% [inline block 20: 1 envs, 2388 chars -> data_tex | \begin{tikzpicture}[scale=1.5] \node[label={270:{\small $x_0$}},circle,inner sep=2pt,fill=white,draw] (x0) at (0,0) {}; ...]

\end{center}
The reasons for the validity of this remark are analogous to the ones
already analysed for Remark \ref{rmk.crossifpath}, and will not be
repeated here. Essentially, if the geometric embedding of a certain
instance in the standard hyper-assignment setting has a crossing
matching hypertree, where the polytopes associated to $y'$ and $y''$
are crossing, then the analogous statement holds for the sub-instance
in which we only keep the two pertinent polytopes, and the vertices on
the path in the tree that connects them, because the restriction of an
optimal hyper-assignment to a subset of the vertices that corresponds
to a subtree of $H_\calJ$ is optimal for the reduced instance.
An illustration of an instance of this type is provided in
Figure~\ref{fig.exRmkcrossifpathHyper}.

\begin{figure}
\[
\if\faifig1  
% [inline block 21: 1 envs, 3121 chars -> data_tex | \begin{tikzpicture}[scale=2.5] \coordinate (x0) at (.2666,.2); ...]

\else [...TikZ\ code...] \fi
\]
\caption{\label{fig.exRmkcrossifpathHyper}%
Example of potential crossing sub-configuration in the setting of 
Remark~\ref{rmk.crossifpathHyper}, here with $\gb'=\gb''=3$ (that is, the
polygons are the convex hull of up to 4 points) and $\ell=3$ (that is,
the two polygons are connected by a path of length 3). We omitted to
depict the splitting of the $k$ copies of $y'$ and $y''$, and
implicitly used the result of Lemma~\ref{lem.hypermatchNOiji}.}
\end{figure}

\begin{proof}[Proof of Theorem \ref{th:treenoncrossHyper}]
As was the case also in the previous section, we will prove Theorem
\ref{th:treenoncrossHyper} by contradiction on the existence of a
structure as the one depicted in the remark above.  In turn, this
structure corresponds to a situation in which the matrix of costs $W$,
for the list of vertices
$(x'_1,\ldots,x'_k,x_0,\ldots,x_\ell,x''_1,\ldots,x''_k)$ in this
order, and $(y',y_1,\ldots,y_\ell,y'')$ in this order, admits a
Hungarian gauge of the form (empty entries correspond to non-negative
values)
\[
W=
% \begin{pmatrix}
\left(
\begin{array}{c|ccc|c}
0      & && & \\
\vdots & && & \\
0      & && & \\
\hline
0 & 0 &&    & \\
  & 0 & 0 & & \\
  && 0 & 0  & \\
  &&& 0     & 0 \\
\hline
& && & 0 \\
& && & \vdots \\
& && & 0 
\end{array}
\right)
% \end{pmatrix}
\]
In other words, that's the same pattern of the matrix appearing in
the ordinary case, if we take a minor by dropping any subset of
$\gb'-1$ rows among the first $\gb'$, and
$\gb''-1$ rows among the last $\gb''$.

Similarly to what was done in Section \ref{sec.caso2dp2}, we can
subtract $|x_i|^2$ from the rows of $W$ and $|y_j|^2$ from the
columns, and have that the statement above holds also for the Gram
matrix of the point configurations (up to an overall factor $-2$), a
situation that has been analysed at length in that section, for the
case in which the matrix is of size $(n+1)\times n$, and has zeroes on
the $(i,i)$ and $(i+1,i)$ entries. That is, for all the sub-configurations
of the form
$X_{h,k}=\{x'_h,x_0,\ldots,x_\ell,x''_k\}$
and $Y=\{y',y_1,\ldots,y_\ell,y''\}$, for $(h,k)\in[\gb']\times[\gb'']$.
Let us perform the analysis for the case $\ell \geq 1$, as the case
$\ell=0$ is a simpler variant.
This implies the inequalities (\ref{eqs.4357856187bmain}), specialised
to the instances above, that read (here $\ell$ is a parameter, while
$1 \leq h \leq \beta'$ and $1 \leq k \leq \beta''$)
\begin{subequations}
\label{eqs.4357856187bmainhyper}
\begin{align}
\langle
x_0-x_\ell
,
y'-y''
\rangle
&
%=:\delta
> 0
\ef;
&
\langle
x'_h-x_\ell
,
y'-y''
\rangle
&
%=:u_h
>0
\ef;
\\
\langle
x_0-x''_k
,
y'-y''
\rangle
&
%=:v_k
>0
\ef;
&
\langle
x'_h-x''_k
,
y'-y''
\rangle
&
%=u_h+v_k-\delta
>0
\ef.
\end{align}
\end{subequations}
In order to conclude, observe that the function 
$\phi(x)=\langle x,y'-y'' \rangle$ defines an order over the
$\gb'+\gb''+2$ points 
%
% $\{x'_h,x_0,x_\ell,x''_k\}_{1\leq h,k\leq k}$, 
$\{x'_h\}_{1\leq h\leq \gb'}
\cup
\{x''_k\}_{1\leq k\leq \gb''}
\cup
\{x_0,x_\ell\}$
and that the inequalities (\ref{eqs.4357856187bmainhyper}) mean that
the $k+1$ points $\{x'_h\}_{1\leq h\leq \gb'} \cup x_0$ are `smaller' than the $k+1$ points
$\{x''_{k}\}_{1\leq k\leq \gb''} \cup x_\ell$, thus there must exist a hyperplane orthogonal to
$y'-y''$ that separates them, and this hyperplane certifies that the convex hulls
of the neighbours of $y'$ and of $y''$ 
cannot cross. The argument for $\ell=0$ is only slightly different. In
this case we have an order over the $\gb'+\gb''+1$ points
$\{x'_h\}_{1\leq h\leq \gb'}
\cup
\{x''_k\}_{1\leq k\leq \gb''}
\cup
x_0$,
%
%% $\{x'_h,x_0,x''_k\}_{1\leq h,k\leq k}$, 
and the analogues of the inequalities (\ref{eqs.4357856187bmainhyper})
mean that the $\gb'$ points $\{x'_h\}$ are `smaller' than $x_0$, that
in turn is smaller than the $\gb''$ points $\{x''_{k}\}$, thus the
hyperplane orthogonal to $y'-y''$ passing through $x_0$ separates the
convex hulls of the neighbours of $y'$ and of $y''$
% $\{x'_h,x_0\}$ and of $\{x_0,x''_{k}\}$ 
except for the vertex~$x_0$.
\end{proof}

%%%%%%%%%%%%%%%%%%%%%%%%%%%%%%%%%%%%%%%%%%%%%%%%%%%%%%%
\section{Hitchcock's generalization of the Assignment Problem}
% Assignment with capacities
\label{sec.AssCapa}
%%%%%%%%%%%%%%%%%%%%%%%%%%%%%%%%%%%%%%%%%%%%%%%%%%%%%%%

\noindent
In this appendix we introduce and study a generalization of the
Assignment Problem, which is also a subclass of the general
Linear Programming, due to Hitchcock \cite{Hitchcock,Ford1962-gm,P-743}.
In his own words (rephrased to match our notations), the problem is
formulated as follows:
\textit{Suppose there are $n$ sources $\{a_1,\ldots,a_n\}$ for a
  commodity, with $\ga_i$ units of supply at $a_i$, and $m$ sinks
  $\{b_1,\ldots,b_m\}$ for the commodity, with the demand $\gb_j$ at
  $b_j$. If $W_{ij}$ is the unit cost of shipment from $a_i$ to $b_j$,
  find a flow that satisfies demand from supplies and minimizes the
  flow cost.}

While the Assignment Problem, in its historical Monge description,
emerges as the algorithmic problem underlying the Monge--Kantorovich
Optimal Transport Problem when both measures are empirical measures
composed of $n$ Dirac masses of weight $1/n$, the generalization
introduced here still deals with empirical measures, but relaxes the
constraint of uniform masses. A first step in the direction of the
Hitchcock Problem was presented in the hyper-Assignment Problem of
Section \ref{sec.ktuples} --- where we have $kn$ sources with $\ga_i=1$
and $n$ sinks with $\gb_j=k$ --- though the present framework is
substantially more general.

It turns out that an efficient algorithmic solution of this problem
has been found by Ford and Fulkerson (see
e.g.\ \cite[sec.~3.1]{Ford1962-gm}). In their algorithm, one closely
follows the ideas of the classical Hungarian Algorithm, except that
one step, that is relatively simple in the latter, in the former must
be replaced with a run of the algorithm (also due to Ford and
Fulkerson) for the Maximum Flow problem, on a related graph where the
$\ga_i$'s and $\gb_j$'s are the capacities of certain edges. For this
reason, in this section we call \emph{Assignment Problem with
  Capacities} the problem depicted above.
% the \emph{Assignment Problem with Capacities}, that is a generalization
% of the Assignment Problem, while still being 

The reason for studying this generalization is that, in contrast to
ordinary Assignment where the optimal configuration is a perfect
matching on $\cK_{n,n}$, it will turn out that here, more generally,
the optimal configurations have support on spanning forests, and,
under suitable generality hypotheses (not satisfied by ordinary
Assignment), a spanning tree of~$\cK_{n,m}$.  Thus, it is natural to
ask whether this tree, obtained as the optimal configuration of a
single instance of the Hitchcock's Assignment Problem, has any
relation with the matching subgraph $H_{\calJ}$ studied in the
previous sections, obtained as the union of the optimal assignments
$M_U$ of several small variations $U\in\calJ$ of one given instance of
the ordinary Assignment Problem. As we will see later on (in
Corollary~\ref{corr.HnewIsHold_ass}), this is indeed the case,
although the connection is (surprisingly) far from evident.

Furthermore, also for the hyper-Assignment variant of the problem
(that is, in the hyper-assignment standard setting of Theorem
\ref{thm.setting1hyper}), the matching subgraph $H_{\calJ}$ associated
to our problem is related to the optimal transportation of a single
instance of the Hitchcock's Assignment Problem, with suitably tuned
capacities. This generalization is the content of
Corollary~\ref{corr.HnewIsHold_hass}).  However, we believe that the
two results mentioned above are optimal in a strong sense, leaving
virtually no room for further broad extensions. Indeed, a careful
inspection of the proofs --- supported by explicit counterexamples to
the main lemma (Proposition \ref{prop.treeOnlyIfStar}) under slightly
modified hypotheses --- suggests that this connection between matching
subgraphs and the support structures of the Hitchcock problem is
maximal within this framework, indicating that our formulation hits a
rigid structural boundary.
\medskip

\noindent
As used already several times along this paper, the ordinary
Assignment Problem on an arbitrary (bipartite) graph $G$ can be
studied by replacing edges $e=\edge{a_i}{b_j}$ absent from $G$ with
edges having very large cost, so that we can always suppose that the
ambient graph is the complete bipartite graph $\cK_{n,n}$, and the
datum of an instance of the Assignment Problem can be encoded solely
by a
\emph{matrix of costs} $W=(W_{ij})_{1\leq i,j\leq n}$, with 
$W_{ij} \in \bR$ (and, in the hyper-Assignment Problem, 
solely
by a
matrix of costs $W=(W_{ij})_{1\leq i \leq kn, 1\leq j\leq n}$).

Analogously, the datum of an instance of the Assignment Problem with
Capacities is given by a triple $(W;\va,\vb)$, where
$W=(W_{ij})_{1\leq i\leq n\,,\,1\leq j\leq m}$, with $W_{ij} \in \bR$,
is the \emph{matrix of costs}, and $\va=(\ga_{i})_{1\leq i\leq n}$ and
$\vb=(\gb_{j})_{1\leq j\leq m}$, with $\ga_{i}, \gb_{j} \in \bR^+$,
are the
\emph{vectors of (row and column) capacities}. We have the constraint
that $M:=\sum_i \ga_i=\sum_j \gb_j$ (this parameter is the
normalization of the two measures in the Optimal Transportation
Problem, that is, the `total mass' to be transported), and the case of
ordinary Assignment is when the $\ga_i$'s and $\gb_j$'s are all equal.

A $n \times m$ matrix $T$ is a \emph{transport matrix}
for the capacities $(\va,\vb)$ if $T_{ij}\geq 0$ for all $i$ and $j$, and
\begin{align}
\sum_{j=1}^m T_{ij} &= \alpha_i \qquad \forall \ 1\leq i\leq n
\ef;
&
\sum_{i=1}^n T_{ij} &= \beta_j  \qquad \forall \ 1\leq j\leq m
\ef.
\end{align}
We denote by 
$\mathcal{T}(\va,\vb)$
% (\alpha, \beta)$ 
the set of all transport matrices for the capacities~$(\va,\vb)$.
In a transportation problem where 
$\mu_1(z)=\frac{1}{M} \sum_{i=1}^n \ga_i \delta(z-x_i)$ and
$\mu_2(z)=\frac{1}{M} \sum_{j=1}^m \gb_j \delta(z-y_j)$,
and the cost for moving a unit of mass from $x_i$ to $y_j$ is
$W_{ij}$, the quantity $T_{ij}$ represents the portion of the mass in
$x_i$ that is moved to $y_j$ under the transportation plan $T$. So,
provided that we can find a metric such that the $W_{ij}$'s can be
interpreted as the transportation costs, this variant of the
Assignment is also natural in the framework of Optimal Transport (and
this justifies the name we give to matrices~$T$). However, at this
level we deal with a problem of Combinatorial Optimization on graphs,
with no underlying geometry.

The study of the combinatorial structures associated to optimal
transport matrices, and in particular of its relation with the
matching subgraphs $H_{\calJ}$, goes through the introduction of a
useful notion:
\begin{defn}[Support of a transport matrix]
For $T \in \cT(\va,\vb)$ as above, and calling $A=\{a_i\}$ and
$B=\{b_j\}$ the two vertex classes of $\cK_{n,m}$, the 
\emph{support graph} $H(T)$ is the spanning subgraph of $\cK_{n,m}$
such that
$\edge{a_i}{b_j} \in E(H)$ iff~$T_{ij}>0$.
\end{defn}
\noindent
If all the $\ga_i$'s and $\gb_j$'s are non-zero, the graphs
$H\subseteq \cK_{n,m}$
obtained as $H(T)$ have no singletons, that is, all vertices have
degree at least 1. We have the following property
(see e.g.\ \cite[Sec.\ 21.6]{schrijver2003combinatorial})
\begin{lem}
\label{lem.HTforest}
If $H(T)$ is a forest, then $T$ is uniquely identified by $H$, $\va$
and $\vb$,
that is, if $T',T'' \in \cT(\va,\vb)$,
$H$ is a forest and $H(T')=H(T'')$, then $T'=T''$. 
\end{lem}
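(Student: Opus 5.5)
We argue by contradiction, via the difference of the two matrices. Let $T',T''\in\cT(\va,\vb)$ with $H(T')=H(T'')=H$ a forest, and set $D=T'-T''$. Then $D$ has vanishing row sums and vanishing column sums, since both matrices have the same marginals $(\va,\vb)$. Moreover $D_{ij}=0$ whenever $\edge{a_i}{b_j}\notin E(H)$, because both $T'_{ij}$ and $T''_{ij}$ vanish there. So $D$ can be read as a real-valued weight function on the edges of $H$, whose sum around every vertex is zero: at $a_i$ the incident weights add up to $\sum_j D_{ij}=0$, and at $b_j$ they add up to $\sum_i D_{ij}=0$. In network language, $D$ is a circulation on $H$ once we orient every edge from $A$ to $B$.

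The key step is the standard fact that a forest carries no nonzero such edge function. We prove it by peeling leaves. If $H$ has an edge, it has a leaf $v$ (a vertex of degree $1$), because every nonempty finite forest has one. Let $e$ be its unique incident edge. The vertex condition at $v$ reduces to $D_e=0$. We then delete $e$ and obtain a smaller forest on which $D$ still satisfies all vertex conditions. The endpoint $v$ has become isolated, so its condition is trivial. At the other endpoint we have only removed a term equal to zero. Induction on $|E(H)|$ then gives $D\equiv 0$ on $E(H)$, hence on all entries, so $T'=T''$.

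An equivalent formulation, which also makes explicit that $T$ can be reconstructed from $H$, $\va$ and $\vb$, is the pruning algorithm of Remark~\ref{rmk.reconstr}. Repeatedly pick a leaf $v$ with pendant edge $e$, set $T_e$ equal to the current residual capacity of $v$, subtract $T_e$ from the residual capacity of the other endpoint, and remove $e$. Each value is forced, so uniqueness follows. We would present the difference argument as the proof and mention the reconstruction as a consequence. There is no real obstacle; the only point needing care is that isolated vertices contribute nothing and the leaf-peeling induction terminates.
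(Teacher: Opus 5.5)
Your argument is correct and uses the same mechanism as the paper. The paper inducts on $n+m$: after discarding zero-capacity singletons and possibly transposing, it takes a leaf $a_1$ of $H$, forces $T_{11}=\ga_1$, and passes to reduced capacities with $\gb_1-\ga_1$, which is exactly your pruning/reconstruction formulation. Running the same leaf-peeling on the signed difference $D=T'-T''$, as in your main version, is only a repackaging, though it does spare you the singleton removal and the residual-capacity bookkeeping. One wording fix: write ``every finite forest with at least one edge has a leaf'', since an edgeless forest has none.
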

\begin{proof}
We can prove this fact by induction on $n+m$. 
If $n=m=1$ the statement is obvious, as $T_{11}=\ga_1=\gb_1$ is
the unique possibility. 

Since singletons correspond to capacities $\ga_i$ or $\gb_j$ equal to
zero, they play a trivial role (we can remove the entry from the
vector of capacities and the corresponding row or column from the
matrix of costs, yielding an equivalent problem with a smaller value
of $n+m$); therefore, we can assume that there are no singletons.

Then, let $H$ be a forest, with no singletons. So $H$ has at least two
leaves. Up to relabeling the vertices, and possibly swapping the role
of $\va$ and $\vb$ and taking the transpose of $W$, we can assume that
$a_1$ is a leaf, adjacent to $b_1$.  Since $a_1$ has degree 1 in
$H(T)$, we must have $T_{1j}=0$ for all $j \geq 2$. The row-sum
constraint then forces $T_{11}=\ga_1$ (which implicitly requires
$\ga_1 \leq \gb_1$).  The graph $H'=H\setminx \edge{a_1}{b_1}$ is a
forest, and $H'=H(T')$ if and only if $H=H(T)$, where
$T'\in \cT(\va',\vb')$ is the matrix $T$ with the first row removed, 
$\va'=(\ga_2,\ga_3,\ldots,\ga_n)$ and
$\vb'=(\gb_1-\ga_1,\gb_2,\ldots,\gb_m)$. (If $\gb_1=\ga_1$, we can
also remove the first column of $T'$, and shorten the vector $\gb$.)
As we are in the same setting from where we started, but with
% one edge less,
a smaller size $n+m$, our induction is complete.
\end{proof}
\noindent
Remark that, if $T'$ and $T''$ are transport matrices for the
capacities $(\va,\vb)$,
then also $\lambda T'+(1-\lambda)T''$, with 
$\lambda \in [0,1]$, is a transport matrix for the same capacities.
Therefore the set $\mathcal{T}(\va,\vb)$ of all transport matrices for
the capacities $(\va,\vb)$ is convex (and, as the conditions for being
a transport matrix are linear equalities and inequalities, it is in
fact a convex polytope).

Of course, if $E'=E(H(T'))$ and $E''=E(H(T''))$,
then for all $0<\lam<1$ we have
\be
\label{eq.287658354}
E(H(\lam T'+(1-\lam)T''))=E' \cup E''
\ef.
\ee
In the case of the ordinary Assignment Problem (say, of size $n$), the
extremal points of this convex set are given by the $n!$ permutation
matrices $T^{(\pi)}_{ij}=\delta_{j\,\pi(i)}$, and in particular their
support graphs are the set of perfect matchings on $\cK_{n,n}$.

This fact generalizes in the following
\begin{lem}
\label{lem.ExtrIfForest}
For $T \in \cT(\va,\vb)$ as above, $T$ is extremal iff $H(T)$ is a
forest.
\end{lem}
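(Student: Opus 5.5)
We prove the two implications separately, using the standard characterisation of vertices of a polytope: a point $T$ of $\cT(\va,\vb)$ is extremal iff it cannot be written as $\frac12(T'+T'')$ with $T'\neq T''$ both in $\cT(\va,\vb)$. By equation (\ref{eq.287658354}), any such $T'$ and $T''$ have support contained in $H(T)$, so throughout we work only with matrices supported on $E(H(T))$.

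\emph{Forest $\Rightarrow$ extremal.} Suppose $H(T)$ is a forest and $T=\lam T'+(1-\lam)T''$ with $0<\lam<1$ and $T',T''\in\cT(\va,\vb)$. Since all entries are non-negative, $H(T')$ and $H(T'')$ are subgraphs of $H(T)$, hence forests. We need them to coincide with $H(T)$ in order to invoke Lemma~\ref{lem.HTforest} directly. Rather than arguing this, we will re-run the leaf-peeling induction of that lemma in a slightly stronger form: \emph{any} transport matrix supported on a given forest $F$ is unique. Take a leaf $a_1$ of $F$ adjacent to $b_1$. Then every matrix supported on $F$ has $T_{11}=\ga_1$; we remove that row, update $\gb_1\to\gb_1-\ga_1$, and recurse. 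The argument of Lemma~\ref{lem.HTforest} never used that the support is exactly $F$, only that it is contained in it. Applying this to $F=H(T)$ gives $T'=T''=T$, so $T$ is extremal.

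\emph{Extremal $\Rightarrow$ forest.} We argue by contraposition. Suppose $H(T)$ contains a cycle $C=(e_1,\ldots,e_{2\ell})$; it has even length because $\cK_{n,m}$ is bipartite. Let $D$ be the matrix equal to $+1$ on the odd edges of $C$, $-1$ on the even edges, and $0$ elsewhere. Each vertex of $C$ is incident to exactly one odd and one even edge of the cycle, so every row and column sum of $D$ vanishes. Set $\eps=\min_{e\in C}T_e>0$. Then $T\pm\eps D$ are non-negative with the same marginals, so both lie in $\cT(\va,\vb)$. They are distinct, and $T=\frac12\big((T+\eps D)+(T-\eps D)\big)$, so $T$ is not extremal.

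The only delicate point is the first direction: Lemma~\ref{lem.HTforest} as stated assumes equal supports, so we must check that its proof adapts to the weaker hypothesis of support \emph{contained in} $F$. It does, because the leaf argument forces $T_{1j}=0$ for $j\neq1$ solely from $\edge{a_1}{b_j}\notin F$. Singletons of $F$ with positive capacity cannot occur, since the marginals would then be violated; so the zero-capacity reduction used in that proof also goes through.
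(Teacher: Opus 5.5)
Your proof is correct and follows the paper's approach: the cycle perturbation $T\pm\eps D$ is exactly the paper's argument that an extremal $T$ has acyclic support, and the converse rests, as in the paper, on uniqueness of transport matrices supported on a forest. The one difference is that you re-run the leaf-peeling induction to get uniqueness for supports merely \emph{contained in} $H(T)$, whereas the paper applies Lemma~\ref{lem.HTforest} verbatim: by equation~(\ref{eq.287658354}), every point $\rho T'+(1-\rho)T''$ with $\rho\in\;]0,1[$ has support exactly $E(H(T))$, so two distinct such points already contradict the lemma as stated.
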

\begin{proof}
Let us start by proving that if $T$ is not extremal, then $H(T)$ is
not a forest.
% The `if' part 
This is a corollary of the Lemma \ref{lem.HTforest} above, and
equation (\ref{eq.287658354}). Indeed, if $T$ is not extremal in
$\cT$, there exist $T' \neq T''$ in $\cT$ and $0<\lam<1$ such that
$T=\lambda T' + (1 - \lambda) T''$, so, from equation
(\ref{eq.287658354}), we would have a whole 1-dimensional open
interval within $\cT$, namely the transport matrices 
$\rho T' + (1 - \rho) T''$ for $\rho \in\; ]0,1[$, with the
same support graph as $H(T)$, thus, in light of Lemma
\ref{lem.HTforest} above, $H(T)$ cannot be a forest.

Now we shall prove that if $T$ is extremal, then $H(T)$ is a forest.
We give a proof by contradiction.  If $T$ is extremal and $H(T)$ is
not a forest, then we have a cycle $C \subseteq H \subseteq \cK_{n,m}$
(thus of even length).  Root and orient the cycle $C$
arbitrarily. Then the matrix $T'$ such that
\be
T'_{ij}
=
\left\{
\begin{array}{ll}
T_{ij}+(-1)^k \eps & \textrm{$\edge{a_i}{b_j}$ is the $k$-th edge of $C$;}
\\
T_{ij}             & \edge{a_i}{b_j}\not\in C;
\end{array}
\right.
\ee
for an interval 
$\eps \in[-\eps_1,\eps_2]$ for some $\eps_1,\eps_2>0$,
is still a transport matrix for $(\va,\vb)$,
So $T$ is not extremal.
% Call $\eps$ the smallest entry $T_{ij}$ for $\edge{a_i}{b_j}\in C$, and 
\end{proof}
\noindent
Note that, despite what could be guessed, it is not the case that all
extremal points $T^{(i)} \in \cT(\va,\vb)$ have associated support
graphs $H^{(i)}=H(T^{(i)})$ which are forests with the same number of
components.
For example, for $\va=(2,2)$ and $\vb=(1,2,1)$, there are four extremal
matrices, of which two with 2 components, and two with a single component
(spaces stand for zeroes):
\begin{align*}
&
\begin{pmatrix}
{} & 2 &   \\
 1 &   & 1  
\end{pmatrix}
&&
\begin{pmatrix}
 1 &   & 1 \\
{} & 2 &    
\end{pmatrix}
&&
\begin{pmatrix}
 1 & 1 &   \\
{} & 1 & 1  
\end{pmatrix}
&&
\begin{pmatrix}
{} & 1 & 1 \\
 1 & 1 &    
\end{pmatrix}
\end{align*}
and correspond to the four vertices of the square $(a,b)\in [0,1]^2$,
where an arbitrary transport matrix is parametrized as
\be
\begin{pmatrix}
a   & 2-a-b & b   \\
1-a & a+b   & 1-b    
\end{pmatrix}
\ef.
\ee
The Lemma \ref{lem.ExtrIfForest} above implies the following simple
corollary (that usually goes under the name of 
\emph{total unimodularity}, or \emph{integrality property of the 
Hitchcock--Koopmans polytope}):
\begin{corr}
\label{corr.Tinteg}
If the capacities $\ga_i$ and $\gb_j$ are integer-valued, also the extremal
transport matrices $T \in \cT(\va,\vb)$ are integer-valued.
\end{corr}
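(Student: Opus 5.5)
The plan is to combine Lemma~\ref{lem.ExtrIfForest} (extremal transport matrices are exactly those whose support graph $H(T)$ is a forest) with the way $T$ is pinned down by such a forest, as in the inductive proof of Lemma~\ref{lem.HTforest}. Fix an extremal $T \in \cT(\va,\vb)$ with integer capacities $\ga_i,\gb_j \in \bN$. Then $H(T)$ is a forest, and it is enough to show that the entries of $T$ can be computed from the capacities using only additions and subtractions. Integrality then follows at once.

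First, I would discard singletons: a vertex of $H(T)$ with degree $0$ has zero capacity and carries no entries, so removing its row or column loses nothing. Next, I would run the leaf-peeling induction on $n+m$ from the proof of Lemma~\ref{lem.HTforest}, strengthening the induction hypothesis to the statement that every entry of $T$ is an integer. Pick a leaf, say $a_1$ adjacent to $b_1$ (otherwise transpose). The row constraint forces $T_{11}=\ga_1$, which is an integer, and every other entry in that row is zero. The remaining matrix $T'$ lies in $\cT(\va',\vb')$ with $\va'=(\ga_2,\ldots,\ga_n)$ and $\vb'=(\gb_1-\ga_1,\gb_2,\ldots,\gb_m)$. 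These capacities are again integers, and $\gb_1-\ga_1\ge 0$ because $T'\geq 0$. The support $H(T')=H(T)\setminx\edge{a_1}{b_1}$ is still a forest. The base case $n=m=1$ gives $T_{11}=\ga_1\in\bN$.

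The one point needing care is the leaf-existence step, in the form "a forest with no isolated vertices and at least one edge has a leaf"; it holds because every finite tree with at least two vertices has a leaf. I also need $T'$ to be a valid transport matrix with the reduced capacities, and this is immediate from the row and column sums. I do not need $T'$ to be extremal in the smaller polytope: the induction uses only that $H(T')$ is a forest, which is exactly the hypothesis carried through Lemma~\ref{lem.HTforest}. So the proof is short, and the only obstacle is phrasing the induction so that it applies to forest-supported transport matrices directly rather than to extremal points.
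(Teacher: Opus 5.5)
Your proof is correct, and it reaches integrality by a different route from the paper's, though both start from Lemma~\ref{lem.ExtrIfForest}.

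The paper does not recurse. It looks at the subgraph of $H(T)$ formed by the edges whose entries $T_{ij}$ have nonzero fractional part. Every row and column sum is an integer, so no vertex can be incident to exactly one such edge, and this subgraph has no vertex of degree $1$. If it were nonempty, it would therefore contain a cycle inside the forest $H(T)$, which is impossible. This is a one-shot version of the same "forests have leaves" fact that drives your argument. It avoids the bookkeeping of discarding zero-capacity singletons, tracking reduced capacities $\gb_1-\ga_1$, and choosing an induction measure.

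Your route costs that bookkeeping but gives more. Reusing the peeling of Lemma~\ref{lem.HTforest} yields uniqueness and integrality from a single computation. It also shows that each entry of a forest-supported $T$ is a $\pm 1$ combination of capacities. Explicitly, if removing the edge $\edge{a_i}{b_j}$ leaves a part containing $a_i$ with vertex sets $A_S$ and $B_S$, then $T_{ij}=\sum_{a\in A_S}\ga_a-\sum_{b\in B_S}\gb_b$.

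You were also right that the induction only needs $H(T')$ to be a forest, not $T'$ to be extremal. The paper's argument never faces that question.
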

\begin{proof}
The subgraph of $H(T)$ consisting of the edges $\edge{a_i}{b_j}$ where
$T_{ij}-\lfloor T_{ij} \rfloor >0$ has no vertex of degree 1. So
either it is the empty graph (and all the $T_{ij}$'s are integers), or
it contains a cycle, and the latter is excluded by
Lemma~\ref{lem.ExtrIfForest}.
\end{proof}
\noindent
We can introduce a version of Definition \ref{def.genwei1} of generic
weights adapted to this variant of the problem, namely
\begin{defn}[Generic weights and capacities] 
\label{def.genwei1cap}
  The cost matrix $W$ is \emph{generic} if, for all even-length
  cycle $C=(e_1,e_2,\dots,e_{2\ell})$ in $\cK_{n,m}$ (with edges
  labeled in cyclic order), we have that the alternating sum
  $\walt(C)$ (defined in equation (\ref{eq:altsum})) is not vanishing.
  The vectors of capacities $(\va,\vb)$ are \emph{generic} if there
  exists no pair $(I,J)$, with 
$\emptyset \subsetneq I \subsetneq [n]$ and
$\emptyset \subsetneq J \subsetneq [m]$,
such that $\sum_{i \in I}\ga_i=\sum_{j \in J}\gb_j$.
\end{defn}
\noindent 
It is easily seen that, under the sole assumption that the capacities
$(\va,\vb)$ are generic, irrespectively of $W$, all the support graphs
$H(T)$ are spanning connected subgraphs of $\cK_{n,m}$, and in
particular, all the $H(T)$'s which are forests are in fact
trees. Indeed, if we had a non-trivial connected component
$H_{\ell}\subset H$, this would have sets of vertices
$A_{\ell}=\{a_i\}_{i\in I}$ and $B_{\ell}=\{b_j\}_{j\in J}$, and the
sets of indices $I$ and $J$ would violate the genericity hypothesis.
Of course the capacities $(\va,\vb)$ for ordinary Assignment are not
generic, and actually all pairs $(I,J)$ with $1\leq |I|=|J|\leq n-1$
violate the genericity hypothesis (and a similar statement holds for
the hyper-Assignment Problem).  Nonetheless, in the Lebesgue measure
on $\bR^{2n-1}$ for capacities, generic capacities are of course
dense, and the set of non-generic capacities is a finite union of
hyperplanes of codimension~1, so that, similarly as what is the case
for generic weights, certain general facts on the problem with
capacities can be deduced by continuity from the corresponding
statements under the assumption of generic capacities.

Now that we have described in some detail the space $\cT(\va,\vb)$ of
possible transport matrices for a given set of capacities, we shall
introduce the cost function $\cW(T)=\cW(T;W)$:\footnote{This function
  is also $\tr T W^{\rm T}$, but we prefer to avoid this notation as
  Linear Algebra is not the appropriate framework here.}
%;\va,\vb)
\be
\cW(T;W)=\sum_{\substack{
1\leq i \leq n \\
1\leq j \leq m }}
T_{ij} W_{ij}
\ef.
\ee
Just as in the ordinary Matching and Assignment Problems described in
the previous sections, we shall concentrate on the optimal transport
matrices $T_{\star}(W;\va,\vb)$, that is, the matrices that minimize
$\cW(T;W)$ inside $\cT(\va,\vb)$, and in the value
$\cW_{\star}(W;\va,\vb)=\cW(T_{\star}(W;\va,\vb);W)$ of the optimal
cost.

Another notion that generalises naturally from the ordinary Assignment
Problem
% , also the Assignment Problem with Capacities has a 
is ``gauge invariance''. This is not surprising, as in fact the gauge
parameters are the dual variables in Linear Programming duality, and
the Hitchcock's Assignment Problem is within this more general
framework. Also, just like gauge transformations for ordinary
Assignment (that is, operations on the dual variables) are used
extensively in the Hungarian Algorithm, gauge transformations for this
problem are used extensively in the Ford and Fulkerson algorithm for
this problem.  So we have

\begin{rmk}[Gauge Invariance with capacities]
\label{rmk.gaugeinvcapac}
Let $W$ be a $n \times m$ real matrix, 
$(\va,\vb)$ two vectors of capacities,
and
$\bm{\lambda}=(\lambda_1,\ldots,\lambda_n)$,
$\bm{\mu}=(\mu_1,\ldots,\mu_m)$ two vectors. Define
$W'_{ij}=W_{ij}-\lambda_i-\mu_j$ and
%. Then, calling
% $\cW(T)$ the cost of the transport matrix $T$ for the matrix of costs
% $W$, $\cW'(T)$ the cost for the matrix of costs $W'$, and
$c=\sum_{i=1}^n \ga_i \lambda_i+\sum_{j=1}^m \gb_j \mu_j$. We have
\be
\cW(T,W') = \cW(T,W)-c
\ee
and in particular $T$ is optimal for $W$ iff it is optimal for $W'$
(and, if $W$ is generic, $T_\star(W)=T_\star(W')$).
\end{rmk}
\noindent
Again, a gauge such that $W'_{ij}\geq 0$ for all entries and
$W'_{ij}=0$ on all entries such that $(T_{\star})_{ij}>0$ will be called
a \emph{Hungarian gauge}, and provides a certificate of optimality for
$T_{\star}$.

Call $G=G(W;\bm{\lambda},\bm{\mu})$ the graph such that
$\edge{a_i}{b_j} \in G$ iff $W'_{ij}=0$. The statement above reads as
the fact that $(\bm{\lambda},\bm{\mu})$ is a Hungarian gauge for the
instance $(W;\va,\vb)$ iff 
$W'_{ij}\geq 0$ for all entries, and
there exists $T \in \cT(\va,\vb)$ such that
$H(T;W) \subseteq G(W;\bm{\lambda},\bm{\mu})$, and in this case the
optimal transport matrices are all and only the transport matrices
with the property above, and the optimal cost is
$\cW_{\star}(W;\va,\vb)=\sum_i \lam_i \ga_i+\sum_j \mu_j \gb_j$.

If both $W$ and $(\va,\vb)$ are generic, then both $H(T_{\star})$ and
$G$ are spanning trees, so they must coincide. In this case the
Hungarian gauge is unique up to the obvious invariance
$(\bm{\lambda},\bm{\mu})\to(\bm{\lambda}+s,\bm{\mu}-s)$
(which, in passing, implies that we
have $n+m-1$ independent gauge parameters).
% and $H(T_{\star})=G$. 
Indeed, as the support graph $H(T_{\star})$ is a spanning tree, it has
$n+m-1$ edges and no cycles, and having $W'_{ij}=0$ on all the edges
of $H$ is a necessary condition for the gauge to be Hungarian.
Imposing all these identities accounts for $n+m-1$ linear relations,
all independent (because linear relations in equations of this kind
come from alternating cycles in the graph, but $H$ is a tree). As this
coincides with the number of independent gauge parameters, the
dimension counting is coherent with our claim of existence and
unicity. For example we could set $\lambda_1=0$, and solve the
equations associated to the edges of the tree iteratively in an order
compatible with the partial ordering given by rooting the tree $H$ on
$a_1$. That is, if $\edge{a_i}{b_j}$ is in $H$ and $\lam_i$ has been
determined, we can set $\mu_j=W_{ij}-\lam_i$, while if $\mu_j$ has
been determined, we can set $\lam_i=W_{ij}-\mu_j$. In fact, the
$\lam_i$'s and $\mu_j$'s can be determined in any (total) order
compatible with the partial ordering given by the distance from the
root $a_1$ on the tree~$H$.
% $G(W;\bm{\lambda},\bm{\mu})$.
%% and it is not possible that both $\lam_i$ and $\mu_j$ have been
%% already determined by other equations, because in a rooted tree every
%% edge has exactly one of the endpoints on the branch of the tree
%% containing the root.  
Once that all the equalities $W'_{ij}=0$ for
edges in $H$ have been set, the inequalities $W'_{ij}>0$ for edges not
in $H$ are implied by the optimality of $T$ 
(saying that $W'_{ij}\geq 0$) and the genericity of $W$ (saying that 
$W'_{ij}\neq 0$).

Also, if $W$ is generic (so that $T_{\star}$ is unique) but
$(\va,\vb)$ is not necessarily generic (so that $H=H(T_{\star})$ may
be a forest with $k>1$ components), the set of Hungarian gauges (up to
the trivial parameter) is a convex polytope of dimension
$k-1$. Indeed, convexity is obvious, the fact that it is a polytope
follows from the fact that the defining properties consist of linear
equalities and inequalities, and the dimension comes from counting
variables and equalities in the associated linear system, and the fact
that $H$ is a forest (that is, it is acyclic) implies that the linear
relations are independent.  In fact, this poytope coincides with the
\emph{Spanning Tree Polytope} of the graph consisting of the edges
$\edge{a_i}{b_j}$ such that $W'_{ij}=0$ for some Hungarian gauge (see
\cite{Edmonds1965}, or \cite[Sec.\ 50.4]{schrijver2003combinatorial}).

As a result, for $(\va,\vb)$ not generic,
% (so that $H(T_{\star})$ may be a forest with several components), 
for all $G$ a spanning tree containing $H(T_{\star})$ there exists a
choice of Hungarian gauge $(\bm{\lambda},\bm{\mu})$ such that
$G=G(W;\bm{\lambda},\bm{\mu})$, which in fact is a vertex of the
polytope depicted above. On the other side, a gauge associated to an
internal point of the polytope has exactly $G=H(T_{\star})$.
\medskip

\noindent
As we have seen, under the assumption of generic weights (so that we
have unicity of $T_\star$), for what concerns the graph properties of
$H(T_\star)$, the case of generic capacities and the case of the
`ordinary' Assigment Problem, with $n=m$ and $\ga_i=\gb_j=1$, are
rather different. In the first case, we have that $H$ is a spanning
tree, and in the second case it is a matching, that is a forest with
$n$ components, in which all components are isolated edges. So it is
natural to guess that we can classify the quadruples $(n,m;\va,\vb)$
according to three possible behaviors:
\begin{enumerate}
\item $H$ is guaranteed to have one single component, 
\item $H$ is guaranteed to have a number of components determined
  \emph{a priori} in terms of $n$ and $m$ and
scaling linearly with $n$
\item the number of components of $H$ is not determined \emph{a
  priori}
\end{enumerate}
Let us remark that the special case of the Hitchcock Problem discussed
in Section~\ref{sec.ktuples} under the name of hyper-Assignment
Problem, that is when we work on $\cK_{n,m}$, the $\ga_i$'s are all 1
and the $\gb_j$'s are $m$ positive integers summing up to $n$, falls
in the second case of the classification, as the extremal transport
matrices have entries in $\{0,1\}$ and correspond to graphs which are
collections of star graphs, where the $B$-vertex $b_j$ is incident to
exactly $\gb_j$ $A$-vertices, that are all leaves, so we have exactly
$m$ components, and the set of extremal transport matrices is in
bijection with elements in 
$\mathfrak{S}_n \slash \bigtimes_j \mathfrak{S}_{\gb_j}$.

We will not perform a full classification, and we will restrict our
attention to a special subclass of quadruples, those in which the
$\ga_i$'s are all equal, as well as the $\gb_j$'s, that is (up to
rescaling all capacities), for $p$ and $q$ two coprime positive
integers, the case in which the underlying graph is $\cK_{pn,qn}$, and
$\ga_i=q$ for all $1\leq i \leq pn$ and
$\gb_j=p$ for all $1\leq j \leq qn$.
Define the graph $L_{pq}$ as the bipartite graph, spanning on
$\cK_{p,q}$, such that
$(ij)\in L_{pq}$ iff
$c_{ij}=|\{(i-1)q+1,\ldots,iq\}\cap\{(j-1)p+1,\ldots,jp\}|>0$, that is
the graph induced by the following construction (here illustrated for
$(p,q)=(7,4)$)
\begin{gather*}
\setlength{\unitlength}{10pt}
% [inline block 22: 4 envs, 4020 chars in 2 pieces, piece 1 here, a bare % at each other -> data_tex | \begin{picture}(28,2)(0,0) \thinlines...]

% \else [...TikZ\ code...] \fi
\end{gather*}
In particular $L_{11}$ is a single edge, and $L_{p1}$ is the star
graph with a $B$-vertex in the central hub, and all the leaves being
$A$-vertices. This construction shows that, for all $p$ and $q$
coprime (and $n=1$), the set of extremal transport matrices with
capacities as above is non-empty. Note that it is not the case that
all trees $T$ which are associated to extremal transport matrices for
the pair $(p,q)$ are equal to $L_{pq}$ up to isomorphism, for example,
for $(p,q)=(4,3)$ we have the two graphs
\begin{align*}
&
%
% \else [...TikZ\ code...] \fi
\end{align*}
(of which the one on the left corresponds to $L_{43}$).

Simple arguments show that, with the choice of sizes and capacities
above, the number of components of $H$ must be an integer between 1
and $n$, and each component contains $pm$ $A$-vertices and $qm$
$B$-vertices, for some $1\leq m \leq n$.  In particular, as the case
$m=1$ is always possible, we will be in the case (1) of the
classification if and only if $n=1$, that is, the numbers of
rows and columns are coprime. When $n>1$, it remains to
determine under which choices of $(p,q)$ we are in the case (2) of our
proposed classification, and under which choices we are in case
(3). We shall prove the following:
\begin{prop}
\label{prop.treeOnlyIfStar}
Let $p>q\geq 1$ be as above. Let $W$ be a generic matrix of costs, of
size $(np)\times(nq)$, for some $n\geq 1$, and let the capacities be
$\ga_i=q$ for all $i$, and $\gb_j=p$ for all $j$.  Then, if $q=1$, $H(T_\star)$
is guaranteed to be a forest with $n$ components, all isomorphic to
the star graph
$L_{p1}$, while if $q>1$ the number of components of $H(T_\star)$ may
take any value between 1 and $n$.
\end{prop}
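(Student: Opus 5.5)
The statement splits naturally into the case $q=1$, where a rigidity argument is needed, and the case $q>1$, where explicit constructions suffice.

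\emph{Case $q=1$.} The capacities are integers, so by Corollary~\ref{corr.Tinteg} and Lemma~\ref{lem.ExtrIfForest} the optimal $T_\star$ (unique, since $W$ is generic) is integer-valued and $H(T_\star)$ is a forest. The plan is to classify all integer transport matrices with $\ga_i=1$ and $\gb_j=p$.

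Each row has sum $1$ and nonnegative integer entries, so it contains exactly one entry equal to $1$. Hence every $A$-vertex is a leaf of $H(T_\star)$. Each column $b_j$ then has exactly $p$ ones, so $b_j$ is the hub of a star with $p$ leaves. This gives exactly $nq=n$ components, each isomorphic to $L_{p1}$.

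This is precisely the hyper-Assignment situation of Section~\ref{sec.ktuples}, so this case is essentially immediate.

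\emph{Case $q>1$.} Here the task is to show that every number of components $c\in\{1,\ldots,n\}$ actually occurs for some generic $W$. The plan is to build a target forest $F_c$ and then choose costs that single it out.

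\textbf{Step 1: the target forest.} Partition $A$ into blocks of sizes $pm_1,\ldots,pm_c$ and $B$ into blocks of sizes $qm_1,\ldots,qm_c$, with $\sum m_k=n$. On each block pair choose a spanning tree $T_k$ that supports a transport matrix with capacities $q$ and $p$. For $m_k=1$ this is $L_{pq}$. For $m_k>1$, take $m_k$ copies of $L_{pq}$ and glue them into a single tree.

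\textbf{Step 2: the gluing (expected main obstacle).} Gluing needs $q>1$, and this is the place where the argument could fail. Each copy of $L_{pq}$ has $p+q-1$ edges, so when $q>1$ it has some edge $(a,b)$ whose mass $c_{ab}$ is strictly less than $\min(\ga_a,\gb_b)$. The plan is to transfer mass along a $4$-cycle between two copies: move $\eps$ units from edges $(a,b)$ and $(a',b')$ onto the cross edges $(a,b')$ and $(a',b)$. For small $\eps$ this keeps all entries positive and the marginals intact, and it connects the two copies.

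The resulting support, however, contains a cycle, so it is not extremal. I would then push $\eps$ until some other edge of the cycle vanishes; this yields an extremal point whose support is connected on the union of the two copies. Connectivity survives because the vanishing edge lies on a cycle, so deleting it does not disconnect anything.

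By contrast, when $q=1$ every edge of $L_{p1}$ is saturated with mass $1=\ga_a$. Any cross edge would then force some $A$-row to split, which integrality forbids; this is consistent with the first case.

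\textbf{Step 3: forcing $T_\star$.} Given an extremal $T$ with support $F_c$, choose costs so that it is the optimum:
\begin{itemize}
\item set $W_{ij}=\lam_i+\mu_j$ on the edges of $F_c$, with arbitrary generic potentials;
\item set $W_{ij}=\lam_i+\mu_j+\delta_{ij}$ off $F_c$, with generic $\delta_{ij}>0$.
\end{itemize}
By Remark~\ref{rmk.gaugeinvcapac}, $(\bm{\lambda},\bm{\mu})$ is then a Hungarian gauge for $T$, so $T$ is optimal. Because $F_c$ is a forest, a small generic perturbation of the values on its edges keeps the gauge Hungarian, so $W$ can be taken generic. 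Genericity of $W$ then gives uniqueness, $T_\star=T$.

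This produces an optimal support with exactly $c$ components, for every $c\in\{1,\ldots,n\}$, which proves the claim for $q>1$.
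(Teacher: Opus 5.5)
Your $q=1$ case and your Step~3 (costs $\lambda_i+\mu_j$ on $F_c$, strictly larger off it) agree with the paper. Step~2, however, has a genuine gap. In the $4$-cycle exchange, the two tree edges $(a,b)$ and $(a',b')$ lose mass at the same rate. So at $\epsilon=\min(c_{ab},c_{a'b'})$ \emph{both} leave the support whenever $c_{ab}=c_{a'b'}$. Your justification (``the vanishing edge lies on a cycle'') only covers a single vanishing edge, and deleting two edges of the unique cycle of a unicyclic graph disconnects it.

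This is not a corner case, and ``unsaturated'' is not the relevant criterion. For $(p,q)=(3,2)$, $L_{32}$ has edges $(a_1,b_1),(a_2,b_1),(a_2,b_2),(a_3,b_2)$ with masses $2,1,1,2$, so every unsaturated edge has mass $1$. Take, say, $(a_2,b_1)$ and $(a'_2,b'_1)$ in two copies. The pivot at $\epsilon=1$ leaves two components, $\{a_1,b_1,a'_2,b'_2,a'_3\}$ and $\{a'_1,b'_1,a_2,b_2,a_3\}$, so nothing is glued. This is exactly the mechanism that blocks gluing when $q=1$.

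The fix is to pivot on two edges of \emph{distinct} masses, which is possible precisely when $q>1$. In a tree supporting a transport matrix with capacities $q$ on $A$ and $p$ on $B$, a leaf edge carries $q$ or $p$. So a common mass $c$ on all $p+q-1$ edges would give $c(p+q-1)=pq$ with $c\in\{p,q\}$, forcing $q=1$ or $p=1$. Concretely, $(a_1,b_1)$ in $L_{pq}$ has mass $q$, and your count gives another edge of mass $<q$.

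So, when attaching a new copy of $L_{pq}$ to the current tree, fix an edge of the current tree and choose an edge of the new copy with a different mass. Then exactly one edge leaves the support, and the result is a spanning tree of the union, extremal by Lemma~\ref{lem.ExtrIfForest}. The iteration then goes through.

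With this repair your proof is correct and genuinely different from the paper's. The paper builds spanning-tree vertices of every size directly from the B\'ezout gadgets $L^{\rm r}_{pq}$, $L^{\rm i}_{pq}$, $L^{\rm l}_{pq}$ assembled along planar binary trees, which also exhibits many such trees. Your pivoting argument is more elementary and needs only $L_{pq}$ and the vertex structure of the transportation polytope.
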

\begin{proof}
From Lemma \ref{lem.ExtrIfForest} above, and the fact that, when the
weights are generic, the optimal transport matrix is unique, we see
that $T_\star$ must be an extremal point in the polytope of possible
transport matrices. Conversely, for any spanning forest $H$ such that
there exists a transportation matrix $T$ with support $H$, there
exists also a matrix of costs for which $T$ is optimal (it suffices to
take $W_{ij}=0$ if $\edge{a_i}{b_j}\in H$ and $1$ otherwise, or, if we
insist on the genericity of the weights, a small perturbation of this
choice).

In other words, it suffices to understand the properties of the
extremal points in the polytope of possible transport matrices (i.e.,
for short, \emph{extremal transport matrices}), which is now a fact
merely of graph theory, that is independent from the Assignment
Problem. That is, we should determine for which pairs $(p,q)$ it is
the case that all forests $F \subseteq \cK_{pn,qn}$ equipped with a
positive-integer-valued function on the edges,
$c_{\edge{a_i}{b_j}}\geq 1$, such that $\sum_j c_{\edge{a_i}{b_j}}=q$
for all $i$ and $\sum_i c_{\edge{a_i}{b_j}}=p$ for all $j$, must be a
collection of bipartite graphs of size $(p,q)$.
% $L_{pq}$ with $c_{ij}$'s as above.

The positive claim, for $q=1$, is trivially seen to be true.  Indeed,
from Corollary \ref{corr.Tinteg} we know that in this case all the
$A$-vertices must have degree 1, which easily implies that all the
components must be star graphs.

Then, for the negative case when $q>1$, we will prove the stronger
fact that there exist extremal transport matrices whose support graph
is a spanning tree, for all $n\geq 1$ (which implies that, for all
$n$, and all integer partitions $\lam$ of $n$, there exist 
extremal transport matrices whose components have sizes
$(p+q)\lam_i$).

In order to do so, we shall produce three auxiliary graphs. Recall
that, from the B\'ezout lemma, there exist values $a$ and $b$ such
that $pa-qb=1$, and $0<a<q$, $0<b<p$.  As a result, the values
$a'=q-a$ and $b'=p-b$ are such that $pa'-qb'=-1$, and $0<a'<q$,
$0<b'<p$. Our three graphs are produced by performing constructions
analogue to the one for $L_{pq}$, but truncated to the left or right
portion of the segment:
% with an offset at the endpoints. Namely, we have
\begin{itemize}
\item a graph 
$L_{pq}^{\rm r}$
% $L_{pq}^{01}$, 
(r stands for ``root''),
where we partition the segments of length
  $pa$ (for $A$-vertices) and $qb$ (for $B$-vertices), with an offset
  of $0$ on the left (and thus of $1$ on the right). This produces a
  graph with one ``out-leg'', that is a half-edge of capacity 1 attached
  to an $A$-vertex.
\item a graph 
$L_{pq}^{\rm i}$
%$L_{pq}^{-12}$, 
(i stands for ``internal node''),
where we have the same segments as above, but the leftmost cell of
each segment is treated differently and produced an extra leg.
 %% partition the segments of
 %%  length $pa$ (for $A$-vertices) and $qb$ (for $B$-vertices), with an
 %% offset of $-1$ on the left (and thus of $2$ on the right).  This
produces a graph with two ``out-legs'', and one ``in-leg'', that is a
half-edge of capacity 1 attached to a $B$-vertex.
\item a graph 
$L_{pq}^{\rm l}$
% $L_{pq}^{-10}$, 
(l stands for ``leaf''),
where we partition the segments of length
  $pa'$ (for $A$-vertices) and $qb'$ (for $B$-vertices), with an offset
  of $-1$ on the left (and thus of $0$ on the right). This produces a
  graph with one ``in-leg''.
\end{itemize}
These three graphs are illustrated below, for our case example
$(p,q)=(7,4)$, for which we have 
$7\cdot 3-4\cdot 5=1$ and $7\cdot 1-4\cdot 2=-1$.
% --------- root
\begin{center}
\setlength{\unitlength}{10pt}
% [inline block 23: 7 envs, 8924 chars in 2 pieces, piece 1 here, a bare % at each other -> data_tex | \begin{picture}(28,2.25)(0,0) \thinlines...]

% \else [...TikZ\ code...] \fi
\end{center}
Note that the graph $L_{pq}^{\rm i}$ is connected whenever $q>1$,
because the capacity of the left-most internal edge (not a leg) is
exactly $q-1$ (and this is a crucial difference with the case where
$L_{pq}=L_{p1}$ is a star graph).

These three graphs have schematically the form
\[
%
\]
and the rule is that white-disk--legs must be paired with
gray-square--legs. 

Now, consider the set of planar binary trees, rooted at a leaf
(notoriously counted by Catalan numbers). These trees have $m$
internal vertices, $m+1$ non-root leaves and one root leaf, and can be
realised by assemblying the elementary units $L_{pq}^{\rm r,i,l}$
accordingly. Due to the values $a$, $b$, $a'$ and $b'$ from the
B\'ezout Lemma, the resulting graph is a tree with $(m+1)p$ white-disk
vertices and $(m+1)q$ gray-square vertices. This proves the existence
of extremal transportation matrices of all sizes for all cases of $p$
and $q$ coprime with $p>q>1$, and all sizes $m+1\geq 1$, as was to be
proven.\footnote{In fact, up to simple reasonings in light of the
  theory of Otter trees, we can deduce that there are exponentially
  many of them for large~$m$.}
\end{proof}

\noindent
Now, let us recall a rather evident subadditivity property of the
Optimal Transport Problem (namely, the subadditivity of the optimal
cost seen as a function of the unnormalized marginal measures):
\begin{rmk}[Subadditivity]
For a given value of $n$ and $m$, and a given $n \times m$ matrix of
costs $W$, let $T^{(1)}$ be a transport matrix for 
$(\va^{(1)}, \vb^{(1)})$ and $T^{(2)}$ a transport matrix for
$(\va^{(2)}, \vb^{(2)})$. Then $T^{(1)} + T^{(2)}$ is a transport
matrix for $(\va^{(1)} + \va^{(2)}, \vb^{(1)} + \vb^{(2)})$.
In particular, as, if 
$T^{(1)}$ is optimal for $(W;\va^{(1)}, \vb^{(1)})$
and
$T^{(2)}$ is optimal for $(W;\va^{(2)}, \vb^{(2)})$,
the matrix
$T^{(1)} + T^{(2)}$ may or may not be optimal for
$(W;\va^{(1)} + \va^{(2)}, \vb^{(1)} + \vb^{(2)})$, we have
\be
\label{eq.subadd1}
\begin{split}
&
\cW_{\star}
(W;\va^{(1)}, \vb^{(1)})
+
\cW_{\star}
(W;\va^{(2)}, \vb^{(2)})
=
\cW
(T^{(1)};W)
+
\cW
(T^{(2)};W)
\\
&\quad
=\cW
(T^{(1)}+T^{(2)};W)
\geq 
\cW_{\star}
(W;\va^{(1)} + \va^{(2)}, \vb^{(1)} + \vb^{(2)})
\ef;
\end{split}
\ee
with equality if and only if $T^{(1)}+T^{(2)}$ is optimal,
and more generally
  \begin{equation}
    \sum_{k = 1}^\ell \mathcal{W}_{\star} \left( W, \va^{(k)} , \vb^{(k)} 
    \right) \geqslant 
\mathcal{W}_{\star} \left(W, 
\va^{(1)}+\cdots+\va^{(\ell)}
, 
\vb^{(1)}+\cdots+\vb^{(\ell)}
\right)
% \mathcal{W}_{\star} \left( W, \sum_{k = 1}^l \va^{(k)}, \sum_{k = 1}^l \vb^{(k)} \right)
\ef. 
\label{sub}
  \end{equation}
\end{rmk}
\noindent
This suggests to introduce the definition
\begin{defn}[Perfect decomposition]
Given $\ell+1$ pairs of capacity vectors,
$(\va, \vb)$ and $\{(\va^{(k)}, \vb^{(k)})\}_{1 \leq k \leq \ell}$,
we say that the $\ell$-tuple $\{(\va^{(k)}, \vb^{(k)})\}_{1 \leq k \leq \ell}$
is a \emph{decomposition} of $(\va, \vb)$ if
\begin{align}
\va &= \sum_{k = 1}^\ell \va^{(k)}
\ef;
&
\vb &= \sum_{k = 1}^\ell \vb^{(k)}
\ef.
\end{align}
We say that it is a \emph{perfect decomposition} if the inequality
(\ref{sub}) holds with equality, and write
\be
\label{eq.3876584}
\{(\va^{(k)}, \vb^{(k)})\}_{1 \leq k \leq \ell}
\xleftarrow{\mathrm{p.d.}}
(\va, \vb)
\ef.
\ee
\end{defn}
\noindent
This notion has a form of associativity, as we have the equation
(\ref{eq.3876584}) above iff
\begin{gather}
\{
(\va^{(1)}+\va^{(2)}, \vb^{(1)}+\vb^{(2)}), (\va^{(3)}, \vb^{(3)}),
\ldots, (\va^{(\ell)}, \vb^{(\ell)})\}
\xleftarrow{\mathrm{p.d.}}
(\va, \vb)
\\
\label{eq.3876584b}
\{(\va^{(1)}, \vb^{(1)}), (\va^{(2)}, \vb^{(2)})\}
\xleftarrow{\mathrm{p.d.}}
(\va^{(1)}+\va^{(2)},\vb^{(1)}+\vb^{(2)})
\end{gather}
Furthermore, it has a notion of ``linearity'': if the equation
(\ref{eq.3876584b}) above holds, then
\be
\label{eq.3876584c}
\{(\lam_1\va^{(1)}, \lam_1\vb^{(1)}),
(\lam_2\va^{(2)}, \lam_2\vb^{(2)})\}
\xleftarrow{\mathrm{p.d.}}
(\lam_1\va^{(1)}+\lam_2\va^{(2)}, \lam_1\vb^{(1)}+\lam_2\vb^{(2)})
\ee
for all $\lam_1, \lam_2 \in\bR^+$.
A corollary of the fact seen above, that the extremal transport
matrices are those with acyclic support graphs, is the following:
\begin{corr}
\label{corr.perfOIforest}
  For $W$ a generic matrix of costs, the decomposition 
$\{(\va^{(k)}, \vb^{(k)})\}$ of $(\va, \vb)$ is perfect only if
  the forests $F^{(k)}=H(T_{\star}(W, \va^{(k)}, \vb^{(k)}))$,
  support of the optimal transport matrices for the capacities
  $(\va^{(k)}, \vb^{(k)})$, are such that $F:=F^{(1)} \cup \cdots \cup
  F^{(\ell)}$ is a forest.
\end{corr}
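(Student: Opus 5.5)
We argue by contraposition, combining the subadditivity identity (\ref{eq.subadd1})--(\ref{sub}) with Lemma~\ref{lem.ExtrIfForest} and the uniqueness of the optimum for generic $W$. Write $T^{(k)}=T_{\star}(W;\va^{(k)},\vb^{(k)})$, which is unique since $W$ is generic. Set $T=\sum_k T^{(k)}$. Each $T^{(k)}$ has non-negative entries, so no cancellation can occur in the sum. Hence the support of $T$ is exactly the union of the supports, $H(T)=F^{(1)}\cup\cdots\cup F^{(\ell)}=F$, which is the multi-term version of (\ref{eq.287658354}), with the positive coefficients absorbed into the $T^{(k)}$.

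Now assume the decomposition is perfect. Then the chain of (in)equalities in (\ref{sub}) is tight. Since $T\in\cT(\va,\vb)$ and $\cW(T;W)=\sum_k\cW_\star(W;\va^{(k)},\vb^{(k)})=\cW_\star(W;\va,\vb)$, the matrix $T$ is an optimal transport matrix for $(W;\va,\vb)$.

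For generic $W$ the optimal transport matrix is unique. Indeed, if two optima existed, a cycle in the support of their difference would have vanishing alternating sum, contradicting Definition~\ref{def.genwei1cap}; equivalently, we may use the uniqueness already invoked in the proof of Proposition~\ref{prop.treeOnlyIfStar}. A unique minimiser of a linear functional over a polytope is a vertex, so $T$ is extremal in $\cT(\va,\vb)$. By Lemma~\ref{lem.ExtrIfForest}, $H(T)$ is a forest, and therefore $F=H(T)$ is a forest.

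The only delicate point is the uniqueness step. If it needs justification beyond what was stated, we would argue directly. Suppose $H(T)$ contained a cycle $C$. Perturbing $T$ along $C$ by $\pm\eps$, as in the proof of Lemma~\ref{lem.ExtrIfForest}, changes the cost by $\pm\eps\,\walt(C)$. This quantity is nonzero by genericity, so one of the two signs strictly decreases the cost, contradicting the optimality of $T$. This cycle argument bypasses extremality entirely.
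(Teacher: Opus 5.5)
Your proposal is correct and follows the paper's proof. Both write $F=H(T)$ with $T=\sum_k T_\star(W;\va^{(k)},\vb^{(k)})$, and both use Lemma~\ref{lem.ExtrIfForest} to show that a cycle in $F$ prevents $T$ from being optimal, so (\ref{sub}) is strict. Your explicit cycle-perturbation argument also spells out a step the paper leaves implicit: a non-extremal $T$ cannot be optimal for generic $W$, because the perturbation is feasible for small $\eps$ (all entries on the cycle are positive) and changes the cost by $\pm\eps\,\walt(C)\neq 0$.
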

\begin{proof}
Indeed, observe that $F=H(T)$, with 
$T=\sum_{k=1}^{\ell}T_{\star}(W,\va^{(k)},\vb^{(k)})$.
% $T=T_{\star}(W,\va^{(1)},\vb^{(1)})+\cdots+T_{\star}(W,\va^{(\ell)},\vb^{(\ell)})$.
%
Then, from Lemma \ref{lem.ExtrIfForest} we know that if 
$F$ is not a forest, then $T$ is not optimal, and thus (\ref{sub}) is
a strict inequality.
\end{proof}
\noindent
Note that the other direction of the implication is not true. For example,
for $n=m=2$, we have a counterexample by choosing (again, spaces in
$T$ stand for zeroes)
\be
\label{eq.3876538}
% [inline block 24: 3 envs, 4849 chars in 2 pieces, piece 1 here, a bare % at each other -> data_tex | \begin{array}{rlrlrl} W...]

\ee 
Also, the other direction does not hold even if we add the further
hypothesis that the $\ga_i^{(k)}$'s and $\gb_j^{(k)}$'s are all
non-zero, as shown by a larger counter-example in
Figure~\ref{fig.controesABpos}.

\begin{figure}[t]
\begin{center}
\if\faifig1  
%
\else [...TikZ\ code...] \fi
\end{center}
\caption{\label{fig.controesABpos}% 
  Counterexample to the possibility that $F^{(1)}\cap F^{(2)}$ being a
  forest may imply that 
  $\{(\va^{(1)}, \vb^{(1)}), (\va^{(2)}, \vb^{(2)})\}$ is a perfect
  decomposition, under the further assumption that the $\ga_i^{(k)}$
  and $\gb_j^{(k)}$ are all nonzero.  Left: red and blue denote
  $F^{(1)}$ and $F^{(2)}$, all the costs of edges in $F^{(1)}\cup
  F^{(2)}$ are zero, the cost of the green dashed edge is negative,
  while edges not shown have very large positive costs.  Right: the
  support graph of the optimal configuration for the sum of the two
  capacities.  The multiplicity of an edge denote the corresponding
  (integer) entry of the transport matrix.}
 %% The crucial fact is that
 %%  the central hexagon has monochromatic edges of both colors and both
 %%  parities, invalidating all arguments on alternating cycles.
\end{figure}

We find that the problem of understanding perfect decompositions is
important, and we analyse it in what follows. Let us give a first
characterization of perfect decompositions in terms of the graph
$G(W;\bm{\lambda},\bm{\mu})$:
\begin{lem}
\label{lem.perfdeco1st}
The decomposition $\{(\va^{(k)}, \vb^{(k)})\}_{1 \leq k \leq \ell}$ is
a perfect decomposition for $(\va, \vb)$ and the matrix of costs $W$
if and only if there exists a Hungarian gauge $(\bm{\lambda},\bm{\mu})$
for $(\va, \vb)$, and
optimal assignments $T^{(k)}=T_{\star}(W;\va^{(k)}, \vb^{(k)})$, such
that $H(T^{(k)})\subseteq G(W;\bm{\lambda},\bm{\mu})$ for all 
$1 \leq k \leq \ell$.
\end{lem}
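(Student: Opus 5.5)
The plan is to use LP duality (complementary slackness) on both sides, together with the equality case of the subadditivity (\ref{eq.subadd1})/(\ref{sub}). The key identity is that for any gauge $(\bm{\lambda},\bm{\mu})$ and any $T\in\cT(\va',\vb')$,
\be
\cW(T;W)=\sum_i \ga'_i\lam_i+\sum_j \gb'_j\mu_j+\sum_{ij}T_{ij}W'_{ij}
\ef,
\ee
with $W'_{ij}=W_{ij}-\lam_i-\mu_j$. This is just Remark~\ref{rmk.gaugeinvcapac} applied to arbitrary capacities $(\va',\vb')$.

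For the ``if'' direction, assume $(\bm{\lambda},\bm{\mu})$ is a Hungarian gauge for $(\va,\vb)$ and that $H(T^{(k)})\subseteq G(W;\bm{\lambda},\bm{\mu})$ for every $k$. Since $W'\geq 0$ everywhere and $W'$ vanishes on the support of each $T^{(k)}$, the identity gives
\be
\cW(T^{(k)};W)=\sum_i\ga^{(k)}_i\lam_i+\sum_j\gb^{(k)}_j\mu_j
\ef.
\ee
Summing over $k$, using $\sum_k\va^{(k)}=\va$ and $\sum_k\vb^{(k)}=\vb$, we get $\sum_k \cW_\star(W;\va^{(k)},\vb^{(k)})=\sum_i\ga_i\lam_i+\sum_j\gb_j\mu_j$. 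By the paragraph after Remark~\ref{rmk.gaugeinvcapac}, the right-hand side equals $\cW_\star(W;\va,\vb)$, so equality holds in (\ref{sub}). Strictly speaking we do not even need each $T^{(k)}$ to be optimal here: each is a transport matrix whose cost equals the gauge lower bound, so it is automatically optimal.

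For the ``only if'' direction, assume the decomposition is perfect, and take optimal $T^{(k)}$ together with any Hungarian gauge $(\bm{\lambda},\bm{\mu})$ for $(\va,\vb)$. Such a gauge exists by LP duality; equivalently, the Ford--Fulkerson/Hungarian algorithm terminates with one. Set $T=\sum_k T^{(k)}\in\cT(\va,\vb)$. Perfectness means $\cW(T;W)=\sum_k\cW(T^{(k)};W)=\cW_\star(W;\va,\vb)$, so $T$ is optimal for $(\va,\vb)$. Applying the identity to $T$ gives $\sum_{ij}T_{ij}W'_{ij}=0$. Since $T_{ij}\geq0$ and $W'_{ij}\geq0$, every term vanishes, so $W'_{ij}=0$ wherever $T_{ij}>0$. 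Finally, by (\ref{eq.287658354}) (positivity of the entries), $H(T)=\bigcup_k H(T^{(k)})$, and therefore $H(T^{(k)})\subseteq G(W;\bm{\lambda},\bm{\mu})$ for all $k$.

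The main obstacle is the existence of a Hungarian gauge for an arbitrary (possibly non-generic) instance, i.e.\ strong LP duality. I would quote it as the standard duality for the transportation polytope (or the correctness of the Ford--Fulkerson algorithm), which the paper already implicitly uses. A minor point to check is that zero capacities in some $(\va^{(k)},\vb^{(k)})$ cause no trouble; they do not, since the identity holds verbatim for them.
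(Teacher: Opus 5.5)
Your proof is correct and follows essentially the same route as the paper. The paper also passes to the reduced costs $W'_{ij}=W_{ij}-\lam_i-\mu_j$ of a Hungarian gauge for $(\va,\vb)$, notes that perfectness is gauge invariant, and uses that $\cW(T_\star;W')=0$ while each $\cW(T^{(k)};W')\geq 0$, so equality forces every $T^{(k)}$ to be supported on the zeros of $W'$; your version simply makes this complementary-slackness identity explicit, and likewise makes explicit the reliance on the existence of a Hungarian gauge (strong duality), which the paper assumes tacitly.
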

\begin{proof}
Of course, the notion of perfect decomposition is gauge invariant,
i.e.\ the $\ell$-tuple
$\{(\va^{(k)}, \vb^{(k)})\}_{1 \leq k \leq \ell}$ is a perfect
decomposition for $(\va, \vb)$ and the matrix of costs $W$, if and
only if it is perfect for $(\va, \vb)$ and
$W'=\{W_{ij}-\lam_i-\mu_j\}$. As, in a Hungarian gauge
for $(W;\va,\vb)$, we have
$\cW(T_{\star};W')=0$, the only possibility to have
$\cW(T_{\star};W')=\sum_k \cW(T^{(k)};W')$ is that the gauge
$(\bm{\lambda},\bm{\mu})$ is a Hungarian gauge also for all the
instances $(W;\va^{(k)}, \vb^{(k)})$, which is equivalent to our
claim.
\end{proof}
\noindent
It would be interesting to find a characterization that does not
require the detection of a Hungarian gauge.  In light of
Corollary \ref{corr.perfOIforest} (but also of the counterexample in
equation (\ref{eq.3876538}) and Figure \ref{fig.controesABpos}), and
of the intuition that the matching subgraph introduced in the previous
sections may be related to the present framework, we have been led to
the identification of the following fact:
\begin{thm}
\label{thm.critForestCapa}
  Let $\ell, n, m$ be three integers such that $n \geq m,\ell$, let
  $W$ be a generic matrix of costs of size $(n+1)\times m$,
  $(\va,\vb)$ two vectors of capacities with all $\ga_i$ in the range
  $\{1,\ldots,\ell\}$ and $\gb_j$ non-zero integers, multiples of
  $\ell$, such that $\sum_j \gb_j=n\ell$, and
  $\{(\va^{(k)},\vb^{(k)})\}_{1\leq k \leq \ell}$ a decomposition
  of $(\va,\vb)$ of a special form:
%% with the following properties:
%% \begin{itemize}
%% \item
there exists $i_k \in [n+1]$, such that
$\ga^{(k)}_{i}=1-\delta_{i,i_k}$, while $\gb_j^{(k)}=\gb_j/\ell$ for
all $j$ and $k$.\footnote{In other words, each subproblem
  $(W;\va^{(k)},\vb^{(k)})$ is a hyper-Assignment Problem on a
  subgraph $\cK_{n,m} \subset \cK_{n+1,m}$, with all $\ga_i$ equal to
  1, as in Section~\ref{sec.ktuples}.}  Then the decomposition is perfect if
and only if the optimal hyper-assignments 
$M^{(k)}=H(T_{\star}(W, \va^{(k)}, \vb^{(k)}))$, support of the
optimal transport matrices for the capacities $(\va^{(k)},\vb^{(k)})$,
are such that $F:=M^{(1)} \cup \cdots \cup M^{(\ell)}$ is a forest.
\end{thm}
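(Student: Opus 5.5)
The forward implication (perfect $\Rightarrow$ forest) is Corollary~\ref{corr.perfOIforest} applied verbatim. Its hypotheses hold here: $W$ is generic, and each $M^{(k)}$ is the support of an optimal transport matrix. So the whole content of the theorem is the converse, and the plan is to prove it by constructing a Hungarian gauge that is common to all $\ell$ subproblems. Lemma~\ref{lem.perfdeco1st} then concludes. Concretely, it suffices to produce $(\bm\lambda,\bm\mu)$ such that $W'_{ij}=W_{ij}-\lambda_i-\mu_j\ge0$ everywhere and $W'_{ij}=0$ on every edge of $F$. Indeed, $T=\sum_k T_\star(W;\va^{(k)},\vb^{(k)})$ lies in $\cT(\va,\vb)$ and has support $F\subseteq G(W;\bm\lambda,\bm\mu)$, so the gauge is Hungarian for $(\va,\vb)$. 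The decomposition is then perfect.

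Step 1 is to reduce to the setting of Section~\ref{sec.teoremaserio1}. I would split each $b_j$ into $\gb_j/\ell$ copies and perturb, as in the proof of Theorem~\ref{thm.setting1hyper}. Each subproblem becomes an ordinary assignment on $G\setminx a_{i_k}$, and $M^{(k)}$ is the $\eps\to0$ limit of its optimal matching. The indices $i_k$ need not be distinct, and the set $I=\{i_k\}$ may be smaller than $[n+1]$; repetitions are harmless, since they only duplicate summands. I would then apply the machinery to the restricted collection $\calJ'=\{\{a_i\}\}_{i\in I}$, which is of the form of Theorem~\ref{thm:Jtree} with $A_0=\{a_i\}_{i\in I}$ and $B_0=\varnothing$. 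By that theorem, $H^*_{\calJ'}$ is a tree whose $B$-vertices have degree 2, and the rest of $H_{\calJ'}$ consists of isolated dimers common to all the matchings. The forest hypothesis on $F$ should be exactly what guarantees that, after merging the split copies, no extra cycles appear. I would import this using Lemma~\ref{lem.hypermatchNOiji}, as in the proof of Theorem~\ref{thm.setting1hyper}.

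Step 2 is to build the gauge by mimicking the simultaneous Hungarian gauge theorem of Section~\ref{sec.hunggauge}. Set $\lambda_i=-\cW(M_\star(G\setminx a_i))$ for $i\in I$, and $\mu_j=\cW(M_\star(G\cup\{b'_j\}))$. For the $A$-vertices outside $I$ and the dimer components, define the parameters by solving $W'=0$ along the edges of $F$. This is consistent because $F$ is a forest, and it also fixes the free constant on each component of $F$.

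The equalities $W'=0$ on $F$ follow as in that theorem, where $M^{(j)}$ decomposes along the tree. The inequalities $W'_{ij}>0$ off $F$ follow from the semi-alternating-cycle argument with an auxiliary vertex $\tilde a$, which rules out every nonpositive reduced cost. The step I expect to be the main obstacle is this positivity when $a_i$ and $b_j$ lie in \emph{different} components of $F$: then there is no tree path to close the cycle. There I would try first to use the remaining freedom in the per-component constants. I would choose them, as in the discussion of the polytope of Hungarian gauges, to be a point of the polytope defined by requiring each subproblem's Hungarian conditions. Its nonemptiness would follow from the optimality of each $M^{(k)}$ separately, together with the fact that the union $F$ is acyclic.
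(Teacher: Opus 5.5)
Your reduction is sound. By Lemma~\ref{lem.perfdeco1st} it suffices to produce a gauge with $W'\ge 0$ everywhere and $W'=0$ on $F$, and this is also the paper's strategy. The gap is in the positivity step.

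\emph{Where it fails.} By keeping only the colours $\calJ'=\{\{a_i\}\}_{i\in I}$, you lose two things.
\begin{itemize}
\item The auxiliary-vertex argument of Section~\ref{sec.hunggauge} breaks whenever $\tilde a\notin I$, because the colour $\{\tilde a\}$ is not available.
\item You have no argument at all for pairs $(a_i,b_j)$ in different components of $F$. Such components do occur: a vertex $a\notin I$ matched to the same saturated $b$ in every subproblem forms a separate component.
\end{itemize}
Your remedy is to choose the per-component constants in the intersection of the subproblems' Hungarian-gauge polytopes. This is circular. A gauge with $W'\ge 0$ vanishing on every $M^{(k)}$ exists if and only if the decomposition is perfect, by Lemma~\ref{lem.perfdeco1st} again. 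Moreover, the reason you give for nonemptiness (each $M^{(k)}$ optimal and $F$ acyclic) is false in this generality. The examples \eqref{eq.3876538}, Figure~\ref{fig.controesABpos} and \eqref{eq.3876538b} are acyclic unions of optimal pieces that are not perfect decompositions. The single-path structure specific to $\cK_{n+1,\cdot}$ has to be used exactly at this point.

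\emph{How the paper closes it.} It passes to distinct $i_k$ and a minimal counterexample, in which no edge has $\calC(e)=\{1,\ldots,\ell\}$ (otherwise one drops $a_i$ and lowers $\gb_j$). Since $M^{(k)}\symdif M^{(k')}$ is a single path from $a_{i_k}$ to $a_{i_{k'}}$, any component of $F$ avoiding the $a_{i_k}$ would consist of such edges. Hence $F$ is a tree, and the gauge vanishing on it is unique. For $(a_i,b_j)\notin F$, the paper shows $\calC(e_1)\cap\calC(e_3)\cap\cdots\cap\calC(e_{2h-1})\neq\varnothing$ along the tree path, using the alternating paths $P',P'',P'''$ and the Y-shape observation. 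It then concludes with Proposition~\ref{prop.swapcyc}.

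\emph{How your own route closes.} It works at once if you do not discard colours.
\begin{itemize}
\item The split instance has $\sum_j\gb_j/\ell=n$ $B$-vertices, so it is a first bipartite standard setting on $\cK_{n+1,n}$.
\item The simultaneous Hungarian gauge theorem with all $n+1$ colours gives one gauge that is Hungarian for every $G\setminx a$. Its zero set is the spanning tree $H_\calJ\supseteq F$.
\item Read in the split instance, the gauge you wrote down is this canonical one. Indeed, the canonical gauge already vanishes on $F$, so solving $W'=0$ along $F$ from the canonical $\mu_j$'s returns the canonical $\lambda_i$'s.
\item Therefore the sum of the optimal matchings of the split subproblems is optimal in the split instance for every small generic $\eps$.
\item Every transport plan for $(\va,\vb)$ can be split into one for the split instance with cost changed by $O(\eps)$. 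Letting $\eps\to0$ shows that $\sum_k T^{(k)}$ is optimal for $(\va,\vb)$.
\end{itemize}
No component issue arises this way. This argument in fact shows that, under these hypotheses, both sides of the equivalence always hold, consistently with Theorem~\ref{thm.setting1hyper}.
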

\begin{proof}
% Observe 
Note that the $i_k$'s are not all equal, otherwise we would have
$\ga_{i_1}=0$.  Furthermore, we can assume w.l.o.g.\ that the $i_k$'s
are all distinct, because two sets of capacities that are proportional
have a trivial role in a decomposition, in light of the linearity
property of equation (\ref{eq.3876584c}). In other words, we may make
a proof by contradiction, and consider a counterexample of minimal
size, where the order is given by the pair $(\ell,n)$ (that is,
$(\ell,n)\preceq (\ell',n')$ if $\ell\leq \ell'$ and, when
$\ell=\ell'$, we also have $n \leq n'$).  By what we said above, we
deduce that on a minimal counterexample the $i_k$'s must be all
distinct.

Again, let us define $F=H(T)$, with 
$T=\sum_k T_{\star}(W,\va^{(k)},\vb^{(k)})$.
The `only if' part is the statement of Corollary
\ref{corr.perfOIforest}, so we pass directly to the `if' part.

In parallel to the original instance, we could consider also the
framework of the hyper-Assignment Problem where the $B$-vertices with
$\gb_j=k \ell$ are replaced by $k$-tuples of vertices, with costs
perturbed infinitesimally (that is, the ``splitting'' procedure
discussed in Section \ref{sec.ktuples}). This framework allows to
prove easily, as always using the generality hypothesis on the costs,
that the symmetric difference of optimal hyper-assignments,
$M^{(k)}\symdif M^{(k')}$, must consist of a single alternating path
with endpoints $a_{i_k}$ and $a_{i_{k'}}$.

We introduce a notion of `colors', and sets $\calC(e)$, similar to
those introduced in equation (\ref{eq.2876587653}) for the study of
matching subgraphs.
More precisely, we associate to the edges $e$ of $\calK_{n+1,m}$
%=\edge{a_i}{b_j}$
% of $F$ the (non-empty) 
the sets $\calC(e) \subseteq \{1,\ldots,\ell\}$ such that 
$k\in \calC(e)$ iff
% $T^{(k)}_{ij}>0$, i.e.,  if 
$e\in M^{(k)}$.
% , while we set $\calC(e)=\emptyset$ if $e\not\in E(F)$.  

Note that there exists no edge $e=\edge{a_i}{b_j}$ such that 
$\calC(e) = \{1,\ldots,\ell\}$, otherwise we could drop $a_i$ from our
set of $A$-vertices, decrease the value of $\gb_j$ by one, and get a
counterexample with the same value of $\ell$ and a smaller value of
$n$, violating the minimality hypothesis.

Now, suppose that $F$ is a forest with two or more components. 
As $M^{(k)}\symdif M^{(k')}$ contains an alternating path
with endpoints $a_{i_k}$ and $a_{i_{k'}}$ (and in fact coincides with
this path), we deduce that the $\ell$ vertices 
$\{a_{i_k}\}_{1 \leq k \leq \ell}$ are all in the same component.
Thus, any other non-trivial component would contain an edge $e$ with 
$\calC(e) = \{1,\ldots,\ell\}$, an eventuality that we have just
excluded. Thus $F$ is in fact a tree.

Now, we know that there exists a gauge
$(\bm{\lambda},\bm{\mu})$ 
%% with 
%% % $G(\bm{\lambda},\bm{\mu})\supseteq F$,
%% $G(\bm{\lambda},\bm{\mu})=G$, and thus so 
such that $W'_{ij}=0$ on all the
edges of $F$ (and thus, in all the edges which are in some of the
$M^{(k)}$'s). This gauge is obtained
by solving all the equations $W_{ij}-\lam_i-\mu_j=0$, for
$\edge{a_i}{b_j}\in E(G)$, and setting $\lam_1=0$. We shall prove
that this gauge is Hungarian for the instance $(W;\va,\vb)$
(and all the $(W;\va^{(k)},\vb^{(k)})$ as well), thus
implying that $T$ is optimal.  In other words, we have to prove that,
for all edges $e=\edge{a_i}{b_j}$ not in $F$,
$W'_{ij}\geq 0$ (and in fact, from the generality hypothesis, that
$W'_{ij}>0$). We do this by contradiction, and assume that $e$ is an
edge with $W'_{ij}<0$.
As $F$ is a tree, adding $e$ to $F$ produces a unicyclic graph, with
cycle $C=(e,e_1,\ldots,e_{2h-1})$ of even length (and $h\geq 2$). We
claim that
$\Gamma:=\calC(e_1)\cap\calC(e_3)\cap\cdots\cap\calC(e_{2h-1})\neq\emptyset$.
Suppose that this is the case, and let $k$ be an element of $\Gamma$.
% in this set. 
Then
%, if $W'_{ij}<0$, 
we would get a contradiction with the optimality
of $M^{(k)}$, from Proposition \ref{prop.swapcyc} and the fact that
$\walt(C)=-W'_{ij}>0$, and conclude.

So we are only left with proving our claim that
$\Gamma \neq \emptyset$.  Say that $k$ is such that 
% $i_k \not\in \calC(e)$.
$i_k \neq i$, where $i$ is the index of the endpoint $a_i$ of the edge
$e$.  Such an index must exist, because the $i_k$'s are not all equal.
% and $\calC(e)$ is not the full set of indices.
Then there exists (exactly) one edge $e'\in F \setminx e$ incident to
$a_i$ and with $k\in\calC(e')$.  Now, 
$\calC(e') \subsetneq \{1,\ldots,\ell\}$, and there must exist an
element $k' \not\in \calC(e')$.  So there is a subgraph of $F$
consisting of the edges $e$ such that $\calC(e)\cap\{k,k'\}$ has
cardinality 1, which has a connected component containing $a_i$. 
On the splitted instance,
%% Our reasoning above (on the perturbed instance)
%  allows to select a path on 
%% implies that 
this subgraph is a path alternating in colors $k$ and
$k'$, going through $a_i$ and with endpoints $a_{i_k}$ and
$a_{i_{k'}}$. Going back to the merged instance, this path remains a
simple path (i.e., the merging procedure does not produce
self-intersections), in light of Lemma~\ref{lem.hypermatchNOiji}.

Let us repeat the argument on $b_j$. There must be exactly $\gb_j$
distinct edges
% $e''$ 
incident to $b_j$ such that their sets $\calC$ contain $k$.
% with $k\in\calC(e'')$. 
Let us take one of these, and call it $e''$. Suppose that
$k' \not\in \calC(e'')$.  Again, in this case there would exist an
alternating path $P''$, in colors $k$ and $k'$, going through
$b_j$. But this second path has the same endpoints as $P'$, so these
two paths must coincide, and the portion of this path between $a_i$
and $b_j$ must be
$C \setminx e$, and, as a result, one among $k$ and $k'$ is in
$\Gamma$. If instead $k' \in \calC(e'')$, let $k''$ be some element
not in $\calC(e'')$ (this must exist again because no edge contains
all colors), and define $P''$ as the path, alternating in colors $k$
and $k''$, going through $b_j$. The paths $P'$ and $P''$ share an
endpoint, because the endpoints of $P''$ are $a_{i_k}$ and
$a_{i_{k''}}$. Call $P'''$ the unique open path of
$M^{(k')}\symdif M^{(k'')}$, thus with endpoints $a_{i_{k'}}$ and
$a_{i_{k''}}$. 

In fact, as by assumption $F$ is a tree, and all pairs of our three
paths share one endpoint, their union must be either one path, or a
$Y$-shaped diagram, similar to the one appearing in the proof of our
crucial Lemma~\ref{lem:tripla}.  

It is easily seen that, for a $Y$-shaped graph
% with leaves $x_1$, $x_2$ and $x_3$, 
any two vertices $v_1$ and $v_2$ must be contained in at least one of
the three paths connecting the leaves pairwise.

Applying this fact to the vertices $a_i$ and $b_j$, we deduce that at
least one among the paths $P'$, $P''$ or $P'''$ must contain both vertices,

Thus the path 
$C \setminx e$ is a portion of one of the paths $P'$, $P''$ or $P'''$,
and thus is alternating in two among our three colors $k$, $k'$ and
$k''$, so that one of the three colors must be in~$\Gamma$.
\end{proof}

\noindent
Note that in the theorem above we have relaxed the hypotheses at the
best of our possibilities. For example, even just in the setting of
the ordinary Assignment Problem, if we have an underlying graph
$\cK_{n+2,n}$ instead of $\cK_{n+1,n}$, and for the rest essentially
the same hypotheses, one can easily identify a counterexample:
\be
\label{eq.3876538b}
\begin{split}
% \begin{array}{rlrlrl}
W
&=
\begin{pmatrix}
-1 & 0 \\
0 & 0 \\
0 & 2 \\
1 & 0
\end{pmatrix}
\qquad
\begin{array}{rl}
(\va,\vb)&\hspace*{-2.5mm}=((1,2,2,1),(3,3)) \\
(\va^{(1)},\vb^{(1)})&\hspace*{-2.5mm}=((1,0,1,0),(1,1)) \\
(\va^{(2)},\vb^{(2)})&\hspace*{-2.5mm}=((0,1,1,0),(1,1)) \\
(\va^{(3)},\vb^{(3)})&\hspace*{-2.5mm}=((0,1,0,1),(1,1))
\end{array}
\\
T_{\star}&=
\begin{pmatrix}
1 & \pz \\
\pz & 2 \\
2 & \pz \\
\pz & 1
\end{pmatrix}
\quad
T^{(1)}=
\begin{pmatrix}
\pz & 1\\
\pz & \pz \\
1 & \pz \\
\pz & \pz 
\end{pmatrix}
\rule{0pt}{29pt}
\quad
T^{(2)}=
\begin{pmatrix}
\pz & \pz \\
\pz & 1 \\
1 & \pz \\
\pz & \pz 
\end{pmatrix}
\rule{0pt}{29pt}
\quad
T^{(3)}=
\begin{pmatrix}
\pz & \pz \\
1 & \pz \\
\pz & \pz \\
\pz & 1 
\end{pmatrix}
\rule{0pt}{29pt}
\end{split}
\ee
and we verify immediately that $T_\star \neq T^{(1)}+T^{(2)}+T^{(3)}$
(for example, they differ in the top-left entry).

Now we are ready to show the connection between our two problems:
\begin{corr}
\label{corr.HnewIsHold_ass}
Let $W$ be a matrix of costs $(n+1)\times n$, and consider the
`standard setting' of Corollary \ref{cor.setting1}. Call $H_\calJ$ the
resulting matching subgraph.  Now consider the instance $(W;\va,\vb)$
of the Hitchcock's Assignment Problem, with $W$ as above, $\ga_i=n$
for all $i$ and $\gb_j=n+1$ for all $j$, call $T_{\star}$ the optimal
transport matrix, and $H_\star=H(T_{\star})$ its support.  Then 
$H_\calJ = H_\star$, and, more precisely
$(T_{\star})_{ij}=|\calC(\edge{a_i}{b_j})|$, where the $\calC(e)$'s
are the sets associated to the construction of the matching subgraph
(i.e.\ the set of colors $U\in \calJ$ such that $e \in M_U$).
\end{corr}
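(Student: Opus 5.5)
The plan is to obtain the corollary as the special case $\ell=n+1$, $m=n$ of Theorem~\ref{thm.critForestCapa}. Decompose $(\va,\vb)$, with $\ga_i=n$ and $\gb_j=n+1$, into the $n+1$ pairs $(\va^{(k)},\vb^{(k)})$ with $\ga^{(k)}_i=1-\delta_{i,k}$ and $\gb^{(k)}_j=1$, that is, $i_k=k$. Summing over $k$ gives $\sum_k \ga^{(k)}_i = n = \ga_i$ and $\sum_k\gb^{(k)}_j=n+1=\gb_j$, so this is a decomposition. The hypotheses of Theorem~\ref{thm.critForestCapa} are met: every $\ga_i=n$ lies in $\{1,\ldots,\ell\}$, every $\gb_j=n+1$ is a multiple of $\ell=n+1$, $\sum_j\gb_j=n(n+1)=n\ell$, and $n\geq m,\ell$ fails only by one. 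The last point needs care: the theorem is stated with $n\geq \ell$, and here $\ell=n+1$. I will check whether this inequality is used in the proof; as far as I can see it only ensures that the $i_k$ can be distinct, which holds trivially here. If it is really needed, I will rerun the argument verbatim with $\ell=n+1$.

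Each subproblem $(W;\va^{(k)},\vb^{(k)})$ has all $\gb^{(k)}_j=1$ and zero capacity on row $k$. Its polytope is therefore the Birkhoff polytope of the minor $W^{\{k\}|\varnothing}$. By genericity, its unique optimum is the permutation matrix of $M_{\{a_k\}}=M_\star(G\setminx a_k)$, so $M^{(k)}=M_{\{a_k\}}$. Hence $F=\bigcup_k M^{(k)}=H_\calJ$, which is a spanning tree by Corollary~\ref{cor.setting1} and in particular a forest. Theorem~\ref{thm.critForestCapa} then says the decomposition is perfect, i.e.\ $\sum_k\cW_\star(W;\va^{(k)},\vb^{(k)})=\cW_\star(W;\va,\vb)$. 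It follows that $T:=\sum_k T_\star(W;\va^{(k)},\vb^{(k)})$ attains the optimal cost and is therefore optimal.

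For uniqueness, $W$ is generic, but $(\va,\vb)$ need not be. I will argue that $H(T)=F$ is a tree, so $T$ is extremal by Lemma~\ref{lem.ExtrIfForest}. Then any other optimum $T'$ gives an optimal segment from $T$ to $T'$, and by the alternating-cycle argument of Lemma~\ref{lem.ExtrIfForest} together with genericity of $W$ this forces $T'=T$. The standard LP fact that, for generic costs, the optimum of a transportation polytope is unique gives the same conclusion, and I will cite it that way. Thus $T_\star=T$.

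Finally, $(T_\star)_{ij}=\sum_k [\edge{a_i}{b_j}\in M_{\{a_k\}}]=|\calC(\edge{a_i}{b_j})|$, so $H_\star=H(T_\star)=H_\calJ$. The main obstacle is the hypothesis-matching step ($n\geq\ell$ versus $\ell=n+1$) in applying Theorem~\ref{thm.critForestCapa}. If that fails, I will fall back on the direct route used inside its proof. Take the gauge solving $W'_{ij}=0$ on the tree $H_\calJ$; it coincides with the simultaneous Hungarian gauge theorem of Section~\ref{sec.hunggauge}, which already shows $W'_{ij}>0$ off $H_\calJ$. Lemma~\ref{lem.perfdeco1st} then gives perfection immediately.
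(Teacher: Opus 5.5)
Your proposal is correct and is essentially the paper's proof: the paper also applies Theorem~\ref{thm.critForestCapa} with $\ell=n+1$ and $i_k=k$, notes that $M^{(k)}=M_{\{a_k\}}$ so that $F=H_\calJ$ is a tree, and concludes that the matrix with entries $|\calC(\edge{a_i}{b_j})|$ is the optimal transport matrix. The clash with the stated hypothesis $n\geq\ell$ that you flag is real but harmless: the paper passes over it, and the proof of that theorem only needs the $i_k$ to be distinct indices in $[n+1]$. Your fallback through the simultaneous Hungarian gauge of Section~\ref{sec.hunggauge} combined with Lemma~\ref{lem.perfdeco1st} would also give a valid and more direct argument.
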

\noindent
and its hyper-Assignment generalization in the sense of
Section~\ref{sec.ktuples}:
\begin{corr}
\label{corr.HnewIsHold_hass}
Let $(W;\va,\vb)$ be an instance of the Hitchcock's Assignment
Problem, with $W$ of size $(n+1)\times m$, of the form discussed in
Theorem~\ref{thm.critForestCapa} above, with $\ell=n+1$ and
$i_k=k$.\footnote{In particular $\ga_i=n$ for all $i$ and
  $\gb_j=(n+1)\tilde{\gb}_j$ for some positive integers
  $\tilde{\gb}_j$ such that $\sum_{j=1}^m \tilde{\gb}_j=n$, note that
  these vectors of capacities are generic.}  Then the optimal
transport matrix $T_{\star}$ has support graph $H(T_{\star})$ that
coincides with the matching subgraph $H_{\calJ}$ for the
`hyper-assignment standard
setting' of
%  Corollary \ref{cor.setting1} and its generalization of
Theorem \ref{thm.setting1hyper}, namely
$\calJ=\{ \{a_i\} \}_{1 \leq i \leq n+1}$.
% of Section \ref{sec.teoremaserio1}, 
More precisely $(T_{\star})_{ij}=|\calC(\edge{a_i}{b_j})|$, where
the $\calC(e)$'s are the collection of subsets discussed at the end of 
Section~\ref{sec.ktuples}.
\end{corr}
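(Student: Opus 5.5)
The plan is to deduce the statement directly from Theorem~\ref{thm.critForestCapa}, combined with Theorem~\ref{thm.setting1hyper} and the multiset properties of the sets $\calC(e)$ listed at the end of Section~\ref{sec.ktuples}.

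First I match the parameters. Take $\ell=n+1$, $i_k=k$ for $k\in[n+1]$, and set $\gb_j=(n+1)\tilde{\gb}_j$ with $\sum_j\tilde{\gb}_j=n$. Each subproblem $(W;\va^{(k)},\vb^{(k)})$ then has $\ga^{(k)}_i=1-\delta_{i,k}$ and $\gb^{(k)}_j=\tilde{\gb}_j$. So it is exactly the hyper-Assignment instance on $G_{\{a_k\}}$ with parameters $\tilde{\vb}$. By Corollary~\ref{corr.Tinteg} together with genericity of $W$ (which guarantees uniqueness), its optimal transport matrix is the $0/1$ indicator matrix $T^{(k)}$ of the optimal hyper-assignment $M_{\{a_k\}}$. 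Summing over $k$ gives $\sum_k\va^{(k)}$ with $i$-th entry $n$, and $\sum_k\vb^{(k)}=\vb$. Hence this is indeed a decomposition of $(\va,\vb)$ of the special form required by Theorem~\ref{thm.critForestCapa}. The hypotheses $n\geq m,\ell$ should be checked, or at least noted as inessential. Indeed, $m\leq n$ holds since $\tilde{\gb}_j\geq1$. The condition $n\geq\ell$ fails by one, since $\ell=n+1$. I would check that the proof of the theorem never uses it: it only needs the $i_k$ distinct and no edge carrying all colors. Alternatively, I would invoke the statement as written in the footnote setting.

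Second, I identify the union. We have $F=\bigcup_k M^{(k)}=\bigcup_{U\in\calJ}M_U=H_\calJ$, which is a spanning tree by Theorem~\ref{thm.setting1hyper}. Theorem~\ref{thm.critForestCapa} therefore shows the decomposition is perfect. Hence $T=\sum_kT^{(k)}$ is optimal for $(W;\va,\vb)$. Since the capacities are generic (the footnote), $T_\star$ is unique, so $T_\star=T$.

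Finally, the entrywise identity is immediate: $T_{ij}=\#\{k: \edge{a_i}{b_j}\in M_{\{a_k\}}\}=|\calC(\edge{a_i}{b_j})|$. Therefore the support $H(T_\star)$ consists exactly of the edges with $\calC(e)\neq\varnothing$, that is $H_\calJ$, by property (1) at the end of Section~\ref{sec.ktuples}. As a consistency check, properties (3) and (4) reproduce the marginals $n$ and $(n+1)\tilde{\gb}_j$.

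The only delicate point is the mismatch $\ell=n+1>n$ with the stated range of Theorem~\ref{thm.critForestCapa}. I expect that inspecting its proof, via minimal counterexample, the three alternating paths, and Lemma~\ref{lem.hypermatchNOiji}, shows the bound is unused.
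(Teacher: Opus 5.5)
Your proposal is correct and is essentially the paper's own argument: $T=\sum_k T^{(k)}$ has support $H_\calJ$, which is a tree by Theorem~\ref{thm.setting1hyper}, so Theorem~\ref{thm.critForestCapa} makes the decomposition perfect and hence $T=T_\star$. Two side remarks. First, your check of the hypothesis $n\geq\ell$ is a welcome addition: the paper simply asserts that the hypotheses are satisfied even though $\ell=n+1$, and its proof indeed only uses that the $i_k$ are distinct, i.e.\ $\ell\leq n+1$. Second, uniqueness of $T_\star$ comes from genericity of $W$, not from genericity of the capacities.
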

\begin{proof}
Indeed, the matrix $T$ defined by $T_{ij}=|\calC(\edge{a_i}{b_j})|$ is
the sum of the transport matrices associated to the optimal matchings
of the various subproblems on $G|_U$, as in the setting of
Theorem~\ref{thm.critForestCapa}, and in particular
% calling $T=\sum_{k=1}^{n+1} T_{\star}(W;\va^{(k)},\vb^{(k)})$, we have
$H_{\calJ}=H(T)$.  Now, from Theorem~\ref{thm:Jtree} we know that the
graph $H_{\calJ}$ is a tree.
%, that was called $H_{\calJ}$ in Section \ref{sec.teoremaserio1}.
As the hypotheses of
Theorem~\ref{thm.critForestCapa} are satisfied,
$T=T_{\star}(W;\va,\vb)$, so that
$H_{\calJ}=H(T_{\star})$.
\end{proof}

\begin{rmk}
Now we have a simple explanation of the fact, determined in
Section~\ref{sec.teoremaserio1}, that all $B$-vertices have degree~2
in $H_{\calJ}$. In fact, if $\min_j(\gb_j)>\max_i(\ga_i)$, for any 
$T \in \calT(\va,\vb)$ the graph $H(T)$ has all $B$-vertices of degree
at least $2$. If $W$ and $(\va,\vb)$ are generic, $H(T)$ is a spanning
tree of $\cK_{n,m}$, so it has $n+m-1$ edges. And, if $n=m+1$ the
number of edges is exactly twice the number of $B$-vertices.
\end{rmk}

\begin{rmk}
Contrarily to Theorem~\ref{thm:Jtree}, that is very rigid, in this
setting there seems to be some space for generalizations.  A typical
situation that may be interesting to consider is when $n=m+k$, with
$k$ ``finite'' and $m$ ``large''. In such a case it is still easy to
produce vectors $(\va,\vb)$ that are generic, and the number of
$B$-vertices not of degree 2 is finite. For example, for
$(n,m)\to(2n+1,2n-1)$ and all $\ga_i$'s equal to $2n-1$, and the
$\gb_j$'s equal to $2n+1$, all $B$-vertices have degree 2, except for
a single vertex that has degree 3. This provides a sensible notion of
projected graph $\bar{H}$, that now is a spanning (hyper-)tree with
up to $k$ hyperedges that are not ordinary edges. These hyperedges
subdivide the tree into up to $2k-1$ portions. The main difference
with the case $k=1$ is that we do not have a general recipe to
describe a perfect decomposition in this situation (that is, we do not
have an analogue of Theorem~\ref{thm.critForestCapa} for $n-m>1$, note
in particular the counterexample in equation~(\ref{eq.3876538b})). 
\end{rmk}

\noindent
The setting of the remark above will be described in more detail
elsewhere, for the time being we shall content ourselves of just
providing one example, in Figure~\ref{fig.exCapa2tri}.

\begin{figure}[t]
\begin{center}
\if\faifig1
% [inline block 25: 1 envs, 4514 chars -> data_tex | \begin{tikzpicture}[scale=8] \draw[] (0,0) -- (1,0); ...]

\else [...TikZ\ code...] \fi
\end{center}
\caption{\label{fig.exCapa2tri}% 
  Example of an instance of the Hitchcock's version of the Assignment
  Problem, with capacities. The $a_i$'s are circles, while the $b_j$'s
  are squares. Here $n=11$, $m=8$, $\ga_i=8$ for all $i$ and
  $\gb_j=11$ for all $j$. The costs $W_{ij}$ are given by the square
  distance among the vertices of different classes. For what we have
  seen above, the support graph $H(T_{\star})$ must be a tree (because
  the capacities are generic), and all $B$-vertices must have degree
  2, except for two vertices of degree 3 (as is the case here), or one
  vertex of degree 4, because $\min_j \gb_j>\max_i\ga_i$ (so that
  there are no $B$-vertex leaves), and the total number of edges is
  $n+m-1=2m+2$ (so $\sum_j (\deg b_j-2)=2$).  These two vertices have
  been marked in the drawing. The values of $T_{ij}$ are shown next to
  the corresponding edges.}
\end{figure}

%%%%%%%%%%%%%%%%%%%%%%%%%%%%%%%%%%%%%%%%%%%%%%%%%%%%%%%
\appendix
\renewcommand{\sectionname}{}
% \begin{appendices}
%%%%%%%%%%%%%%%%%%%%%%%%%%%%%%%%%%%%%%%%%%%%%%%%%%%%%%%

%%%%%%%%%%%%%%%%%%%%%%%%%%%%%%%%%%%%%%%%%%%%%%%%%%%%%%%
\section{\texorpdfstring{More on the non-crossing property at $p=2$}{More on the non-crossing property at p=2}}
%%%%%%%%%%%%%%%%%%%%%%%%%%%%%%%%%%%%%%%%%%%%%%%%%%%%%%%

\noindent
In this section we provide some supplementary material to
Sections~\ref{sec.caso2dp2} and~\ref{sec.ktuplesGeom}. Although the
proofs of all relevant theorems have already been provided in the main
body of this manuscript, we find it worthwhile to describe some
further structural properties and alternative proofs. While these may
be longer and less direct, they offer a more explicit illustration of
certain mechanisms and could serve as inspiration for future
developments.

%-------------------------------------------------------
\subsection{Further properties of valid matrices}
\label{app.altproofValid1st2nd}
%-------------------------------------------------------

\noindent
In this section we discuss some more properties of the family of
$(n+1)\times n$ matrices $W$ which are valid in the sense of
Definition~\ref{def.validmat}. We recall that, in light of
Proposition~\ref{prop.crossifvalid}, these matrices play a prominent
role in the proof of Theorem~\ref{th:treenoncross} and its variants.

We start with a simple observation, in the form of a sufficient
condition for validity.
\begin{rmk}
\label{rmk.validifdiagzeroes}
% There exists a simple criterium for $W$ to be a valid matrix: 
If for each $i$ we have $W_{ii}=W_{i+1\,i}<W_{ki}$ for all 
$k\neq i,i+1$, we have that, after the gauge transformation with
$\lam_i=0$ and $\mu_i=W_{ii}$, $W$ is a matrix with zeroes on the
diagonals of the minors $W^{\{1\}|\varnothing}$ and
$W^{\{n+1\}|\varnothing}$, and strictly positive elsewhere, so that in
particular the minor $W^{\{1\}|\varnothing}$ has strictly-positive
entries in its top triangular part, and $W^{\{n+1\}|\varnothing}$ has
strictly-positive entries in its bottom triangular part, this implying
that the $W(1,\pi)$'s and $W(n+1,\pi)$'s are strictly positive for all
$\pi$'s distinct from the identity, and thus that $W$ is valid.
\end{rmk}
\noindent
A more remarkable property of valid matrices is that a smaller set of
inequalities are sufficient to imply validity. In other words, we will
produce a convenient necessary and sufficient condition for validity.

Definition~\ref{def.validmat} describes \emph{valid} matrices.  Let us
say, within this section, that matrices satisfying the conditions of
this definition are \emph{valid in the first sense}, and introduce the
variant:
\begin{defn}
\label{def.valid2nd}
A matrix $W \in \bR(n+1,n)$ is \emph{valid in the second sense} if
%, for $k=1$ and $k=n+1$, and 
for all single-cycle permutations\footnote{We call
\emph{single-cycle} a permutation with a unique non-singleton cycle.}
$\pi \in \mathfrak{S}_n$ 
% with a unique non-singleton cycle,
%
\begin{align}
W(1,\pi)&>0,
&
W(n+1,\pi)&>0.
%% \sum_{i=1}^n
%% \left(
%% W^{\{k\}|\varnothing}_{ii}
%% -
%% W^{\{k\}|\varnothing}_{i\,\pi(i)}
%% \right)
\end{align}
\end{defn}
\noindent
Then, as our choice of names suggest,
\begin{thm}
\label{th:valid12}
The matrix $W$ is valid in the first sense iff it is valid in the second sense.
\end{thm}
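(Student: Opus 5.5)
The implication from the first sense to the second is immediate, since the second sense only asks for the special cases $k\in\{1,n+1\}$ and $\pi$ single-cycle of the inequalities in Definition~\ref{def.validmat}. Throughout, I read the two definitions with the same sign convention, namely that the identity is the strictly optimal assignment of each minor concerned; the strict inequality in Definition~\ref{def.valid2nd} is the same condition as in Definition~\ref{def.validmat}, with the sign of $W(k,\pi)$ taken consistently. All the work is in the converse, which I would split into two reductions.

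\emph{Step 1: reduce to single cycles.} For fixed $k$, the quantity $W(k,\pi)$ is additive over the disjoint cycles of $\pi$. This is because
\[
\sum_i \bigl(W^{\{k\}|\varnothing}_{ii}-W^{\{k\}|\varnothing}_{i\,\pi(i)}\bigr)
\]
splits into the sums over the supports of the non-trivial cycles, and each such partial sum equals $W(k,c)$ for the corresponding single-cycle permutation $c$. Hence, for each $k$, it suffices to prove the required inequality for single-cycle $\pi$. In particular, the second-sense hypothesis already gives the first-sense inequalities for $k=1$ and $k=n+1$ and all $\pi\neq\mathrm{id}$.

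\emph{Step 2: express an intermediate $k$ through $k=1$ and $k=n+1$.} Fix $2\le k\le n$ and a single cycle $c$ on a column set $S$. Split the columns into $L=\{j<k\}$ and $R=\{j\ge k\}$. The diagonal of the minor $W^{\{k\}|\varnothing}$ uses the entries $(j,j)$ for $j\in L$, which coincide with the diagonal of $W^{\{n+1\}|\varnothing}$, and the entries $(j+1,j)$ for $j\in R$, which coincide with the diagonal of $W^{\{1\}|\varnothing}$.

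If $c$ is supported in $L$, then $W(k,c)=W(n+1,c)$; if it is supported in $R$, then $W(k,c)=W(1,c)$; in both cases we are done. In the mixed case, I would imitate Remark~\ref{rmk.nuovaeq2n1n}, which is exactly the smallest instance, a transposition between $L$ and $R$, obtained as $W(1,\pi_1)+W(n+1,\pi_2)$ with $\pi_1,\pi_2$ "shift" cycles. Concretely, I would build two permutations:
\begin{itemize}
\item a permutation $\sigma$ for the minor $n+1$, equal to $c$ on $L\cap S$ and completed on $R$ by shift steps $j\mapsto j+1$ (or $j\mapsto j-1$) that close up the $L$-excursions of $c$;
\item a permutation $\tau$ for the minor $1$, equal to $c$ on $R\cap S$ and completed on $L$ by the opposite shift steps.
\end{itemize}
The aim is the identity
\[
W(k,c)=W(n+1,\sigma)+W(1,\tau),
\]
in which the extra shift contributions $W_{jj}-W_{j\,j+1}$ coming from $\sigma$ cancel against $W_{j+1\,j}-W_{j+1\,j-1}$-type contributions coming from $\tau$, by telescoping along each run. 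Since at least one of $\sigma,\tau$ is not the identity, combining this identity with Step 1 (which has already settled all $\pi\neq\mathrm{id}$ for $k=1$ and $k=n+1$) gives the required inequality for $W(k,c)$.

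The main obstacle is Step 2 for a cycle that alternates several times between $L$ and $R$. There one must check that the completed maps $\sigma,\tau$ are genuine permutations, and that the boundary terms at each $L/R$ crossing cancel exactly rather than leaving a residue. I would first verify the identity for a cycle with exactly one excursion into each side, which is the direct generalization of Remark~\ref{rmk.nuovaeq2n1n}, using the grid diagrams of (\ref{eq.innuovaeq}) to visualise the cancellations. If a single pair $(\sigma,\tau)$ does not work in general, the fallback is an induction on the number of $L/R$ alternations of $c$: cut $c$ at a crossing into a shorter mixed cycle plus a one-excursion correction, each expressed by the previous case, and check that the resulting combination of inequalities is still positive with all coefficients $1$, in the spirit of the proof of Lemma~\ref{lem:tripla}.
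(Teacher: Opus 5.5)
\textbf{The mixed case of Step~2 is not proved, and the construction you give for it fails.} Your Step~1 and the cases $S\subseteq L$, $S\subseteq R$ are correct. The target identity $W(k,c)=W(n+1,\sigma)+W(1,\tau)$ is also the right one. But the mixed case is the whole content of the theorem, and you have not established it.

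\emph{The telescoping you display does not happen.} It rests on filler steps $\sigma(j)=j+1$ and $\tau(j)=j-1$. These produce the entries $W_{j\,j+1}$ and $W_{j+1\,j-1}$, which lie off the two diagonals $(j,j)$, $(j+1,j)$. Every positive term of $W(n+1,\cdot)$ and $W(1,\cdot)$ lies on those diagonals, so these entries can never be cancelled. The only admissible fillers are $\sigma(j)=j-1$ and $\tau(j)=j+1$.

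\emph{Splitting $c$ along $L/R$ fails even with correct fillers,} already with one excursion to each side. Take $n=3$, $k=2$, $c=(1\,3\,2)$. Then $W(2,c)=W_{11}+W_{32}+W_{43}-W_{13}-W_{31}-W_{42}$. Matching the entries outside the two diagonals forces $\sigma=(1\,3)$, $\tau=(2\,3)$ as the unique solution. Here the term $-W_{31}$, which comes from $2\in R\cap S$, is carried by $\sigma$ as $\sigma(3)=1$, while $\tau(2)=3\neq c(2)$. So the steps of $c$ have to be redistributed between $\sigma$ and $\tau$ in a way the $L/R$ split does not dictate. Your fallback induction is too vague to check.

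\textbf{The missing idea is an alternating-path argument on matrix entries.} Set up the following notation:
\begin{itemize}
\item $P$ is the set of entries $(j,j)$ and $(j+1,j)$;
\item $D_k\subseteq P$ is the diagonal of $W^{\{k\}|\varnothing}$;
\item $E_k(\pi)$ is the set of entries $(i,\pi(i))$ for $i<k$ and $(i+1,\pi(i))$ for $i\geq k$;
\item $W[\cdot]$ is the sum of entries, so that $W(k,\pi)=W[D_k]-W[E_k(\pi)]$.
\end{itemize}
Consider the multiset $Q=E_k(c)\uplus(P\setminus D_k)$ as a bipartite multigraph on rows and columns. Every vertex has degree $2$ except rows $1$ and $n+1$, which have degree $1$; row $k$ receives $(k,k-1)$ and $(k,k)$. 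Hence $Q$ is a path from row $1$ to row $n+1$ plus even cycles, possibly doubled entries.

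Colour the path alternately starting at row $1$, and each cycle alternately. This splits $Q$ as $E_{n+1}(\sigma)\uplus E_1(\tau)$. Since $P=D_1\sqcup D_{n+1}$, you get $W(k,c)=W[P]-W[Q]=W(n+1,\sigma)+W(1,\tau)$. Moreover $(\sigma,\tau)\neq(\mathrm{id},\mathrm{id})$, since otherwise $E_k(c)=D_k$. Your Step~1 then finishes the proof.

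Completed this way, your route is more direct than either proof in the paper. The first proof uses Theorem~\ref{thm:Jtree} and Remark~\ref{rmk.reconstr} to force the sets $\mathcal{C}(e)$ along the path. The second builds a recursive decomposition through intermediate rows $k'$, whose termination is controlled by the non-trivial length of the cycles.
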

Of course, validity in the first sense implies validity in the second
sense. Also, it is obvious that validity in the first sense is
equivalent to the same notion, where only single-cycle permutations
%  with a unique non-singleton cycle 
are considered, as a permutation with a different
cycle type is obtained by swapping the cycles one by one.

If we adopt a cycle notation for permutations, and the symbol 
``${\rm s}$'' to denote the completion of a list of non-singleton
cycles to a permutation (e.g.,
$\mathfrak{S}_9 \ni \pi=((164)(37){\rm s})=((164)(2)(37)(5)(8)(9))$),
the observation above reads
\be
W(k,(\gamma_1\,\gamma_2\,\ldots\,\gamma_c\, {\rm s}))
=
W(k,(\gamma_1\, {\rm s}))
+
W(k,(\gamma_2\, {\rm s}))
+ \cdots
+
W(k,(\gamma_c\, {\rm s}))
,
\ee
so that the positivity of the summands on the RHS implies the
positivity of the LHS.  

We will provide two proofs. A first one, given below, is shorter, but
more ``abstract''. A more explicit and constructive proof is given
later on.
% in Appendix~\ref{app.altproofValid1st2nd}.

\begin{proof}[First proof of Theorem \ref{th:valid12}]
As we have seen in the proof of Proposition~\ref{prop.crossifvalid},
in light of Remark~\ref{rmk.reconstr} we get that, if $W$ is valid in
the first sense, then the edges $e_{j}=(x_j, y_j)$ have
$\calC(e_j)=\{j+1,j+2,\ldots,n+1\}$, while the edges 
$e'_{j}=(x_j, y_{j+1})$ have $\calC(e'_j)=\{1,2,\ldots,j\}$.

Validity in the second sense gives the apparently weaker facts that
the edges $e_{j}=(x_j, y_j)$ have
$n+1 \in \calC(e_j)$, while the edges 
$e'_{j}=(x_j, y_{j+1})$ have $1\in \calC(e'_j)$. However this already
implies that the whole path $(x_1,y_1,x_2,y_2,\ldots,x_{n+1})$ is in
$H_\calJ$, and, as we know from the main Theorem \ref{thm:Jtree} that
$H_\calJ$ is a tree, we can infer that the whole sets $\calC(e_j)$ and
$\calC(e'_j)$ are as in the case of validity in the first
sense. Indeed, if this were not the case, there would be an index
$2\leq i \leq n$, and an index $j \geq i$, such that $i \not\in
\calC(e'_j)$ (w.l.o.g.\ up to reversing the path). That is, in
$M_\star(G_i)$ the vertex $x_j$ is matched to some $y_k$, with $k \neq
j+1$, while in $M_\star(G_1)$ the vertex $x_j$ is matched to
$y_{j+1}$.  Now consider the path passing through $x_j$, alternating
in colors $1$ and $i$, and thus containing the chain
$(y_{j+1},x_j,y_k)$. As this path cannot be a cycle, it must be an
open chain with endpoints $x_1$ and $x_i$, and this would produce a
graph with a cycle when including also the path alternating in colors
$1$ and $n+1$ we started from.
\end{proof}

%% Here we provide an alternate proof of Theorem \ref{th:valid12}, on
%% page~\pageref{th:valid12}. Let the basic remarks in the first
%% paragraph of the original proof be understood also now.

\begin{proof}[Second proof of Theorem \ref{th:valid12}]
Our second proof strategy is as follows: 
for all $1<k<n+1$, and all single-cycle permutations 
$\pi \in \mathfrak{S}_n$, we shall exhibit a list of pairs
$\{(k_i,\pi_i)\}$ such that
\be
\label{eq.98537295}
W(k,\pi)
=
\sum_i W(k_i,\pi_i)
\ef.
\ee
This induces a directed graph in the set of of pairs $(k,\pi)$, where
the $\pi$'s are single-cycle permutations, by putting an oriented edge
from $(k,\pi)$ to $(k',\pi')$ if $(k',\pi')$ appears on the RHS of the
equation for $(k,\pi)$. Pairs $(1,\pi)$ and $(n+1,\pi)$ are sinks of
this graph, as they are the pairs associated to the definition of
validity in the second sense. At our aims it is sufficient to prove
that this directed graph is in fact a poset.

We shall establish the existence of a function $\ell:(k,\pi)\to \bN$
such that
$\ell(k_i,\pi_i)\leq \ell(k,\pi)$, and then provide a suitable control
of the cases where the inequality is strict.

\emph{A priori} an equally valid proof strategy would have been to
have pairs $(k_i,\pi_i)$ on the RHS of the decomposition above where
the $\pi_i$'s are not necessarily single-cycle, and then, if
$(k',\gamma_1\,\ldots\,\gamma_c\,{\rm s})$ is on the RHS of the
equation for $(k,\pi)$, put a directed edge from $(k,\pi)$ to all of
the $(k',\gamma_j\,{\rm s})$. Nonetheless, it will come out that, in
our construction, on the RHS of (\ref{eq.98537295}) we will always
have at most two pairs, and that the $\pi_i$'s will be single-cycle
permutations.

Our function of interest is the ``non-trivial length'' of the
cycle of $\pi$, that is 
$\ell(\pi=(\gamma\,{\rm s}))=\ell(\gamma)$, and the non-trivial length
$\ell(\gamma)$ of a cycle $\gamma$ is calculated as follows: after
renaming the elements of the cycle in the canonical way (e.g.,
$(1,6,4,7)\to(1,3,2,4)$), $\ell(\gamma)$ is the number of $i$ such
that $\pi(i) \neq i \pm 1$.
Cycles of length 2 have non-trivial length $\ell((12))=0$, while the
only cycles with non-trivial length 1 are those of the form
$(123\cdots n)$ and their inverses, for $n \geq 3$.

We will prove separately that our claim holds for single-cycle
permutations $\pi$ with cycle of length 2, that is, with non-trivial
length 0, then, for single-cycle permutations $\pi$ with cycle of
length larger than 2, and non-trivial length $\ell$, we will produce
permutations $\pi_i$ where the cycle $\gamma_i$ has at most
$\ell(\gamma_i)=\ell$, and, provably, ultimately strictly smaller than
$\ell$.

We shall adopt a graphical representation for pairs $(k,\pi)$. We will
represent the $+1$ and $-1$ summands in the alternating sum
$W(k,\pi)$ as blue and red dots (respectively) in a rectangular
matrix, and the singletons of $\pi$ with a (smaller) black dot, while
the entries of $W$ not entering the sum will be left empty. In fact,
singletons and empty cells have the same role (namely, no role at all
in the expression for $W$), and the mark on the singletons is there
only to help visualising the whole permutation~$\pi$.

Also, in order to help the eye, we will also draw the lines between
blue and red dots in the same row and column, which will constitute
the cycle of $\pi$.  We will use thick lines separating the cells to
highlight the natural block decomposition of the matrix associated to
$\pi$, and a yellow strip to denote the empty row. We will
occasionally use light yellow to denote rows and columns that are left
empty once the singletons are removed.

Our equalities will turn into equalities for ``sums of diagrams''. As
diagrams are shortcuts for linear combinations of weights, sums of
diagrams must be performed with according rules. In our construction,
it will be sufficient to adopt
the graphical convention that a ``blue dot'' plus a
``red dot'' makes an empty cell, or, if on the diagonal of the main
blocks, a singleton. In describing generic situations at arbitrary
size, we will use white disks to emphasize that a cell is forced to be
empty, and light gray to denote cells whose content is unspecified.

As a first trivial observation, note that singletons adjacent to the
missing rows can be moved freely, that is, in formulas
% , and using a cycle notation for permutations,
$W(k,\cdots(k)\cdots)=W(k+1,\cdots(k)\cdots)$, and in diagrams
\be
\setlength{\unitlength}{10pt}
\thicklines
\raisebox{-27pt}{% [inline block 26: 9 envs, 6359 chars in 4 pieces, piece 1 here, a bare % at each other -> data_tex | \begin{picture}(5,6) \put(0,3){\goyel{\rule{50pt}{10pt}}}...]
}
\ee
so we can standardize singletons in a run adjacent to the missing row
by putting them after the missing row (and just omit singletons not in
a run adjacent to the missing row).  Of course, if, by a sequence of
these operations, we can transform a pair $(k,\pi)$ so that the
missing row is in position $1$ or $n+1$, this pair is connected to a
sink of our abstract digraph, so that, for the remaining pairs, a
generic standardised pair has the form
\[
\setlength{\unitlength}{10pt}
\thicklines
\raisebox{-27pt}{%
}
\]
For cycles of length 2, if we restrict to long-cycle permutations, we
have a single equation
\be
W(2,(12))=W(1,(12))+W(3,(12))
,
\ee
encoded by the graphical identity
\be
\setlength{\unitlength}{10pt}
\thicklines
\raisebox{-12pt}{%
}
\ee
When we have singletons adjacent to the missing row,
% , adopting a cycle notation for permutations,
% with singletons in which only the non-singleton cycles
the appropriate combination becomes
\be
W(2,(1n)(2)(3)\cdots)
=
W(1,(123\cdots n))
+
W(n+1,(1n\,n-1\cdots 2))
\ee
that is, in graphical notation
\be
\setlength{\unitlength}{10pt}
\thicklines
\raisebox{-32pt}{%
}
\ee
This identity is exceptional w.r.t.\ what follows, as the non-trivial
length for the pair $(k,\pi)$ on the LHS is $\ell=0$, while on the RHS
we have two pairs, with $k=1$ and $k=n+1$, and $\ell_i=1>\ell$. In all
the following identities, the values of $\ell_i$ for pairs on the RHS
will never be larger than the one on the LHS.

The mechanism above is the main mechanism of the proof for dealing
with singletons:
% next to the missing row: 
whenever, in absence of singletons next to the missing row, we add and
subtract a blue and a red dot as 
\be
\label{eq.276476534ns}
\setlength{\unitlength}{10pt}
\thicklines
\raisebox{-22pt}{% [inline block 27: 9 envs, 6694 chars in 3 pieces, piece 1 here, a bare % at each other -> data_tex | \begin{picture}(4,5) \put(0,2){\goyel{\rule{40pt}{10pt}}}...]
}
\ee
(the meaning of green lines is explained later on),
then when we also have singletons we add and subtract a ``ladder'' of
blue/red dot pairs, that is
\be
\label{eq.276476534ws}
\setlength{\unitlength}{10pt}
\thicklines
\raisebox{-27pt}{%
}
\ee
Now, for longer cycles, we shall distinguish three cases, schematised below:
\begin{align}
\textrm{type $A_+$:\ }&
\setlength{\unitlength}{10pt}
\thicklines
\raisebox{-22pt}{%
}
\end{align}
that is, let $(h+1,h)$ be the first blue cell in the second
  diagonal block, after a run of $h-k$ singletons. Then 
\begin{description}
\item[type $A_+$] the cell $(h+1,k-1)$ is red, and the cell $(k-1,h)$
  is not red.
\item[type $A_-$] the cell $(k-1,h)$ is red, and the cell $(h+1,k-1)$
  is not red.
\item[type $B$] neither the cell $(h+1,k-1)$ not the cell $(k-1,h)$ are red.
\end{description}
Note that it is not possible that $(h+1,k-1)$ and $(k-1,h)$ are both
red, as otherwise we would have a cycle of length 2, while here we are
considering cycles of length at least 3.

In the case $A_+$, in absence of singletons in $\pi$, we have, schematically,
\be
\setlength{\unitlength}{10pt}
\thicklines
\raisebox{-32pt}{% [inline block 28: 3 envs, 2117 chars -> data_tex | \begin{picture}(6,7) \put(0,3){\goyel{\rule{60pt}{10pt}}}...]
}
\ee
that is, $W(k,\pi)$ is the sum of $W(k-1,\pi_1)$, where $\pi_1$ is the
transposition $(k-1,k)$, and $W(k+1,\pi)$ (remark: indeed $\pi_2=\pi$
in this situation). Now, the diagram for the pair $(k+1,\pi)$ has the
cell $(k,k+1)$ not red (because the red cell in the same row is in
$(k,k-1)$), so it is not of type $A_-$.  For $A_-$ the situation is
symmetric (w.r.t.\ rotating the diagram 180 degrees): we get a new
pair $(k-1,\pi)$, whose diagram is not of type $A_+$.  Thus, iterated
applications of these rules to diagrams of type $A_+$ or $A_-$ either
ultimately lead to $k=n+1$ or $k=1$, respectively, or give a diagram
of type $B$, with a cycle of the same non-trivial length of the one we
started from (actually, with the very same cycle we started from).

In presence of singletons, the situation is similar, except that now
$\pi_1$ is a long cycle of non-trivial length $1$, while $\pi_2$ is
possibly not equal to $\pi$, but it has the same non-trivial length
(this situation illustrates once again the importance of defining
non-trivial length, instead of trying to devise an induction based on
the ordinary length of the cycles):
\be
\setlength{\unitlength}{10pt}
\thicklines
\raisebox{-27pt}{% [inline block 29: 3 envs, 3322 chars -> data_tex | \begin{picture}(6,7) \put(0,0){\golg{\rule{60pt}{10pt}}}...]
}
\ee
%
%% The examples at $n=3$ shown above are of type $A_-$ (and the pairs
%% omitted because evinced by symmetry are of type $A_+$).
These diagrams are a good choice for the explanation of the meaning of
the green lines: on the LHS, the two tones of green denote the two
portions of the cycle of $\pi$ connecting the two highlighted blue
cells, and everything that is not drawn is left as is in $\pi_2$,
which, in particular, has the same set of rows contributing to the
non-trivial length. Note that at the end of these paths some
``dangling edges'' are drawn, going towards parts of the diagram that
haven't been disclosed so far. One edge goes ``west'', while one goes
``south''.  The latter choice is taken only for helping visualisation,
but at this point the first edge may go west or east, and the second
one may go south or north. Besides the graphical representation, this
does not change the algebraic construction of the pairs, so we do not
need a case analysis at this point.

Now repeat the construction for type $B$. In particular, consider the
non-singleton cycle of $\pi$, represented by a green line in our
diagrams, constituted of the concatenation of two open chains: the
path $P_1$, reaching the blue cell $(k-1,k-1)$ horizontally, and the
blue cell $(h+1,h)$ vertically, and the path $P_2$, reaching the blue
cell $(k-1,k-1)$ vertically, and the blue cell $(h+1,h)$ horizontally.

We will construct an equation of the form
$W(k,\pi)=W(k-1,\pi_1)+W(h+1,\pi_2)$, where $\pi_1$ and $\pi_2$ are
single-cycle permutations, with non-trivial length at most $\ell-1$.
The construction is exactly the one illustrated in equations
(\ref{eq.276476534ns}) and (\ref{eq.276476534ws}) (again with the
\emph{caveat} on the fact that the two dangling edges may go west or
east, and south or north, respectively).  The set of rows in the
diagram $(k,\pi)$ contributing to the non-trivial length $\ell$ is
split between $\pi_1$ and $\pi_2$, so that
$\ell(\pi)=\ell(\pi_1)+\ell(\pi_2)$. This is true because $\pi$ is of
type $B$, so that the situation in which rows $k-1$ or $h+1$ do not
contribute to $\ell(\pi)$, but do contribute to $\ell(\pi_2)$ or
$\ell(\pi_1)$, respectively, is excluded (this would happen if $h>k$,
and $(k,\pi)$ were not of type $B$). Also, again because $\pi$ is of
type $B$, we know that $\ell(\pi_1)$ and $\ell(\pi_2)$ are at least 1
(as, in order be 0, they should be a cycle of length 2, and this would
happen if $h=k$, and if $\pi$ were not of type $B$). Thus
$\ell(\pi_1),\ell(\pi_2)<\ell(\pi)$, and we have an induction.
\end{proof}

\noindent
A generic example, in absence of singletons, is shown below:
\be
\setlength{\unitlength}{10pt}
\thicklines
\raisebox{-47pt}{% [inline block 30: 3 envs, 4577 chars -> data_tex | \begin{picture}(9,10) \put(0,4){\goyel{\rule{90pt}{10pt}}}...]
}
\ee
\medskip
\noindent
The characterization given above is the most relevant result of this
appendix. Nonetheless, a simple sufficient condition for validity that
is sometimes useful is based only on consecutive transpositions (not
necessarily along the diagonal). Indeed we have
\begin{prop}
\label{prop.onlytransp}
If $W_{i\,j}+W_{i+1\,j+1}-W_{i+1\,j}-W_{i\,j+1}<0$ for all $i,j$, then
the matrix $W$ is valid.
\end{prop}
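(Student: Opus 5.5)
The plan is to show that the hypothesis is a strict Monge condition, that this condition survives the deletion of a row, and that under a strict Monge condition the identity is the unique optimal assignment. Since validity in the sense of Definition~\ref{def.validmat} says exactly that, for every $k$, the identity is the unique optimal assignment of the minor $W^{\{k\}|\varnothing}$, this proves the proposition.

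First, I extend the hypothesis from adjacent rows and columns to arbitrary ones. For $i<i'$ and $j<j'$, the quantity $W_{ij}+W_{i'j'}-W_{i'j}-W_{ij'}$ equals the sum of the adjacent quantities $W_{ab}+W_{a+1\,b+1}-W_{a+1\,b}-W_{a\,b+1}$ over $i\le a<i'$ and $j\le b<j'$. The inner terms cancel telescopically in both directions. Hence
\[
W_{ij}+W_{i'j'}-W_{i'j}-W_{ij'}<0
\quad\text{for all } i<i',\ j<j'.
\]
Next, fix $k$ and set $V=W^{\{k\}|\varnothing}$. Removing a row does not change the relative order of the remaining rows, so the rows of $V$ are rows of $W$ in the same order. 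The general strict inequality therefore holds for $V$ as well, for every pair of rows $i<i'$ and columns $j<j'$.

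Then I run an exchange argument on an $n\times n$ matrix $V$ with this strict Monge property. Take $\pi\neq\mathrm{id}$. It has an inversion $i<i'$ with $\pi(i)>\pi(i')$. Set $j=\pi(i')$ and $j'=\pi(i)$, so $j<j'$, and let $\pi'=\pi\circ(i\,i')$. Then $\pi'$ sends $i\mapsto j$ and $i'\mapsto j'$, and the strict inequality gives
\[
\sum_a V_{a\,\pi'(a)}<\sum_a V_{a\,\pi(a)}.
\]
I choose the inversion to be adjacent in position, $i'=i+1$. Such an inversion exists whenever $\pi\neq\mathrm{id}$, and uncrossing it reduces the inversion number by exactly one. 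Iterating reaches the identity after finitely many strict decreases. This gives $\sum_a V_{aa}<\sum_a V_{a\,\pi(a)}$, which is $W(k,\pi)<0$ in the notation of Definition~\ref{def.validmat}, for every $k$ and every $\pi\neq\mathrm{id}$. So $W$ is valid.

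The only step I expect to need care is this termination of the exchange. The resolution is to insist on uncrossing adjacent inversions, because then the inversion count drops by exactly one at each step. Another point to watch is the sign convention, namely that ``valid'' means the diagonal is strictly cheaper; this is the reading consistent with Proposition~\ref{prop.crossifvalid} and Remark~\ref{rmk.nuovaeq2n1n}. As an alternative route, I would check the hypothesis of Theorem~\ref{th:valid12} only on single-cycle permutations; that would also work, but the direct exchange argument makes it unnecessary.
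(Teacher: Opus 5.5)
Your proof is correct, and it takes a genuinely different route from the paper's.

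The paper first invokes Theorem~\ref{th:valid12} to reduce validity to the two extremal minors $W^{\{1\}|\varnothing}$ and $W^{\{n+1\}|\varnothing}$, and to single-cycle permutations. In these two minors, consecutive rows of the minor are consecutive rows of $W$, so the hypothesis applies verbatim. The paper then writes $\cW(\pi)=\sum_{i,j}c_{\pi;i,j}T_{ij}$ with explicit coefficients, given by the height function of the permutation diagram, which are manifestly non-negative.

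You instead telescope the adjacent conditions into the full strict Monge inequality, which survives the deletion of \emph{any} row. This is exactly what is needed: deleting an interior row $k$ makes rows $k-1$ and $k+1$ consecutive, a pair the hypothesis does not cover directly. You then finish with the classical adjacent-transposition exchange.

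Your route buys self-containedness. It avoids Theorem~\ref{th:valid12} altogether, whose proofs go either through the spanning-tree Theorem~\ref{thm:Jtree} (under genericity) or through a lengthy diagrammatic induction. It also treats all $n+1$ minors uniformly. The paper's route buys a closed-form certificate.

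In fact the two certificates are the same. Unrolling your exchange steps expresses $W(k,\pi)$ as a non-negative integer combination of the $T_{ij}$'s. These functionals form a basis of the functionals $W\mapsto\sum_{a,b}M_{ab}W_{ab}$ whose coefficient matrix $M$ has vanishing row and column sums. Hence the coefficients are forced to be the two-dimensional cumulative sums of the $\pm1$ pattern of $W(k,\pi)$, which is what the paper's height function computes. The two proofs differ only in how the non-negativity of these coefficients is certified.
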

\begin{proof}
% \noindent
By Theorem \ref{th:valid12}, in order to see this it suffices to show
that a matrix $W$ with the property above must be valid in
the second sense. As a result, it is sufficient to show that, for all
square matrices $W$, and all permutations $\pi$, calling
\be
\cW(\pi)=\sum_i (W_{ii}-W_{i\,\pi(i)})
\ee
and $T_{ij}=W_{i\,j}+W_{i+1\,j+1}-W_{i+1\,j}-W_{i\,j+1}$, there exists
$c_{\pi;\,i,j}\geq 0$ such that
\be
\cW(\pi)=\sum_{i,j} c_{\pi;\,i,j} T_{ij}
\ef.
\ee
In fact, these coefficients are related to a classical notion in
Combinatorics.  Consider a diagram associated to a permutation, where
we put a blue dot on a diagonal cell $(i,i)$, that is not a singleton,
and a red dot on a cell $(i,j=\pi(i))$ that is not a singleton.
Define the \emph{height function} of a permutation as the
integer-valued function on the dual cells of the diagram above, valued
0 on the boundary, consistent with the local rules:
\begin{align*}
% B1
\setlength{\unitlength}{10pt}
\thicklines
\raisebox{-12pt}{% [inline block 31: 8 envs, 10563 chars in 3 pieces, piece 1 here, a bare % at each other -> data_tex | \begin{picture}(5,5)   \linethickness{0.5pt}...]
}
&
\end{align*}
(in fact, the last two rules are sufficient to determine the height
function univocally). This function is the 
needed quantity $c_{\pi;\,i,j}$. In formulas:
\be
c_{\pi;\,i,j}=\left\{
%
\right.
\ee
\end{proof}
\noindent
An example on a permutation consisting of a unique cycle is the following:
\[
\setlength{\unitlength}{10pt}
\thicklines
\raisebox{-44.5pt}{%
}
\]

%-------------------------------------------------------
\subsection{Alternative proof of the non-crossing property 
% for $p=2$ and $\Omega \subset \bR^2$, 
in the bipartite standard setting}
\label{ssec.2ndproofNonCross}
%-------------------------------------------------------
\noindent
Now we use the results of the previous section, to provide an
alternate proof of Theorem~\ref{th:treenoncross}, that is crucially
based on the equivalence between validity in the first and second
sense established above in Theorem~\ref{th:valid12}.

\begin{proof}[Second proof of Theorem~\ref{th:treenoncross}]
The proof will be by contradiction. Let assume that a pair $(X,Y)$ as in the
statement of the theorem does exist, and
that $\ell$ is the minimum value such that this is possible.

Recall that, by gauge invariance, we can use the matrix
$W_{ij}=-\langle x_i,y_j\rangle$ instead of the original matrix of
costs $W_{ij}=|x_i-y_j|^2$.  Due to the invariances discussed at the
end of Section\ \ref{ssec.nccgen}, and the generality hypothesis, we
can also fix three (general) positions determined by the $x_j$'s. We
choose to fix $x_2$, $x_\ell$, and the intersection of the line going
through the segment $[x_1, x_2]$ and the line going through the
segment $[x_{\ell}, x_{\ell+1}]$. Thus we can always assume, without
loss of generality, that
\begin{align}
\label{eq.2876873}
x_1 &= (a,0),
%^\intercal,
&
x_2 &= (-1,0),
%^\intercal,
&
x_\ell &= (0,-1),
%^\intercal,
&
x_{\ell+1} &= (0,b)
%^\intercal
\end{align}
for some $a,b\in\bR$. As we have assumed (by contradiction) that 
the two segments above do cross, and because of the generality
hypothesis,
% The crossing hypothesis implies that segment $( x_1, x_2)$
% intersects the segment $( x_\ell, x_{\ell+1})$ and therefore
we have that both $a$ and $b$ are strictly positive.
% $a>0$ and $b>0$. 
%% On the other hand, the non-degeneracy condition implies that the
%% matrix $W$ on such a path is valid.
That is, a typical configuration (if the theorem did not hold, and the
subgraphs $H_\calJ$ and $\bar{H}_\calJ$ were as depicted) would be the
following:
\[
\if\faifig1  
% [inline block 32: 1 envs, 2443 chars -> data_tex | \begin{tikzpicture}   \begin{axis}[...]

\else [...TikZ\ code...] \fi
\]
The validity conditions (in the first sense) take a specially simple
form when the stability is checked against a transposition, namely, by
using the shorthands
$\xi_{a,b}\coloneqq x_a-x_b$ and $\eta_{a,b}\coloneqq y_a-y_b$,
the conditions coming from transpositions have the form
\begin{align}
\langle \xi_{i,j},\eta_{i,j}\rangle
&>0,
&
\langle \xi_{i,j+1},\eta_{i,j}\rangle
&>0,
&
\langle \xi_{i+1,j+1},\eta_{i,j}\rangle
&>0.
\end{align}
%
% the validity of the matrix implies a collection of inequalities. 
There exist seven inequalities of this form which involve the
quantities $\{\eta_{i,j}\}$ with $(i,j)=(1,\ell-1)$, $(1,\ell)$ or
$(2,\ell)$,
% $\{ \eta_{i,j}\}_{(i,j)=(1,\ell-1),(1,\ell),(2,\ell)}$.
% and $\{\xi_{i,j}\}_{i=1,2;\,j=\ell,\ell+1}$, 
namely, arranging them in a grid according to the indices of the
$\xi$'s and the $\eta$'s,
\begin{subequations}
\label{eqss.xiij}
\begin{align}
\label{eqs.xi1l}
% \langle \eta_{1,2},\xi_{1,2}\rangle&>0,&
%\langle \eta_{\ell-1,\ell},\xi_{\ell,\ell+1}\rangle>0,\\
%\langle \eta_{2,\ell-1},\xi_{2,\ell}\rangle>0,\\
\langle 
\xi_{1,\ell},\eta_{1,\ell}
\rangle &> 0, &
&&
\langle 
\xi_{2,\ell+1},\eta_{1,\ell}
\rangle &> 0, &
\langle 
\xi_{1,\ell+1},\eta_{1,\ell}
\rangle &> 0,
\\
\label{eqs.xi1lm1}
\langle 
\xi_{1,\ell},\eta_{1,\ell-1}
\rangle &> 0, &
\langle 
\xi_{2,\ell},\eta_{1,\ell-1}
\rangle &> 0,
\\
\label{eqs.xi2l}
&&
\langle 
\xi_{2,\ell},\eta_{2,\ell}
\rangle &> 0, &
\langle 
\xi_{2,\ell+1},\eta_{2,\ell}
\rangle &> 0
\ef.
\end{align}
\end{subequations}
Rewriting the $\xi_{i,j}$'s in light of (\ref{eq.2876873}) gives
\begin{align}
\xi_{1,\ell} &= (a,1),
%^\intercal,
&
\xi_{2,\ell} &= (-1,1),
%^\intercal,
&
\xi_{2,\ell+1} &= (-1,-b),
%^\intercal,
&
\xi_{1,\ell+1} &= (a,-b).
%^\intercal.
\end{align}
We shall read the inequalities (\ref{eqss.xiij}) as linear
inequalities for the $\eta_{i,j}$'s, in terms of the $\xi_{k,m}$'s
seen as parameters. Each inequality constrains some $\eta_{i,j}$ to be
contained inside some half-plane (with the origin on the boundary),
determined by a $\xi_{k,m}$. As a result, each of our $\eta_{i,j}$'s
will be contained in a certain cone (with the vertex at the origin).

For example, the condition 
$\langle \xi_{1,\ell+1},\eta_{1,\ell}\rangle>0$ expresses the
optimality of the perfect matching $M$ containing the edges 
$(x_1, y_1)$ and $(x_{\ell+1}, y_{\ell})$ (among others), with respect
to the pairing $M'$ that has the same edges as $M$, except for the two
edges above, that are replaced by $(x_{\ell+1}, y_1)$ and
$(x_{1},y_{\ell})$, in the graph $G_U$ where $U=\{a_k\}$, for some 
$2 \leq k \leq \ell$. This condition forces the vector $\eta_{1,\ell}$
to have positive projection along $\xi_{1,\ell+1}$, constraining it in
a half-plane.

The last of inequalities (\ref{eqs.xi1l}) constrains $\eta_{1,\ell}$
to be in the half-plane with positive projection on $(a,-b)$. As both
$a$ and $b$ are strictly-positive, this half-plane is contained inside
the three quadrants $(x>0 \vee y<0)$.  For $k \geq 3$, and $u_j$'s
some non-zero vectors in the plane, let us adopt
the symbol $\prec$ in expressions as
\[
u_1 \prec u_2 \prec \cdots \prec u_k
\]
to state that the list of vectors is
in counterclockwise cyclic order, i.e.
\[
\arg(u_1)<\arg(u_2)<\ldots<\arg(u_k)<\arg(u_1)
\ef.
\]
The statement above reads, in these notations,
\be
\label{eq.54w64w643}
(-1,0) \prec  \eta_{1,\ell} \prec (0,1)
\ee
The other two inequalities (\ref{eqs.xi1l}) give, depending on the
sign of $ab-1$,
\begin{subequations}
\begin{align}
\label{eq.54w64w6431}
(1,-a) & \prec  \eta_{1,\ell} \prec (b,-1)
&&
ab>1
\\
\label{eq.54w64w6432}
(-b,1) & \prec  \eta_{1,\ell} \prec (-1,a)
&&
ab<1
\end{align}
\end{subequations}
The case $ab=1$ would be produced by a non-generic instance.
%
% ----------------------------------
%    .....FIGURA CON I CONI.....
% ----------------------------------
% 
\begin{figure}
\if\faifig1  
\begin{center}
%gather*}
% fig 1
\makebox[0pt][c]{
% [inline block 33: 4 envs, 4125 chars -> data_tex | \begin{tikzpicture}[scale=0.65]     \draw[-stealth, thin] (0,-3) -- (0,3);...]

\end{center}
%gather*}
\else [...TikZ\ code...] \fi
\caption{\label{fig:prova}Top: geometric construction for the proof of
  Theorem~\ref{th:treenoncross}. Determination of the allowed cones
  for the vectors $\eta_{1,\ell-1}$, $\eta_{\ell,1}$ and
  $\eta_{2,\ell}$, in terms of the vectors $\xi_{i,j}$.  Bottom: the
  cones above are combined in a single plot, showing the generic shape
  of the polygon with vertices $\{y_{\ell-1},y_1,y_{\ell},y_2\}$ (in
  this order).}
\end{figure}

The possibility $ab<1$ is ruled out by the fact that
(\ref{eq.54w64w643}) and (\ref{eq.54w64w6432}) are incompatible, so we
must have $ab>1$, and $\eta_{1,\ell}$ contained in the cone determined
by (\ref{eq.54w64w6431}), that is, taking the opposite vectors (and
using the obvious fact $\eta_{i,j}=-\eta_{j,i}$),
\be
(-b,1)
\prec \eta_{\ell,1} \prec 
(-1,a) 
.
\ee
Then, more easily, the two inequalities in (\ref{eqs.xi1lm1}) imply
\be
(-1,a)
\prec \eta_{1,\ell-1} \prec 
(1,1) 
,
\ee
while those in (\ref{eqs.xi2l}) imply
\be
(-1,-1)
\prec \eta_{2,\ell} \prec 
(-b,1)
,
\ee
(see Figure~\ref{fig:prova} for an illustration).
Combining these facts gives
\be
\label{eq.289764378}
(-1,-1) \prec
\eta_{2,\ell} \prec 
(-1,a) \prec
\eta_{\ell,1} \prec 
(-b,1) \prec
\eta_{1,\ell-1} \prec
(1,1) 
\ef.
\ee
In other words, calling $\alpha$ the angle between $\eta_{2,\ell}$ and
$\eta_{\ell,1}$, $\beta$
the angle between 
$\eta_{\ell,1}$ and $\eta_{1,\ell-1}$, $\gamma'$
the angle between $\eta_{1,\ell-1}$ and $(1,1)$ and
$\gamma-\gamma'$
the angle between 
$(-1,-1)$ and $\eta_{2,\ell}$,
we have $\alpha, \beta, \gamma >0$ and $\alpha+\beta+\gamma=\pi$, that
is, $\{\alpha, \beta, \gamma\}$ is the set of angles of a triangle.

This fact can be rephrased into a geometric construction:
call $\hat{y}$ the intersection of the line passing through
$(y_{\ell-1},y_1)$ and the line passing through
$(y_{\ell},y_2)$, then the drawing in the bottom part of Figure~\ref{fig:prova}
is generic, that is, the polygon
% consider the SPEZZATA CHIUSA.... $Q$ 
with vertices (in cyclic order) 
$(y_{\ell-1}, y_{1}, y_{\ell}, y_2)$ is a convex quadrilater, and the
line passing through $y_{1}$ and $y_{\ell}$ separates it from
$\hat{y}$. The fact that $(y_{\ell-1}, y_{1}, y_{\ell}, y_2)$ is a
convex quadrilater, in turn, implies that its diagonals (namely, the
segments $(y_{1},y_2)$ and $(y_{\ell-1},y_{\ell})$) do cross.

As a consequence, the new sets 
$X'=(x'_1,\ldots, x'_{\ell})=(y_1,\ldots,y_{\ell})$
and 
$Y'=(y'_1,\ldots,y'_{\ell-1})=(x_2,\ldots, x_{\ell})$
are such that the segments
$[x'_1, x'_2]$ and
$[x'_{\ell-1}, x'_{\ell}]$ strictly cross.
On the other hand, the new matrix
$W'$ (with 
$W'_{ij}=-\langle x'_{i}, y'_{j}\rangle=-\langle y_{i}, x_{j-1}\rangle$)
% $W^{\rm (new)}$ 
is the transpose of the minor of $W$ obtained by dropping the first
and last row, that is
$W'=(W^{\{1,\ell+1\},\varnothing})^\intercal
=(W^\intercal)^{\varnothing,\{1,\ell+1\}}$. 
We claim that $W'$ is valid. Essentially, this is due to the fact that
we can use the notion of validity in the second sense, that is
preserved by the operation above (of dropping the first and last row,
and taking the transpose).

More explicitly, as $W$ is valid in the first sense, is in particular
also valid in the second sense, that reads
\begin{align}
\sum_j
(W^{\{1\},\varnothing}_{jj}-W^{\{1\},\varnothing}_{j\,\pi(j)})&>0
,
\\
\sum_j
(W^{\{\ell+1\},\varnothing}_{jj}-W^{\{\ell+1\},\varnothing}_{j\,\pi(j)})
&>0
,
\end{align}
for all single-cycle permutations $\pi \in \mathfrak{S}_{\ell}$. 
In particular, this is true
for $\pi$'s that have a singleton in the first or last position, that
implies
\begin{align}
\sum_j
(W^{\{1,\ell+1\},\{\ell\}}_{jj}-W^{\{1,\ell+1\},\{\ell\}}_{j\,\pi(j)})
&>0
,
\\
\sum_j
(W^{\{1,\ell+1\},\{1\}}_{jj}-W^{\{1,\ell+1\},\{1\}}_{j\,\pi(j)})
&>0
,
\end{align}
for all single-cycle permutations $\pi \in \mathfrak{S}_{\ell-1}$. 
Of course, the inverse of single-cycle permutations are 
single-cycle permutations, thus, writing (for $\sigma=\pi^{-1}$)
\[
\sum_j (W_{jj}-W_{j\,\pi(j)})
=
\sum_j (W_{jj}-W_{j\,\sigma^{-1}(j)})
=
\sum_j (W_{jj}-W_{\sigma(j)\,j})
\]
we get
\begin{align}
\sum_j
((W^\intercal)^{\{\ell\},\{1,\ell+1\}})_{jj}
-((W^\intercal)^{\{\ell\},\{1,\ell+1\}})_{j\,\pi(j)})
&>0
,
\\
\sum_j
((W^\intercal)^{\{1\},\{1,\ell+1\}})_{jj}
-((W^\intercal)^{\{1\},\{1,\ell+1\}})_{j\,\pi(j)})&>0
,
\end{align}
that is
\begin{align}
\sum_j
((W')^{\{\ell\},\varnothing})_{jj}
-((W')^{\{\ell\},\varnothing})_{j\,\pi(j)})
&>0
,
\\
\sum_j
((W')^{\{1\},\varnothing})_{jj}
-((W')^{\{1\},\varnothing})_{j\,\pi(j)})&>0
.
\end{align}
This is exactly the condition for $W'$ to be valid in the second
sense. However, by Theorem~\ref{th:valid12} (which plays a crucial role
at this point), this implies that $W'$ is also valid in the first
sense, that is, it is valid \emph{tout court}.

In conclusion, the pair $(X',Y')$ provides an instance containing a
crossing path, of smaller length than the one provided by $(X,Y)$. As
we assumed that the value of $\ell$ was minimal for these properties
to hold, we have reached a contradiction, so that we can conclude that
no crossing configurations do exist.
\end{proof}

%-------------------------------------------------------
\subsection{Alternative proof of the non-crossing property 
% for $p=2$ and $\Omega \subset \bR^2$, 
in the bipartite reservoir standard setting}
\label{ssec.2ndproofNonCrossReserv}
%-------------------------------------------------------
\noindent 
Here we provide a second proof of Theorem \ref{th:treenoncrosssym}.
% restricted to the case in which $\Omega$ is simply connected. 
The main idea is that we can approximate the spectrum of a reservoir
instance
% $\{W_{(G,w)}(M)\}$ 
with the spectrum of a suitably constructed instance of the bipartite
standard setting, and then just apply
Theorem~\ref{th:treenoncross}. The new instance comes with a parameter
$N$, denoting the number of extra vertices in the construction, and
the distance between the two spectra is bounded (in $L_\infty$ norm)
by a function of $N^{-1}$ converging to zero for $N\to\infty$. As in
particular the hypothesis of generic weights implies that in every
subproblem $G_U$ the difference of cost between $M_U$ and the second
best matching is a finite quantity (independent from $N$), and as the
list $\calJ$ is finite, we can then conclude by continuity.

Before passing to the proof, we explain the approximation procedure
sketched above.
\begin{rmk}
\label{rmk.facciorese}
Let $\Omega_0$ be a flat region of $\bR^2$, and 
$\Omega_r \subseteq \Omega_0$ a connected subset. Call
$\Omega=\Omega_0 \setminx \Omega_r$ and $\partial \Omega$ the boundary
of $\Omega$. Let the cost function $f(x)$ be a strictly monotone
function with the property
$\lim_{x\to 0}f(x)=0$ (a fact that is true in particular in the case
$p=2$, i.e.\ $f(x)=x^2$).  Then, for all $(X,Y)$ generic
configurations of points in $\Omega$, there exists sets $(X_r,Y_r)$ of
points in $\Omega_r$ (described below) such that the matching subgraph
$H$ of the standard bipartite reservoir setting
$(\Omega,(X,Y),f)_{\rm res}$ coincides with the matching subgraph $H'$
of the standard bipartite setting 
$(\Omega_0,(X\cup X_r,Y\cup Y_r),f)$, restricted to $\Omega$.
\end{rmk}
\begin{proof}
Say that $|X|=n$ and $|Y|=m$.  Let us call $u_i=u(x_i)$ the points on
$\partial \Omega$ nearest to $x_i \in X$, and similarly $v_j=u(y_j)$
for $y_j\in Y$. Let $T$ be a configuration of curves inside $\Omega_r$,
with the topology of a tree, containing $\{u_i\}\cup\{v_j\}$, and such
that its Hausdorff dimension is~1. For $N\geq 0$, we construct $X_r$
and $Y_r$ as follows:
\begin{itemize}
\item all $u_i$'s are added to $Y_r$, and all $v_j$'s are added to
  $X_r$;
\item further $N$ points $u'_a \in T$, chosen evenly spaced, are added
  to $Y_r$, and further $N+1$ points $v'_a \in T$, chosen evenly spaced,
  are added to $X_r$.
\end{itemize}
Now consider the standard setting 
$(\Omega_0,(X\cup X_r,Y\cup Y_r),f)$.
% and call $H'$ its matching subgraph. 
This setting has a relatively large set
$\calJ_0=\{\{x_i\}\}_{1\leq i \leq n}\cup
\{\{v_j\}\}_{1\leq j \leq m}\cup
\{\{v'_a\}\}_{1\leq a \leq N+1}$,
to be compared with the subset 
$\calJ=\{\{x_i\}\}_{1\leq i \leq n}\cup\{\emptyset\}$ for the setting
$(\Omega,(X,Y),f)_{\rm res}$.  Choose one element $\{v'_a\}$
arbitrarily, and call $\calJ'\subseteq \calJ_0$ the set
$\calJ'=\{\{x_i\}\}_{1\leq i \leq n}\cup\{\{v'_a\}\}$.  Our precise
claim is that, for $N$ large enough, the restriction of $H'$ to
$\Omega$ coincides with $H$, and furthermore, for all
$e\in H$, the set $\calC(e)$ in the reservoir setting coincides with
the restriction to $\calJ'$ of the set $\calC'(e)$ in the ordinary
setting, with the identification of $\emptyset \in \calJ$ with
$\{v'_a\}\in \calJ'$.

Indeed, from the fact that the instance is generic, we know that for
each $U \in \calJ$, in the reservoir setting, the difference of cost
between $M_U$ and any other matching in its ensemble is some
$\delta_U$ strictly positive. So there exists a value
$\delta=\min_{U\in \calJ} \delta_U >0$. For every $U \in \calJ$, we
can construct some matching $M'_U$ in the larger configuration of points
$(X\cup X_r,Y\cup Y_r)$, by making it coincide with $M_U$ on $(X,Y)$,
and then by completing it with the matching of smaller cost on the
remaining vertices. All these vertices are on $T$. So it is easy to
see that $0\leq w(M'_U)-w(M_U)\lesssim N f(1/N)$. On the other side,
for any matching $M'$ on $(X\cup X_r,Y\cup Y_r)$
that does not coincide with $M_U$ on $(X,Y)$, we have
$w(M')-w(M_U)\geq \delta$. So, by our hypothesis on $f$, our claim
follows.
\end{proof}
\noindent
Now we are ready to provide our proof.
%% Our main theorem is:
%% \begin{thm}
%% \label{th:treenoncrosssym}
%% When $\Omega$ is a simply connected domain with boundary of $\bR^2$, and $p=2$, in
%% the bipartite reservoir standard setting the embedding of the tree
%% $\bar{H}_{\calJ}^{\rm (l)}$ is non-crossing.
%% \end{thm}
\begin{proof}[Second proof of Theorem~\ref{th:treenoncrosssym}]
As was the case in the main proofs for the cylinder and other flat surfaces, 
the main idea is to reduce to the case of the standard setting,
on a domain that is a portion of the plane.

A fact used in this proof is that, for $\Omega$ to contain all
(geodesic) segments $[x_i,y_j]$ (and their reservoir analogs),
essentially it must contain the convex hull $\tilde{\Omega}$ of the
reservoir vertices $\{u(x_i)\}\cup\{v(y_j)\}$ (and in fact the
instances over $\Omega$ and $\tilde{\Omega}$ have the same matrix of
costs). So we can assume w.l.o.g.\ that $\Omega$ is convex, which in
turn implies that it is simply connected (and topologically a disk),
that is, $\partial \Omega$ is topologically a circle, and in particular
it is connected.

%% when $\Omega$ is simply
%% connected. Indeed, in dimension 2, $\Omega$ is simply connected iff
%% %
%% $\partial \Omega$ is connected. We will first discuss this situation,
%% and then explain how to pass to the general case towards the end of
%% the proof.

The graph $H_{\calJ}$ determined by the instance
$(\Omega,(X,Y),f)_{\rm res}$
% $H_{\calJ}^{\rm (l)}$
%  and $\bar{H}_{\calJ}^{\rm (l)}$ 
consists of a collection of components $H_{\alpha}$, each of them
being a tree.  With reference to the notation in
Remark~\ref{rmk.bridgerootC}, and using that statement, we know that
each of these components has a unique sym-edge $e_\alpha$.  In the
reservoir case, this edge is embedded with coincident endpoints, and
on the boundary (although, for clarity of exposition, in our pictures
we will split these endpoints).  If the endpoint of $e_\alpha$ on the
left is a $B$-vertex $d_{\alpha}$ in position $v_\alpha$, then
$\calC(e_\alpha)=C_{\alpha}$. Let us now construct a suitable instance
$(\Omega,(X',Y'),f)$ of the first bipartite standard setting.

We choose to take all the vertices in the left part of $H_\alpha$,
plus an $A$-vertex in a position $\tilde{v}_\alpha$ coinciding with
$v_{\alpha}$ and $\iota v_\alpha=v'_{\alpha}$ (or, in our pictures,
infinitesimally near to $v_{\alpha}$ and $v'_{\alpha}$).  

The set $\calJ$ in the new instance is determined as by the
prescription of the first bipartite standard setting. In particular,
recalling that (e.g.\ in Remark \ref{rmk.bridgerootC}) we have called
$C_{\alpha}$ the set of colors associated to the component $\alpha$,
we can identify the new set $\calJ$ with 
$C_\alpha \cup \{\varnothing\}$, where the extra color, $\varnothing$,
is associated to the $A$-vertex $\tilde{v}_\alpha$, and has a role
analogous to the color $U=\varnothing$ in the original symmetric
instance. This construction is depicted on the top part of
Figure~\ref{fig.nocrosssym5}.

We claim that the graph $H_{\calJ}$ of this new instance coincides
with a graph $\tilde{H}_{\alpha}$, that is the restriction of
$H_{\alpha}$ to its edges on the left part, and the edge embedded as
the segment $[v_\alpha,\tilde{v}_{\alpha}]$ of zero (or infinitesimal)
length.

Indeed, and also in light of
Proposition~\ref{prop.addresedgestoothers}, the optimality of
$H_{\calJ}$ in the new instance is determined by a subset of the
inequalities implying the optimality of $H_{\alpha}$ in the original
instance, except for the possibility in the new instance that
$\tilde{v}_{\alpha}$ (which is an $A$-vertex) is connected to some
$y_j \neq y_\alpha$ for some color $U_k$. As we are in the standard
bipartite setting, the set $U_k$ corresponds to the removal of a
single vertex $x_k \in H_{\alpha}$.  It can be seen that the only
interesting case is when the path on the tree connecting $x_k$ and
$v_\alpha$ goes through~$y_j$.  Refer again to
Figure~\ref{fig.nocrosssym5},~top, for an illustration.

Call $W$ the alternating sum of the path on $H_{\alpha}$ (and on
$\tilde{H}_{\alpha}$ as well), from $y_j$ to $y_{\alpha}$, and
$2W_\alpha$ the difference of cost (in the symmetric setting) between
$\cW(M_\star(G_{U_k}))$ and the optimal matching among those containing
the edge $[y_j,u_j]$.  In the reservoir setting the weight of the
sym-edges is zero.
% $w_{e_\alpha}, w_{e_j} = \cO(f(\eps))$.
Then, the weight for connecting $y_j$ to its reservoir point $u_j$ on
$\partial \Omega$ is some quantity
$w=f(|\gamma_{y_j}|)=f(|\gamma_{y_j,u_j}|)$. The fact that $H_\alpha$
is the component in the original instance, and in particular that the
edge $[y_j,u_j]$ is not in the embedding of $H_\calJ$, implies
\be
\label{eq.39876894756}
2W_\alpha = 2W-2 f(|\gamma_{y_j}|)
%+\cO(f(\eps))
<0
\ef.
\ee
In the new instance, the weight of the edge
$[v_{\alpha},\tilde{v}_{\alpha}]$ is again just zero,
% $\cO(f(\eps))$, 
and the weight of the edge $[y_{j},\tilde{v}_{\alpha}]$ is given by
$f(|\gamma_{y_j,\tilde{v}_{\alpha}}|)$, so, calling $\tilde{W}$ the
difference of cost between what we claim being the optimal matching,
and any other matching such that the edge $[y_j,\tilde{v}_{\alpha}]$
is in the embedding of $H_\calJ$, the missing inequalities for the
validity of the claimed matching subgraph $H_\calJ$ are
\be
\tilde{W} = W-f(|\gamma_{y_j,\tilde{v}_{\alpha}}|)
% +\cO(\eps,f(\eps))
<0
\ef.
\ee
These inequalities are implied by
the inequalities (\ref{eq.39876894756}) above, and the inequalities
implied by the definition of the reservoir setting, namely
$|\gamma_{y_j}|=\min_{x \in \partial \Omega}|\gamma_{y_j,x}|\leq
|\gamma_{y_j,\tilde{v}_{\alpha}}|$ for all 
$\tilde{v}_{\alpha} \in \partial \Omega$.
By Theorem~\ref{th:treenoncross} we conclude that, when $f(x)=x^2$,
there are no crossing pairs of edges which are in the same
component~$H_\alpha$.  This completes the argument when we consider
the possible crossings among edges in the same component $H_{\alpha}$
of the forest, and the left endpoint of the sym-edge of $H_{\alpha}$
is a $B$-vertex.

\begin{figure}
\begin{center}
\if\faifig1
%  UN ALBERO
% [inline block 34: 2 envs, 14479 chars -> data_tex | \begin{tikzpicture}[scale=2] \draw [thick,domain=-28:22] plot ({6-3*cos(\x)}, {3*sin(\x)});...]

\else [...TikZ\ code...] \fi
\end{center}
\caption{\label{fig.nocrosssym5}% 
Top: example of tree
  $H_{\alpha}$. Here $C_{\alpha}=\{\alpha,1,2,\ldots,7\}$. For
  clarity, the vertices on $\partial \Omega$ have been moved slightly
  off.  The auxiliary instance in $\bR^2$ has matching subgraph
  $H_\calJ$ coinciding with $H_\alpha$, expect for the extra vertex
  $\tilde{v}_\alpha$. Here $H_\alpha$ and $H_\calJ$ have been drawn
  one on top of the other. The proof argument states that the
  newly-introduced vertex $\tilde{v}_\alpha$ cannot be connected to a
  $y_j$ (here $y_3$). Indeed, the cycle in the symmetric setting
  $(\ldots,y'_3,u'_3,u_3,y_3,x_2,y_2,x_1,y_1,x_\alpha,v_\alpha,v'_\alpha,x'_\alpha,\ldots)$
  has some alternating cost $2W_\alpha>0$, while taking the edge
  $(y_3,\tilde{v}_{\alpha})$ in $H_\calJ$ would require to swap a
  cycle with alternating cost $\tilde{W}>W+\cO(\eps^2)$, where the
  difference is
  $\tilde{W}-W=f(|\gamma_{y_3,u_3}|)-f(|\gamma_{y_3,\tilde{v}_{\alpha}}|)$. These
  alternating cycles are drawn in dotted orange and green.
Bottom: the analogous construction when we consider two components
$H_\alpha$ and $H_\beta$, and we connect their root vertices on
$\partial \Omega$ by a path $P$.%
}
\end{figure}

If the endpoint of $e_\alpha$ on the left is an $A$-vertex
$a_{\alpha}$ in position $u_\alpha$, then 
$\calC(e_\alpha)=\calJ \setminx C_{\alpha}$.  Nonetheless, an argument
very similar to the one above does hold. Now, in the construction of
an instance in the first bipartite standard setting, we choose to take
just all the vertices in the left part of $H_\alpha$.  The graph
$H_{\calJ}$ of this new instance coincides with the graph
$\tilde{H}_{\alpha}$, that is the restriction of $H_{\alpha}$ to its
edges on the left part, in light of a similar argument than the one
given above, in particular involving the same inequalities.  Again, by
Theorem~\ref{th:treenoncross} we conclude that, when $f(x)=x^2$, there
are no crossing pairs of edges which are in the same
component~$H_\alpha$.

Now, we should exclude the possibility that there are crossing pairs
of edges which are in distinct components $H_\alpha$ and $H_\beta$.
The main ideas are similar to those used in the previous analysis, and
the slight difference between the two cases in which the left endpoint
of the sym-edge is an $A$-vertex or a $B$-vertex works in a similar
way. So we shall describe in detail only one case of the three that
are possible, say, the case in which the left endpoints of both
$e_\alpha$ and $e_\beta$ are $A$-vertices.

At this point, we shall use Proposition~\ref{prop.addresedgestoothers}
and Remark~\ref{rmk.facciorese}, the latter with choice of $\Omega_r$ given
by the portion of $\partial \Omega$ between $u_{\alpha}$ and
$u_{\beta}$ (say, in CCW order from $\ga$ to $\gb$).

According to this paradigm, we construct an instance of the first bipartite standard setting
in which the embedded graph consists of the restriction of
$H_{\alpha}$ and $H_{\beta}$ to their left parts, plus a chain $P$ of
$\sim 1/\eps$ alternating $A$-vertices and $B$-vertices, at Euclidean
distance $\eps$ one from the other, connecting $u_\alpha$ to
$u_\beta$, along
$\partial \Omega$.  

As in Remark~\ref{rmk.facciorese}, it is convenient to consider the
sets $\calC(e)$ restricted to the subset
$C_\alpha \cup C_\beta =: \calJ' \subseteq \calJ$. Indeed, while the
set $\calJ$ of the standard bipartite setting for this family (in
$\eps$) of configurations changes with $\eps$, the set $\calJ'$ is
fixed once and for all, so it is conceivable that not only the
matching subgraph $\bar{H}_{\calJ}(\eps)$ may have a limit for 
$\eps \to 0$, but also the sets $\calC'(e)=\calC(e)\cap \calJ'$ do.

% the most convenient choice for the collection $\calJ$ is to set
%
% $\calJ = C_\alpha \cup C_\beta$.

We claim that, when $\lim_{x \to 0} f(x)/x = 0$, for some
$\eps=\eps(X,Y,f)>0$ the matching subgraph $H_\calJ$ of this instance
of the first bipartite standard setting consists of the left parts of
$H_{\alpha}$ and $H_{\beta}$, and the path $P$, where the edges $e$ of
$P$ have alternately $\calC(e)\cap\calJ'=C_\alpha$ and $C_\beta$.
%% \[
%% \includegraphics[scale=.12]{nocrosssym3.png}
%% \]
Similarly as above, the optimality of $H_{\calJ}$ in the new instance
is determined by a subset of the inequalities as the optimality of
$\tilde{H}_{\alpha}$ and $\tilde{H}_{\beta}$ in the original instance,
except for the possibility in the new instance that some point 
$x \in P$ is connected to some $y_j$ or that some point $y \in P$ is
connected to some $x_j$.

The two cases are similar (and of course $\ga$ and $\gb$ play a
symmetric role). Let us say that $y \in P$ is connected to 
$x_j\in H_{\alpha}$.  Call $W$ the alternating sum of the path on
$\tilde{H}_{\alpha}$ from $x_j$ to $u_{\alpha}$.  We have that
% the weight of the original sym-edge is $w_{e_\alpha}=\cO(\eps^2)$.
% Then,
the cost for connecting $x_j$ to its reservoir point $v_j$ on
$\partial \Omega$ is
$f(|\gamma_{y_j}|)=f(|\gamma_{y_j,v_j}|)$. The fact that
$H_\alpha$ is a separate component of the original instance, rooted at
$u_{\ga}$, implies
\be
\label{eq.39876894756b}
2 W_{\alpha}
=
2W-2 f(|\gamma_{y_j}|)
% +\cO(\eps^2)
<0.
\ee
This construction is illustrated in Figure~\ref{fig.nocrosssym5},~bottom.

In the new instance, we have a similar expression for the difference
of cost for connecting $x_j$ to some point $\tilde{v}_j$ on the path.
In fact, it can be seen that, as the points on the path are equally
spaced (at distance $\eps$), the modification in the cost due to the
modified connection of the points along the path would give a
difference of cost of order at most $f(\eps)$. However, it is simpler
to just use the paradigm of Remark~\ref{rmk.facciorese}, and write
that the difference of costs is some quantity $\tilde{W}$:
\be
\tilde{W}\leq W-f(|\gamma_{x_j,\tilde{v}_j}|)+R_U(\eps)
%+\cO(\ell^2/N^2)
\ef,
\ee
where $R_U(\eps)$ is some function, depending only on $U$,
such that $\lim_{\eps \to 0} R_U(\eps)=0$,
again according to Remark~\ref{rmk.facciorese}. And, as $|\calJ'|$ is
finite, there exists a function $R(\eps)\leq R_U(\eps)$ for all $U$.
That is, for the inequalities (\ref{eq.39876894756b}) we get
\be
\label{eq.39876894756c}
W_{\alpha}
\leq R(\eps)
\ef.
\ee
Furthermore, the generality hypothesis in fact implies the stronger fact 
\be
\label{eq.39876894756d}
W_{\alpha}
\leq R(\eps)-\delta
\ef,
\ee
with $\delta=\min_U \delta_U$ defined as in Remark~\ref{rmk.facciorese}.

We can conclude that there exists a value $\eps>0$ small enough to
make all equations (\ref{eq.39876894756d}) to hold simultaneously. As
this corresponds to a finite instance in the standard bipartite
setting on a portion of the plane, when $f(x)=x^2$ the configuration
must be non-crossing.
\end{proof}

%%%%%%%%%%%%%%%%%%%%%%%%%%%%%%%%%%%%%%%%%%%%%%%%%%%%%%%
\section{More on the analysis of when the matching subgraph is non-crossing}
\label{app.crossingGene}
%%%%%%%%%%%%%%%%%%%%%%%%%%%%%%%%%%%%%%%%%%%%%%%%%%%%%%%

\noindent
Also this section is devoted to the non-crossing property of projected
matching subgraphs $\bar{H}_\calJ$.  Here we take the task of
providing some technical simple facts (like the starting point of our
induction proofs provided in the main body of the manuscript), and a
list of negative results, implied by counterexamples, that go in the
direction of classifying \emph{all} manifolds $\Omega$ and values $p$
where the non-crossing property holds.

%-------------------------------------------------------
\subsection{Proof that there are no crossing configurations of small size}
% with $|X|=4$ and $|Y|=3$ on the plane or the cylinder}
\label{app.casiL3}
%-------------------------------------------------------

\noindent
In this section we prove the base case
($\ell=2$) of the induction for 
Theorem~\ref{th:treenoncrossToro},
%\ref{th:treenoncross} and 
that is, the claim
%, in the proof Theorem \ref{th:treenoncrossToro}, 
that there exist no paths $P=(x_1,y_1,\ldots,y_{\ell},x_{\ell+1})$ on
the cylinder such that the matrix $\tilde{W}$ is valid, thus
completing the proof of the theorem.

In passing, we will compare this proof to the case $\ell=3$, for which
the argument is essentially equivalent to the one for generic $\ell$,
provided in the main body of the proof (but it may be instructive to
see it explicitly in its simplest incarnation).  As a corollary of our
calculation, we will also derive a proof for the analogous case
$\ell=3$ for Theorem \ref{th:treenoncross}, that there exist no paths
$P=(x_1,y_1,\ldots,y_{\ell},x_{\ell+1})$ on the plane such that the
matrix $W$ is valid. Again this is not strictly necessary, because in
the case of the plane the induction works down to the base case
$\ell=2$, which is trivially established, but again we find
instructive to have a direct explanation of the non-crossing property
of the plane in the first non-trivial setting at~$\ell=3$.

We will use the graphical notation of the previous section to describe
the inequalities that guarantee that a matrix of costs is valid.  In
the case of the cylinder, with periodicity vector $\w$, we will also
adopt the shortcuts $x_{j_{\rm L}}=x_j-\w$ and $x_{j_{\rm R}}=x_j+\w$
(and similarly for $y$), where the choice of notation stands for left-
and right-translation. Furthermore, we use $R$ for the operator that
applies a rotation by 90 degrees to vectors, $R(x,y)=(-y,x)$.

Let us start with the case $\ell=2$ on the cylinder. Let us assume
w.l.o.g.\ that $\w$ is parallel to the horizontal axis, and oriented
towards the right, that $[x_1,x_2]$ and 
$[x_{2_{\rm L}},x_{3_{\rm L}}]$ cross at the origin, and that $x_2$
has negative ordinate (so that $x_1$ and $x_3$ must have positive
ordinate). In this case we have 
\be
\label{eq.487528763a}
R^{-1}\w \prec \xi_{2,1} \prec \xi_{3_{\rm L},1} \prec \xi_{3,2} \prec
R\w 
\ef.
\ee
Furthermore, we have that the image of $y_1$ nearest to $x_2$ must be
on its left, and the image of $y_2$ nearest to $x_2$ must be
on its right, more precisely
\begin{align}
\langle
(y_1-x_1), \w 
\rangle
&< \frac{|\w|^2}{2} 
\ef;
&
\langle
(x_2-y_1), \w 
\rangle
&< \frac{|\w|^2}{2}
\ef;
\end{align}
so that, in particular,
\be
\langle
\eta_{1,2_{\rm L}} , \w 
\rangle
>0
\ee
that can also be restated as
\be
\label{eq.487528763b}
R^{-1}\w \prec \eta_{1,2_{\rm L}} \prec R\w 
\ef.
\ee
Combining (\ref{eq.487528763a}) and (\ref{eq.487528763b}) gives, in
particular
\be
\label{eq.4576582764}
\langle
\xi_{1,3_{\rm L}},
\eta_{1,2_{\rm L}}
\rangle
<0
\ef.
\ee
Now, for what concerns the inequalities for the validity of the
matrix $\tilde{W}$, we have
\be
\tilde{K}=
\left(
\begin{array}{rcccl}
%{p[10pt]p[10pt]p[10pt]p[10pt]p[10pt]}
 0 &  1 \\
 0 &  0 \\
-1 &  0 
\end{array}
\right)
\ee
Recall from Section \ref{ssec.noncrossToro} that the inequalities with
a simple form are those corresponding to
transpositions $\{(i,k),(j,h)\} \to \{(i,h),(j,k)\}$, and such that
the corresponding minor of $\tilde{K}$ has the ``affine'' form
\be
\begin{pmatrix}
K_{ik} & K_{ih} \\
K_{jk} & K_{jh} 
\end{pmatrix}
=
\begin{pmatrix}
a&a+b\\
a+c&a+b+c
\end{pmatrix}
\ef.
\ee
In this case we have a single inequality of this form, namely the one
associated to the diagram
\[
\setlength{\unitlength}{10pt}
\thicklines
\raisebox{-12pt}{\begin{picture}(2,3)
\put(0,1){\goyel{\rule{20pt}{10pt}}}
  \linethickness{0.5pt}
  \multiput(0,0)(1,0){3}{\line(0,1){3}} 
  \multiput(0,0)(0,1){4}{\line(1,0){2}}
  \linethickness{1.5pt}
  \put(0,0){\line(0,1){3}}
  \put(0,0){\line(1,0){2}}
  \put(0,3){\line(1,0){2}}
  \put(2,0){\line(0,1){3}}
  \put(0,1){\line(1,0){2}}
  \put(0,2){\line(1,0){2}}
  \put(1,0){\line(0,1){3}}
\put(0.5,2.5){\gogre{\line(1,0){1}}}
\put(0.5,0.5){\gogre{\line(1,0){1}}}
\put(0.5,0.5){\gogre{\line(0,1){2}}}
\put(1.5,0.5){\gogre{\line(0,1){2}}}
\dotbr{0}{1}{2}
\dotbr{1}{0}{0}
\end{picture}}
\]
that gives
\be
\label{eq.4576582764n}
\langle
\xi_{1,3_{\rm L}},
\eta_{1,2_{\rm L}}
\rangle
>0
\ef.
\ee
As this is in contrast with (\ref{eq.4576582764}), we can conclude.

In the case $\ell=3$, on the cylinder, the crossing condition states
that the polygon with vertices $(x_1,x_{4_{\rm L}},x_2,x_{3_{\rm L}})$
is a convex quadrilater (so that the diagonals do cross), which means
that one of the following must be true (depending if the vertices
in the list above are in clockwise or counterclockwise order)
\begin{subequations}
\label{eqs.4357856187}
\begin{gather}
\xi_{1,4_{\rm L}} \prec \xi_{4_{\rm L},2} \prec \xi_{2,3_{\rm L}} \prec \xi_{3_{\rm L},1}
\ef;
\\
\xi_{1,3_{\rm L}} \prec \xi_{3_{\rm L},2} \prec \xi_{2,4_{\rm L}} \prec \xi_{4_{\rm L},1}
\ef.
\end{gather}
\end{subequations}
The matrix $\tilde{K}$ is
\be
\tilde{K}=
\left(
% [inline block 35: 7 envs, 3832 chars in 2 pieces, piece 1 here, a bare % at each other -> data_tex | \begin{array}{rcccl} %{p[10pt]p[10pt]p[10pt]p[10pt]p[10pt]}...]

\right)
\ef.
\ee
Now we can find six diagrams corresponding to transpositions of the
simplest form, of which four have the further property of not involving
the coordinate $y_2$, namely
\begin{align}
% 1
\setlength{\unitlength}{10pt}
\thicklines
\raisebox{-17pt}{%
}
\end{align}
The inequalities associated to these four diagrams are
\begin{subequations}
\label{eqs.4357856187b}
\begin{align}
\langle
\xi_{1,3_{\rm L}},
\eta_{1,3_{\rm L}}
\rangle
&>0
\ef;
&
\langle
\xi_{1,4_{\rm L}},
\eta_{1,3_{\rm L}}
\rangle
&>0
\ef;
\\
\langle
\xi_{2,3_{\rm L}},
\eta_{1,3_{\rm L}}
\rangle
&>0
\ef;
&
\langle
\xi_{2,4_{\rm L}},
\eta_{1,3_{\rm L}}
\rangle
&>0
\ef;
\end{align}
\end{subequations}
which can also be stated as
\begin{subequations}
\label{eqs.4357856187qwerty}
\begin{align}
R^{-1} \eta_{1,3_{\rm L}} &\prec
\xi_{1,4_{\rm L}} \prec 
R \eta_{1,3_{\rm L}}
\ef;
&
R \eta_{1,3_{\rm L}} &\prec
\xi_{4_{\rm L},2} \prec 
R^{-1} \eta_{1,3_{\rm L}}
\ef;
\\
R^{-1} \eta_{1,3_{\rm L}} &\prec
\xi_{2,3_{\rm L}} \prec 
R \eta_{1,3_{\rm L}}
\ef;
&
R \eta_{1,3_{\rm L}} &\prec
\xi_{3_{\rm L},1} \prec
R^{-1} \eta_{1,3_{\rm L}}
\ef.
\end{align}
\end{subequations}
These inequalities are not compatible with the two possibilities in
(\ref{eqs.4357856187}), so we have found a contradiction.

In the case of the plane, the proof is repeated \emph{verbatim}, by
dropping all $(\cdot)_{\rm L}$ and $(\cdot)_{\rm R}$ subscripts.

%-------------------------------------------------------
\subsection{Crossing configurations for the shortest geodesics
  prescription}
\label{app.torosbagliato}
%-------------------------------------------------------

\noindent
In this section we illustrate the importance of adopting the choice in
Definition~\ref{def.projgeo} for embedding the arcs of geodesics in
the projection procedure $H_\calJ \to \bar{H}_{\calJ}$, in the
bipartite setting, in order to have the non-crossing property at
$p=2$, as proven in Theorem~\ref{th:treenoncrossToro}.

We will consider an instance with 4 $A$-vertices and 3
$B$-vertices. Our torus
% $\mathbb{T}$ 
has basis vectors 
$(\omega_1,\omega_2)=\big( (3,0),(-1,2) \big)$. Our points are
\begin{align*}
x_1&=(-\eps,0);&
x_2&=(1-2\eps,\eps^2);&
x_3&=(2-2\eps,\eps^2);&
x_4&=(\eps,0);
\end{align*}
\begin{align*}
y_1&=(-\eps,1)=(1-\eps,-1);&
y_2&=(1,1)=(2,-1);&
y_3&=(2+\eps,1)=(\eps,-1).
\end{align*}
For $\eps$ small enough, the edges $[x_i,y_i]$ have cost
$W_{i\,i}=1+\cO(\eps)$, by taking the image on the torus such that
$y_i-x_i=(0,1)+\cO(\eps)$, while the edges $[x_{i+1},y_i]$ have cost
$W_{i+1\,i}=1+\cO(\eps)$, by taking the image on the torus such that
$y_i-x_{i+1}=(0,-1)+\cO(\eps)$. For the other pairs of indices, the
precise choice of the image depends on the value of $\eps$, but in
fact is of little importance to our purposes, for all entries except
$W_{13}$ and $W_{41}$, because the costs are valued $2+\cO(\eps)$.  To
be definite, the corresponding matrix of costs, for a range of $\eps$,
reads
\be
W=% [inline block 36: 4 envs, 6959 chars in 4 pieces, piece 1 here, a bare % at each other -> data_tex | \begin{pmatrix} 1 & 2 + 2 \eps + \eps^2 & 1 + 4 \eps^2 \\ 1 + 3 \eps^2 + \eps^4 & 1 +...]

\ee
that is, up to a gauge transformation (in the sense of
Remark~\ref{rmk.gaugeinv}) with vectors
\begin{align}
\bm{\lambda} &= 
(1, 1 + 3 \eps^2 + \eps^4, 1 + 7 \eps^2 + \eps^4, 1);
&
\bm{\mu} &= 
(0, -\eps^2, 0);
\end{align}
the matrix
\be
W'=%
.
\ee
This matrix is the actual matrix of costs, and is valid, in the range
$0\leq\eps\leq \frac{\sqrt{2}-1}{2}$. This instance, for the value
$\eps=1/5$, is shown in Figure~\ref{fig.ceToro}.

\begin{figure}[t]
\begin{center}
\if\faifig1  
%
\else [...TikZ\ code...] \fi
\\
\if\faifig1  
%
\else [...TikZ\ code...] \fi
\end{center}
\caption{\label{fig.ceToro}% 
Top: our example, with tree $\bar{H}_\calJ$ constructed with the
na\"ive prescription, of $\gamma_{\{x_i,x_j\}}$ being the shortest
geodesic. The curves $\gamma_{\{x_1,x_2\}}$ and $\gamma_{\{x_3,x_4\}}$
do cross. Bottom: the same example, now with tree $\bar{H}_\calJ$ constructed with the
prescription that $\gamma_{\{x_i,x_j\}}$ is the shortest
geodesic with the same topology of the concatenation of the
corresponding curves $\gamma_{\{x_i,y_k\}}$ and
$\gamma_{\{y_k,x_j\}}$. Now $\bar{H}_\calJ$ is non-crossing in this
example (and in general, by Theorem~\ref{th:treenoncrossToro}).}
\end{figure}

%-------------------------------------------------------
\subsection{Optimality of the hypotheses for the matching subgraph to
  be non-crossing}
%% {\texorpdfstring{Optimality of the hypotheses for
%%     $\bar{H}_\calJ$ to be non-crossing}
%% {Optimality of the hypotheses for the matching subgraph to be non-crossing}}
% \section{Classification of manifolds such that $\bar{H}_\calJ$ is non-crossing}
% Two classes of counterexamples}
\label{app.crossingPneq2}
%-------------------------------------------------------

\noindent
Theorems~\ref{th:treenoncross}, \ref{th:treenoncrossToro} and
\ref{th:treenoncrossKleinB} establish the fact that, in the first
bipartite standard setting, the matching subgraph $\bar{H}_{\calJ}$ is
non-crossing when $p=2$ and $\Omega$ is the plane, or more generally a
manifold that is flat everywhere (namely, a cylinder, or a torus, or a
M\"obius strip or a Klein bottle).  Overall, the proof may look not
quite complicated. However we believe that this is a subtle and
fragile property. 

In order to support this claim, in this section we provide negative
results on the most evident ways we can think to relax our hypotheses
for the theorems above to hold. First, we show that if we relax
certain inequalities between real quantities from strict to weak,
there are large families of counterexamples already on the plane.
Then, we pass to the proof that the theorems above exhaust the list of
possible manifolds with the non-crossing property, by providing a
family of crossing configurations in all manifolds with a curvature
(either distributed or concentrated on conical
singularities). Finally, we show that the non-crossing property does
not hold even just in the plane, for any other function $f(x)=x^p$
with $p\neq 2$.

Let us first drop the hypothesis of generic weights, by allowing that
$x_1$ and $x_{\ell+1}$ are in the same position. This will add some
degeneracy of costs, and imply that certain cycles of even length have
vanishing alternating sums. We will still require that, besides the
cycles which have this property for obvious reasons, no other cycles
have vanishing alternating sums. Furthermore, we have defined a
crossing configuration as one such that there exist four distinct
indices, here $(1,2,\ell,\ell+1)$, such that the segments $[x_1,x_2]$
and $[x_{\ell},x_{\ell+1}]$ cross at an internal point, that is, there
exist $0<t<1$ and $0<s<1$ such that
\be
(1-t) x_1 +t x_2 = s x_\ell +(1-s) x_{\ell+1}
\ef.
\ee
Now we relax this requirement, by letting $0 \leq t \leq 1$ and 
$0 \leq s \leq 1$.  Of course, with our choice, it will just be
$s=t=0$.  Finally, we will say that a matrix $W$ is weakly-valid if
the inequalities in Definition~\ref{def.validmat} are turned into weak
inequalities.

In this larger setting, let $(x_1,x_2,\ldots,x_{\ell})$ be the
vertices of a convex polygon $P$. Up to translation, we can assume
that the origin is (strictly) inside $P$. Then, again up to
translation, let $Y$ be a set of points such that $y_i$ is on the
segment $[x_i,x_{i+1}]$. Then, as we know from
Section~\ref{sec.caso2dp2}, the matrix of costs is valid iff the
matrix with entries $W_{ij}=-\langle x_i,y_j \rangle$ is valid. Let
$d_i$ be the distance from the line passing through $x_i$ and
$x_{i+1}$ to the origin. We have that $W_{ii}=W_{i+1\,i}=-d_i^2$, and
that $W_{ki}>W_{ii}$ for all other values of $k$, except for the fact
that $W_{1\ell}=-d_{\ell}^2$
% W_{\ell\ell}$ 
and 
$W_{\ell+1,1}=-d_{1}^2$.
%W_{11}$.
By Remark~\ref{rmk.validifdiagzeroes}, we have that $W$ is weakly valid.

\medskip
\noindent
Let us now pass to consider other possible manifolds, or other values
of $p$ (curiously, the construction of counterexamples in these two
very different situations is very similar).

First of all, it is easy to get convinced by numerics of the fact that
our theorems for having non-crossing configurations $\bar{H}_{\calJ}$
fail when $p$ is taken different from 2, and that the analogous facts
for configurations $H_{\calJ}$ fail when $p$ is taken different from
1.  A plot is shown in Figure~\ref{fig.numeCrossN4}. The resulting
exponents for the vanishing of the probability of crossing as a
function of $|\delta p|$ (1 and 3 for $H$ at $p=1$ and for $\bar{H}$
at $p=2$, respectively) is expected to be independent from the choice
of domain (and may be of some theoretical interest). Of course, these
data are provided just as an illustration, as we provide explicit
counterexamples.

\begin{figure}
\setlength{\unitlength}{0.4\textwidth}
\begin{picture}(2,1.02)
\put(0,0){\includegraphics[width=0.8\textwidth]{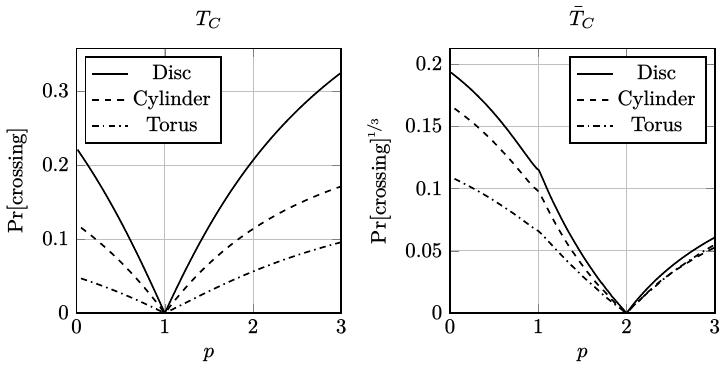}}
%plot.pdf
\put(.58,.94){\makebox[0pt][c]{\colorbox{mywhite}{$H_\calJ$}}}
\put(1.6,.94){\makebox[0pt][c]{\colorbox{mywhite}{$\bar{H}_\calJ$}}}
\end{picture}
\caption{\label{fig.numeCrossN4}% 
  Estimated probability of observing a crossing configuration
  $H_\calJ$ (left) and a crossing configuration $\bar{H}_\calJ$
  (right) in an ensemble of graphs generated by uniformly choosing
  configurations $(X,Y)$, with $|X|=|Y|+1=4$, on the unit disc, the
  unit cylinder with aspect ratio $1$, and the unit torus with aspect
  ratio $1$, shown as a function of the exponent $p>0$ in the cost
  function.  The probability for $H_\calJ$ to be crossing is
  numerically found to scale as $\Theta(|p-1|)$ in a neighborhood of
  $p=1$, whereas the probability for $\bar{H}_\calJ$ to be crossing is
  numerically found to scale as $\Theta(|p-2|^3)$ in a neighborhood of
  $p=2$.  Each data point is obtained averaging on $10^9$ independent
  samples.}
\end{figure}

Both in the case with points on a portion of the plane and $p\neq 2$, and with
points on a curved manifold (and possibly $p=2$), we will use
cartesian coordinates. In the second case, it will be understood that
we have points within a sufficiently small region, so that a single
chart for local parametrization suffices.
We choose the set of points
\begin{align}
\label{eq.XYposCross}
X
&=\left\{
(-1,-\theta),(2,2\theta),(4,0),(-8,0)
% \frac{\eta^2}{2+\eta+\eta^2}
\right\}
\ef;
&
Y
&=
\{(0,1),(0,2),(0,4)\}
\ef.
\end{align}
We will use these points in several proofs of existence of crossing
configurations, adapting the choice of $\theta$.
In the case of $p\neq 2$, we will set $p=2+2\eta$, and choose
%% In the case with
%% curvature, we will say that the curvature is a continuous function,
%% valued $\eta$ (O MENO ETA????) at the origin of the chart.
% Introduce the shortcut
\be
\label{eq.deftheta}
\theta = 
\frac{\eta}{8(4+\eta)}
%\frac{1}{2}\frac{\eta^2}{2+\eta+\eta^2}
\ef.
\ee
Note that the vertices in $X$ are close to the horizontal axis when
$\eta$ is close to zero, and the vertices in $Y$ are on the vertical
axis. Whenever $\theta \neq 0$ (that is, whenever $\eta \neq 0$), the
segments $[x_1,x_2]$ and $[x_3,x_4]$ cross at the origin, so that a
sufficient condition for proving that the configuration is crossing is
to show that either the matrix $W_{ij}=f(|x_i-y_j|)$ is valid, or the
matrix $W'_{ij}=W_{i\,4-j}$ is valid. As we will see, one or the other
case will occur, depending on the sign of $\eta$. So, in light of
Theorem~\ref{th:valid12} and Proposition~\ref{prop.onlytransp}, it is
sufficient to check that the six quantities $T_{ij}$'s (for $i=1,2,3$
and $j=1,2$) are all positive for $\eta>0$, and all negative for
$\eta<0$.  The plot in Figure \ref{fig.plotTij} shows evidence of this
fact. A formal proof is given in the remainder of this appendix.
\begin{figure}
\[
% 0 0 500 313
\setlength{\unitlength}{100pt}
\begin{picture}(2.5,1.55)
\put(0,0){\includegraphics[scale=.5]{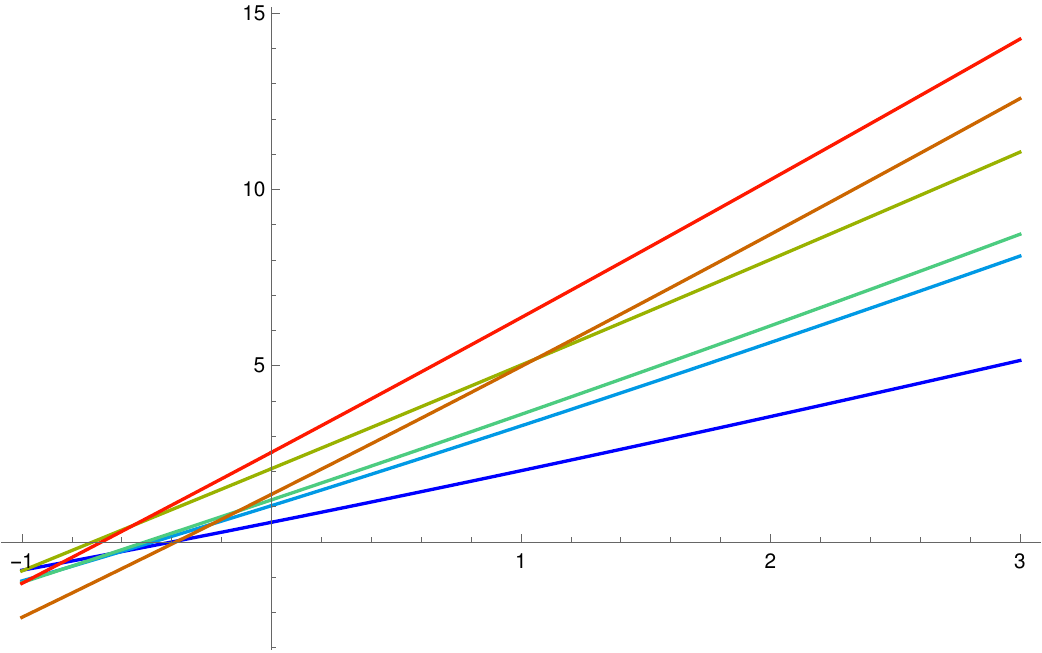}}
\put(2.5,.15){$\eta$}
\put(0.,1.35){$\displaystyle{\ln\left(\frac{T_{ij}}{\eta(\eta+1)}\right)}$}
\end{picture}
\]
\caption{\label{fig.plotTij}%
Plot of the six functions $T_{ij}$. We plot
$\ln(T_{ij}/(\eta(\eta+1)))$, in the range $-1\leq \eta \leq 3$, in
order to remove the zeroes at $\eta=0$ and $\eta=-1$ and improve
visualization. We use a logarithmic plot because for large values of
$\eta$ the quantity $T_{ij}$ is dominated by
$(|x_{i+1}|^2+|y_{i+1}|^2)^\eta$, so it grows exponentially as
$\exp[\eta \ln(|x_{i+1}|^2+|y_{i+1}|^2)]$.
The colors go from blue to red, for the list of
$(ij)$'s in lexicographic order.}
\end{figure}

Let us first establish the useful (simple) property
\begin{prop}
\label{prop.ineqa1a2b1b2simple}
For $0< a_1 < a_2$ and $0< b_1 < b_2$, and $c>0$, the expression
\be
X=\frac{(a_1+b_1)^c+(a_2+b_2)^c-(a_1+b_2)^c-(a_2+b_1)^c}{c(c-1)}
\ee
is positive.
\end{prop}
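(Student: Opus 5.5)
The plan is to write the numerator as an iterated integral of a second derivative and read off the sign. Set $g(t)=t^c$ on $(0,\infty)$. The numerator is the mixed second difference
\[
\Delta=g(a_2+b_2)-g(a_1+b_2)-g(a_2+b_1)+g(a_1+b_1),
\]
which is the increment of $g(a+b)$ over the rectangle $[a_1,a_2]\times[b_1,b_2]$. Since $g$ is smooth on $(0,\infty)$ and every argument $a+b$ with $a\in[a_1,a_2]$, $b\in[b_1,b_2]$ is strictly positive, the fundamental theorem of calculus applied twice gives
\[
\Delta=\int_{a_1}^{a_2}\!\!\int_{b_1}^{b_2} g''(a+b)\,\mathrm{d}b\,\mathrm{d}a
=c(c-1)\int_{a_1}^{a_2}\!\!\int_{b_1}^{b_2}(a+b)^{c-2}\,\mathrm{d}b\,\mathrm{d}a .
\]

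Dividing by $c(c-1)$, which is legitimate when $c\neq 1$, gives
\[
X=\int_{a_1}^{a_2}\!\!\int_{b_1}^{b_2}(a+b)^{c-2}\,\mathrm{d}b\,\mathrm{d}a .
\]
The integrand is strictly positive and the domain has positive area $(a_2-a_1)(b_2-b_1)>0$, so $X>0$. The same formula shows $X>0$ for every real $c\neq 0,1$.

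The only delicate point is the degenerate value $c=1$. There the numerator vanishes identically and the quotient is $0/0$. I would interpret $X$ at $c=1$ as the continuous extension of the expression, which by the integral formula equals $\int\!\!\int (a+b)^{-1}\,\mathrm{d}b\,\mathrm{d}a>0$. This value arises as the limit of the integral representation, with $(a+b)^{c-2}$ converging uniformly on the compact rectangle. Equivalently, it is obtained by replacing $t^c/(c(c-1))$ with its $c\to1$ limit $t\ln t$ modulo affine terms, whose mixed second differences vanish. I would remark that this is presumably how the expression is intended in the subsequent application, with $c$ playing the role of $1+\eta$ near the critical exponent. Everything else is routine.
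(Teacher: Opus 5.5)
Your proof is correct and is essentially the paper's argument. The paper fixes $b_2$, notes $X=0$ at $a_2=a_1$, and observes that $\partial_{a_2}X=\frac{(a_2+b_2)^{c-1}-(a_2+b_1)^{c-1}}{c-1}>0$; this is your double integral with the $b$-integration left undone, and it is exactly the representation the paper uses in the next proposition. Your remark that $c=1$ must be read as the continuous limit, with value $\int\!\!\int (a+b)^{-1}\,\mathrm{d}b\,\mathrm{d}a$, is a fair clarification that the paper's statement glosses over, and it is harmless since the later application has $c=1+\eta$ with $\eta\neq 0$.
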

\begin{proof}
We shall see the quantity $X$ as a function $X(a_2,b_2)$ from
$[a_1,\infty[\,\times[b_1,\infty[\,$ to $\bR$, 
depending on the parameters $a_1,b_1,c>0$. We have that
$X(a_1,b_2)=0$, and that $X$ is monotone in $a_2$, because
\be
\begin{split}
\frac{\mathrm{d}}{\mathrm{d} a_2}X(a_2,b_2)&=
\frac{(a_2+b_2)^{c-1}-(a_2+b_1)^{c-1}}{c-1}
\ef{}
\end{split}
\ee
is positive at sight.
\end{proof}
\noindent
This fact is a preparation for the slightly more subtle
\begin{prop}
\label{prop.ineqa1a2b1b2}
For $0< a_1 \leq a_2$ and $0\leq b_1 \leq b_2$, and $c>0$,
%, and $\rho^{-1}\leq a_1/b_1 \leq \rho$,
the expression
\begin{align}
X&=\frac{((a_1+b_1)(1+s\,\delta))^c+(a_2+b_2)^c-(a_1+b_2)^c-(a_2+b_1)^c}{c(c-1)}
\ef;
\\
\delta
&=
(c-1)\frac{(a_2-a_1)(b_2-b_1)}{a_2 b_2}
\ef;
\end{align}
is positive for $s$ in an interval
containing $0$ as an internal point, namely
\begin{align}
\label{eq.928757863}
|s (c-1)|
&\leq 
\frac{a_2 b_2}{(a_2-a_1)(b_2-b_1)}
\ef;
&
s&>-\frac{a_1 b_1}{(a_1+b_1)^2}
\ef.
\end{align}
\end{prop}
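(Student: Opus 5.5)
Write $g(t)=t^c/(c(c-1))$, which is strictly convex for every $c>0$, $c\neq1$ (the limiting case $c=1$ is handled by continuity with $g(t)=t\ln t$). Then
\[
X = \big[g(a_1+b_1)+g(a_2+b_2)-g(a_1+b_2)-g(a_2+b_1)\big] + \big[g((a_1+b_1)(1+s\delta))-g(a_1+b_1)\big].
\]
The first bracket is the quantity $X_0$ of Proposition~\ref{prop.ineqa1a2b1b2simple}. The plan is to bound $X_0$ from below quantitatively and to bound the second bracket, the perturbation $\Delta$, from below as well (it may be negative), so that their sum stays non-negative.

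For $X_0$ I would sharpen the proof of Proposition~\ref{prop.ineqa1a2b1b2simple} by writing it as a double integral:
\[
X_0=\int_{a_1}^{a_2}\!\int_{b_1}^{b_2} g''(u+v)\,\mathrm{d}v\,\mathrm{d}u=\int\!\!\int (u+v)^{c-2}\,\mathrm{d}v\,\mathrm{d}u .
\]
This gives $X_0\ge (a_2-a_1)(b_2-b_1)\min\big((a_1+b_1)^{c-2},(a_2+b_2)^{c-2}\big)$, according to the sign of $c-2$.

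For $\Delta$, convexity gives $\Delta\ge g'(a_1+b_1)(a_1+b_1)s\delta=\frac{(a_1+b_1)^c}{c-1}\,s\delta = s(a_1+b_1)^c\frac{(a_2-a_1)(b_2-b_1)}{a_2b_2}$. Note that the factor $(c-1)$ in $\delta$ cancels the $1/(c-1)$ exactly, which is why $\delta$ is chosen this way. Hence for $s\ge0$ we get $\Delta\ge0$ immediately, and $X\ge X_0>0$. The first condition in (\ref{eq.928757863}) only serves to keep $1+s\delta>0$: since $|s\delta|\le1$, the base remains non-negative and $g$ stays well defined.

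For $s<0$ I would compare $|s|(a_1+b_1)^c/(a_2b_2)$ with the lower bound on $X_0$, after dividing by $(a_2-a_1)(b_2-b_1)$. The linear bound on $\Delta$ points the wrong way here, since convexity gives a lower bound but we now need an upper bound on $|\Delta|$. So I would use $|\Delta|\le |s\delta|(a_1+b_1)\max g'$ over $[(a_1+b_1)(1-|s\delta|),\,a_1+b_1]$, and $|s\delta|\le1$ keeps this under control. The required inequality then reduces to
\[
|s|\,\frac{(a_1+b_1)^2}{a_2b_2}\,\frac{(a_1+b_1)^{c-2}}{\min\big((a_1+b_1)^{c-2},(a_2+b_2)^{c-2}\big)}\lesssim 1 .
\]
This is where the second condition $|s|<a_1b_1/(a_1+b_1)^2$ enters: it gives $|s|(a_1+b_1)^2/(a_2b_2)<a_1b_1/(a_2b_2)\le1$.

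The main obstacle is the case $c>2$ combined with $s<0$. There the minimum of $(u+v)^{c-2}$ is $(a_1+b_1)^{c-2}$, so the comparison works; but when $c<2$ the minimum is $(a_2+b_2)^{c-2}$, which can be much smaller than $(a_1+b_1)^{c-2}$. I would try to avoid losing the factor $((a_2+b_2)/(a_1+b_1))^{2-c}$ by not bounding the integral crudely. Instead I would compare $X_0$ with the exact expression $\frac{(a_1+b_1)^c}{c-1}\big[(1-|s|\delta)^{c}-1\big]/c$, using the homogeneity substitution $u=a_1\alpha$, $v=b_1\beta$. If this fails, I would fall back on a direct monotonicity argument in $a_2$ (and then $b_2$), as in Proposition~\ref{prop.ineqa1a2b1b2simple}, applied to the full $X$ at fixed $s$ on the boundary of the allowed interval.
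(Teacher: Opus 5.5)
Your argument for $s\ge 0$ is correct, and the tangent-line bound $\Delta\ge s(a_1+b_1)^c\frac{(a_2-a_1)(b_2-b_1)}{a_2b_2}$ is a nice way to get it. For $s<0$, however, the proof is not complete.

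First a misstep: the tangent-line bound does not ``point the wrong way'' there, since $g(y)-g(x)\ge g'(x)(y-x)$ holds for every $y\ge 0$. For $s<0$, where $\Delta<0$, it is exactly the lower bound you need. Your mean-value substitute merely reproduces it, and uses the wrong interval when $c<1$, where $s\delta>0$.

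The genuine gap is the next step, where you replace $X_0$ by the area times the minimum of $(u+v)^{c-2}$. For $c<2$ and $|s|$ close to $a_1b_1/(a_1+b_1)^2$, this would require
\[
\frac{a_1b_1}{a_2b_2}\le\Big(\frac{a_1+b_1}{a_2+b_2}\Big)^{2-c}.
\]
This does hold for $1\le c<2$: the base is a weighted mean of $a_1/a_2$ and $b_1/b_2$, hence at least their product, and the exponent is at most $1$. It fails for $0<c<1$. Take $b_1\ll a_1$, $b_2\downarrow b_1$, $a_2\to\infty$. Then both $\frac{a_1b_1}{a_2b_2}$ and $\frac{a_1+b_1}{a_2+b_2}$ are $\approx a_1/a_2$, and the exponent $2-c>1$ makes the right side smaller. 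For instance, $c=\tfrac{1}{2}$, $a_1=1$, $b_1=10^{-2}$, $b_2=1.01\cdot 10^{-2}$, $a_2=100$ loses a factor of about $10$. Your two fallback ideas are not carried out, so the case $s<0$, $0<c<1$ is left open. This is exactly the case needed later for $p<2$ in the rows $T_{11},T_{12}$, where $s<0$.

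The missing idea is to keep both pieces as integrals over the same rectangle. Since $\frac{(a_2-a_1)(b_2-b_1)}{a_2b_2}=\int_{a_1}^{a_2}\!\int_{b_1}^{b_2}\frac{a_1b_1}{u^2v^2}\,\mathrm{d}v\,\mathrm{d}u$, your two estimates already give
\[
X\ge\int_{a_1}^{a_2}\!\int_{b_1}^{b_2}\Big[(u+v)^{c-2}+s\,(a_1+b_1)^c\,\frac{a_1b_1}{u^2v^2}\Big]\,\mathrm{d}v\,\mathrm{d}u .
\]
For $-\frac{a_1b_1}{(a_1+b_1)^2}<s<0$ the integrand exceeds $(u+v)^{c-2}-(a_1+b_1)^{c-2}\big(\frac{a_1b_1}{uv}\big)^2\ge 0$. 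This holds because $\frac{a_1b_1}{uv}\le\frac{a_1+b_1}{u+v}\le 1$ (the first inequality is $\frac{1}{u}+\frac{1}{v}\le\frac{1}{a_1}+\frac{1}{b_1}$), and $r^{2-c}\ge r^2$ for $r\in(0,1]$ and $c>0$. The perturbation density decays like $(uv)^{-2}$, faster than $(u+v)^{c-2}$. The comparison is tight only at the corner $(a_1,b_1)$, which is where the threshold $a_1b_1/(a_1+b_1)^2$ comes from.

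This pointwise comparison is also the core of the paper's proof. The paper instead differentiates the full $X$ twice, with $\delta$ depending on the integration variables and $X$ vanishing on the edges $a=a_1$, $b=b_1$. It therefore has to control extra factors $(1+s\delta)^{c-2}(1+cs\delta)$ through a case split on the signs of $s$ and $c-1$. Your convexity decomposition, completed this way, avoids those factors, and uses the first condition in (\ref{eq.928757863}) only to keep $1+s\delta\ge 0$.
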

\begin{proof}
Again, we shall see the quantity $X$ as a function $X(a_2,b_2)$
depending on the parameters $\{a_1,b_1,c,s\}$. We have that
$X(a_1,b_2)=0$, and that 
$\frac{\mathrm{d}}{\mathrm{d} a_2}X(a_2,b_1)=
\frac{\mathrm{d}}{\mathrm{d} a_2}0=0$, so that
\be
X(a_2,b_2)=\int_{a_1}^{a_2} \dx a \int_{b_1}^{b_2} \dx b\,
\frac{\mathrm{d}^2}{\mathrm{d} a\,\mathrm{d} b}X(a,b)
\ee
This second derivative is the expression
\begin{align}
\frac{\mathrm{d}^2}{\mathrm{d} a\,\mathrm{d} b}X(a,b)
&=
(a+b)^{c-2} 
\left(
1
+
s
\frac{a_1 b_1
% (a+b)^2
}{(a b)^{2}}
\frac{(a_1+b_1)^c}{(a+b)^{c-2}}
(1+s\delta)^{c-2} (1+cs\delta) 
\right)
\ef.
\end{align}
Of course, a sufficient condition implying $X\geq 0$ is that the
expression above is non-negative in the whole range
$(a,b)\in[a_1,a_2]\times[b_1,b_2]$.

If $s\geq 0$ and $c\geq 1$ we have $s \delta>0$, and our quantity is
positive at sight. 
% If $s\geq 0$ and $0<c<1$ 
If $s< 0$ and $c\geq 1$ 
we have $s \delta<0$, and we have two
conditions:
\begin{gather}
\label{eq.3765872}
1+s\delta>0
\ef;
\\
\label{eq.3765872bis}
1+s
\frac{a_1 b_1 (a+b)^2}{(a b)^{2}}
\frac{(a_1+b_1)^c}{(a+b)^c}
(1+s\delta)^{c-2} (1+cs\delta) 
>0
\ef.
\end{gather}
The first condition is implied by the first inequality in (\ref{eq.928757863}).
We rewrite the second condition as
\be
\label{eq.5689775672}
% \left|
s
\frac{1+cs\delta}{1+s\delta}
\frac{a_1 b_1 (a+b)(a_1+b_1)}{(a b)^{2}}
\left(\frac{(a_1+b_1)(1+s\delta)}{a+b}\right)^{c-1}
% \right|
>-1
\ef.
\ee
If $cs\delta<-1$ then $s(1+cs\delta)$ is positive, as well as all the
other factors, and we are left only with the condition
(\ref{eq.3765872}). Otherwise we need the expression on the LHS of
(\ref{eq.5689775672}) to be valued in $[-1,0]$. As the factor
$\frac{1+cs\delta}{1+s\delta}$ is in $[0,1]$ in this case, we can drop
it, and write the sufficient conditions
\begin{align}
\label{eq.5689775672b}
-s
\frac{a_1 b_1 (a+b)(a_1+b_1)}{(a b)^{2}}
&\leq 1
&
\frac{(a_1+b_1)(1+s\delta)}{a+b}
&\leq 1
\ef.
\end{align}
These conditions should be verified in the whole range
$(a,b)\in[a_1,a_2]\times[b_1,b_2]$.
The second condition is always verified, and the first condition 
is more tight when $a\to a_1$ and $b\to b_1$, where it reads
\be
\label{eq.2987637865}
-s
\frac{(a_1+b_1)^2}{a_1 b_1}
\leq 1
\ee
that is the second inequality in (\ref{eq.928757863}).

Now, if $s\geq 0$ and $0<c<1$ we have $s \delta \leq 0$, and 
again $1+s\delta>0$ is guaranteed only in light of
the first inequality in (\ref{eq.928757863}) (now with the other
determination of the absolute value).
%% reads $s<\frac{1}{c-1}\big(\frac{\lam}{\lam-1}\big)^2$. 
Once that this condition is satisfied, we have that $1+cs\delta \geq
0$, thus all the factors in (\ref{eq.3765872bis}) are positive, and we
are done.

If $s<0$ and $0<c<1$ we have $s \delta > 0$. In this case we have
$1<1+cs\delta<1+s \delta$, and again we can drop the factor
$\frac{1+cs\delta}{1+s\delta}$.
%<1+s(c-1)<1-s$. 
Now there is a case analysis
depending if the quantity in (\ref{eq.3765872bis}) appearing with
exponent $c-1<0$ is larger or smaller than 1. To stay simple, we can
use the fact $1+s \delta>1$
% <1-s$, 
and write
% restrict our analysis to the range $s>1-\lam$, for which we have the
% condition
\be
-s
\frac{a_1 b_1 (a_1+b_1)(a+b)}{(a b)^{2}}
<
\frac{a_1+b_1}{a+b}
% \min\left(1,\frac{(1-s)(a_1+b_1)}{a+b}\right)
\ef.
\ee
% One case gives back the inequality (\ref{eq.2987637865}). The second case 
This gives
\be
-s
\frac{a_1 b_1(a+b)^2}{(a b)^{2}}
<
1
%-s
\ef.
\ee
Although this inequality is tighter than the first inequality in
(\ref{eq.5689775672b}), for all values of $a$ and $b$, in fact again
this condition is more tight when $a\to a_1$ and $b\to b_1$,
where it becomes identical to~(\ref{eq.2987637865}).
This concludes our analysis.
\end{proof}

\noindent
Now we are ready to analyse the six expressions $T_{ij}$ needed to
ensure the validity of our cost matrix. The quantities $T_{31}$ and
$T_{32}$ are specially simple, as in this case 
Proposition~\ref{prop.ineqa1a2b1b2simple} is enough to conclude. For
example, the pertinent minor of $W$ giving $T_{31}$ is
\be
\begin{pmatrix}
17^{\eta} & 20^{\eta} \\
65^{\eta} & 68^{\eta} 
\end{pmatrix}
\ee
and the evaluation of $T_{31}=17^{\eta}+68^{\eta}-20^{\eta}-65^{\eta}$
is a special case of the expression $X$ in the proposition, namely
$T_{31}=c(c-1)X$ for $c=\eta+1$, and
$(a_1,a_2;b_1,b_2)=(4^2,8^2;1^2,2^2)$.
The case $T_{32}$ is identical, now with
$(a_1,a_2;b_1,b_2)=(4^2,8^2;2^2,4^2)$.

The four other quantities can be written in a similar way as
$T_{ij}=c(c-1)X$ for $c=\eta+1$, in terms of the more general
expressions $X$ appearing in Proposition~\ref{prop.ineqa1a2b1b2}, for
suitable choices of $s$ and $(a_1,a_2;b_1,b_2)$, and in this case we
have to choose the value of $\theta$, as a function of $\eta$, in
order to ensure that the conditions (\ref{eq.928757863}) for $s$ are
satisfied.  The whole matrix is
\be
M=
\left(
\begin{array}{ccc}
 ((1+\theta)^2+1)^{\eta} & ((2+\theta)^2+1)^{\eta} & ((4+\theta)^2+1)^{\eta} \\
 ((1-2 \theta)^2+4)^{\eta} & ((2-2 \theta)^2+4)^{\eta} & ((4-2 \theta)^2+4)^{\eta} \\
 17^{\eta} & 20^{\eta} & 32^{\eta} \\
 65^{\eta} & 68^{\eta} & 80^{\eta} \\
\end{array}
\right)
\ee
This gives the table
\[
\begin{array}{|c|cccc|c|}
\cline{2-6}
\multicolumn{1}{c|}{} & 
\rule{0pt}{11pt}\raisebox{-5pt}{\rule{0pt}{11pt}}%
a_1&a_2&b_1&b_2 & (a_1+b_1)s\delta
\\
\hline
\rule{0pt}{11pt}%
T_{11} &
1+4\theta+\theta^2 & 4-8\theta+4\theta^2 & 1+4\theta & 4 & -6\theta \\
T_{12} &
1+8\theta+\theta^2 & 4-16\theta+4\theta^2 & 4+8\theta & 16 & -12\theta \\
T_{21} & 
4-8\theta+4\theta^2 & 16 & 1 & 4 & 4 \theta \\
\raisebox{-5pt}{\rule{0pt}{11pt}}%
T_{22} & 
4-16\theta+4\theta^2 & 16 & 4 & 16 & 8 \theta \\
\hline
\end{array}
\]
Each line determines four constraints.
First, the values of $\theta$ must be compatible with the hypotheses
of the proposition, that $a_1<a_2$ and $b_1<b_2$. This gives, overall,
\be
-0.645751\ldots = 2 - \sqrt{7} < \theta < 4 - \sqrt{15} = 0.127017\ldots
\ee
Then, the identity
$(a_1+b_1)s\delta=k\theta$, together with the 
first of conditions (\ref{eq.928757863}), gives
$|\theta|<\frac{1}{|k|}(a_1+b_1)$. Combining the four rows, this gives
\be
|\theta|<7-\sqrt{47}=0.14434\ldots
\ee
Finally,
the second of conditions (\ref{eq.928757863}) gives that
$s=\frac{k\theta}{(a_1+b_1)\delta}>-\frac{a_1 b_1}{(a_1+b_1)^2}$, that
is
$\sign(k \eta) \theta>-|\eta|\frac{a_1 b_1 (a_2-a_1)(b_2-b_1)}{|k|(a_1+b_1)
  a_2 b_2}$. Combining the
four rows, this gives
\be
-0.107969\ldots < 
\frac{\theta}{|\eta|} < 0.0376064\ldots
\ee
where the bounds are roots of certain polynomials of degree 4 and 6.
It turns out that our choice (\ref{eq.deftheta}) is on the safe side,
as $1/8<0.127017\ldots$ and $1/32<0.0376064\ldots$
% \end{proof}

Now we shall consider the case in which $p=2$, but $\Omega$ is not a
flat manifold. This case will be considerably simpler, as the presence
of curvature breaks the scale covariance of the problem, so it is
sufficient to exhibit a family of configurations that are crossing in
the limit in which they are scaled to a small neighbourhood of a point
with non-zero curvature, and some small parameter is adjusted
accordingly. In other words, this account to consider a family of
configurations with points $x_i$ and $y_j$ with norms of order 1, but
in a manifold $\Omega$ with constant infinitesimal curvature, as a
sphere with large radius, or a hyperbolic plane with infinitesimal
negative curvature.

% $\bu \bv \bx \by \bz \bw \bphi$

In the second case, if we adopt the Poincar\'e Disk Model to encode
the positions of the points, we have the general formula
\cite{AndersonHyperbolicGeometry}
% https://en.wikipedia.org/wiki/Poincaré_disk_model#Lines_and_distance
\be
\label{eq.75343275676pre}
d(\bu,\bv)
=
\frac{1}{\sqrt{\eta}}
\arccosh
\left(
1+
\frac{\frac{\eta}{2} \|\bu-\bv\|^2}
{(1-\frac{\eta}{4}\|\bu\|^2)(1-\frac{\eta}{4}\|\bv\|^2)}
\right)
\ee
where $\eta$ encodes the curvature parameter.  In the limit of small
curvature, and keeping only the leading correction, we get
\be
\label{eq.75343275676}
d(\bu,\bv)
=
\|\bu-\bv\| 
\left(
1+\frac{\eta}{24} 
\Big(
3(\|\bu\|^2+\|\bv\|^2)-\|\bu-\bv\|^2
\Big)
+\cO(\eta^2)
\right)
\ee
As we work at $p=2$, we get
\be
\label{eq.75343275676sq}
w_{\bu,\bv}
=d(\bu,\bv)^2
=
\|\bu-\bv\|^2
\left(
1+\frac{\eta}{12} (3(\|\bu\|^2+\|\bv\|^2)-\|\bu-\bv\|^2)+\cO(\eta^2)
\right)
\ee
In the case of positive curvature, we consider points on a sphere of
radius $R$, using the standard orthogonal projection onto the tangent
plane at the origin (or equivalently, an equatorial plane). Let $\bu$
and $\bv$ be the coordinates of the projected points. The exact
geodesic distance $d(\bu,\bv)$ can be obtained from the spherical law
of cosines (the fundamental relation underlying the Haversine
formula). Specifically, the points on this plane 
are lifted to the sphere as 
$\tilde{\bu} = (\bu,\sqrt{R^2-\Vert{}\bu\Vert{}^2})$ and
$\tilde{\bv} = (\bv,\sqrt{R^2-\Vert{}\bv\Vert{}^2})$, yielding the
exact relation for the angular distance $\theta =d(\bu,\bv)/R$:
\be
\cos\left(\frac{d(\bu,\bv)}{R}\right) =
\frac{\tilde{\bu}\cdot\tilde{\bv}}{R^2} = \frac{\bu\cdot\bv +
  \sqrt{R^2-|\bu|^2}\sqrt{R^2-|\bv|^2}}{R^2}
\ef.
\ee
To analyze the small-curvature regime, we set $\eta = 1/R^2$ and
expand the square roots and the arccosine for large $R$ (or
equivalently, small $\eta$). Expanding to order $\mathcal{O}(\eta^2)$
first the radicals, and then the $\arccos(1-x)$ function,
\be
\label{eq.75343275676sqSph}
w_{\bu,\bv} = d(\bu,\bv)^2 = |\bu-\bv|^2\left(1-\frac{\eta}{12}\Big(3(|\bu|^2+|\bv|^2)-|\bu-\bv|^2\Big)+\cO(\eta^2)\right)
\ef.
\ee
This shows that, up to identifying $\eta$ with $1/R^2$, the cases of
positive and negative curvature share the same formula, up to the
choice of sign for~$\eta$.

Now, the $4\times 3$ matrix of weights $W_{ij}$, as a function of
$\eta$ and $\theta$, is determined by using the formula
(\ref{eq.75343275676sq}), and the set of positions
(\ref{eq.XYposCross}).  Performing an expansion for $\eta$ and
$\theta$ small, and keeping only the terms of order $1$ complexively
in $\eta$ and $\theta$, gives certain linear expressions for the
$T_{ij}$'s, namely
\be
\begin{pmatrix}
T_{11} & T_{12} \\
T_{21} & T_{22} \\
T_{31} & T_{32}
\end{pmatrix}
=
\begin{pmatrix}
3 \eta - 6 \theta & 12 \eta - 12 \theta \\ 
12 \eta + 4 \theta & 48 \eta + 8 \theta \\ 
48 \eta & 192 \eta
\end{pmatrix}
\ee
Note that, again, with our choice of points the transpositions
$T_{31}$ and $T_{32}$ are simpler than the others, and they have the
appropriate sign depending on the sign of $\eta$, regardless of the
value of $\theta$. For the other four transpositions we get that,
when $\eta>0$, all $T_{ij}$'s are positive if
\be
-3\eta < \theta < \frac{1}{2} \eta
\ee
while when $\eta<0$ all $T_{ij}$'s are negative if
\be
\frac{1}{2} \eta < \theta < -3\eta 
\ef.
\ee
As these intervals are non-empty, we have proven that on curved
two-dimensional manifolds also at $p=2$ there are in general crossing
configurations.

We should now investigate if there are crossing configurations on
manifolds which are almost everywhere flat, and have conical
singularities. In the case of a conical singularity with positive
curvature, we can encode a neighbourhood of a singularity with angle
$2\pi-2\alpha$ by stating that the positions are in the portion of the
plane with $\arg(\bu)=\arctan(u_2/u_1) \not\in [-\alpha,\alpha]$, and
that the points $(r \cos \alpha,r \sin \alpha)$ and 
$(r \cos \alpha,-r \sin \alpha)$ are identified.

Now we choose as set of positions
\begin{align}
X&=
\{
(-1,1),(-1,-1),(-1-\eps,0),(-1+\eps,0)
\}
\ef;
\\
Y
&=
\{
(r \cos \alpha,r \sin \alpha),
(-1-\eps/2,-1/2),
(0,0)
\}
\ef.
\end{align}
In light of Remark~\ref{rmk.validifdiagzeroes}, the matrix of costs
is valid at sight, for all columns, for generic
$r$ and $\eps<1$, except for the first column, where we have the
condition
\be
(r \cos \alpha+1)^2+(r \sin \alpha-1)^2
<
(r \cos \alpha+1-\eps)^2+(r \sin \alpha)^2
\ef.
\ee
In particular, if we choose $r=1/\sin\alpha$, we get
\be
(1/\tan \alpha+1)^2
<
(1/\tan \alpha+1-\eps)^2+1
\ef,
\ee
which is true, for example, if 
$\eps=\frac{1}{2} \frac{\tan \alpha}{1+\tan \alpha}$.

When we have conical singularities with negative curvature, say with
angle $2\pi+2\alpha$ with $\alpha>0$, then our manifold does not
satisfy our generality hypotheses on the transversality of geodesics,
as all pairs of points with the angular distance exceeding $\pi$ along
both potential traversal directions around the apex
% with polar coordinates $(r_1,\phi_1)$ and $(r_2,\phi_2)$, with
% $|\phi_2-\phi_1|>\pi$ in both angular directions,
have geodesics passing through the singularity.
% $\pi<\phi_2-\phi_1<\pi+2\alpha$

Nonetheless, we shall illustrate the fact that also in this case
% , for $\alpha<\pi/2$, 
there exist generic instances that are crossing.

The case $\alpha>\pi$ is specially simple. In this case, using polar
coordinates, a counterexample is given by the configuration (calling
$\theta=\frac{\pi+\alpha}{2}$)
\begin{align}
X&=\big(
(4,\theta),(3,-\theta),(2,-\theta-\eps),(1,\theta+\eps)
\big)
\ef;
&
Y&=\big(
(1,0),(2,0),(3,0)
\big)
\ef.
\end{align}
When $\eps$ is non-zero, the path $(x_1,x_2,x_3,x_4)$ is crossing. The
matrix of costs, before and after a gauge transformation,
neglecting the perturbation $\eps$ of the angles, reads
\begin{align}
W&=
\begin{pmatrix}
25 & 36 & 49 \\
16 & 25 & 36 \\
 9 & 16 & 25 \\
 4 &  9 & 16
\end{pmatrix}
\ef;
&
W'&=
\begin{pmatrix}
0 & 2 & 6 \\
0 & 0 & 2 \\
2 & 0 & 0 \\
6 & 2 & 0
\end{pmatrix}
\ef.
\end{align}
As the entries of $W'$ out of the main diagonals are at least $2$,
there must exist a non-empty interval of $\eps$ such that the
corresponding matrix of costs is valid. Another way of seeing this
goes through the calculation of the matrix of transpositions,
according to the sufficient condition described in Proposition
\ref{prop.onlytransp}. In this case we get:
\be
\label{eq.2775895}
T=\begin{pmatrix}
2 & 2 \\
2 & 2 \\
2 & 2 
\end{pmatrix}
\ef.
\ee
When $0<\alpha \leq \pi$, and we keep the same configuration of points
above, the matrix of costs is modified because the distance between
$x_i$'s and $y_j$'s is not given by the sum of the radia, squared, but
it rather involves the angle $\theta$, namely
$d(x_i,y_j)^2=x_i^2+y_j^2-2 x_i y_j \cos(\theta)$. The calculations of
$W$, $W'$ and $T$ go on almost indentically, and we find a matrix $T$
that coincides with (\ref{eq.2775895}) up to an overall factor 
$-\cos(\theta) \in \;]\rule{.5pt}{0pt}0,1]$. 
This allows to conclude, again in light of Proposition
\ref{prop.onlytransp}, that there are crossing configurations also for
the range $0<\alpha \leq \pi$ of conical singularities, thus
completing our analysis.

%%%%%%%%%%%%%%%%%%%%%%%%%%%%%%%%%%%%%%%%%%%%%%%%%%%%%%%
\addtocontents{toc}{\protect\setcounter{tocdepth}{-2}}
\section*{Acknowledgement}
\addtocontents{toc}{\protect\setcounter{tocdepth}{2}} % Riporta a 2 per le sezioni successive
% \acknowledgements
%%%%%%%%%%%%%%%%%%%%%%%%%%%%%%%%%%%%%%%%%%%%%%%%%%%%%%%

\noindent 
We are grateful to Massimiliano Gubinelli for a stimulating discussion
on the connection between the Euclidean Random Assignment Problem and
$\text{SLE}_2$. His skepticism regarding the absence of loops in the
matching subgraph $H_\mathcal{J}$ prompted us to seek a rigorous
proof; what initially seemed a routine application of cycle swapping
arguments ultimately led to the deeper structural analysis presented here.

%% We are grateful to Massimiliano Gubinelli for a stimulating discussion
%% that originally sparked this work. The project began as an attempt to
%% investigate the connection between the Euclidean Random assignment
%% Problem and $\text{SLE}_2$, during which his insightful skepticism
%% regarding the absence of loops in what would have later become the
%% ``matching subgraph'' $H_\calJ$ prompted us to seek a rigorous
%% proof. What initially was argued to be a straightforward application
%% of classical arguments of cycle swapping ultimately led to the deeper
%% structural analysis presented in this paper.

%% I am deeply grateful to Massimiliano Gubinelli for a stimulating
%% conversation that originally inspired this work. The initial
%% motivation stemmed from my attempt to convince him that the paths
%% underlying this problem should be governed by SLE$_{2}$. During
%% that discussion, I took it for granted that these paths must form a
%% spanning tree, assuming the impossibility of loops. He insightfully
%% questioned this assumption, prompting me to promise either a formal
%% proof or a numerical simulation in support of my claim. To my
%% surprise, establishing this result proved to be far more subtle
%% than anticipated: while I initially believed standard alternating
%% path swapping arguments would suffice, a much finer
%% analysis—relying on the careful combination of semi-alternating
%% paths developed here—was ultimately required.

The work of A.S.\ is or has been supported by the ANR projects
\emph{DIMERS} (ANR-18-CE40-0033), \emph{COMBIN\'E}
(ANR-19-CE48-0011) and \emph{COMETA-GAE}
(ANR-25-CE48-0602).

%%%%%%%%%%%%%%%%%%%%%%%%%%%%%%%%%%%%%%%%%%%%%%%%%%%%%%%
\bibliographystyle{unsrt}
\bibliography{biblio}
%%%%%%%%%%%%%%%%%%%%%%%%%%%%%%%%%%%%%%%%%%%%%%%%%%%%%%%

\end{document}